\DeclareFontFamily{U}{mathx}{\hyphenchar\font45}
\DeclareFontShape{U}{mathx}{m}{n}{
      <5> <6> <7> <8> <9> <10>
      <10.95> <12> <14.4> <17.28> <20.74> <24.88>
      mathx10
      }{}
\DeclareSymbolFont{mathx}{U}{mathx}{m}{n}
\DeclareMathAccent{\widecheck}{0}{mathx}{"71}
\newcommand{\abs}[1]{\vert#1\vert}
\newcommand{\Abs}[1]{\Vert#1\Vert}
\newcommand{\jump}[1]{\llbracket#1\rrbracket}
\newcommand{\uu}{\underline{u}}
\newcommand{\dtan}{{\mathfrak d}_{\rm tan}}
\newcommand{\dnor}{{\mathfrak d}_{\rm nor}}
\newcommand{\dpar}{{\mathfrak d}_{\parallel}}
\newcommand{\gr}{{\mathtt g}}
\newcommand{\R}{{\mathbb R}}
\newcommand{\N}{{\mathbb N}}
\newcommand{\dt}{\partial_t}
\newcommand{\dx}{\partial_x}
\newcommand{\cI}{{\mathcal I}}
\newcommand{\cE}{{\mathcal E}}
\newcommand{\eps}{\varepsilon}
\newcommand{\opnorm}{\@ifstar\@opnorms\@opnorm}
\newcommand{\@opnorms}[1]{%
  \left|\mkern-1.5mu\left|\mkern-1.5mu\left|
   #1
  \right|\mkern-1.5mu\right|\mkern-1.5mu\right|
}
\newcommand{\@opnorm}[2][]{%
  \mathopen{#1|\mkern-1.5mu#1|\mkern-1.5mu#1|}
  #2
  \mathclose{#1|\mkern-1.5mu#1|\mkern-1.5mu#1|}
}
\newtheorem{theorem}{Theorem}
\newtheorem{proposition}{Proposition}
\newtheorem{lemma}{Lemma}
\newtheorem{corollary}{Corollary}
\newtheorem{definition}{Definition}
\newtheorem{assumption}{Assumption}
\newtheorem{remark}{Remark}
\newtheorem{example}{Example}
\begin{document}
\title[The $2D$ nonlinear shallow water equations with a partially immersed obstacle]%
{The $2D$ nonlinear shallow water equations with a partially immersed obstacle}

\author{Tatsuo Iguchi and David Lannes}

\address{Tatsuo Iguchi, Department of Mathematics, Faculty of Science and Technology, Keio University, 
3-14-1 Hiyoshi, Kohoku-ku, Yokohama 223-8522, Japan}

\address{David Lannes, Institut de Math\'ematiques de Bordeaux, Universit\'e de Bordeaux et CNRS UMR 5251, 
351 Cours de la Lib\'eration, 33405 Talence Cedex, France}
\maketitle

\begin{abstract}
This article is devoted to the proof of the well-posedness of a model describing waves propagating in shallow water 
in horizontal dimension $d=2$ and in the presence of a fixed partially immersed object. 
We first show that this wave-interaction problem reduces to an initial boundary value problem 
for the nonlinear shallow water equations in an exterior domain, 
with boundary conditions that are fully nonlinear and nonlocal in space and time. 
This hyperbolic initial boundary value problem is characteristic, does not satisfy the constant rank assumption on the boundary matrix, 
and the boundary conditions do not satisfy any standard form of dissipativity. 
Our main result is the well-posedness of this system for irrotational data and at the quasilinear regularity threshold. 
In order to prove this, we introduce a new notion of weak dissipativity, that holds only after integration in time and space. 
This weak dissipativity allows high order energy estimates without derivative loss; 
the analysis is carried out for a class of linear non-characteristic hyperbolic systems, 
as well as for a class of characteristic systems that satisfy an algebraic structural property that allows us to define a generalized vorticity. 
We then show, using a change of  {unknowns}, that  {it} is possible to transform the linearized wave-interaction  {problem} into 
a non-characteristic system,  {which} satisfies this structural property and for which the boundary conditions are weakly dissipative. 
We can therefore use our general analysis to derive linear, and then nonlinear, a priori energy estimates. 
Existence for the linearized problem is obtained by a regularization procedure that makes the problem non-characteristic and strictly dissipative, 
and by the approximation of the data by more regular data satisfying higher order compatibility conditions for the regularized problem. 
Due to the fully nonlinear nature of the boundary conditions, it is also necessary to implement a quasilinearization procedure. 
Finally, we have to lower the standard requirements on the regularity of the coefficients of the operator in the linear estimates 
to be able to reach the quasilinear regularity threshold in the nonlinear well-posedness result. 
\end{abstract}

\noindent
{\bf MSC:} 35L04, 74F10

\noindent
{\bf Keywords:} 
Wave-structure interactions; nonlinear hyperbolic initial boundary value problems; weakly dissipative boundary conditions; generalized vorticity.

\section{Introduction}
%
\subsection{General setting}
Motivated by applications to renewable marine energies such as offshore wind turbines or wave energy convertors, 
as well as environmental issues such as the modeling of sea-ice, 
many recent articles are devoted to the study of the interactions between ocean waves and floating structures. 
There are several approaches to describe such interactions. Offshore industry has traditionally employed tank testing, 
but simulations based on computational fluid dynamics (CFD) are becoming increasingly popular and tend to replace model tests. 
CFD-based simulations are particularly relevant to study turbulence-related issues, wave impact, strain on mooring systems, etc. 
However, while CFD is a common tool in aeronautics or the automotive industry, it is not yet the case for wave-structure interactions, 
as explained in \cite{Kim_etal} where the main difficulties are pointed out, 
among which an open ocean environment that requires the computation of a large volume of fluid and the computation of complicated wave fields. 

Less costly than CFD computations is the so-called fully nonlinear potential flow (FNPF) approach \cite{Penalba}; 
viscous effects are neglected, and the flow is assumed to be irrotational. 
The velocity field therefore derives from a scalar velocity potential that can be found by solving the Laplace equation in the fluid domain. 
This approach is of course less precise than CFD, as it misses turbulence-related effects, 
but it is interesting for its ability to capture the nonlinear effects in the wave-structure interactions. 

Both CFD and FNPF approaches are unable to describe wave-structure interactions on a large scale and 
cannot for instance be used to study the impact of a group of floating structures on the dynamics of the waves. 
This is not surprising because the simulation of ocean waves on a domain whose size is of the order of several wave-lengths of the waves 
is out of reach with these methods, even if there is no floating structure. 
For practical applications, waves are indeed not computed using the free-surface Navier--Stokes or irrotational Euler equations, 
which are the underlying mathematical equations for the CFD and FNPF approaches, but with simpler approximate models. 

For instance, most of the computations made to simulate wave farms, which are arrays of up to dozens of wave energy convertors, 
are based on Cummins' equation \cite{Cummins} which is an integro-differential equation for the six degrees of freedom of the floating object. 
This equation relies on several approximations performed on the FNPF equations: 
one neglects the variations of the surface elevation for the domain of definition to the velocity potential 
as well as the variations of the immersed part of the object, 
and the pressure is recovered using a linear approximation of Bernoulli's equation in terms of the velocity potential, 
which is itself computed using Green's functions. 
The resulting model is very efficient from a numerical point of view but, due to these many approximations, 
cannot be used when nonlinear effects are important, in extreme sea conditions for instance. 

More recently, a strategy was proposed in \cite{Lannes2017} to derive wave-structure models based on other approximations of the FNPF system. 
More precisely, the idea was to use the important advances performed in the last two decades on the derivation and 
justification of shallow water asymptotic models; 
  {see} for instance \cite{Iguchi2009,Alvarez,Iguchi2018,Iguchi2018b} and the review \cite{Lannes2020}, 
and to use these models to build new wave-structure interactions systems. 
Compared to Cummins' equation, the limitation is that the range of validity of these models is restricted to shallow water 
while the advantages are that they are able to capture nonlinear effects and that very efficient numerical codes have been developed 
for such wave models and are therefore likely to be used for the ``wave part'' of such wave-structure models. 
To describe better this approach, let us briefly describe it on the physical configuration considered in this article, namely, 
a fixed vertical cylinder partially immersed in a $2+1$ layer of fluid with a flat bottom; see Figure \ref{fig:SideWalls}. 
Denoting by $x\in \R^2$ and $z \in \R$ the horizontal and vertical coordinates, the bottom is located at $z=-H_0$ and 
the surface can be represented at time $t$ by the graph of a function $\zeta(t,\cdot)$. 
At time $t$, the fluid domain is therefore given by 
\[
\Omega(t)=\{(x,z)\in \R^{2+1} \,|\, -H_0< z <\zeta(t,x)\}.
\]
The horizontal plane $\R^2$ is decomposed as $\R^2={\mathcal I}\cup {\mathcal E}\cup \mathit{\Gamma}$, 
where ${\mathcal I}$ is the projection on the horizontal coordinates of the wetted part of the object; 
this region is called the {\it interior} region; 
the projection of the part of the surface which is in contact with the air is called {\it exterior} region and denoted ${\mathcal E}$; 
finally, ${\mathit \Gamma}$ denotes the projection of the contact line. 
Finally, the bottom of the object is parametrized by a function $\zeta_{\rm w}$, where the subscript ${\rm w}$ is for ``wetted''. 

\bigskip
\begin{figure}[h]
\begin{center}
\includegraphics[width=0.8\linewidth]{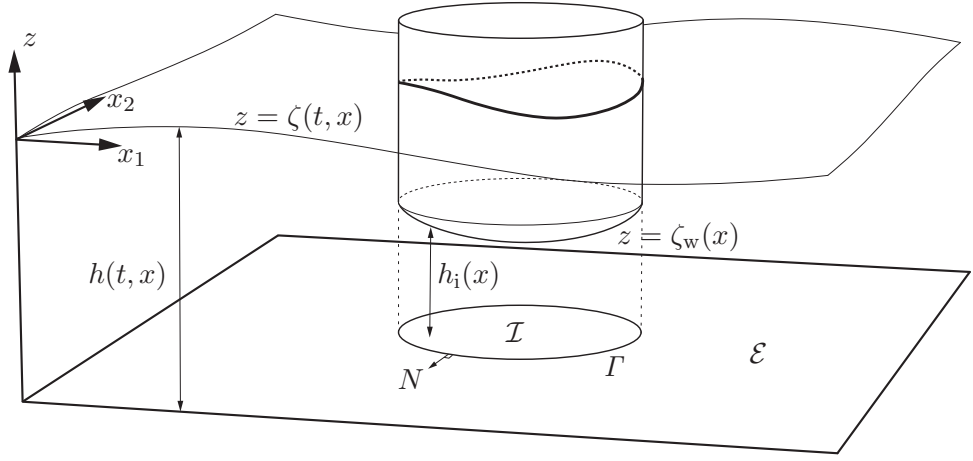}
\end{center}
\setlength{\unitlength}{1pt}
\begin{picture}(0,0)
\put(8,58){$\cI$}
\put(100,50){$\cE$}
\put(45,45){$\mathit{\Gamma}$}
\put(-175,170){$z$}
\put(-139,126){$x_1$}
\put(-143,147){$x_2$}
\put(-150,80){$h(t,x)$}
\put(-18,80){$h_{\rm i}(x)$}
\put(-95,140){$z=\zeta(t,x)$}
\put(50,95){$z=\zeta_{\rm w}(x)$}
\put(-33,40){$N$}
\end{picture}
\vspace{-3ex}
\caption{A fixed solid body with vertical sidewalls}\label{fig:SideWalls}
\end{figure}

There are three steps in the strategy of \cite{Lannes2017}: 
\begin{enumerate}
\item[(i)]
{\it Choice of the wave model}. 
For the configuration mentioned above, the FNPF wave-structure interaction model can be formulated equivalently 
as a system of evolution equations on $\zeta$ and on the vertically averaged horizontal velocity $v$. 
A very common approximation, obtained by neglecting non-hydrostatic pressure terms and 
quadratic interactions of the fluctuations of the horizontal velocity around its vertical average ${v}$, 
results in the {\it nonlinear shallow water equations}, 
\[
\begin{cases}
 \dt \zeta + \nabla\cdot (h{v})=0, \\
 \dt {v}+{v}\cdot \nabla {v} + \gr \nabla\zeta=-\frac{1}{\rho} \nabla \underline{P},
\end{cases}
\]
where $h=H_0+\zeta$ is the depth of the water, $\gr$ is the gravitational constant, $\rho$ is the constant density of the water, 
and $\underline{P}$ is the pressure on the water surface. 
This is the wave model that we consider in this article. 
\item[(ii)]
{\it Identification of the constraints in ${\mathcal E}$ and ${\mathcal I}$.} 
We assume that the nonlinear shallow water equations are solved in both ${\mathcal E}$ and ${\mathcal I}$, 
but with symmetric constraints on $\underline{P}$ and $\zeta$: 
\begin{itemize}
\item
In the exterior region ${\mathcal E}$, there is a constraint on the pressure, namely, $\underline{P}=P_{\rm atm}$, 
where $P_{\rm atm}$ is the atmospheric pressure assumed to be constant; 
on the other hand, there is no constraint on $\zeta$. 
We must therefore solve 
\begin{equation}\label{introSW2}
\begin{cases}
 \dt \zeta+\nabla\cdot (h{v})=0 &\mbox{in}\quad (0,T)\times\cE, \\
 \dt {v}+{v}\cdot \nabla {v}+\gr \nabla\zeta=0  &\mbox{in}\quad (0,T)\times\cE.
\end{cases}
\end{equation}
\item
In the interior region ${\mathcal I}$, this is the reverse: 
there is no constraint on the surface pressure which is one of the unknowns of the problem, 
but there is a constraint on the surface elevation since, by definition, we must have $\zeta_{\rm i}=\zeta_{\rm w}$ in ${\mathcal I}$. 
Here and in what follows, we denote with a subscript ${\rm i}$ the quantities evaluated in the interior region. 
Since $\zeta_{\rm w}$ is time independent in our configuration, we must therefore solve 
\begin{equation}\label{introSW3}
\begin{cases}
 \nabla\cdot (h_{\rm i}{v}_{\rm i})=0 &\mbox{in}\quad (0,T)\times\cI, \\
 \dt {v}_{\rm i}+{v}_{\rm i}\cdot \nabla {v}_{\rm i}+\gr \nabla\zeta_{\rm w}=-\frac{1}{\rho} \nabla \underline{P}_{\rm i}
  &\mbox{in}\quad (0,T)\times\cI
\end{cases}
\end{equation}
with $h_{\rm i}=H_0+\zeta_{\rm w}$; these equations have the same structure as the {\it incompressible} Euler equations; 
the surface pressure $\underline{P}_{\rm i}$ can therefore be understood as the Lagrange multiplier associated with the constraint $\zeta=\zeta_{\rm w}$, 
while it is found through Bernoulli's equation in the standard FNPF approach.
\end{itemize}
\item[(iii)]
{\it   {Matching} conditions on $\mathit{\Gamma}$.} 
Neither \eqref{introSW2} nor \eqref{introSW3} are well-posed without appropriate boundary conditions on the unknowns. 
We must therefore provide such conditions. 
This is not always an easy task. 
In the present case, we derive in Section \ref{subsectSWM} the following matching conditions: 
\begin{equation}\label{introSW4}
\begin{cases}
 N\cdot(h{v})=N\cdot(h_{\rm i}{v}_{\rm i})&\quad\mbox{on}\quad (0,T)\times\mathit{\Gamma}, \\
 P_{\rm atm}+\gr \zeta+\frac{1}{2}\abs{{v}}^2=P_{\rm i}+\gr \zeta_{\rm w}+\frac{1}{2}\abs{{v}_{\rm i}}^2
  &\quad\mbox{on}\quad (0,T)\times\mathit{\Gamma}, \\
 N^\perp\cdot {v}=N^\perp\cdot {v}_{\rm i}&\quad\mbox{on}\quad (0,T)\times\mathit{\Gamma},
\end{cases}
\end{equation}
where $N$ denotes the outward normal vector to $\mathit{\Gamma}$. 
We also show that under such boundary conditions, if the initial velocity field is irrotational in ${\mathcal E}$ and in ${\mathcal I}$, 
then it remains irrotational for all times. 
\end{enumerate}

The goal of this article is to prove that the wave-structure model \eqref{introSW2}--\eqref{introSW4} is locally well-posed 
for irrotational data; see Theorem \ref{th:exist}.

\subsection{Related results}\label{sectrelated}
As mentioned above, it is necessary to understand the initial boundary value problem (IBVP), 
which consists in solving the equations given initial and boundary data, for the wave model under consideration. 
Unfortunately, the local well-posedness of IBVPs with nonhomogenous boundary data remains an open question for most wave-models. 
In horizontal dimension $d=1$, the exterior domain ${\mathcal E}$ is the union of two half-lines and the equations 
\eqref{introSW2}--\eqref{introSW4} can be reduced to a transmission problem across the two connected components of ${\mathcal E}$ \cite{Lannes2017}. 
One can check that these transmission conditions satisfy the Kreiss--Lopatinski\u{\i} condition, 
from which the well-posedness can be established \cite{IguchiLannes}. 
In this one-dimensional setting, it is also possible to deal with an object having non-vertical sidewalls: 
this is more complicated because finding the location of $\mathit{\Gamma}$, which are reduced to two points, is then a free boundary problem. 
This was solved in \cite{IguchiLannes} where the object is also allowed to move freely under the action of the waves, 
and where a general theory for hyperbolic boundary value problems in dimension $d=1$ is provided. 
The proof was adapted numerically in \cite{Haidar}. 
In \cite{BHV}, this approach was used to simulate a wave-energy device, that is, the oscillating water column, 
and this one-dimensional approach has also been extended to cover radially symmetric configurations in two-dimensions \cite{Bocchi,Bocchi2}; 
let us also mention \cite{SuTucsnak} where controllability issues are investigated, and  \cite{MSTT} where a viscous fluid is considered. 

In dimension $d=1$, there are also some works dealing with other wave-models, 
and in particular for the Boussinesq equations which can be seen as a dispersive perturbation of the nonlinear shallow water equations. 
The presence of dispersion makes the analysis of the IBVP completely different, 
both theoretically \cite{BLM,BeckLannes} and numerically \cite{LannesWeynans}, with an important role played by dispersive boundary layers. 
For practical applications, most authors use formal approximations near the boundary to avoid this difficulty; 
see for instance \cite{BEER,MIS,Karambas,Gusev}. 

In order to avoid dealing with the IBVP for the wave model, 
some authors also proposed to relax the constraint on the surface elevation in the interior domain: 
the pressure term is then approximated by a pseudo-compressible relaxation for which efficient numerical tools exist \cite{GPSW1,GPSW2}. 
This latter approach highlights the fact that the kind of wave-structure interaction we consider here is a particular instance of {\it congested flow}. 
Such models appeared first as an asymptotic model for two-phase flows \cite{Bouchut} and are also relevant in biology \cite{Perthame}, 
collective dynamics \cite{Degond,Maury} or granular flows for instance; 
it is the maximal packing constraint at the particle level that induces the congestion constraint at the macroscopic level. 
In the literature devoted to congested flows, 
a distinction is often made \cite{Perrin} between soft congestion models where a single model of compressible type 
with a singular pressure law is used in the whole domain, 
and hard congestion models where the two phases are separated by a possibly free boundary. 
The model we study in this article fits therefore in the category of hard congestion models. 
Such models have generally been addressed from the point of view of weak solutions \cite{Berthelin} 
but strong solutions have also been considered more recently in \cite{Ancona,Dalibard1,Dalibard2}, 
where smooth solutions have been constructed in the neighborhood of explicit propagation fronts. 

Contrary to the previous references our present work deals with the case of strong solutions in horizontal dimension $d=2$. 
Even in the broader context of congested flows, this is to our knowledge the first mathematical result of this kind. 
One of the main obstructions in the two-dimensional case is the difficulty to handle the IBVP, 
even in the hyperbolic framework of the nonlinear shallow water equations. 
As we shall see in Section \ref{sectModelling}, the wave-structure interaction problem formed by \eqref{introSW2}--\eqref{introSW4} 
can be reduced to an IBVP for the nonlinear shallow water equations \eqref{introSW2} in $\cE$ 
with nonlocal in space and time boundary conditions on $\mathit{\Gamma}$ that do not fit in any category of boundary conditions 
for which quasilinear hyperbolic IBVP are well-posed. 
Contrary to the one-dimensional case which is well understood \cite{IguchiLannes}, the well-posedness of IBVP for quasilinear hyperbolic systems 
in general domains remains open in many cases, and even the well-posedness of the linearized equations raises important difficulties. 

The well-posedness in Sobolev spaces of linear hyperbolic IBVPs in certain classes of regular domains $\Omega$ is known 
when the boundary is non-characteristic, the differential operator is symmetrizable, 
and with strictly dissipative boundary conditions whose notions are defined in Section \ref{sectapriorilin}; 
even if the boundary conditions are not strictly dissipative, it is possible in certain cases, in particular, 
in the case where the so-called Kreiss--Lopatinski\u{\i} condition holds, 
to construct a Kreiss symmetrizer that reduces the problem to the case of strictly dissipative boundary conditions; 
see the works of Majda \cite{Majda1983}, M\'etivier \cite{Metivier}, and the book of Benzoni and Serre \cite{BenzoniSerre}. 

When the boundary is not non-characteristic, which is the case in this article, 
if the boundary condition is maximal dissipative and under the assumption that the rank of the boundary matrix is constant on the boundary, 
one can use energy methods as in the non-characteristic case for strictly dissipative solutions; 
however, in general, one has to work in conormal Sobolev spaces, 
and there is a loss of half a derivative in the energy estimates with respect to the boundary data. 
This is the reason why most studies assume homogeneous boundary data; 
we refer to the works of Secchi, and in particular to \cite{Secchi} and references therein, 
where the case of quasilinear systems is treated with linear homogeneous boundary conditions. 
In some particular cases, it is not necessary to work with conormal Sobolev spaces 
because some additional information can be obtained from the analysis of the vorticity or a similar quantity; 
this is for instance the case for the compressible Euler equations \cite{Schochet1986} with homogeneous boundary condition on the normal velocity, 
and for systems having a particular structure exhibited by Ohkubo \cite{Ohkubo}. 

When the boundary conditions are not maximally dissipative, 
it is natural to try to extend the construction of Kreiss symmetrizers to the characteristic case; 
this was done by Majda and Osher \cite{MajdaOsher} under the quite restrictive assumption that 
the boundary matrix is of constant rank in the vicinity of the boundary. 
In this situation, the failure of the uniform Kreiss--Lopatinski\u{\i} condition typically induces a loss of derivative in the energy estimate, 
as shown for instance by Coulombel and Secchi for $2D$ compressible vortex sheets \cite{Coulombel}, 
so that solving the nonlinear problem requires the use of a Nash--Moser iterative scheme \cite{Coulombel2008}. 

There are several reasons why we cannot use these results here. 
A first reason is that we have to deal with non-standard boundary conditions that are nonlocal in space and time; 
for such boundary conditions, the notion of Kreiss--Lopatnski\u{\i} condition is not very clear and 
several technical steps such as localization arguments to reduce to the case where the domain $\Omega$ is a half-space are not valid. 
A second reason is that the boundary conditions are fully nonlinear. 
A third reason is that the boundary matrix does not satisfy the constant rank assumption, 
and a fourth one is that the boundary conditions are not maximally dissipative. 

We are however able to transform the equations into a new system that satisfies the constant rank assumption, 
for which we exhibit a generalized vorticity that allows us to work with standard Sobolev spaces instead of conormal ones. 
We also exhibit a new kind of weak dissipativity: 
contrary to the strict or maximal dissipativity conditions, this new form of dissipativity holds only in integral form in space and time, 
but we show that it is sufficient to avoid derivative losses. 
The specific structure of the equations also allows us to quasilinearize the equation by time differentiation only, 
that is, we do not need for spatial differentiation, 
and a priori estimates allow us to obtain a well-posedness result at the critical quasilinear regularity $H^3$.

\subsection{Organization of this article}
Our first goal is to derive the wave-structure interaction problem \eqref{introSW2}--\eqref{introSW4} 
and to further reduce it to an IBVP in the exterior domain $\cE$. 
This is done in Section \ref{sectModelling}. 
The first thing to do is to derive the last two matching conditions in \eqref{introSW4}: 
we show that conservation of the total energy yields the continuity of the Bernoulli pressure across $\mathit{\Gamma}$ 
and then explain why the continuity of the tangential velocity is a natural condition to be imposed if we want that a flow preserves irrotationality. 
By solving the equations in the interior domain, we are then able to reduce the coupled compressible-incompressible system 
\eqref{introSW2}--\eqref{introSW4} into an IBVP for \eqref{introSW2} with boundary conditions that involve the Dirichlet-to-Neumann map 
associated with the interior domain. 

We then gather in Section \ref{sectprelim} some notations and technical results that will be used throughout the article, 
such as the construction of normal-tangential coordinates, the definition of function spaces, 
and some properties of the Dirichlet-to-Neumann map. 

We turn to the study of linear hyperbolic IBVP in Section \ref{sectapriorilin}. 
In this section, we begin to work with the case where the boundary is non-characteristic for a wide class of systems. 
The system that we have to deal with is characteristic and adaptations will be done in the latter part of this section. 
One of the objectives in this section is to insist on the notion of weak dissipativity. 
We show that this notion, that is weaker than the standard strict and maximal dissipativity conditions, 
is sufficient to derive   {higher} order energy estimate without derivative loss. 
Then, we work with a case where the boundary is not necessarily non-characteristic. 
We impose some structural conditions to the equations, which allow us to introduce a generalized vorticity. 
This generalized vorticity compensates the lack of the invertibility of the boundary matrix to evaluate normal derivatives of the solution. 

We then show in Section \ref{sectAPNLWS} that this notion of weak dissipativity can be used to obtain a priori estimates 
for our nonlinear wave-interaction problem. 
Since this system has been built to conserve the total energy, we get easily a control of the $L^2$-norm of the solution. 
In order to get higher order estimates, we need to obtain $L^2$-estimates for the linear IBVP satisfied by the derivatives of the solution. 
However, the boundary conditions for this linearized system do not seem to satisfy any kind of dissipativity 
even in the weak sense of Section \ref{sectapriorilin}; moreover, the boundary is not non-characteristic for this system, 
and the constant rank assumption mentioned in Section \ref{sectrelated} is not satisfied. 
Our strategy to answer these issues is first to use the irrotationality condition to transform the problem into another equivalent one, 
but for which the constant rank assumption is satisfied. 
In this new formulation, the boundary condition on the tangential velocity is also removed. 
We then remark that if we include lower order terms in the linearized equations, 
it is possible to perform a change of unknown that greatly simplifies the boundary condition and that makes it weakly dissipative. 
The only thing left to do is therefore to check that the transformed system satisfies the structural conditions introduced in Section \ref{sectapriorilin}. 
Gathering all these results, we obtain nonlinear a priori estimates at the quasilinear threshold of regularity. 

We then turn to prove the existence of solution. 
With start in Section \ref{sectexistlin} where we show the well-posedness of a class of linear IBVP 
that has the same structure as the linearized equations associated with the wave-interaction problem, 
after the change of unknown exhibited in Section \ref{sectAPNLWS} has been performed; 
in particular, the coefficients of the space derivatives are constant. 
We first propose a regularization of the problem that is non-characteristic and with strictly dissipative boundary conditions 
so that existence follows by standard results; 
a difficulty is to construct, starting from data that are compatible for the original system, 
some approximate data that are more regular and satisfy higher order compatibility conditions for the regularized system. 
This part is made quite delicate by the fact that the original problem is characteristic. 
Once this is done, it suffices to prove uniform estimates for the regularized system, and to use compactness arguments to pass to the limit. 
We must then check the continuity in time of the limit. 
During the whole process, we manage to lower the usual regularity requirements on the coefficients of the operator, 
a fact that proves crucial later to get critical regularity in the nonlinear well-posedness result. 

We can then prove the main result of this article in Section \ref{sectmain}. 
A difficulty is that while the nonlinear shallow water equations are quasilinear, the boundary conditions are {\it fully} nonlinear. 
We must therefore reduce the problem to a system of quasilinear equations. 
Using the specific structure of the equations, 
it is possible to introduce a set of new unknowns that involve only the original unknown and its time derivative, 
and that satisfies a system with is quasilinear. 
We can use the linear existence result of Section \ref{subsectexistlin} to build a standard Picard scheme on this extended system. 
Since this procedure requires additional regularity on the data, we also need to approximate the data by a sequence of more regular data 
satisfying higher order compatibility conditions, from which we get a sequence of approximate solutions. 
We can then use the a priori estimate of Section \ref{sectAPNLWS} to pass to the limit and get a solution that has sharp regularity.

\medskip
\noindent
{\bf Acknowledgement} \\
T. I. was partially supported by JSPS KAKENHI Grant Number JP17K18742 and JP22H01133, 
and D. L. was partially supported by the ANR-18-CE40-0027 Singflows.

\section{Modeling}\label{sectModelling}
The goal of this section is to reduce the whole wave-structure interaction problem described in the introduction 
to an initial boundary value problem for the nonlinear shallow water equations in the exterior domain. 
In Section \ref{subsectSWM}, we present the set of equations in the exterior and interior domains, 
as well as a continuity condition on the normal mass flux at the interface. 
In order to close the system of equations, an additional condition is needed at the interface; 
we show in Section \ref{constotNRJ} that imposing the conservation of the total energy leads to a continuity condition 
for a quantity that we name Bernoulli pressure. 
Even with this additional condition, we show that one cannot expect uniqueness of the solution. 
A way to discard this obstruction to uniqueness is to impose irrotationality of the solution; 
as shown in Section \ref{subsectsup} this leads to impose continuity of the tangential velocity at the interface. 
We then show in Section \ref{sectanalvort} that this irrotationality assumption is reasonable 
inasmuch as it is propagated from the initial condition; 
this is not a trivial point because the vorticity equation is not non-characteristic in our setting. 
We are then in a position to reduce the wave-structure interaction problem to an initial boundary value problem in the exterior domain; 
this is done in Section \ref{subsectreduc}; 
note that the boundary conditions for this problem are not standard, 
as they are nonlocal and involve the Dirichlet-to-Neumann map associated with the interior domain.

\subsection{Shallow water model}\label{subsectSWM}
We recall that we investigate the interactions between waves of the water and a partially immersed solid body which is placed on the water surface. 
We assume that the solid body is fixed and has a vertical sidewall whose projection on the horizontal plane 
is a smooth Jordan curve $\mathit{\Gamma}$ of length $L$; see Figure \ref{fig:SideWalls}. 
We call interior region and denote by $\cI$ the interior of the domain delimited by this curve $\mathit{\Gamma}$, 
and similarly call exterior region and denote by $\cE$ the outer domain delimited by this curve. 
We also denote by $N$ the unit normal vector on $\mathit{\Gamma}$ pointing from $\cI$ to $\cE$.

We suppose also that the water surface at time $t$, the bottom of the wetted part of the solid body, and the bottom of the fluid layer are 
represented as $z=\zeta(t,x)$ for $x\in\cE$, $z=\zeta_{\rm w}(x)$ for $x\in\cI$, and $z=-H_0$ with a positive constant $H_0$, respectively. 
Therefore, the depth of the water under the water surface and under the solid body are given by 
$h(t,x)=H_0+\zeta(t,x)$ and $h_{\rm i}(x)=H_0+\zeta_{\rm w}(x)$, respectively. 
We denote by $v(t,x)$ and $v_{\rm i}(t,x)$ the vertically averaged horizontal velocity fields under the water surface and the solid body, respectively. 
Then, the nonlinear shallow water model proposed in \cite{Lannes2017} consists of the nonlinear shallow water equations in the exterior region 
\begin{equation}\label{SWEinE}
\begin{cases}
\dt\zeta + \nabla\cdot(hv) = 0 &\mbox{in}\quad (0,T)\times\cE, \\
\dt v + (v\cdot\nabla)v + \gr\nabla\zeta = 0 &\mbox{in}\quad (0,T)\times\cE,
\end{cases}
\end{equation}
the nonlinear shallow water equations in the interior region 
\begin{equation}\label{SWEinI}
\begin{cases}
\nabla\cdot(h_{\rm i}v_{\rm i}) = 0 &\mbox{in}\quad (0,T)\times\cI, \\
\dt v_{\rm i} + (v_{\rm i}\cdot\nabla)v_{\rm i} + \gr\nabla\zeta_{\rm w}
 + \frac{1}{\rho}\nabla\underline{P}_{\rm i} = 0 &\mbox{in}\quad (0,T)\times\cI,
\end{cases}
\end{equation}
and the matching condition on the curve $\mathit{\Gamma}$
\begin{equation}\label{MConM}
N\cdot(hv) = N\cdot(h_{\rm i}v_{\rm i}) \quad\mbox{on}\quad (0,T)\times\mathit{\Gamma},
\end{equation}
where $\gr$ is the gravitational constant, $\rho$ is the constant density of the water, and 
$\underline{P}_{\rm i}(t,x)$ is the pressure measured on the bottom of the solid body. 
This matching condition represents the continuity of mass flux across the interface $\mathit{\Gamma}$. 
Here, we note that this matching condition is not enough to close the equations and to ensure the local well-posedness of the initial value problem. 
In particular, in order to determine the pressure $\underline{P}_{\rm i}$ from the interior equations \eqref{SWEinI}, 
boundary data must be provided at the interface $\mathit{\Gamma}$. 
Such boundary conditions for the pressure can be obtained, as in the one-dimensional case \cite{MSTT,Bocchi2020,BLM}, 
by imposing the conservation of the total energy.

\subsection{Conservation of the total energy}\label{constotNRJ}
Since the object is fixed, the conservation of the total energy of the system, that is, the water and the solid, is 
equivalent to the conservation of the mechanical energy, that is, the sum of the kinetic and the potential energies of the fluid, 
denoted by $\mathfrak{E}_\mathrm{fluid}(t)$ at time $t$. 
The mechanical energy $\mathfrak{E}_\mathrm{fluid}(t)$ is the sum of the mechanical energy of the water below the water surface 
and below the fixed solid structure 
\[
\mathfrak{E}_\mathrm{fluid}(t) = \int_\cE\mathfrak{e}(t,\cdot) + \int_\cI\mathfrak{e}_{\rm i}(t,\cdot),
\]
where the densities of the energies $\mathfrak{e}$ and $\mathfrak{e}_{\rm i}$ are given by 
\[
\mathfrak{e} = \tfrac12\rho h\abs{v}^2 + \tfrac12\rho\gr\zeta^2, \qquad
\mathfrak{e}_{\rm i} = \tfrac12\rho h_{\rm i}\abs{v_{\rm i}}^2+\tfrac12\rho\gr\zeta_{\rm w}^2.
\]
Now, suppose that $(\zeta,v)$ and $v_{\rm i}$ satisfy the nonlinear shallow water model \eqref{SWEinE}--\eqref{MConM}. 
It follows from the nonlinear shallow water equations \eqref{SWEinE} and \eqref{SWEinI} that 
\[
\begin{cases}
 \dt\mathfrak{e} + \nabla\cdot {\mathfrak F}= 0 &\mbox{in}\quad (0,T)\times\cE, \\
 \dt\mathfrak{e}_{\rm i} + \nabla\cdot {\mathfrak F}_{\rm i}=0,
  &\mbox{in}\quad (0,T)\times\cI, 
\end{cases}
\]
where ${\mathfrak F}$ and ${\mathfrak F}_{\rm i}$ are the energy fluxes in the exterior and interior regions respectively, that is, 
\[
{\mathfrak F}=\Pi hv,\qquad
{\mathfrak F}_{\rm i}= \Pi_{\rm i}h_{\rm i}v_{\rm i},
\]
where $\Pi$ and $\Pi_{\rm i}$ denote the Bernoulli pressures in the exterior and interior domains respectively and are defined as 
\begin{equation}\label{defiPiPii}
\Pi=\rho\big(\gr\zeta+\tfrac12\abs{v}^2\big),\qquad 
\Pi_{\rm i}=\underline{P}_{\rm i}-P_{\rm atm}+\rho\big(\gr\zeta_{\rm w}+\tfrac12\abs{v_{\rm i}}^2\big),
\end{equation}
where $P_{\rm atm}$ is the atmospheric pressure, which is assumed to be constant. 
These equations together with the matching condition \eqref{MConM} yield that the conservation of the energy 
$\frac{\rm d}{{\rm d}t}\mathfrak{E}_\mathrm{fluid}(t)=0$ is equivalent to 
\[
\int_\mathit{\Gamma}  \big( \Pi-\Pi_{\rm i}\big)hN\cdot v = 0.
\]
Taking this into account 
we impose additionally the continuity of the Bernoulli   {pressures} across $\mathit{\Gamma}$ 
\begin{equation}\label{MConP}
\Pi= \Pi_{\rm i}
 \quad\mbox{on}\quad (0,T)\times\mathit{\Gamma},
\end{equation}
under which we have the conservation of the total energy.

\begin{remark}
If the pressure were hydrostatic in the fluid, then the boundary condition for $\underline{P}_{\rm i}$ should be 
\[
\underline{P}_{\rm i} = P_{\rm atm} + \rho\gr\zeta-\rho\gr\zeta_{\rm w}
 \quad\mbox{on}\quad (0,T)\times\mathit{\Gamma};
\]
this differs from \eqref{MConP} by nonlinear terms that account for the hydrodynamical pressure and are necessary to ensure total energy conservation. 
\end{remark}

\subsection{A supplemental matching condition}\label{subsectsup}
The matching conditions \eqref{MConM} and \eqref{MConP} are still underdetermined, that is, 
under these matching conditions the nonlinear shallow water equations \eqref{SWEinE} and \eqref{SWEinI} with the initial conditions 
\[
(\zeta,v)_{\vert_{t=0}}=(\zeta^{\rm in},v^{\rm in}) \quad\mbox{in}\quad \cE,
\]
cannot determine uniquely the solution $(\zeta,v,v_{\rm i})$. 
In fact, we have the following simple example of multiple solutions.

Let us consider the case where $\mathit{\Gamma}$ is the unit circle $\abs{x}=1$, the bottom of the solid body is flat so that 
$\zeta_{\rm w}$ is a negative constant, and the initial data are identically zero $(\zeta^{\rm in},v^{\rm in})=0$. 
Then, we can construct solutions $(\zeta,v,v_{\rm i})$ satisfying $(\zeta,v)=0$ and $v_{\rm i}\ne0$ as follows. 
In this case, $v_{\rm i}$ should satisfy 
\begin{equation}\label{eqsuppl1}
\begin{cases}
\nabla\cdot v_{\rm i} = 0 &\mbox{in}\quad (0,T)\times\cI, \\
\dt v_{\rm i} + (v_{\rm i}\cdot\nabla)v_{\rm i} 
 + \frac{1}{\rho}\nabla\underline{P}_{\rm i} = 0 &\mbox{in}\quad (0,T)\times\cI,
\end{cases}
\end{equation}
with the boundary conditions 
\begin{equation}\label{eqsuppl2}
\begin{cases}
 N\cdot v_{\rm i}=0 &\mbox{on}\quad (0,T)\times\partial\cI, \\
 \underline{P}_{\rm i} = P_{\rm atm} -\rho(\tfrac12\abs{v_{\rm i}}^2+\gr\zeta_{\rm w})
  &\mbox{on}\quad (0,T)\times\partial\cI.
\end{cases}
\end{equation}
It is easy to check that this problem has a one-parameter family of solutions 
\[
\begin{cases}
 v_{\rm i}(x)=ax^\perp, \\
 \underline{P}_{\rm i}(x)=P_{\rm atm}-\rho(\frac12a^2(2-\abs{x}^2)+\gr\zeta_{\rm w})
\end{cases}
\]
with a parameter $a\in\R$, where we write $v^\perp=(-v_2,v_1)^{\rm T}$ for a vector $v=(v_1,v_2)^{\rm T}$.

One of the ways to exclude these multiple solutions is to impose an initial condition on $v_{\rm i}$ 
since the equations \eqref{eqsuppl1} for $(v_{\rm i},\underline{P}_{\rm i})$ are nothing but the incompressible Euler equations. 
The initial boundary value problem associated with these equations in $\cI$ and Neumann boundary condition 
$N\cdot v_{\rm i}=0$ on $\mathit{\Gamma}$ is well posed, in particular, 
the value of $\underline{P}_{\rm i}$ is determined by the initial data up to an additive constant and cannot be imposed as the second 
condition in \eqref{eqsuppl2}. 
In other words, the initial value problem associated with \eqref{eqsuppl1} and \eqref{eqsuppl2} is overdetermined.

Another way to exclude these multiple solutions is to impose the irrotationality of $v_{\rm i}$, namely, that $\nabla^\perp\cdot v_{\rm i}=0$ in $\cI$. 
We also impose irrotationality of $v$ in the exterior domain $\cE$, and to avoid the presence of a vortex line on $\mathit{\Gamma}$, 
we impose continuity of the tangential velocity across $\mathit{\Gamma}$, that is, 
\begin{equation}\label{MConV}
N^\perp\cdot v = N^\perp\cdot v_{\rm i} \quad\mbox{on}\quad (0,T)\times\mathit{\Gamma}
\end{equation}
as a supplemental matching condition. 
Under this condition, we can exclude the above multiple solutions; the parameter $a$ must be equal to zero. 
Moreover, we show in the next section that with this matching condition, irrotationality is propagated from the initial data.

\subsection{Analysis of the vorticity}\label{sectanalvort}
We check here that the irrotationality of $v_{\rm i}$ in $\cI$ and of $v$ in $\cE$ is preserved under the time evolution 
by the nonlinear shallow water model \eqref{SWEinE}--\eqref{MConP} and \eqref{MConV}. 
To this end, suppose that $(\zeta,v,v_{\rm i})$ is a solution to the model and put 
\[
\omega=\nabla^\perp\cdot v, \qquad \omega_{\rm i}=\nabla^\perp\cdot v_{\rm i}.
\]
Then, the second equation in the nonlinear shallow water equations \eqref{SWEinE} and \eqref{SWEinI} can be written as 
\begin{equation}\label{MomEqs}
\begin{cases}
 \dt v + \omega v^\perp + \frac{1}{\rho}\nabla\Pi=0 &\mbox{in}\quad (0,T)\times\cE, \\
 \dt v_{\rm i} + \omega_{\rm i} v_{\rm i}^\perp + \frac{1}{\rho}\nabla\Pi_{\rm i}=0 &\mbox{in}\quad (0,T)\times\cI, 
\end{cases}
\end{equation}
respectively, where the Bernoulli pressures $\Pi$ and $\Pi_{\rm i}$ are defined as in \eqref{defiPiPii}. 
Applying the curl operator $\nabla^\perp\cdot$ to these equations, we have 
\[
\begin{cases}
 \dt\omega + \nabla\cdot(\omega v)=0 &\mbox{in}\quad (0,T)\times\cE, \\
 \dt\omega_{\rm i} + \nabla\cdot(\omega_{\rm i} v_{\rm i})=0 &\mbox{in}\quad (0,T)\times\cI,
\end{cases}
\]
which together with the first equations in the nonlinear shallow water equations \eqref{SWEinE} and \eqref{SWEinI} yield 
\[
\begin{cases}
 \dt\bigl(\frac{\omega}{h}\bigr) + v\cdot\nabla\bigl(\frac{\omega}{h}\bigr)=0 &\mbox{in}\quad (0,T)\times\cE, \\
 \dt\bigl(\frac{\omega_{\rm i}}{h_{\rm i}}\bigr)
  + v_{\rm i}\cdot\nabla\bigl(\frac{\omega_{\rm i}}{h_{\rm i}}\bigr)=0 &\mbox{in}\quad (0,T)\times\cI,
\end{cases}
\]
so that 
\[
\begin{cases}
 \dt\bigl(\frac{\omega^2}{h}\bigr) + \nabla\cdot\bigl(\frac{\omega^2}{h}v\bigr)=0 &\mbox{in}\quad (0,T)\times\cE, \\
 \dt\bigl(\frac{\omega_{\rm i}^2}{h_{\rm i}}\bigr)
  + \nabla\cdot\bigl(\frac{\omega_{\rm i}^2}{h_{\rm i}}v_{\rm i}\bigr)=0 &\mbox{in}\quad (0,T)\times\cI.
\end{cases}
\]
On the other hand, taking the inner products of the equations in \eqref{MomEqs} with $N^\perp$ and using the matching 
conditions \eqref{MConP} and \eqref{MConV} we have 
\[
\omega N\cdot v = \omega_{\rm i}N\cdot v_{\rm i} \quad\mbox{on}\quad (0,T)\times\mathit{\Gamma}.
\]
Therefore, we see that 
\begin{align*}
\frac{\rm d}{{\rm d}t}
 \left( \int_\cE\frac{\omega^2}{h} + \int_\cI\frac{\omega_{\rm i}^2}{h_{\rm i}} \right)
&= \int_{\mathit{\Gamma}}\left( \frac{\omega^2}{h}N\cdot v - \frac{\omega_{\rm i}^2}{h_{\rm i}}N\cdot v_{\rm i} \right) \\
&= \int_{\mathit{\Gamma}}\left( \frac{\omega}{h} + \frac{\omega_{\rm i}}{h_{\rm i}} \right)
 (\omega N\cdot v - \omega_{\rm i}N\cdot v_{\rm i}) \\
&= 0,
\end{align*}
where we used the matching condition \eqref{MConM} in the second equality. 
This shows that if $\omega(0,\cdot)=0$ and $\omega_{\rm i}(0,\cdot)=0$, then $\omega(t,\cdot)=0$ and $\omega_{\rm i}(t,\cdot)=0$ for all $t$. 
In other words, irrotationality in $\cI$ and $\cE$ is preserved under the time evolution by the nonlinear shallow water model 
\eqref{SWEinE}--\eqref{MConP} and \eqref{MConV}. 
In the following of this article, we will always consider irrotational initial data so that the following conditions hold. 
\begin{equation}\label{Irrotational}
\begin{cases}
 \nabla^\perp\cdot v = 0 &\mbox{in}\quad (0,T)\times\cE, \\
 \nabla^\perp\cdot v_{\rm i} = 0 &\mbox{in}\quad (0,T)\times\cI.
\end{cases}
\end{equation}

\subsection{Reduction to an initial boundary value problem}\label{subsectreduc}
Suppose that $(\zeta,v,v_{\rm i})$ is a solution to the nonlinear shallow water model \eqref{SWEinE}--\eqref{MConP} and \eqref{MConV} 
and satisfies the irrotationality conditions \eqref{Irrotational}. 
Since $v_{\rm i}(t,\cdot)$ is irrotational in $\cI$ which is simply connected, 
$v_{\rm i}(t,\cdot)$ has a single valued velocity potential $\phi_{\rm i}(t,\cdot)$, that is, $v_{\rm i}=\nabla\phi_{\rm i}$. 
Even though the exterior region $\cE$ is not simply connected, we have 
\begin{align*}
\int_\mathit{\Gamma}N^\perp\cdot v
 = \int_\mathit{\Gamma}N^\perp\cdot v_{\rm i}
 = \int_\cI\nabla^\perp\cdot v_{\rm i}
 =0,
\end{align*}
so that $v(t,\cdot)$ has also a single valued velocity potential $\phi(t,\cdot)$, that is, $v=\nabla\phi$. 
Therefore, by choosing appropriately additive functions of $t$ to the velocity potentials $\phi$ and $\phi_{\rm i}$ 
we can rewrite the second equations in the nonlinear shallow water equations \eqref{SWEinE} and \eqref{SWEinI}, or equivalently \eqref{MomEqs}, as 
\[
\begin{cases}
 \dt\phi+\frac{1}{\rho}\Pi=0 &\mbox{in}\quad (0,T)\times\cE, \\
 \dt\phi+\frac{1}{\rho}\Pi_{\rm i}=0 &\mbox{in}\quad (0,T)\times\cI,
\end{cases}
\]
which together with the matching condition \eqref{MConP} yield 
\[
\dt\phi=\dt\phi_{\rm i} \quad\mbox{on}\quad (0,T)\times\mathit{\Gamma}.
\]
On the other hand, it follows directly from the matching condition \eqref{MConV} that 
\[
N^\perp\cdot\nabla\phi=N^\perp\cdot\nabla\phi_{\rm i} \quad\mbox{on}\quad (0,T)\times\mathit{\Gamma}.
\]
Since $N^\perp\cdot\nabla$ is a tangential derivative on the curve $\mathit{\Gamma}$, 
these equations imply that the function $\phi-\phi_{\rm i}$ is constant on $(0,T)\times \mathit{\Gamma}$. 
Therefore, by adding an appropriate constant to $\phi$ or $\phi_{\rm i}$ we may assume without loss of generality that $\phi$ and $\phi_{\rm i}$ 
coincide on $\mathit{\Gamma}$, and we denote by $\psi_{\rm i}$ their common trace, that is, 
\[
\psi_{\rm i}:=\phi=\phi_{\rm i} \quad\mbox{on}\quad (0,T)\times\mathit{\Gamma}.
\]
Taking the trace on $\mathit{\Gamma}$ of the equation for $\phi$, and recalling that $v=\nabla\phi$, 
we see that $\psi_{\rm i}$ is determined by an ODE in time forced by the traces on $\mathit{\Gamma}$ of $v$ and $\zeta$, that is, 
\[
\dt \psi_{\rm i}+\gr \zeta+\tfrac{1}{2}\abs{v}^2=0 \quad\mbox{on}\quad (0,T)\times\mathit{\Gamma}. 
\]
The knowledge of $\psi_{\rm i}$ is enough to reconstruct the velocity $v_{\rm i}=\nabla\phi_{\rm i}$ in $\cI$. 
Indeed, the first equation in \eqref{SWEinI} can be restated as 
\begin{equation}\label{EEinI1}
\nabla\cdot(h_{\rm i}\nabla\phi_{\rm i}) = 0\quad \mbox{ in}\quad (0,T)\times\cI, 
\end{equation}
which, for all time $t\in (0,T)$, is an elliptic equation that can be solved with the boundary condition $\phi_{\rm i}=\psi_{\rm i}$ on $\mathit{\Gamma}$. 
It is also convenient to introduce the Dirichlet-to-Neumann map $\Lambda$ as 
\[
\Lambda \psi_{\rm i} := \bigl( N\cdot(h_{\rm i}\nabla\phi_{\rm i}) \bigr)_{\vert_\mathit{\Gamma}};
\]
see Definition \ref{defDN} below for a precise definition of the map $\Lambda$. 
The matching conditions \eqref{MConM} and \eqref{MConV} can therefore be understood as non-homogeneous boundary conditions 
on the velocity $v$ in the exterior region that are expressed in terms of $\psi_{\rm i}$, that is, 
\[
\begin{cases}
 N\cdot(hv) = \Lambda \psi_{\rm i} &\mbox{on}\quad (0,T)\times\mathit{\Gamma}, \\
 N^\perp\cdot v = \dtan\psi_{\rm i} &\mbox{on}\quad (0,T)\times\mathit{\Gamma},
\end{cases}
\]
where $\dtan=N^\perp\cdot \nabla$ is a tangential derivative on $\mathit{\Gamma}$; see Section \ref{sectnoprmtang} below.

To summarize the above considerations, the nonlinear wave-structure interaction problem formed by \eqref{SWEinE}--\eqref{MConP} and \eqref{MConV} 
reduces to the nonlinear shallow water equations in $\cE$, 
\begin{equation}\label{PB1}
\begin{cases}
 \dt \zeta + \nabla\cdot (hv) = 0 &\mbox{in}\quad (0,T)\times\cE, \\
 \dt v + v\cdot \nabla v + \gr\nabla\zeta = 0 &\mbox{in}\quad (0,T)\times\cE
\end{cases}
\end{equation}
with irrotationality condition
\begin{equation}\label{PBirrot}
\nabla^\perp\cdot v=0 \quad\mbox{in}\quad (0,T)\times\cE,
\end{equation}
and boundary conditions 
\begin{equation}\label{PB2}
\begin{cases}
 N\cdot(hv) = \Lambda \psi_{\rm i} &\mbox{on}\quad (0,T)\times\mathit{\Gamma}, \\
 N^\perp\cdot v = \dtan\psi_{\rm i} &\mbox{on}\quad (0,T)\times\mathit{\Gamma},
\end{cases}
\end{equation}
where $\psi_{\rm i}$ is found by solving the forced ODE on $\mathit{\Gamma}$ 
\begin{equation}\label{PB3}
\dt\psi_{\rm i} = - \gr \zeta-\tfrac{1}{2}\abs{v}^2  \quad\mbox{on}\quad (0,T)\times\mathit{\Gamma}.
\end{equation}

\begin{remark}
We note that in this formulation of the nonlinear shallow water model the velocity $v_{\rm i}$ 
and the pressure $\underline{P}_{\rm i}$ have been eliminated. 
However, as explained above $v_{\rm i}=\nabla\phi_{\rm i}$ can be deduced from $\psi_{\rm i}$ by solving an elliptic boundary value problem 
for $\phi_{\rm i}$, while the pressure can be recovered by 
\[
\underline{P}_{\rm i}
 = P_{\rm atm} - \rho\bigl(\dt\phi_{\rm i} + \tfrac12\abs{\nabla\phi_{\rm i}}^2 + \gr\zeta_{\rm w}\bigr). 
\]
\end{remark}

\begin{remark}
In the one-dimensional case considered in \cite{Lannes2017,MSTT,BHV}, the exterior domain consists of two half lines, 
$\cE=(-\infty,\ell)\cup(\ell,+\infty)$ for some $\ell>0$, 
and the wave-structure interaction problem reduces to a transmission problem between the two connected components of $\cE$ 
\begin{equation}\label{PB1-1d}
\begin{cases}
 \dt \zeta+\dx (h v)=0 &\mbox{ in}\quad (0,T)\times ((-\infty,\ell)\cup(\ell,\infty)), \\
 \dt v+v\dx v +\gr\dx\zeta=0 &\mbox{ in}\quad (0,T)\times ((-\infty,\ell)\cup(\ell,\infty))
\end{cases}
\end{equation}
with boundary conditions
\begin{equation}\label{PB2-1d}
(h v)_{\vert_{x=\pm\ell}} = q_{\rm i} \quad\mbox{on}\quad (0,T)\times\{x=\pm\ell\},
\end{equation}
where $q_{\rm i}=q_{\rm i}(t)$ is found by solving the forced ODE 
\begin{equation}\label{PB3-1d}
\alpha\frac{\rm d}{{\rm d}t}q_{\rm i}=- \jump{\gr\zeta+\tfrac{1}{2}{v}^2}  \quad\mbox{in}\quad (0,T),
\end{equation}
with the notation $\jump{f}=f(\ell)-f(-\ell)$, and with $\alpha=\int_{-\ell}^\ell \frac{1}{h_{\rm i}}$. 
In order to see that \eqref{PB1}--\eqref{PB3} is a natural generalization of \eqref{PB1-1d}--\eqref{PB3-1d}, 
we can rewrite the boundary conditions \eqref{PB2-1d}--\eqref{PB3-1d} as 
\[
\pm(h v)_{\vert_{x=\pm\ell}} = \pm\frac{1}{\alpha}(\psi_{+}-\psi_-)
\quad \mbox{ with }\quad
\frac{\rm d}{{\rm d}t}\psi_\pm=-(\gr \zeta+\tfrac{1}{2}v^2)_{\vert_{x=\pm\ell}}  \quad\mbox{in}\quad (0,T).
\]
The local well-posedness of the initial value problem to this system is a consequence of the general results for 
$1d$ nonlinear hyperbolic initial boundary value problems of \cite{IguchiLannes}. 
\end{remark}

\section{Notations and preliminary results}\label{sectprelim}
We introduce in Section \ref{sectnota} some notations that are used throughout this article. 
In Section \ref{sectnoprmtang}, we construct normal-tangential coordinates in the neighborhood of $\mathit{\Gamma}$ 
and then define in Section \ref{sectfunctionspace} the functional spaces that we shall need. 
Finally, we introduce in Section \ref{sectDN} the Dirichlet-to-Neumann map and related objects, and prove some of their most important properties.

\subsection{General notations} \label{sectnota}
\begin{itemize}[leftmargin=\parindent]
\item[--]
We denote generically by $C$ a constant of no importance, whose value may differ from one line to another. 
\item[--]
We write $f\lesssim g$ if $f\leq C g$. 
\item[--]
For a two-dimensional vector $v=(v_1,v_2)^{\rm T}$, we write $v^\perp=(-v_2,v_1)^{\rm T}$. 
\item[--]
We denote by $x=(x_1,x_2)$ the horizontal coordinates and by $\partial_1$ and $\partial_2$ partial derivation with respect to $x_1$ and $x_2$. 
\item[--]
We denote by ${\bf e_1}$ and ${\bf e}_2$ the unit vectors of $(0x_1)$ and $(0x_2)$ respectively. 
\item[--]
If we say that a quantity depends on $\boldsymbol{\partial} u$, this means that it depends on $\dt u$, $\partial_1 u$, and $\partial_2 u$. 
\item[--]
We use Einstein's convention on repeated indices, so that $a_jb_j=a_1b_1+a_2b_2$. 
\item[--]
If $u=(u_1,u_2,u_3)^{\rm T}\in \R^3$, we write $u^{\rm I}=u_1$ and $u^{\rm II}=(u_2,u_3)^{\rm T}$.
\end{itemize}

\subsection{Normal and tangential coordinates}\label{sectnoprmtang}
We assume throughout this article that $\mathit{\Gamma}$, the projection of the contact line on the horizontal plane, 
is a positively oriented Jordan curve of $C^\infty$-class. 
To fix the ideas, we suppose that $\mathit{\Gamma}$ is parametrized by the arc length $s$ as 
$x=\bm{\gamma}(s)=({\gamma}_1(s),{\gamma}_2(s))^{\rm T}$ for $0\leq s<L$. 
As usual, we can regard $x=\bm{\gamma}(s)$ as a function of $s\in\mathbb{T}_L\simeq\R/L$. 
Then, it holds that $\abs{\bm{\gamma}'(s)} \equiv 1$ and that the unit normal vector $\bm{n}(s)$ at $\bm{\gamma}(s)$ 
to the curve $\mathit{\Gamma}$ pointing towards the exterior is given by 
\[
\bm{n}(s) = -\bm{\gamma}'(s)^\perp = ({\gamma}_2'(s),-{\gamma}_1'(s))^{\rm T}. 
\]
We also denote by ${\cI}$ the region enclosed by $\mathit{\Gamma}$, and by ${\cE}$ the outer region.

Near the boundary $\mathit{\Gamma}$ of the exterior and interior domains $\cE$ and $\cI$, 
it is convenient to work with normal-tangential coordinates rather than the cartesian ones. 
Recalling that $\mathit{\Gamma}$ is parametrized by its arc length $s$ through an $L$-periodic function $\bm{\gamma}$, 
we have that $\bm{\gamma}''(s)=\kappa(s)\bm{\gamma}'(s)^\perp=-\kappa(s)\bm{n}(s)$, where $\kappa(s)$ 
is the scalar curvature of the curve $\mathit{\Gamma}$ at $\bm{\gamma}(s)$. 
For $r_0>0$ we define a map $\theta: (-r_0,r_0)\times\mathbb{T}_L\ni(r,s) \mapsto \theta(r,s)\in\R^2$ by 
\begin{equation}\label{diffeo-theta}
\theta(r,s) = \bm{\gamma}(s) + r\bm{n}(s) = 
\begin{pmatrix}
 {\gamma}_1(s) + r{\gamma}_2'(s) \\
 {\gamma}_2(s) - r {\gamma}_1'(s)
\end{pmatrix}.
\end{equation}
Then, it holds that 
\[
\det\left( \frac{\partial\theta(r,s)}{\partial(r,s)} \right) = 1+r\kappa(s).
\]
Therefore, if we take $r_0>0$ so small that $r_0 \abs{\kappa}_{L^\infty(\mathbb{T}_L)}<1$, 
and define a tubular neighborhood $U_{\mathit{\Gamma}}$ of $\mathit{\Gamma}$ by 
\begin{equation}\label{defUGamma}
U_{\mathit{\Gamma}} = \{ x=\theta(r,s)\in\R^2 \,|\, (r,s)\in (-r_0,r_0)\times\mathbb{T}_L \},
\end{equation}
then the map 
$\theta: (-r_0,r_0)\times\mathbb{T}_L \to U_{\mathit{\Gamma}}$ is a $C^\infty$-diffeomorphism. 
Each point of $U_{\mathit{\Gamma}}$ can therefore be uniquely determined by its normal-tangential coordinates $(r,s)$. 
We will also repeatedly use a cutoff function $\chi_{\rm b}$ with support in $U_{\mathit{\Gamma}}$, equal to $1$ 
in a neighborhood of $\mathit{\Gamma}$ and depending only on the normal variable; such a function can be defined on $U_{\mathit{\Gamma}}$ as 
\begin{equation}\label{defchib}
(\chi_{\rm b}\circ \theta)(r,s)=\chi(r),
\end{equation}
where $\chi$ is a smooth cutoff function with support in $(-r_0,r_0)$ and equal to $1$ in the neighborhood of the origin; 
it is extended by $0$ in $\R^2\backslash U_{\mathit{\Gamma}}$.

Associated with these normal-tangential coordinates, 
we can define normal and tangential derivatives of functions $f$ defined in the neighborhood $\mathit{\Gamma}$ by 
\[
(\dnor f)\circ\theta = \partial_r(f\circ\theta), \quad
(\dtan f)\circ\theta = \partial_s(f\circ\theta). 
\]
Then, we have 
\[
\dnor = {N}\cdot\nabla,\quad
\dtan = {T}\cdot\nabla,
 \]
where ${T}\circ\theta = (1+r\kappa(s))\bm{\gamma}'(s)$ and ${N}\circ\theta = \bm{n}(s) = (-\bm{\gamma}'(s))^\perp$. 
We note that $\dnor$ and $\dtan$ commute with each other. 
Conversely, we have a decomposition 
\begin{equation}\label{diff deco}
\nabla = {N}\dnor + \frac{1}{\abs{T}^2}{T}\dtan.
\end{equation}
For a vector field $\bm{f}$ defined in the neighborhood $\mathit{\Gamma}$ we see that 
\begin{align*}
(\nabla\cdot\bm{f})\circ\theta
&= \tilde{J}^{-1}\nabla_{r,s}\cdot (\tilde{J}(\partial\theta)^{-1}\bm{f}\circ\theta) \\
&= \tilde{J}^{-1}\{ \partial_r(\tilde{J}(N\cdot\bm{f})\circ\theta)) + \partial_s(\tilde{J}^{-1}(T\cdot\bm{f})\circ\theta) \},
\end{align*}
where $\partial\theta$ stands for the Jacobian matrix of the map $\theta$ and $\tilde{J}=\det(\partial\theta)=1+r\kappa(s)$. 
This implies that 
\begin{equation}\label{diff exp1}
\nabla\cdot\bm{f}
= J^{-1}\{ \dnor(JN\cdot\bm{f}) + \dtan(J^{-1}T\cdot\bm{f})\},
\end{equation}
where $J\circ\theta=\tilde{J}$. 
Moreover, putting $\bm{f}=J^{-1}Nf$ and $J^{-1}Tf$ in this formula and noting $\abs{T}=J$ we have also 
\begin{equation}\label{diff exp2}
\begin{cases}
 J^{-1}\dnor f = \nabla\cdot(J^{-1}Nf), \\
 J^{-1}\dtan f = \nabla\cdot(J^{-1}Tf).
\end{cases}
\end{equation}

\subsection{Function spaces}\label{sectfunctionspace}
We introduce function spaces for functions of space, of time, and of space and time.

\subsubsection{Norms in space}
We denote by $L^p(\cE)$, $L^p(\cI)$, and $L^p(\mathit{\Gamma})$ with $1\leq p\leq \infty$, 
the standard Lebesgue spaces on $\cE$, $\cI$, and $\mathit{\Gamma}$, respectively. 
Double bars are used to denote norms on the two-dimensional domains $\cE$ or $\cI$ and simple bars on the curve $\mathit{\Gamma}$, for instance, 
\[
\Abs{u}_{L^2(\Omega)}=\left(\int_\Omega \abs{u(x)}^2 {\rm d}x \right)^{1/2}, \qquad
\abs{g}_{L^2(\mathit{\Gamma})}=\left(\int_\mathit{\Gamma} \abs{f(x)}^2 {\rm d} \mathit{\Gamma}_x \right)^{1/2}
\]
with $\Omega=\cE$ or $\cI$. 
When no confusion is possible on the domain of integration, we write $\Abs{u}_{L^p}$ or even $\Abs{u}_p$ instead of $\Abs{u}_{L^p(\Omega)}$ 
and similarly $\abs{g}_{L^p}$ or $\abs{g}_p$ instead of $\abs{g}_{L^p(\mathit{\Gamma})}$.

We define $L^p$ Sobolev spaces of order $m\in \N$ as 
\[
W^{m,p}(\Omega)=\{ u\in L^p(\Omega) \,|\, \partial_1^j\partial_2^k u \in L^p(\Omega), 0\leq j+k\leq m\} \qquad (\Omega=\cE \mbox{ or }\cI)
\]
endowed with its canonical norm. 
We put $H^m(\Omega)=W^{m,2}(\Omega)$. 
Note that in $U_\mathit{\Gamma}$, Sobolev norms can be defined equivalently in terms of cartesian and normal-tangential derivatives. 
In the proposition below, we write ${\mathfrak d}^\alpha=\dtan^{\alpha_1}\dnor^{\alpha_2}$ for all $\alpha=(\alpha_1,\alpha_2)\in \N^2$.

\begin{proposition}\label{propequivnorm}
For all $m\in \N$, there exists a constant $c_m>0$ depending only on $m$ and $\mathit{\Gamma}$ such that for all regular function $f$ 
supported in $U_\mathit{\Gamma}$, we have 
\[
\frac{1}{c_m} \Abs{f}_{H^m(U_\mathit{\Gamma})} \leq \sum_{\alpha\in \N^2, \abs{\alpha}\leq m} \Abs{{\mathfrak d}^\alpha f}_{L^2(U_\mathit{\Gamma})}
\leq {c_m} \Abs{f}_{H^m(U_\mathit{\Gamma})}. 
\]
\end{proposition}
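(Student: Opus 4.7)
The plan is to pull the problem back to the flat product domain via the diffeomorphism \(\theta\) of \eqref{diffeo-theta}. Set \(\tilde f = f\circ\theta\), defined on \((-r_0,r_0)\times\mathbb{T}_L\). By the very definitions of \(\dnor\) and \(\dtan\) (and the commutativity noted just after them), one has the chain-rule identity \(({\mathfrak d}^\alpha f)\circ\theta = \partial_s^{\alpha_1}\partial_r^{\alpha_2}\tilde f\) for every \(\alpha=(\alpha_1,\alpha_2)\in\N^2\). Since the Jacobian \(J(r,s)=1+r\kappa(s)\) lies between two positive constants on \((-r_0,r_0)\times\mathbb{T}_L\) (by the choice \(r_0\abs{\kappa}_{L^\infty}<1\)), the change-of-variables formula yields
\[
\Abs{{\mathfrak d}^\alpha f}_{L^2(U_\mathit{\Gamma})}^2 = \int_{-r_0}^{r_0}\!\!\int_{\mathbb{T}_L}\abs{\partial_s^{\alpha_1}\partial_r^{\alpha_2}\tilde f(r,s)}^2 J(r,s)\,{\rm d}s\,{\rm d}r,
\]
so that \(\Abs{{\mathfrak d}^\alpha f}_{L^2(U_\mathit{\Gamma})}\) and \(\Abs{\partial_s^{\alpha_1}\partial_r^{\alpha_2}\tilde f}_{L^2((-r_0,r_0)\times\mathbb{T}_L)}\) are comparable, with constants depending only on \(\mathit{\Gamma}\).

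The remaining step is the analogous equivalence between \(\Abs{f}_{H^m(U_\mathit{\Gamma})}\) and the full \(H^m\) norm of \(\tilde f\) on the product domain. Since \(\theta\) is a \(C^\infty\)-diffeomorphism whose derivatives are all bounded (by the smoothness of \(\bm{\gamma}\) and the compactness of \(\mathbb{T}_L\)), the chain rule expresses any cartesian derivative \(\partial_1^{\beta_1}\partial_2^{\beta_2}f\) evaluated at \(\theta(r,s)\) as a linear combination, with smooth bounded coefficients, of the mixed derivatives \(\partial_r^j\partial_s^k\tilde f(r,s)\) with \(j+k\leq \beta_1+\beta_2\). Conversely, iterating the decomposition \eqref{diff deco} (the coefficient \(\abs{T}^{-2}=J^{-2}\) being smooth and bounded under our hypothesis on \(r_0\)) re-expresses every \(\partial_r^j\partial_s^k\tilde f\) as a smooth bounded combination of cartesian derivatives of \(f\) of order \(\leq j+k\). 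Consequently,
\[
\Abs{f}_{H^m(U_\mathit{\Gamma})} \lesssim \sum_{j+k\leq m}\Abs{\partial_r^j\partial_s^k\tilde f}_{L^2((-r_0,r_0)\times\mathbb{T}_L)} \lesssim \Abs{f}_{H^m(U_\mathit{\Gamma})},
\]
with constants depending only on \(m\) and \(\mathit{\Gamma}\). Combining with the first step gives the announced double inequality.

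The support hypothesis plays a precise role here: it guarantees that \(\tilde f\) is compactly supported in \((-r_0,r_0)\) in the normal variable (and is intrinsically periodic in the tangential one), so both \(H^m\) spaces above are genuinely equivalent via the pullback, with no boundary contribution from \(r=\pm r_0\). The main (and only) obstacle is combinatorial bookkeeping for the chain rule: after iterating, the coefficients that appear are polynomials in the derivatives \(\bm{\gamma}^{(j)}\) (\(j\leq m+1\)) and in \(J^{-1}\), hence uniformly bounded on \([-r_0,r_0]\times\mathbb{T}_L\). This is what ensures that \(c_m\) depends only on \(m\) and on \(\mathit{\Gamma}\); the rest of the argument is classical once the problem is transported to the product domain.
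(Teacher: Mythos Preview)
The paper states this proposition without proof, treating it as a standard fact about Sobolev norms under a smooth change of coordinates. Your argument is correct and is exactly the natural one: pull back by the diffeomorphism $\theta$, use that $\dnor,\dtan$ become $\partial_r,\partial_s$ after composition with $\theta$, and exploit the uniform bounds on the Jacobian $1+r\kappa(s)$ and on all derivatives of $\theta,\theta^{-1}$ to transfer the norm equivalence.
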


We can define also fractional Sobolev spaces of order $s\in \R$ on $\mathit{\Gamma}$ by using the fact that the curve $\mathit{\Gamma}$ is 
parameterized by the arc length $s$ as $x=\bm{\gamma}(s)$, which induces a natural isometry between $L^2({\mathbb T}_L)$ and $L^2(\mathit{\Gamma})$. 
More precisely, we define 
\[
H^s(\mathit{\Gamma}) = \{ g\in \mathscr{D}'(\mathit{\Gamma}) \,|\, g\circ\bm{\gamma} \in H^s({\mathbb T}_L)\}
\]
endowed with its canonical norm, where $H^s({\mathbb T}_L)$ is classically defined through Fourier series.

\subsubsection{Weighted norms in time}
We say that $f\in L^p_{\lambda,t}=L^p_{\lambda}(0,t)$ with $1\leq p<\infty$ and $\lambda>0$, if $e^{-\lambda t'}f\in L^p(0,t)$ with canonical norm 
\[
\abs{f}_{L^p_{\lambda,t}}=\left( \int_0^t e^{- p \lambda t'}\abs{f(t')}^p{\rm d}t' \right)^{1/p}. 
\]
We also denote by $I_{\lambda,t}(\cdot)$ the norm of $L^\infty_\lambda(0,t)\cap L^2_\lambda(0,t)$ defined as 
\begin{equation}\label{defIlambda}
I_{\lambda,t}(f) = \sup_{t'\in[0,t]}e^{-\lambda t'}\abs{f(t')} + \sqrt{\lambda}\abs{f}_{L^2_{\lambda,t}},
\end{equation}
which is not the canonical norm of $L^\infty_\lambda(0,t)\cap L^2_\lambda(0,t)$ because of the factor $\sqrt{\lambda}$ in front of the second term. 
The reason why we introduce it is because this quantity arises naturally in hyperbolic energy estimates; see Section \ref{sectapriorilin} below. 
We denote by $S_{\lambda,t}^*(\cdot)$ its dual norm for the $L^2_\lambda(0,t)$ scalar product, that is, 
\begin{equation}\label{defSstar}
S_{\lambda,t}^*(f) = \sup_{\varphi} \biggl\{ \biggl| \int_0^t e^{-2\lambda t'}f(t')\varphi(t'){\rm d}t' \biggr|
 \,;\, I_{\lambda,t}(\varphi) \leq 1 \biggr\}.
\end{equation}
In particular, for all $\varphi\in L^\infty_\lambda(0,t)\cap L^2_\lambda(0,t)$, we have 
\[
\int_0^t e^{-2\lambda t'}f(t')\varphi(t'){\rm d}t'\leq I_{\lambda,t}(\varphi)
S_{\lambda,t}^*(f).
\]
From this definition, we get directly the following upper bounds 
\begin{equation}\label{propSstar}
S^*_{\lambda,t}(f)\leq \abs{f}_{L^1_{\lambda,t}}
\quad\mbox{ and }\quad
S^*_{\lambda,t}(f)\leq \frac{1}{\sqrt{\lambda}}\abs{f}_{L^2_{\lambda,t}}. 
\end{equation}

\subsubsection{Mixed norms}
For all $m\in \N$, we denote by $C^m_{\rm b}(\overline{(0,T)\times\Omega})$ with $\Omega=\cI$ or $\cE$ the space of functions 
that are continuous and bounded in $\overline{(0,T)\times\Omega}$, as well as their space and time derivatives of order up to $m$.

Regularity for solutions to hyperbolic systems on $(0,T)\times \cE$ is typically measured in the spaces $\mathbb{W}^m_T$ with $m\in \N$ defined as 
\[
\mathbb{W}^m_T = \bigcap_{j=0}^mC^j([0,T];H^{m-j}(\cE))
\]
endowed with its canonical norm 
\begin{equation}\label{defopnorm}
\Abs{u}_{\mathbb{W}^m_T}:=\sup_{t\in[0,T]}\opnorm{u(t)}_m \quad\mbox{and}\quad \opnorm{u(t)}_m:=\sum_{j=0}^m \Abs{\dt^j u (t)}_{H^{m-j}(\cE)}. 
\end{equation}
We also introduce similar quantities, but with only non-normal derivatives involved. 
For $\alpha=(\alpha_0,\alpha_1)\in \N^2$, we write 
\[
\dpar^\alpha = \dt^{\alpha_0}\dtan^{\alpha_1}
\]
and define, for functions $u$ supported in the tubular neighborhood $U_\mathit{\Gamma}$ where the normal-tangential coordinates are well defined, 
\begin{equation}\label{defopnormpar}
\opnorm{u(t)}_{m,\parallel}:=\sum_{\alpha\in \N^2, \abs{\alpha}\leq m} \Abs{\dpar^\alpha u(t)}_{L^2(\cE)}. 
\end{equation}
For a technical reason we use an $L^p$ version of the space $\mathbb{W}^m_T$ defined as 
\[
\mathbb{W}^{m,p}_T = \bigcap_{j=0}^mC^j([0,T];W^{m-j,p}(\cE))
\]
endowed with its canonical norm $\Abs{\cdot}_{\mathbb{W}^{m,p}_T}$ together with $\opnorm{\cdot}_{m,p}$.

For functions defined on $\mathit{\Gamma}$ and depending also on time, 
we introduce the space $L^p_{\lambda,t}H^{s}(\mathit{\Gamma})$ defined for $1\leq p\leq \infty$ and $s,\lambda\in \R$ as 
\begin{equation}\label{defHm0}
L^p_{\lambda,t}H^s(\mathit{\Gamma})=L^p_\lambda(0,t;H^s(\mathit{\Gamma}));
\end{equation}
as well $L^p_{\lambda,t}H^s_{(m)}(\mathit{\Gamma})$ with $m\in \N$ defined as 
\begin{equation}\label{defHm}
L^p_{\lambda,t}H^{s}_{(m)}(\mathit{\Gamma})
 = \{ g\in L^p(0,t;H^{s}(\mathit{\Gamma})) \,|\, \abs{g(\cdot)}_{H^{s}_{(m)}(\mathit{\Gamma})}\in L^p_\lambda(0,t)\}, 
\end{equation}
where
\begin{equation}\label{defHm2}
 \abs{g(t)}_{H^{s}_{(m)}(\mathit{\Gamma})} = \sum_{\alpha\in \N^2, \abs{\alpha}\leq m} \abs{\dpar^\alpha g(t)}_{H^{s}(\mathit{\Gamma})},
\end{equation}
and endowed with their canonical norms. 
We introduce also the space $\mathbb{W}_{{\rm b},T}^{m+1/2}$, to which the solution $\psi_{\rm i}$ to the nonlinear wave-structure interaction model 
\eqref{PB1}--\eqref{PB3} belongs, as 
\[
\mathbb{W}^{m+1/2}_{{\rm b},T} = \bigcap_{j=0}^mC^j([0,T];H^{m-j+1/2}(\Gamma)).
\]

\subsection{Extension operator and Dirichlet-to-Neumann map}\label{sectDN}
Functions defined on the curve $\mathit{\Gamma}$ can be extended as functions defined in the exterior domain $\cE$ or in the interior domain $\cI$. 
For the extension to the exterior domain, we will repeatedly use the fact that there is a regularizing extension.

\begin{proposition}\label{propextext}
Let $\theta$ be the diffeomorphism constructed in \eqref{diffeo-theta}. 
For all $m\in \N$ and $\psi\in H^{m-1/2}(\mathit{\Gamma})$, there exists an extension denoted $\psi^{\rm ext}$ of $\psi$, 
compactly supported in $\overline{\cE}\cap U_\mathit{\Gamma}$ such that $\psi^{\rm ext}\circ \theta \in C([0,r_0);H^{m-1/2}({\mathbb T}_L))$ 
and $\psi^{\rm ext} \in H^{m}(\cE)$, and that  satisfies 
\[
{\psi^{\rm ext}}_{\vert_\mathit{\Gamma}}=\psi \quad \mbox{ and }\quad \Abs{\psi^{\rm ext}}_{H^{m}(\cE)}\leq C \abs{\psi}_{H^{m-1/2}(\mathit{\Gamma})}
\]
for some constant $C$ that depends only on $\mathit{\Gamma}$ and $m$. 
\end{proposition}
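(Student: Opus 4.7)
The plan is to construct $\psi^{\rm ext}$ explicitly in the normal-tangential coordinates $(r,s)$ of Section~\ref{sectnoprmtang}, in which $U_\mathit{\Gamma}\cap\overline{\cE}$ corresponds to $[0,r_0)\times\mathbb{T}_L$ and $\mathit{\Gamma}$ to $\{r=0\}$ (recall that $N$ points from $\cI$ into $\cE$, so $r\ge 0$ is the exterior side). First I would pull $\psi$ back to the circle by setting $\check\psi:=\psi\circ\bm{\gamma}\in H^{m-1/2}(\mathbb{T}_L)$ and expanding it in Fourier series,
\[
\check\psi(s)=\sum_{k\in\mathbb{Z}}c_k\,e^{2\pi iks/L},\qquad \abs{\check\psi}^2_{H^{m-1/2}(\mathbb{T}_L)}\sim \sum_{k\in\mathbb{Z}}\langle k\rangle^{2m-1}\abs{c_k}^2,
\]
with $\langle k\rangle:=(1+(2\pi k/L)^2)^{1/2}$, which is the definition of $H^{m-1/2}(\mathbb{T}_L)$ adopted in Section~\ref{sectfunctionspace}.

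The heart of the construction is a regularizing lift on the strip,
\[
\widetilde\psi(r,s)\;:=\;\chi(r)\sum_{k\in\mathbb{Z}}c_k\,e^{-r\langle k\rangle}\,e^{2\pi iks/L},\qquad (r,s)\in[0,r_0)\times\mathbb{T}_L,
\]
where $\chi$ is the cutoff appearing in \eqref{defchib}. The frequency-dependent Poisson-type damping $e^{-r\langle k\rangle}$ is the crucial point: each normal derivative costs a factor $\langle k\rangle$, but the $L^2_r$ integration gives back half a derivative thanks to the identity
\[
\int_0^{+\infty}\langle k\rangle^{2j}\,e^{-2r\langle k\rangle}\,dr\;=\;\tfrac{1}{2}\langle k\rangle^{2j-1}.
\]
Combining this with Plancherel in $s$ shows that for every $\alpha=(\alpha_1,\alpha_2)\in\N^2$ with $\abs{\alpha}\le m$,
\[
\int_0^{r_0}\bigl\|\partial_s^{\alpha_1}\partial_r^{\alpha_2}\widetilde\psi(r,\cdot)\bigr\|_{L^2(\mathbb{T}_L)}^2\,dr\;\lesssim\;\sum_{k\in\mathbb{Z}}\langle k\rangle^{2\abs{\alpha}-1}\abs{c_k}^2\;\lesssim\;\abs{\check\psi}^2_{H^{m-1/2}(\mathbb{T}_L)},
\]
the contributions from derivatives falling on $\chi$ being harmless since $\chi$ is smooth and compactly supported in $(-r_0,r_0)$. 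The trace identity $\widetilde\psi(0,\cdot)=\check\psi$ follows by construction, while the required continuity $r\mapsto\widetilde\psi(r,\cdot)\in H^{m-1/2}(\mathbb{T}_L)$ on $[0,r_0)$ is obtained by dominated convergence applied to the $\langle k\rangle^{2m-1}$-weighted $\ell^2$ sum, since $|e^{-r\langle k\rangle}-e^{-r'\langle k\rangle}|\le 1$ and tends to zero for each $k$.

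To finish I would set $\psi^{\rm ext}:=\widetilde\psi\circ\theta^{-1}$ on $U_\mathit{\Gamma}\cap\overline{\cE}$ and extend by zero elsewhere. Since $\theta$ is a $C^\infty$-diffeomorphism on $(-r_0,r_0)\times\mathbb{T}_L$ and $\widetilde\psi$ is supported in $r<r_0$, the function $\psi^{\rm ext}$ is compactly supported in $\overline{\cE}\cap U_\mathit{\Gamma}$, and the equivalence of cartesian and normal-tangential Sobolev norms given by Proposition~\ref{propequivnorm} turns the strip estimate into $\Abs{\psi^{\rm ext}}_{H^m(\cE)}\le C\abs{\psi}_{H^{m-1/2}(\mathit{\Gamma})}$. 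The main conceptual point—and the only one that is not purely mechanical—is the sharp $1/2$-derivative gain: a naive product extension of the form $\chi(r)\check\psi(s)$ would only place $\psi^{\rm ext}$ in $H^{m-1/2}(\cE)$, and it is precisely the frequency-adapted exponential profile $e^{-r\langle k\rangle}$, acting as a harmonic-type extension in $(r,s)$, that recovers the missing half-derivative.
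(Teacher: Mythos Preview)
Your proof is correct and follows essentially the same route as the paper's. The paper writes the extension compactly as
\[
\psi^{\rm ext}\circ\theta(r,\cdot)=\chi(r)\,\chi(r\langle D\rangle)\bigl(\psi\circ\theta(0,\cdot)\bigr),
\]
i.e.\ a Fourier multiplier $m(r\langle k\rangle)$ with $m=\chi$ a smooth cutoff, while you take $m(x)=e^{-x}$; in both cases the trace condition comes from $m(0)=1$ and the half-derivative gain comes from the scaling $\int_0^\infty |m^{(j)}(r\langle k\rangle)|^2\,dr=\langle k\rangle^{-1}\int_0^\infty|m^{(j)}|^2$, so the arguments are interchangeable.
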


\begin{proof}
Denoting $\langle{D}\rangle=(1-\partial_s^2)^{1/2}$, such an extension satisfying the properties of the proposition is given by,
\[
\psi^{\rm ext}\circ\theta(r,\cdot)=\chi(r)\chi(r\langle D\rangle)\big( \psi\circ \theta(0,\cdot )\big),
\]
where $r\in [0,r_0]$ and $\chi$ denote a smooth function supported in $[0,r_0)$ satisfying $\chi(0)=1$.
\end{proof}

We can also extend functions defined on $\mathit{\Gamma}$ to functions defined on the closure $\overline{\cI}$ of the interior domain. 
A natural extension of a function $\psi$ defined on $\mathit{\Gamma}$ is provided by a {\em harmonic} extension $\psi^{\mathfrak h}$. 
More precisely, for any $\psi\in H^{1/2}(\mathit{\Gamma})$, $\psi^\mathfrak{h}$ is defined as a unique solution in $H^1(\cI)$ 
to the elliptic boundary value problem 
\begin{equation}\label{BVPinI}
\begin{cases}
\nabla\cdot(h_{\rm i}\nabla \psi^{\mathfrak h})=0 &\mbox{in}\quad \cI, \\
{\psi^{\mathfrak h}}=\psi&\mbox{on}\quad \mathit{\Gamma}.
\end{cases}
\end{equation}
This is a natural extension in our context, because if $\phi_{\rm i}$ denotes the velocity potential associated with the average velocity 
under the object of a solution to the nonlinear wave-structure problem, 
then \eqref{EEinI1} shows that $\phi_{\rm i}$ solves the above elliptic equation in the interior domain with the boundary data $\psi=\psi_{\rm i}$. 
Note that existence and uniqueness of the solution $\psi^{\mathfrak h}$ follow from standard variational arguments under the assumption that 
$h_{\rm i}$ does not vanish. 
We therefore make the following assumption throughout this article.

\begin{assumption}\label{ass:hi}
There exists a positive constant $c_0$ such that the followings hold. 
\begin{enumerate}
\item[{\rm (i)}]
$h_{\rm i} \in C^\infty(\overline{\cI})$. 
\item[{\rm (ii)}]
$h_{\rm i}(x) \geq c_0$ for $x\in\cI$. 
\end{enumerate}
\end{assumption}

Under this assumption, for each $\psi\in H^{1/2}(\mathit{\Gamma})$ the boundary value problem \eqref{BVPinI} has a unique solution 
$\psi^\mathfrak{h} \in H^1(\cI)$. 
Moreover, $( N\cdot(h_{\rm i}\nabla \psi^{\mathfrak h}) )_{\vert_\mathit{\Gamma}}$ can be defined as a function in $H^{-1/2}(\mathit{\Gamma})$. 
Therefore, the following definition makes sense.

\begin{definition}\label{defDN}
Under Assumption \ref{ass:hi}, we define the \emph{Dirichlet-to-Neumann} map $\Lambda$ as 
\[
\Lambda: \begin{array}{lcl}
H^{1/2}(\mathit{\Gamma}) &\to& H^{-1/2}(\mathit{\Gamma}) \\
\psi & \mapsto & ( N\cdot(h_{\rm i}\nabla \psi^{\mathfrak h}) )_{\vert_\mathit{\Gamma}},
\end{array}
\]
where $\psi^\mathfrak{h}$ is a unique solution to the boundary value problem \eqref{BVPinI}. 
\end{definition}

The following proposition gathers some properties of the Dirichlet-to-Neumann map; 
we use the notation $\langle\cdot,\cdot\rangle_{H^{-1/2}\times H^{1/2}}$ for the standard 
$H^{-1/2}(\mathit{\Gamma})$--$H^{1/2}(\mathit{\Gamma})$ duality bracket.

\begin{proposition}\label{propDN}
Under Assumption \ref{ass:hi}, the following properties hold. 
\begin{enumerate}
\item[{\rm (i)}]
There exist positive constants $\underline{\mathfrak c}$ and $\underline{\mathfrak C}$ that depend only on $\mathit{\Gamma}$, $c_0$, and 
$\Abs{ h_{\rm i} }_{L^\infty(\cI)}$ such that for any $\psi\in H^{1/2}(\mathit{\Gamma})$, we have 
\[
\underline{\mathfrak c} \abs{\dtan\psi}_{H^{-1/2}}^{2}
 \leq \langle \Lambda\psi,\psi \rangle_{H^{-1/2}\times H^{1/2}}
 \leq \underline{\mathfrak C}\abs{\dtan\psi}_{H^{-1/2}}^{2}.
\]
\item[{\rm (ii)}]
For any $\psi_1,\psi_2 \in H^{1/2}(\mathit{\Gamma})$, we have 
\[
\langle \Lambda\psi_1,\psi_2 \rangle_{H^{-1/2}\times H^{1/2}} = \langle \Lambda\psi_2,\psi_1 \rangle_{H^{-1/2}\times H^{1/2}}. 
\]
\item[{\rm (iii)}]
For any $k\in \N$, $\Lambda$ maps $H^{k+1/2}(\mathit{\Gamma})$ to $H^{k-1/2}(\mathit{\Gamma})$ and we have 
\[
\abs{\Lambda \psi}_{H^{k-1/2}}\leq C \abs{\dtan \psi}_{H^{k-1/2}},
\]
where the constant $C$ depends only on $k$, $\mathit{\Gamma}$, $c_0^{-1}$, and $\Abs{ h_{\rm i} }_{W^{k,\infty}(\cI)}$. 
\item[{\rm (iv)}]
For any positive integer $k$, we have 
\[
\begin{cases}
 \abs{[\dtan^k,\Lambda]\psi}_{H^{-1/2}}\leq C_1 \abs{\dtan\psi}_{H^{k-3/2}}, \\
 \abs{[\dtan^k,\Lambda]\psi}_{H^{1/2}}\leq C_2 \abs{\dtan\psi}_{H^{k-1/2}},
\end{cases}
\]
where the constant $C_1$ depends only on $k$, $\mathit{\Gamma}$, $c_0^{-1}$, and $\Abs{ h_{\rm i} }_{W^{k,\infty}(\cI)}$, 
and the constant $C_2$ depends also on $\Abs{ h_{\rm i} }_{W^{k+1,\infty}(\cI)}$. 
\end{enumerate}
\end{proposition}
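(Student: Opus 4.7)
My plan is to reduce everything to the fundamental integration-by-parts identity
\begin{equation*}
\langle \Lambda\psi_1, \psi_2\rangle_{H^{-1/2}\times H^{1/2}} = \int_\cI h_{\rm i}\, \nabla\psi_1^{\mathfrak h}\cdot\nabla\psi_2^{\mathfrak h}\,{\rm d}x,
\end{equation*}
obtained by testing the equation \eqref{BVPinI} for $\psi_1^{\mathfrak h}$ against $\psi_2^{\mathfrak h}$; the symmetry (ii) is immediate from this expression. A useful structural observation is that $\psi\mapsto\psi+c$ leaves $\nabla\psi^{\mathfrak h}$ unchanged, so every controlling quantity for $\Lambda\psi$ can be estimated modulo constants. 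Since $\mathit{\Gamma}$ is a closed Jordan curve, $\dtan:H^{s+1/2}(\mathit{\Gamma})\to H^{s-1/2}(\mathit{\Gamma})$ has kernel equal to the constants, and a Poincar\'e-type argument gives $\inf_{c\in\R}|\psi-c|_{H^{s+1/2}}\sim |\dtan\psi|_{H^{s-1/2}}$; this is the mechanism producing the tangential-derivative norms on the right-hand sides of (i), (iii), (iv).

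For (i), the upper bound follows from the variational characterization of $\psi^{\mathfrak h}$ as the minimizer of $\int_\cI h_{\rm i}|\nabla v|^2\,{\rm d}x$ over $v\in H^1(\cI)$ with $v|_\mathit{\Gamma}=\psi$, evaluated on a lifting of $\psi-c$ built in the spirit of Proposition \ref{propextext} but adapted to $\cI$; the lower bound uses the standard trace theorem $|\psi-c|_{H^{1/2}}\lesssim\Abs{\psi^{\mathfrak h}-c}_{H^1(\cI)}$ combined with $h_{\rm i}\geq c_0$. Part (iii) then follows from classical elliptic regularity for \eqref{BVPinI} under Assumption \ref{ass:hi}, which gives $\psi^{\mathfrak h}\in H^{k+1}(\cI)$ with an $\Abs{h_{\rm i}}_{W^{k,\infty}(\cI)}$-dependent bound, combined with the normal trace theorem and the constant-invariance above.

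The genuine difficulty is (iv). I would extend $\dtan$ to a smooth vector field $X$ on $\overline{\cI}$ coinciding with $T\cdot\nabla$ in $U_\mathit{\Gamma}$ and smoothly cut off in the interior, and set $u:=\psi^{\mathfrak h}$,
\begin{equation*}
w_k := X^k u - (\dtan^k\psi)^{\mathfrak h},
\end{equation*}
which satisfies $w_k|_\mathit{\Gamma}=0$, $\nabla\cdot(h_{\rm i}\nabla w_k) = -[X^k, \nabla\cdot(h_{\rm i}\nabla\cdot)]u =: F_k$ in $\cI$, together with the key identity $[\dtan^k,\Lambda]\psi = (N\cdot h_{\rm i}\nabla w_k)|_\mathit{\Gamma}$. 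The weaker $H^{1/2}$ estimate then follows from $H^2(\cI)$ elliptic regularity for the Dirichlet problem for $w_k$, using that $[X^k,\nabla\cdot(h_{\rm i}\nabla\cdot)]$ is a differential operator of order $k+1$ bounded $H^{k+1}(\cI)\to L^2(\cI)$ with norm controlled by $\Abs{h_{\rm i}}_{W^{k+1,\infty}(\cI)}$, combined with the normal trace theorem.

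The sharper $H^{-1/2}$ estimate requires a duality argument: for arbitrary $\varphi\in H^{1/2}(\mathit{\Gamma})$, Green's formula in $\cI$ together with $\nabla\cdot(h_{\rm i}\nabla\varphi^{\mathfrak h})=0$ and $w_k|_\mathit{\Gamma}=0$ collapses the pairing to $\langle [\dtan^k,\Lambda]\psi,\varphi\rangle_{H^{-1/2}\times H^{1/2}} = -\int_\cI \varphi^{\mathfrak h} F_k\,{\rm d}x$. The crucial point is that the commutator of a first-order vector field with a second-order elliptic operator admits a divergence-plus-lower-order decomposition; iterating this structure inside $[X^k,\nabla\cdot(h_{\rm i}\nabla\cdot)]$ and integrating by parts the divergence pieces against $\varphi^{\mathfrak h}$ transfers one derivative from $u$ onto $\nabla\varphi^{\mathfrak h}\in L^2(\cI)$, yielding a bound by $\Abs{u}_{H^k(\cI)}\Abs{\nabla\varphi^{\mathfrak h}}_{L^2(\cI)}\lesssim|\dtan\psi|_{H^{k-3/2}}|\varphi|_{H^{1/2}}$ at the price of only $\Abs{h_{\rm i}}_{W^{k,\infty}(\cI)}$. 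The main obstacles will be the careful bookkeeping of regularity through this inductive divergence decomposition and the treatment of boundary terms generated by the integrations by parts (since $u=\psi^{\mathfrak h}$ does not vanish on $\mathit{\Gamma}$), in order to land exactly in the sharp space $H^{k-3/2}$.
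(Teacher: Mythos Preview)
Your strategy for (i)--(iii) and the overall architecture for (iv) --- defining $w_k=X^k\psi^{\mathfrak h}-(\dtan^k\psi)^{\mathfrak h}$, solving a homogeneous Dirichlet problem for it, then using duality against $\varphi^{\mathfrak h}$ for the $H^{-1/2}$ bound and direct elliptic regularity plus trace for the $H^{1/2}$ bound --- is exactly the paper's approach. There is, however, a concrete error in your key identity: $[\dtan^k,\Lambda]\psi\neq(N\cdot h_{\rm i}\nabla w_k)|_{\mathit\Gamma}$ in general. Since $\Lambda\psi=(h_{\rm i}\dnor\psi^{\mathfrak h})|_{\mathit\Gamma}$ and $\dtan,\dnor$ commute near $\mathit\Gamma$, one has
\[
[\dtan^k,\Lambda]\psi=(N\cdot h_{\rm i}\nabla w_k)|_{\mathit\Gamma}+([\dtan^k,h_{\rm i}]\dnor\psi^{\mathfrak h})|_{\mathit\Gamma},
\]
and the extra commutator-with-$h_{\rm i}$ term does not vanish. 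This is not fatal --- it is a lower-order trace term --- but it must be tracked, and in fact it is precisely what produces the boundary contributions you anticipate when integrating the divergence piece by parts.

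The paper handles this more cleanly than your proposed ``inductive divergence decomposition'' of $[X^k,\nabla\cdot(h_{\rm i}\nabla\cdot)]$. Working directly in the normal--tangential coordinates of Section~\ref{sectnoprmtang}, the operator reads $\dtan(J^{-1}h_{\rm i}\dtan\cdot)+\dnor(Jh_{\rm i}\dnor\cdot)$; since $\dtan$ commutes with both $\dtan$ and $\dnor$, applying $\dtan^k$ immediately yields the commutator in divergence form $\nabla\cdot F_0$ with an explicit $F_0$ built from $[\dtan^k,J^{-1}h_{\rm i}]$ and $[\dtan^k,Jh_{\rm i}]$ (via \eqref{diff exp2}). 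The paper then observes that $(N\cdot F)|_{\mathit\Gamma}=-([\dtan^k,h_{\rm i}]\dnor\psi^{\mathfrak h})|_{\mathit\Gamma}$ exactly cancels the missing term above, giving the tidy identity $[\dtan^k,\Lambda]\psi=(N\cdot(h_{\rm i}\nabla\psi^{(k)}-F))|_{\mathit\Gamma}$. With $F$ already subtracted, the duality computation $\int_\cI\nabla\cdot(\tilde\psi^{\mathfrak h}(h_{\rm i}\nabla\psi^{(k)}-F))$ collapses to $\int_\cI(\tilde\psi^{\mathfrak h}f-\nabla\tilde\psi^{\mathfrak h}\cdot F)$ with no residual boundary terms, and both $F,f$ are manifestly controlled by $\Abs{\nabla\psi^{\mathfrak h}}_{H^{k-1}(\cI)}$ with only $W^{k,\infty}$ regularity on $h_{\rm i}$. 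This coordinate choice is what eliminates the two obstacles you flag at the end.
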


\begin{proof}
The first two points are standard and follow easily from the relation 
\[
\langle \Lambda\psi_1,\psi_2 \rangle_{H^{-1/2}\times H^{1/2}}=\int_{\cI} h_{\rm i}\nabla \psi_1^{\mathfrak h}\cdot \nabla\psi_2^{\mathfrak h}. 
\]
The third point follows from the classical elliptic estimate
\begin{equation}\label{DNp0}
\Abs{ \nabla\psi^\mathfrak{h} }_{H^k(\cI)} \lesssim \abs{ \dtan\psi }_{H^{k-1/2}};
\end{equation}
for more details on this estimate, see for instance \cite{McLean}. 
For the fourth point, we note first that 
$\Lambda\psi = (N\cdot(h_{\rm i}\nabla\psi^\mathfrak{h}))_{\vert_\mathit{\Gamma}} = (h_{\rm i}\dnor\psi^\mathfrak{h})_{\vert_\mathit{\Gamma}}$, so that 
\begin{equation}\label{DNp1}
[\dtan^k,\Lambda]\psi
= \bigl( N\cdot(h_{\rm i}\nabla(\chi_{\rm b}\dtan^k\psi^\mathfrak{h} - (\dtan^k\psi)^\mathfrak{h})) \bigr)_{\vert_\mathit{\Gamma}}
 + ([\dtan^k,h_{\rm i}]\dnor\psi^\mathfrak{h})_{\vert_\mathit{\Gamma}}, 
\end{equation}
where $\chi_{\rm b}$ is the smooth cutoff function with support in $U_\mathit{\Gamma}$ defined in \eqref{defchib} 
and we have used the fact that $\dtan$ commutes with $\dnor = N\cdot\nabla$. 
The quantity $\chi_{\rm b} \dtan^k \psi^{\mathfrak h}$ is defined and compactly supported in $\overline{\cI}\cap U_\mathit{\Gamma}$; 
we extend it by zero and consider it as a function defined in the whole interior region $\cI$. 
Then, we note also that in view of \eqref{diff exp1} the elliptic equation in \eqref{BVPinI} can be written as 
\[
\dtan(J^{-1}h_{\rm i}\dtan\psi^\mathfrak{h}) + \dnor(Jh_{\rm i}\dnor\psi^\mathfrak{h}) = 0 
 \quad\mbox{in}\quad \cI\cap U_\mathit{\Gamma}, 
\]
so that we have 
\begin{align*}
& \dtan(J^{-1}h_{\rm i}\dtan(\dtan^k\psi^\mathfrak{h})) + \dnor(Jh_{\rm i}\dnor(\dtan^k\psi^\mathfrak{h})) \\
&= - \dtan([\dtan^k,J^{-1}h_{\rm i}]\dtan\psi^\mathfrak{h}) - \dnor([\dtan^k,Jh_{\rm i}]\dnor\psi^\mathfrak{h})
 \quad\mbox{in}\quad \cI\cap U_\mathit{\Gamma}. 
\end{align*}
In view of \eqref{diff exp2} we can rewrite this equation as 
\[
\nabla\cdot(h_{\rm i}\nabla(\dtan^k\psi^\mathfrak{h})) = \nabla\cdot F_0
 \quad\mbox{in}\quad \cI\cap U_\mathit{\Gamma}, 
\]
where 
\[
F_0 = -J^{-1}( T[\dtan^k,J^{-1}h_{\rm i}]\dtan\psi^\mathfrak{h} + N[\dtan^k,Jh_{\rm i}]\dnor\psi^\mathfrak{h}).
\]
Now, we define $\psi^{(k)}$ as 
\[
\psi^{(k)} := \chi_{\rm b}\dtan^k\psi^\mathfrak{h} - (\dtan^k\psi)^\mathfrak{h}. 
\]
Then, we see that $\psi^{(k)}$ solves the boundary value problem 
\begin{equation}\label{DNp2}
\begin{cases}
 \nabla\cdot(h_{\rm i}\nabla\psi^{(k)}) = \nabla\cdot F+f &\mbox{in}\quad \cI, \\
 \psi^{(k)} = 0 &\mbox{on}\quad \mathit{\Gamma},
\end{cases}
\end{equation}
where 
\begin{align*}
 F &= \chi_{\rm b}F_0 + 2h_{\rm i}(\dtan^k\psi^\mathfrak{h})\nabla\chi_{\rm b}, \\
 f &= -\nabla\chi_{\rm b}\cdot F_0 - (\nabla\cdot(h_{\rm i}\nabla\chi_{\rm b}))\dtan^k\psi^\mathfrak{h}.
\end{align*}
It is easy to see that $(N\cdot F)_{\vert_\mathit{\Gamma}} = -([\dtan^k,h_{\rm i}]\dnor\psi^\mathfrak{h})_{\vert_\mathit{\Gamma}}$, 
which together with \eqref{DNp1} implies 
\begin{equation}\label{DNp3}
[\dtan^k,\Lambda]\psi = (N\cdot(h_{\rm i}\nabla\psi^{(k)}-F))_{\vert_\mathit{\Gamma}}.
\end{equation}
Therefore, for any $\tilde{\psi}\in H^{1/2}(\mathit{\Gamma})$ we see that 
\begin{align*}
\langle [\dtan^k,\Lambda]\psi, \tilde{\psi} \rangle_{H^{-1/2}\times H^{1/2}}
&= \int_{\cI}\nabla\cdot(\tilde{\psi}^\mathfrak{h}(h_{\rm i}\nabla\psi^{(k)}-F)) \\
&= \int_{\cI}\{ \tilde{\psi}^\mathfrak{h}\nabla\cdot(h_{\rm i}\nabla\psi^{(k)}-F) 
 + \nabla\tilde{\psi}^\mathfrak{h}\cdot(h_{\rm i}\nabla\psi^{(k)}-F)\} \\
&= \int_{\cI}( \tilde{\psi}^\mathfrak{h}f - \nabla\tilde{\psi}^\mathfrak{h}\cdot F),
\end{align*}
where we have used \eqref{DNp2} and the fact that $\tilde{\psi}^\mathfrak{h}$ solves the boundary value problem \eqref{BVPinI} with $\psi=\tilde{\psi}$. 
This identity yields 
\begin{align*}
\abs{ \langle [\dtan^k,\Lambda]\psi, \tilde{\psi} \rangle_{H^{-1/2}\times H^{1/2}} }
&\leq \Abs{ (F,f) }_{L^2(\cI)} \Abs{ \tilde{\psi}^\mathfrak{h} }_{H^1(\cI)} \\
&\lesssim \Abs{ \nabla\psi^\mathfrak{h} }_{H^{k-1}(\cI)} \abs{ \psi }_{H^{1/2}},
\end{align*}
which together with \eqref{DNp0} with $k$ replaced by $k-1$ gives the first estimate. 
As for the second one, it is sufficient to evaluate \eqref{DNp3} directly by the trace theorem. 
In fact, by a classical theory on elliptic boundary value problem we see that 
\begin{align*}
\abs{ [\dtan^k,\Lambda]\psi }_{H^{1/2}}
&\lesssim \Abs{ \nabla\psi^{(k)} }_{H^1(\cI)} + \Abs{F}_{H^1(\cI)} \\
&\lesssim \Abs{F}_{H^1(\cI)} + \Abs{f}_{L^2(\cI)} \\
&\lesssim \Abs{ \nabla\psi^\mathfrak{h} }_{H^k(\cI)},
\end{align*}
which together with \eqref{DNp0} gives the second estimate. 
\end{proof}

\section{A priori estimates for linear hyperbolic systems with weakly dissipative boundary conditions in an exterior domain}\label{sectapriorilin}
We consider in this section the following system of $n$ equations cast in the exterior domain $\cE$ 
\begin{equation}\label{eqlinfixed}
\dt u +A_j(t,x)\partial_j u + B(t,x)u = f(t,x) \quad\mbox{in}\quad (0,T)\times\cE,
\end{equation}
where $f$ is a $\R^n$-valued function defined in $(0,T)\times \cE$ for some $T>0$, 
while $A_1$, $A_2$, and $B$ take their values in the space of $n\times n$ real-valued matrices. 
We recall that throughout this article, we use Einstein's notation on repeated indices so that, for instance, in \eqref{eqlinfixed} above, 
$A_j(t,x)\partial_j u$ stands for $A_1(t,x)\partial_1 u +A_2(t,x)\partial_2 u$.

Our goal in this section is to derive a priori estimates for regular solutions to \eqref{eqlinfixed} subject to some boundary conditions. 
Since the boundary conditions we have to deal with in this article do not fit into any known class of boundary conditions, such as 
maximal dissipative or strictly dissipative conditions, for which such a priori estimates are known, 
we introduce a new and more general notion of weak dissipativity that allows the derivation of such estimates.

\medbreak
Throughout this section, we assume that there exists a Friedrichs symmetrizer $S(t,x)$ in the following sense.

\begin{assumption}\label{assFriedrichs}
There exists a $n\times n$ real valued symmetric matrix $S(t,x)$ defined in $(0,T)\times\cE$ such that for any $(t,x)\in(0,T)\times\cE$, 
matrices $S(t,x)A_j(t,x)$ $(j=1,2)$ are symmetric and the following conditions hold. 
\begin{enumerate}
\item[{\rm (i)}]
There exist constants $\alpha_0,\beta_0>0$ such that for any $(v,t,x)\in\R^n\times(0,T)\times\cE$ we have 
\[
\alpha_0\abs{v}^2 \leq v \cdot S(t,x)v \leq \beta_0 \abs{v}^2.
\]
We also denote by $\beta_0^{\rm in}$ the best constant such that the second inequality holds at $t=0$ for all $(v,x)\in \R^n\times\cE$. 
\item[{\rm (ii)}]
There exists a constant $\beta_1$ such that for any $(v,t,x)\in\R^n\times(0,T)\times\cE$ we have 
\[
v\cdot( \dt S(t,x) + \partial_j(S(t,x)A_j(t,x)) - 2S(t,x)B(t,x) )v \leq \beta_1\abs{v}^2.
\]
\end{enumerate}
\end{assumption}

The organization of this section is as follows. 
In Section \ref{sectAEF}, we derive a priori estimates for solutions to \eqref{eqlinfixed} that vanish in a neighborhood of the boundary. 
On the contrary, in Section \ref{sectAENB} we deal with solutions which are supported near the boundary. 
In this case, we have to control the contribution of the boundary term in the energy estimates. 
Various notions of dissipativity can be found in the literature to deal with this boundary contribution, 
but none is adapted to deal with the initial boundary value problem associated with \eqref{PB1}--\eqref{PB3}. 
We therefore introduce, for general hyperbolic systems of the form \eqref{eqlinfixed}, a notion of weak dissipativity 
and show how it can be used to derive a priori estimates. 
Finally, we extend in Section \ref{sectAEGC} this result to the general case where no assumption is made on the support of the solution.

\subsection{A priori energy estimates away from the boundary}\label{sectAEF}
We first provide energy estimates for solutions that vanish near the boundary $\mathit{\Gamma}$. 
This is the simplest configuration since the boundary contribution in the energy estimates then vanishes. 
As in \cite{IguchiLannes} for the one-dimensional case, we provide a sharp dependence on the source term which improves the classical estimates; 
see for instance \cite{BenzoniSerre}. 
In order to do so we use the quantities $I_{\lambda,t}(\cdot)$ and $S_{\lambda,t}^*(\cdot)$ defined in \eqref{defIlambda} and \eqref{defSstar}, 
respectively.

\begin{proposition}\label{propNRJL2far}
There exists an absolute constant $C$ such that under Assumption \ref{assFriedrichs}, 
any regular solution $u$ to \eqref{eqlinfixed} that vanishes in a neighborhood of $\mathit{\Gamma}$ satisfies 
\[
I_{\lambda,t}( \Abs{u(\cdot)}_{L^2(\cE)} )^2
\leq C\tfrac{\beta_0^{\rm in}}{\alpha_0} \Abs{u(0,\cdot)}^2_{L^2(\cE)} + C\tfrac{\beta_0}{\alpha_0} S_{\lambda,t}^*( \Abs{f(\cdot)}_{L^2(\cE)} )^2
\]
for any $\lambda\geq\frac{\beta_1}{\alpha_0}$ and $t\in[0,T]$, where the constants $\alpha_0, \beta_0, \beta_0^{\rm in}$, and $\beta_1$ are 
those in Assumption \ref{assFriedrichs} and $I_{\lambda,t}(\cdot)$ and $S_{\lambda,t}^*(\cdot)$ are defined in \eqref{defIlambda} and \eqref{defSstar}. 
\end{proposition}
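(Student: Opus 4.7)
The plan is a classical symmetrizer energy estimate, but carried out with weights in time so as to track the dependence on the source term through $S^*_{\lambda,t}$ rather than the cruder $L^1_{\lambda,t}$ norm. Define $E(t) = \int_\cE S(t,\cdot)u(t,\cdot)\cdot u(t,\cdot)\,{\rm d}x$. Because $u$ vanishes near $\mathit{\Gamma}$, the key observation is the pointwise identity
\[
\dt(Su\cdot u) + \partial_j(u\cdot SA_j u) = u\cdot\bigl(\dt S+\partial_j(SA_j)-2SB\bigr)u + 2Su\cdot f,
\]
obtained by using $SA_j$ symmetric, integrating by parts against $Su$, and rearranging. Integrating over $\cE$ the divergence term drops out (no boundary contribution), and Assumption \ref{assFriedrichs}(ii) gives
\[
\frac{{\rm d}E}{{\rm d}t} \leq \beta_1\Abs{u}^2_{L^2(\cE)} + 2(Su,f)_{L^2(\cE)} \leq \frac{\beta_1}{\alpha_0} E + 2\sqrt{\beta_0}\, E^{1/2}\Abs{f}_{L^2(\cE)},
\]
where we bounded $\beta_1\|u\|^2\le(\beta_1/\alpha_0)E$ by the lower bound in Assumption \ref{assFriedrichs}(i), and $|(Su,f)|\le\sqrt{(Su,u)}\sqrt{(Sf,f)}\le\sqrt{\beta_0}\,E^{1/2}\|f\|_{L^2}$ by the upper bound.

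Next I multiply by $e^{-2\lambda t}$ and use $\lambda\geq\beta_1/\alpha_0$ to absorb the zeroth-order term on the left: since $-2\lambda + \beta_1/\alpha_0\leq-\lambda$,
\[
\frac{{\rm d}}{{\rm d}t}\bigl(e^{-2\lambda t}E\bigr) + \lambda e^{-2\lambda t} E \leq 2\sqrt{\beta_0}\,e^{-2\lambda t}E^{1/2}\Abs{f}_{L^2(\cE)}.
\]
Integrating from $0$ to $t$ and then taking the supremum over $[0,t]$ on the left yields
\[
\sup_{t'\in[0,t]}e^{-2\lambda t'}E(t') + \lambda\int_0^t e^{-2\lambda t'}E(t')\,{\rm d}t'
\leq E(0) + 2\sqrt{\beta_0}\int_0^t e^{-2\lambda t'}E(t')^{1/2}\Abs{f(t')}_{L^2(\cE)}\,{\rm d}t'.
\]

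The left-hand side controls $\tfrac{1}{2}I_{\lambda,t}(E^{1/2})^2$ thanks to the elementary bound $(a+b)^2\leq 2(a^2+b^2)$ applied to the two terms in the definition \eqref{defIlambda}. For the right-hand side, the duality inequality following \eqref{defSstar} gives
\[
\int_0^t e^{-2\lambda t'}E^{1/2}(t')\Abs{f(t')}_{L^2(\cE)}\,{\rm d}t' \leq I_{\lambda,t}(E^{1/2})\,S_{\lambda,t}^*\bigl(\Abs{f(\cdot)}_{L^2(\cE)}\bigr).
\]
Combining these and using $E(0)\leq\beta_0^{\rm in}\Abs{u(0)}^2_{L^2(\cE)}$, I obtain a quadratic inequality in $I_{\lambda,t}(E^{1/2})$ which, after Young's inequality $2\sqrt{\beta_0}\,IS^*\leq\tfrac14 I^2 + 4\beta_0(S^*)^2$, gives
\[
I_{\lambda,t}(E^{1/2})^2 \leq 4\beta_0^{\rm in}\Abs{u(0)}_{L^2(\cE)}^2 + 16\beta_0\,S_{\lambda,t}^*\bigl(\Abs{f(\cdot)}_{L^2(\cE)}\bigr)^2.
\]
Finally, the lower bound $E^{1/2}\geq\sqrt{\alpha_0}\Abs{u}_{L^2(\cE)}$ gives $\alpha_0 I_{\lambda,t}(\Abs{u}_{L^2(\cE)})^2\leq I_{\lambda,t}(E^{1/2})^2$, which produces the stated inequality with an absolute constant $C$. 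There is no real obstacle in this proof; the only point requiring care is to work with $E^{1/2}$ rather than $\Abs{u}_{L^2(\cE)}$ inside the $I_{\lambda,t}$ quantity so that the square-root on the boundary contribution from $(Su,f)$ matches the $L^\infty\cap L^2$ time structure of $I_{\lambda,t}$ and can be paired with $S_{\lambda,t}^*$.
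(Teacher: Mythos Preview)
Your proof is correct and follows essentially the same route as the paper's: a weighted symmetrizer energy identity, absorption of the zeroth-order term for $\lambda\geq\beta_1/\alpha_0$, integration in time, pairing the source term with $S^*_{\lambda,t}$ via duality, and Young's inequality. The only step stated a bit loosely is ``taking the supremum over $[0,t]$ on the left'': strictly speaking you should first note that the right-hand side is nondecreasing in $t$, which lets you bound the $\sup$ term and the integral term separately and then add (at the cost of a harmless factor of~$2$); the paper handles this by working with $\widetilde I_{\lambda,t}$ and then upgrading to $I_{\lambda,t}$ using the same monotonicity.
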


\begin{proof}
For the sake of clarity we simply denote $S$, $A_j$, and $B$ instead of $S(t,x)$, $A_j(t,x)$, and $B(t,x)$, 
while $(\cdot,\cdot)_{L^2}$ stands for the standard scalar product in $L^2(\cE)$. 
Since $SA_j$ symmetric, we have 
\begin{equation}\label{IPPterm}
2SA_j \partial_j u\cdot u = \partial_j (SA_j u\cdot u) - (\partial_j(SA_j)) u\cdot u,
\end{equation}
and since $u$ vanishes at the boundary we easily get that for all positive number $\lambda$, 
\begin{align*}
&\frac{\rm d}{{\rm d}t} \bigl\{ e^{-2\lambda t }(Su,u)_{L^2} \bigr\}
 + 2\lambda e^{-2\lambda t}(Su,u)_{L^2} \\
&= 2e^{-2\lambda t}(S f,u)_{L^2} + e^{-2\lambda t} ( (\dt S +\partial_j ({SA_j})-2SB) u,u)_{L^2}.
\end{align*}
Owing to Assumption \ref{assFriedrichs} the last term can be absorbed into the left-hand side if we take $\lambda$ so large that 
$\lambda\geq \frac{\beta_1}{\alpha_0}$. 
Then, integrating it in time yields therefore 
\[
\frac{1}{2}\widetilde{I}_{\lambda,t}( (Su,u)_{L^2}^{1/2} )^2
\leq (Su(0),u(0)\big)_{L^2} + 2\int_0^t e^{-2\lambda t'}(Sf(t'),u(t'))_{L^2}{\rm d}t',
\]
where $\widetilde{I}_{\lambda,t}(f)$ is defined as 
\begin{equation}\label{defItilde}
\widetilde{I}_{\lambda,t}(f) = e^{-\lambda t} \abs{f(t)}+\sqrt{\lambda}\abs{f}_{L^2_{\lambda,t}};
\end{equation}
it differs from the quantity ${I}_{\lambda,t}(f)$ defined in \eqref{defIlambda} by the fact that the first term in the right-hand side is 
$\sup_{t'\in[0,t]}e^{-\lambda t'} \abs{f(t')}$ in \eqref{defIlambda}. 
By definition of $S_{\lambda,t}^*(\cdot)$, we have 
\begin{align*}
2\int_0^t e^{-2\lambda t'}(Sf(t'),u(t'))_{L^2}{\rm d}t'
&\leq 2 S_{\lambda,t}^*((Sf,f)_{L^2}^{1/2}) I_{\lambda,t}( (Su,u)_{L^2}^{1/2} ) \\
&\leq \frac{1}{4}I_{\lambda,t}( (Su,u)_{L^2}^{1/2} )^2 + 4 S_{\lambda,t}^*( (Sf,f)_{L^2}^{1/2} )^2. 
\end{align*}
We have therefore 
\[
\frac{1}{2}\widetilde{I}_{\lambda,t} ( (Su,u)_{L^2}^{1/2} )^2
\leq (Su(0),u(0))_{L^2} + \frac{1}{4}I_{\lambda,t}\big( (Su,u)_{L^2}^{1/2} )^2 + 4 S_{\lambda,t}^*( (Sf,f)_{L^2}^{1/2} )^2. 
\]
Since the right-hand side is an increasing function of time, this inequality is still true if we replace the left-hand side with 
$\frac{1}{2}\widetilde{I}_{\lambda,t'}( (Su,u)_{L^2}^{1/2} )^2 $ for any $t'\in [0,t]$. 
It follows easily that the inequality is also true with $\frac12 I_{\lambda,t}( (Su,u)_{L^2}^{1/2} )^2$ in the left-hand side. 
The proposition follows easily using the properties of $S$ listed in Assumption \ref{assFriedrichs}. 
\end{proof}

We can also derive higher order energy estimates using the spaces $\mathbb{W}^m_T$ introduced in \eqref{defopnorm}; we recall that 
$\Abs{u}_{\mathbb{W}^m_T}=\sup_{0\leq t\leq T}\opnorm{u(t)}_m$ and $\Abs{u}_{\mathbb{W}^{m,p}_T}=\sup_{0\leq t\leq T}\opnorm{u(t)}_{m,p}$, where 
\[
\opnorm{u(t)}_m =\sum_{j=0}^m \Abs{\dt^j u (t)}_{H^{m-j}(\cE)} \quad\mbox{and}\quad
\opnorm{u(t)}_{m,p} =\sum_{j=0}^m \Abs{\dt^j u (t)}_{W^{m-j,p}(\cE)}.
\]
In the statement below, we use the notation $\boldsymbol{\partial} u = (\dt u, \partial_1u,\partial_2u)$. 
Concerning regularities on the coefficient matrices $A_1$, $A_2$, and $B$, we impose the following assumption.

\begin{assumption}\label{ass:regAB}
Let $m$ be a non-negative integer. 
There exist an index $p\in(2,\infty)$ and two constants $0<K_0\leq K$ such that the following conditions hold. 
\begin{enumerate}
\item[{\rm (i)}]
$\|(A_1,A_2)\|_{L^\infty((0,T)\times\cE)} \leq K_0$. 
\item[{\rm (ii)}]
$\Abs{ \boldsymbol{\partial}(A_1,A_2) }_{\mathbb{W}_T^{m-1} \cap \mathbb{W}_T^{1,p}}$, 
$\Abs{B}_{L^\infty((0,T)\times\cE)}$, $\Abs{ \boldsymbol{\partial} B}_{\mathbb{W}_T^{m-1} \cap \mathbb{W}_T^1} \leq K$.
\end{enumerate}
\end{assumption}

\begin{remark}\label{rem:space}
In place of the condition in (i), we may assume $\Abs{ \boldsymbol{\partial}(A_1,A_2) }_{\mathbb{W}_T^{m-1} \cap \mathbb{W}_T^{2}} \leq K$, 
which is slightly restrictive than the above condition due to the Sobolev embedding $H^1(\cE) \hookrightarrow L^p(\cE)$. 
We note also that the requirement $u\in\mathbb{W}_T^3$, so that $\boldsymbol{\partial} u \in \mathbb{W}_T^2$, corresponds to the quasilinear regularity 
in the sense that it is the minimal integer regularity index $m_0$ that ensures the embedding 
$u \in \mathbb{W}^{m_0}_T \hookrightarrow W^{1,\infty}((0,T)\times \cE)$. 
However, in applications, especially in the analysis of the regularity of solutions to nonlinear problems, 
the condition $\boldsymbol{\partial}(A_1,A_2) \in \mathbb{W}_T^{2}$ is rather strong and causes a difficulty in the critical case $m=3$; 
see Section \ref{sect:exist2}. 
\end{remark}

\begin{proposition}\label{propNRHhighfar}
Let $m$ be a non-negative integer. 
There exists a constant $C$ depending only on $m$ such that under Assumptions \ref{assFriedrichs} and \ref{ass:regAB}, 
any regular solution $u$ to \eqref{eqlinfixed} that vanishes in a neighborhood of $\mathit{\Gamma}$ satisfies 
\[
I_{\lambda,t}( \opnorm{ u(\cdot) }_m )^2
\leq C\tfrac{\beta_0^{\rm in}}{\alpha_0} \opnorm{ u(0) }_m^2 + C\tfrac{\beta_0}{\alpha_0} S_{\lambda,t}^*( \opnorm{ f(\cdot) }_m )^2
\]
for any $\lambda\geq\lambda_0$ and $t\in[0,T]$, where $\lambda_0$ depends only on $m$, $p$, $\frac{\beta_1}{\alpha_0}$, and $K$. 
\end{proposition}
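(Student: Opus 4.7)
The plan is to reduce the higher-order estimate to the $L^2$ estimate of Proposition~\ref{propNRJL2far}, applied not to $u$ itself but to each of its mixed space-time derivatives. For every multi-index $\alpha=(\alpha_0,\alpha_1,\alpha_2)\in \N^3$ with $\abs{\alpha}\leq m$, writing $\partial^\alpha=\dt^{\alpha_0}\partial_1^{\alpha_1}\partial_2^{\alpha_2}$ and applying $\partial^\alpha$ to \eqref{eqlinfixed} yields
\[
\dt(\partial^\alpha u) + A_j\partial_j(\partial^\alpha u) + B(\partial^\alpha u) = \partial^\alpha f + R_\alpha,
\qquad
R_\alpha := -[\partial^\alpha,A_j]\partial_j u - [\partial^\alpha,B]u.
\]
Since $u$ vanishes in a neighborhood of $\mathit{\Gamma}$, so does $\partial^\alpha u$, and the Friedrichs symmetrizer $S$ is unchanged. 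Proposition~\ref{propNRJL2far} therefore gives
\[
I_{\lambda,t}\!\bigl(\Abs{\partial^\alpha u(\cdot)}_{L^2}\bigr)^2
\leq C\tfrac{\beta_0^{\rm in}}{\alpha_0}\Abs{\partial^\alpha u(0)}_{L^2}^2
+ C\tfrac{\beta_0}{\alpha_0}S_{\lambda,t}^*\!\bigl(\Abs{\partial^\alpha f(\cdot)}_{L^2}\bigr)^2
+ C\tfrac{\beta_0}{\alpha_0}S_{\lambda,t}^*\!\bigl(\Abs{R_\alpha(\cdot)}_{L^2}\bigr)^2
\]
for all $\lambda\geq\beta_1/\alpha_0$, and it remains to summon all $\alpha$ with $\abs{\alpha}\leq m$ and close the estimate.

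The core of the work is the control of the commutator remainders by $\opnorm{u(\cdot)}_m$. Expanding, $R_\alpha$ is a linear combination of terms $(\partial^\beta A_j)(\partial^{\alpha-\beta+e_j}u)$ and $(\partial^\beta B)(\partial^{\alpha-\beta}u)$ with $1\leq\abs{\beta}\leq\abs{\alpha}$. Using the standard Moser commutator inequality
\[
\Abs{[\partial^\alpha,F]G}_{L^2}\lesssim \Abs{\nabla F}_{L^\infty}\Abs{G}_{H^{\abs{\alpha}-1}}+\Abs{F}_{\dot H^{\abs{\alpha}}}\Abs{G}_{L^\infty},
\]
and treating the $B$-term analogously (with $F=A_j$, $G=\partial_ju$, then $F=B$, $G=u$), we obtain
\[
\Abs{R_\alpha(t)}_{L^2}\lesssim \Bigl(\Abs{\boldsymbol{\partial}(A_1,A_2,B)(t)}_{L^\infty}+\opnorm{\boldsymbol{\partial}(A_1,A_2,B)(t)}_{m-1}\Bigr)\opnorm{u(t)}_m.
\]
The pointwise $L^\infty$ norm of $\boldsymbol{\partial}(A_1,A_2)$ is controlled by Assumption~\ref{ass:regAB}~(i)--(ii) directly, together with the embedding $W^{1,p}(\cE)\hookrightarrow L^\infty(\cE)$ valid for $p>2$, which is precisely the role of the space $\mathbb{W}^{1,p}_T$; the $L^\infty$ bound on $\partial_j u$ needed in the borderline term $(\partial^\alpha A_j)(\partial_j u)$ at the critical level $m=3$ is provided by the embedding $H^2(\cE)\hookrightarrow L^\infty(\cE)$ applied to $\partial_j u\in C([0,T];H^{m-1}(\cE))$. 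Gathering these bounds, $\Abs{R_\alpha(t)}_{L^2}\lesssim K\opnorm{u(t)}_m$.

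Summing the $L^2$ estimates over $\abs{\alpha}\leq m$ and using the subadditivity $S_{\lambda,t}^*(f+g)\leq S_{\lambda,t}^*(f)+S_{\lambda,t}^*(g)$ yields
\[
I_{\lambda,t}\!\bigl(\opnorm{u(\cdot)}_m\bigr)^2
\leq C\tfrac{\beta_0^{\rm in}}{\alpha_0}\opnorm{u(0)}_m^2
+ C\tfrac{\beta_0}{\alpha_0}S_{\lambda,t}^*\!\bigl(\opnorm{f(\cdot)}_m\bigr)^2
+ CK^2\tfrac{\beta_0}{\alpha_0}S_{\lambda,t}^*\!\bigl(\opnorm{u(\cdot)}_m\bigr)^2.
\]
Using the second inequality in \eqref{propSstar} we have $S_{\lambda,t}^*(\opnorm{u}_m)\leq\lambda^{-1/2}\abs{\opnorm{u}_m}_{L^2_{\lambda,t}}\leq\lambda^{-1}I_{\lambda,t}(\opnorm{u}_m)$, so the last term can be absorbed into the left-hand side provided $\lambda$ exceeds a threshold $\lambda_0$ depending only on $m$, $p$, $\beta_1/\alpha_0$, and $K$.

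The main obstacle is the critical-regularity Moser step: at $m=3$ the estimate barely closes, and one cannot afford the usual $\Abs{\boldsymbol{\partial}(A_1,A_2)}_{\mathbb{W}^2_T}$ assumption (cf.\ Remark~\ref{rem:space}). Replacing it by the weaker $\mathbb{W}^{m-1}_T\cap\mathbb{W}^{1,p}_T$ assumption forces careful use of the $L^p$ Sobolev embedding rather than a naive $H^2\hookrightarrow L^\infty$ bound on coefficients; this is what dictates the presence of the index $p\in(2,\infty)$ in Assumption~\ref{ass:regAB} and is the only delicate point in the argument.
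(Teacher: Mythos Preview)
Your approach mirrors the paper's exactly: differentiate, apply Proposition~\ref{propNRJL2far} to each $\partial^\alpha u$, control the commutator remainder by $K\opnorm{u}_m$, sum, and absorb the resulting $S^*_{\lambda,t}(\opnorm{u}_m)^2$ term via \eqref{propSstar}. The paper packages the commutator bound as Lemma~\ref{lemmcommut} and otherwise proceeds identically.

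There is, however, a technical gap in your commutator step for $m\leq 2$. The Moser inequality you invoke places $G=\partial_j u$ in $L^\infty$ in the endpoint term $\Abs{A_j}_{\dot H^{\abs{\alpha}}}\Abs{\partial_j u}_{L^\infty}$, and you justify this via $H^2\hookrightarrow L^\infty$; but that needs $m\geq 3$. At $m=2$ one only has $\partial_j u\in H^1$, which does not embed into $L^\infty$ in two dimensions. Similarly, your displayed bound uses $\Abs{\boldsymbol{\partial}B}_{L^\infty}$, yet Assumption~\ref{ass:regAB}(ii) only gives $\boldsymbol{\partial}B\in\mathbb{W}^{m-1}_T\cap\mathbb{W}^1_T$, which again fails to give $L^\infty$ control when $m=2$. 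The paper's Lemma~\ref{lemmcommut} handles $m=1,2$ via an $L^p$--$L^{q}$ H\"older pairing instead: for instance $\Abs{(\partial^2 A_j)(\partial_j u)}_{L^2}\leq\Abs{\partial^2 A_j}_{L^p}\Abs{\partial_j u}_{L^{q}}$ with $\tfrac1p+\tfrac1q=\tfrac12$, using $\partial(\boldsymbol{\partial}A_j)\in L^p$ from the $\mathbb{W}^{1,p}_T$ assumption and $H^1\hookrightarrow L^q$ for all $q<\infty$. This---rather than merely $W^{1,p}\hookrightarrow L^\infty$ on $\boldsymbol{\partial}A_j$---is the actual role of the $\mathbb{W}^{1,p}_T$ hypothesis at low $m$. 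The fix is to replace your Moser inequality by the variant in Lemma~\ref{lemmcommut}; the rest of your argument goes through unchanged.
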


\begin{proof}
For a multi-index $\beta=(\beta_0,\beta_1,\beta_2)\in \N^3$, we apply $\partial^\beta=\dt^{\beta_0}\partial_1^{\beta_1}\partial_2^{\beta_2}$ 
to \eqref{eqlinfixed} to obtain 
\begin{equation}\label{systalphafixed}
\dt \partial^\beta u+A_j\partial_j \partial^\beta u + B\partial^\beta u = f_\beta,
\end{equation}
where 
\[
f_\beta = \partial^\beta f-[\partial^\beta, A_j] \partial_j u- [\partial^\beta,B]u.
\]
The commutator terms in $f_\beta$ can be controlled by using the following classical commutator estimates, 
which can be easily obtained by using Sobolev embeddings $W^{1,p}(\cE) \hookrightarrow L^\infty(\cE)$ and $H^1(\cE) \hookrightarrow L^q(\cE)$ 
for any $q\in[2,\infty)$.

\begin{lemma}\label{lemmcommut}
Let $m$ be a non-negative integer, $\beta\in\N^3$ a multi-index satisfying $1\leq \abs{\beta} \leq m$, and $2< p<\infty$. 
Then, for any smooth functions $f$ and $g$ we have 
\[
\Abs{[\partial^\beta, f] g(t)}_{L^2(\cE)}
\lesssim
\begin{cases}
 \opnorm{ \partial f(t) }_{1,p} \opnorm{ g(t) }_{m-1} &\mbox{for}\quad m=1,2, \\
 \opnorm{ \partial f(t) }_{m-1} \opnorm{ g(t) }_{m-1} &\mbox{for}\quad m\geq 3, \\
 \opnorm{ \partial f(t) }_{\max\{m-1,1\}} \opnorm{ g(t) }_m &\mbox{for}\quad m\geq1.
\end{cases}
\]
\end{lemma}

Using this lemma, we obtain the following upper bound for the source term $f_\beta$ 
\[
\sum_{\abs{\beta}\leq m}\Abs{f_\beta(t)}_{L^2} \lesssim \opnorm{f(t)}_{m} + K\opnorm{ u(t) }_{m}. 
\]
Applying Proposition \ref{propNRJL2far} to \eqref{systalphafixed} and summing over all $\beta$ such that $\abs{\beta}\leq m$, we get therefore 
\[
I_{\lambda,t}(\opnorm{u(\cdot)}_m)^2
\leq C\tfrac{\beta_0^{\rm in}}{\alpha_0} \opnorm{u(0)}_{m}^2 + C\tfrac{\beta_0}{\alpha_0} S_{\lambda,t}^*(\opnorm{ f(\cdot) }_{m})^2 
 + CK\lambda^{-2}I_{\lambda,t}(\opnorm{u(\cdot)}_m)^2,
\]
where we used the second inequality in \eqref{propSstar} to derive the last term. 
By taking $\lambda$ large enough, this last term can be absorbed into the left-hand side and the result follows. 
\end{proof}

\subsection{A priori energy estimates near the boundary}\label{sectAENB}
Energy estimates for solutions to \eqref{eqlinfixed} supported away from the boundary $\mathit{\Gamma}$ have been derived in the previous section. 
For solutions that do not vanish near $\mathit{\Gamma}$, an additional boundary term prevents us from getting directly the energy estimate in 
Proposition \ref{propNRJL2far}. 
This boundary term comes from the integration of \eqref{IPPterm} over $\cE$, that is, 
\[
2\int_{\cE} SA_j\partial_j u\cdot u = -\int_\mathit{\Gamma}SA_{\rm nor}u\cdot u - ( (\partial_j(SA_j))u,u )_{L^2(\cE)},
\]
where $A_{\rm nor}$ is the boundary matrix, also called the normal matrix, defined by 
\[
A_{\rm nor} = N_jA_j.
\]
Compared to the energy estimate of Proposition \ref{propNRJL2far}, 
there is therefore an additional boundary term if the solution does not vanish in the neighborhood of the boundary, that is, 
\begin{align}\label{defsupstar}
I_{\lambda,t}( \Abs{u(\cdot)}_{L^2(\cE)} )^2 \leq 
& C\tfrac{\beta_0^{\rm in}}{\alpha_0} \Abs{u(0,\cdot)}^2_{L^2(\cE)} + C\tfrac{\beta_0}{\alpha_0} S_{\lambda,t}^*(\Abs{f(\cdot)}_{L^2(\cE)})^2 \\
& + \frac{4}{\alpha_0} \int_0^t e^{-2\lambda t'} \biggl( \int_\mathit{\Gamma} {\mathfrak B}[u(t')] \biggr) {\rm d}t', \nonumber
\end{align}
where the boundary quadratic form ${\mathfrak B}[u]$ is defined as 
\begin{equation}\label{defB}
{\mathfrak B}[u]:= SA_{\rm nor}u\cdot u.
\end{equation}
This term cannot be controlled in terms of $\Abs{u(t,\cdot)}_{L^2(\cE)}$ by Sobolev embeddings and additional information 
is therefore needed on the solution. 
This additional information comes from the fact that the equations \eqref{eqlinfixed} should be complemented by boundary conditions on $u$.

\begin{example}\label{exBC1}
A typical example consists in complementing \eqref{eqlinfixed} with a set of linear boundary conditions of the form 
\begin{equation}\label{BClin}
M(t,x)u = g(t,x) \quad\mbox{on}\quad (0,T)\times\mathit{\Gamma},
\end{equation}
where $g$ is an $\R^p$-valued function defined on $(0,T)\times\mathit{\Gamma}$, 
while $M$ takes its values in the space of $p\times n$ real-valued matrices, so that $p$ is the number of scalar boundary conditions. 
\end{example}

Finding a good set of boundary conditions and a symmetrizer that provides a control on the boundary contribution 
to the energy estimate is in general a difficult task that we shall not address at this point of the discussion. 
For the moment we just assume that the boundary term ${\mathfrak B}[u]$ is \emph{weakly dissipative} in the following sense; 
see Examples \ref{BC2} and \ref{BC3} below for classical configurations leading to a weakly dissipative boundary term.

\begin{definition} \label{propweakdissip}
We say that the boundary term ${\mathfrak B}[u]$ associated with a regular solution to \eqref{eqlinfixed} is \emph{weakly dissipative} 
if there exist a positive constant $\lambda_0$, a non-negative and non-decreasing function of time $S_{\rm data}[u](\cdot)$ 
depending only on the initial and boundary data possibly imposed on $u$, and 
a non-negative \emph{boundary energy} function $E_{\rm bdry}(\cdot)$ such that for any $\lambda\geq\lambda_0$ and any $t\in[0,T]$ we have
\begin{equation}\label{weakdissip}
\int_0^t e^{-2\lambda t'} \biggl( \int_\mathit{\Gamma} {\mathfrak B}[u(t')] \biggr){\rm d}t'
\leq -E_{\rm bdry}(t)+S_{\rm data}(t) + \frac{\alpha_0}{8} I_{\lambda,t}(\Abs{u(\cdot)}_{L^2(\cE)})^2,
\end{equation}
where $\alpha_0$ is the coercivity constant of the symmetrizer as defined in Assumption \ref{assFriedrichs}. 
\end{definition}

\begin{remark}\label{remalpha0}
The coefficient $\frac{\alpha_0}{8}$ in front of the term $I_{\lambda,t}(\Abs{u(\cdot)}_{L^2(\cE)})^2$ 
in the right-hand side allows one to absorb this term by the left-hand side in the energy estimate. 
\end{remark}

When the boundary term is weakly dissipative, it is possible to state a generalization of Proposition \ref{propNRJL2far} 
for solutions that do not vanish in the neighborhood of $\mathit{\Gamma}$. 
We omit the proof, which is straightforward; we just need to use \eqref{weakdissip} in \eqref{defsupstar}.

\begin{proposition}\label{propNRJnear}
There exists an absolute constant $C$ such that under Assumption \ref{assFriedrichs}, 
if $u$ is a regular solution to \eqref{eqlinfixed} and if the boundary term is weakly dissipative in the sense of Definition \ref{propweakdissip}, 
then for any $\lambda\geq\max\{\frac{\beta_1}{\alpha_0},\lambda_0\}$ and $t\in[0,T]$ we have 
\[
I_{\lambda,t}( \Abs{u(\cdot)}_{L^2(\cE)} )^2 + \tfrac{8}{\alpha_0}E_{\rm bdry}(t)
\leq C\tfrac{\beta_0^{\rm in}}{\alpha_0} \Abs{u(0,\cdot)}^2_{L^2(\cE)} + C\tfrac{\beta_0}{\alpha_0} S_{\lambda,t}^*( \Abs{f(\cdot)}_{L^2(\cE)} )^2
 + \tfrac{8}{\alpha_0}S_{\rm data}(t),
\]
where the constants $\alpha_0, \beta_0, \beta_0^{\rm in}$, and $\beta_1$ are those in Assumption \ref{assFriedrichs} 
and $I_{\lambda,t}(\cdot)$ and $S_{\lambda,t}^*(\cdot)$ are defined in \eqref{defIlambda} and \eqref{defSstar}. 
\end{proposition}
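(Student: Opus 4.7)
The plan is to repeat the energy identity used in the proof of Proposition \ref{propNRJL2far}, this time keeping the boundary contribution that previously vanished, and then to appeal to the weak dissipativity assumption to handle it.

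First I would redo the integration-by-parts step \eqref{IPPterm} without assuming $u$ vanishes near $\mathit{\Gamma}$. This introduces the additional term $-\int_\mathit{\Gamma} SA_{\rm nor}u\cdot u = -\int_\mathit{\Gamma}{\mathfrak B}[u]$ in the $L^2$ identity for $(Su,u)_{L^2}$. Multiplying by $e^{-2\lambda t}$, integrating on $[0,t]$, and using Assumption \ref{assFriedrichs}(ii) to absorb the contribution from $\dt S + \partial_j(SA_j) - 2SB$ into the left-hand side for $\lambda\geq\beta_1/\alpha_0$, I obtain, after the same $I_{\lambda,t}$--$S^*_{\lambda,t}$ manipulation as in Proposition \ref{propNRJL2far}, precisely the inequality \eqref{defsupstar}. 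In other words, \eqref{defsupstar} is the raw output of the energy method when one refuses to throw away the boundary term.

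Next, for $\lambda\geq\max\{\beta_1/\alpha_0,\lambda_0\}$, I would insert the weak dissipativity bound \eqref{weakdissip} into the boundary contribution of \eqref{defsupstar}. The two constants in the estimate combine cleanly: the prefactor $4/\alpha_0$ in \eqref{defsupstar} multiplied by the $\alpha_0/8$ in front of $I_{\lambda,t}(\Abs{u(\cdot)}_{L^2})^2$ in \eqref{weakdissip} produces a coefficient $1/2$, which is strictly less than $1$ and can therefore be absorbed into the $I_{\lambda,t}(\Abs{u(\cdot)}_{L^2})^2$ on the left-hand side. The contribution $-\tfrac{4}{\alpha_0}E_{\rm bdry}(t)$ is moved to the left-hand side as $+\tfrac{4}{\alpha_0}E_{\rm bdry}(t)$, while $\tfrac{4}{\alpha_0}S_{\rm data}(t)$ remains on the right-hand side. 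Multiplying the resulting inequality by $2$ and absorbing universal numerical factors into $C$ yields the claimed estimate, with the coefficient $8/\alpha_0$ in front of both $E_{\rm bdry}(t)$ and $S_{\rm data}(t)$.

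There is no genuine obstacle here; the content of the proposition is entirely encoded in Definition \ref{propweakdissip}. The only point to watch is the precise numerical value $\alpha_0/8$ in that definition, which is tuned so that, once multiplied by the $4/\alpha_0$ coming from \eqref{defsupstar}, the absorption step leaves a nontrivial coefficient in front of $I_{\lambda,t}(\Abs{u(\cdot)}_{L^2})^2$ on the left-hand side — this is exactly the observation recorded in Remark \ref{remalpha0}. Beyond this constant chasing the proof is immediate.
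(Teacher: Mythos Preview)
Your proposal is correct and matches the paper's approach exactly: the paper itself omits the proof, saying only that it is straightforward and that one just needs to use \eqref{weakdissip} in \eqref{defsupstar}, which is precisely what you do. Your account of the constant tuning (the $\alpha_0/8$ combining with the $4/\alpha_0$ to give $1/2$, cf.\ Remark \ref{remalpha0}) is the only nontrivial bookkeeping, and you handle it correctly.
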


There are two standard notions of dissipative boundary conditions, namely, maximal and strictly dissipative boundary conditions. 
Both of them are algebraic conditions on the quadratic form ${\mathfrak B}[u]$, 
while the weak dissipativity only requires a condition for an integral in space and time of this quantity. 
We shall show that the boundary conditions we have to deal with in this article are neither strictly nor maximal dissipative, 
but that they however lead to weakly dissipative boundary contributions. 
Let us for the moment check that maximal and strictly dissipative boundary conditions satisfy the conditions in Definition \ref{propweakdissip}.

\begin{example}[Maximal dissipative boundary conditions]\label{BC2}
We say that the boundary condition \eqref{BClin} is \emph{maximal dissipative} if there is a constant $\beta_2>0$ such that 
\[
{\mathfrak B}[v]\leq \beta_2 \abs{Mv}^2 \quad\mbox{on}\quad (0,T)\times\mathit{\Gamma}
\]
holds for any $v\in\R^n$; in particular, ${\mathfrak B}[v]\leq 0$ on $\ker M$. 
In this case, \eqref{weakdissip} is satisfied with 
\[
E_{\rm bdry}(t)=0 \quad\mbox{and}\quad S_{\rm data}(t) = \beta_2 \abs{g}_{L^2_{\lambda,t}L^2(\mathit{\Gamma})}^2.
\]
Therefore, Proposition \ref{propNRJnear} provides an energy estimate controlling $\Abs{u(t)}_{L^2(\cE)}$, 
but there is no boundary energy granting further information on the trace of $u$. 
\end{example}

\begin{example}[Strictly dissipative boundary conditions]\label{BC3}
We say that the boundary condition \eqref{BClin} is \emph{strictly dissipative} if there are constants $\alpha_2>0$ and $\beta_2>0$ such that 
\[
{\mathfrak B}[v] \leq \beta_2 \abs{Mv}^2 - \alpha_2 \abs{v}^2 \quad\mbox{on}\quad (0,T)\times\mathit{\Gamma}
\]
holds for any $v\in\R^n$. 
In this case, \eqref{weakdissip} is satisfied with 
\[
E_{\rm bdry}(t) = \alpha_2 \abs{u}_{L^2_{\lambda,t}L^2(\mathit{\Gamma})}^2 \quad\mbox{and}\quad 
S_{\rm data}(t) = \beta_2 \abs{g}_{L^2_{\lambda,t}L^2(\mathit{\Gamma})}^2.
\]
In this particular configuration, the information we get from the boundary energy is a control of the $L^2((0,T)\times\mathit{\Gamma})$-norm 
of the trace of the solution that cannot be deduced through Sobolev embeddings from the control of $\Abs{u(t)}_{L^2(\cE)}$. 
\end{example}

We derived in Proposition \ref{propNRHhighfar} higher order energy estimates for solutions that vanish in the neighborhood of $\mathit{\Gamma}$. 
In the same spirit, for functions that do not necessarily vanish near $\mathit{\Gamma}$, 
we now want to generalize the energy estimate in Proposition \ref{propNRJnear} for higher regularity. 
We will actually consider solutions that are supported near $\mathit{\Gamma}$, that is, in $\overline{\cE} \cap U_\mathit{\Gamma}$, 
where, according to Section \ref{sectnoprmtang}, $U_\mathit{\Gamma}$ is a tubular neighborhood of $\mathit{\Gamma}$ 
in which normal-tangential coordinates can be used. 
We recall in particular that normal and tangential derivatives can be defined in $\overline{\cE} \cap U_\mathit{\Gamma}$ 
using the extensions ${N}$ and ${T}$ of the unit normal and tangential vectors to $\mathit{\Gamma}$ constructed in Section \ref{sectnoprmtang}; 
they are defined through the relations $\dnor={N}\cdot \nabla$ and $\dtan={T}\cdot \nabla$. 
Since ${N}$ is a unit vector while ${T}$ is not, we have the decomposition 
\[
\nabla = \abs{{T}}^{-2}{T}\dtan + {N}{\dnor}.
\]
In the neighborhood $\cE \cap U_\mathit{\Gamma}$, the system \eqref{eqlinfixed} can therefore be written equivalently under the form 
\begin{equation}\label{eqlinfixedbdry}
\dt u + A_{\rm tan}\dtan u + A_{\rm nor}\dnor u + Bu = f \quad\mbox{in}\quad (0,T)\times(\cE \cap U_\mathit{\Gamma}),
\end{equation}
where 
\[
A_{\rm tan} = \abs{T}^{-2}T_jA_j \quad\mbox{and}\quad A_{\rm nor} = N_jA_j.
\]

Two additional assumptions will be needed to derive these higher order energy estimates. 
The first one states that the weak dissipativity of Definition \ref{propweakdissip} is stable by time and tangential differentiations. 
This will allow us to obtain a control of the time and tangential derivatives of the solution using Proposition \ref{propNRJnear}; 
note that a control of the normal derivatives cannot be obtained along this procedure. 
We also recall that for $\alpha=(\alpha_0,\alpha_1)\in \N^2$ we write $\dpar^\alpha = \dt^{\alpha_0}\dtan^{\alpha_1}$.

\begin{definition} \label{propweakdissipm}
Let $m$ be a non-negative integer. 
We say that the boundary term ${\mathfrak B}[u]$ associated with a regular solution to \eqref{eqlinfixed} 
supported in $\overline{\cE}\cap U_\mathit{\Gamma}$ is \emph{weakly dissipative of order} $m$ if for each $\alpha\in\N^2$ satisfying 
$\abs{ \alpha }\leq m$ there exist a non-negative and non-decreasing function of time $S_{{\rm data},\alpha}[u](\cdot)$ 
depending only on the initial and boundary data possibly imposed on $u$, a non-negative \emph{boundary energy} function $E_{{\rm bdry},\alpha}(\cdot)$, 
and a non-negative continuous function $\nu_\alpha$ on $[0,\infty)$ satisfying $\nu_\alpha(0)=0$ such that for any $\lambda\geq\lambda_0$ with a 
positive constant $\lambda_0$ and any $t\in[0,T]$ we have 
\begin{align*}
&\int_0^t e^{-2\lambda t'} \biggl( \int_\mathit{\Gamma} {\mathfrak B}[\dpar^\alpha u(t')] \biggr){\rm d}t' \\
&\leq -E_{{\rm bdry},\alpha}(t)+S_{{\rm data},\alpha}(t) + \frac{\alpha_0}{8} I_{\lambda,t}(\Abs{\dpar^\alpha u(\cdot)}_{L^2(\cE)})^2
 + \alpha_0\nu_\alpha(\lambda^{-1})I_{\lambda,t}(\opnorm{ u(\cdot) }_m)^2,
\end{align*}
where $\alpha_0$ is the coercivity constant of the symmetrizer as defined in Assumption \ref{assFriedrichs}. 
In this case, we put 
\begin{equation}
E^m_{\rm bdry} = \sum_{\abs{\alpha}\leq m}E_{{\rm bdry},\alpha},\qquad
S^m_{\rm data} = \sum_{\abs{\alpha}\leq m}S_{{\rm data},\alpha},\qquad
\nu=\sum_{\abs{\alpha}\leq m}\nu_\alpha.
\end{equation}
\end{definition}

\begin{remark}
The component $\frac{\alpha_0}{8} I_{\lambda,t}(\Abs{\dpar^\alpha u(\cdot)}_{L^2(\cE)})^2$ in the right-hand side is the same 
as the one in Definition \ref{propweakdissip} with $u$ replaced by $\dpar^\alpha u$. 
The additional term $ \alpha_0\nu_\alpha(\lambda^{-1})I_{\lambda,t}(\opnorm{ u(\cdot) }_m)^2$ is here to control commutator terms; 
it contains non-tangential derivatives. 
\end{remark}

In order to deduce a control of the normal derivatives in terms of the time and tangential ones, 
we will use the equations under the assumption that the boundary is non-characteristic. 
In Section \ref{sectAENONCC} we treat this non-characteristic case. 
Note that in the nonlinear wave-structure interaction problem we are interested in here, 
this assumption is not directly satisfied and that an adaptation is necessary. 
To this end, in Section \ref{sectAECC} we consider the case where the boundary is not necessarily non-characteristic, 
but assume some structure of the equations under which one can introduce a generalized vorticity, 
which compensate the equations to control the normal derivatives.

\subsubsection{A priori energy estimates in the non-characteristic case}\label{sectAENONCC}
In this subsection, we consider the case where the boundary is non-characteristic in the following sense.

\begin{assumption}\label{assnonchar}
The problem \eqref{eqlinfixed} is \emph{non-characteristic}, that is, the boundary matrix $A_{\rm nor}$ is invertible in 
$(0,T)\times(\cE \cap U_\mathit{\Gamma})$ and there exists a constant $K_0$ such that 
\[
\abs{A_{\rm nor}(t,x)^{-1}} \leq K_0
\]
holds for any $(t,x) \in (0,T)\times(\cE \cap U_\mathit{\Gamma})$. 
\end{assumption}

\begin{proposition}\label{propfarorderm}
Let $m$ be a non-negative integer. 
Suppose that Assumptions \ref{assFriedrichs}--\ref{assnonchar} are satisfied and that 
the constants $K_0$ and $K$ in Assumptions \ref{ass:regAB} and \ref{assnonchar} are taken such that $\frac{\beta_0}{\alpha_0} \leq K_0$ and 
$\frac{\beta_1}{\alpha_0}\leq K$, where the constants $\alpha_0, \beta_0$, and $\beta_1$ are those in Assumption \ref{assFriedrichs}. 
Then, any regular solution $u$ to \eqref{eqlinfixed} supported in $\overline{\cE}\cap U_\mathit{\Gamma}$ and with a boundary term 
that is weakly dissipative of order $m$ in the sense of Definition \ref{propweakdissipm} satisfies 
\[
I_{\lambda,t}(\opnorm{u(t)}_m)^2 + \tfrac{1}{\alpha_0}E_{\rm bdry}^m(t)
\leq C(K_0) \bigl( \opnorm{u(0)}_m^2 + S^*_{\lambda,t}\big(\opnorm{f(\cdot)}_m\big)^2 + \tfrac{1}{\alpha_0}S^m_{\rm data}(t) \bigr)
\]
for any $\lambda\geq\lambda_0(K)$ and $t\in[0,T]$, where $\lambda_0(K)$ depends also on $\nu$. 
\end{proposition}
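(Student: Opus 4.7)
The plan is to reduce the estimate to a control of the time-tangential derivatives of $u$ of order up to $m$, for which the weakly dissipative $L^2$ estimate of Proposition \ref{propNRJnear} can be directly applied, and then to use the non-characteristic assumption (Assumption \ref{assnonchar}) to recover the normal derivatives pointwise in time through the equation itself.

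More precisely, I would first apply $\dpar^\alpha$ to \eqref{eqlinfixed} for each $|\alpha|\leq m$; the resulting system for $\dpar^\alpha u$ has the same principal part but an altered source $f_\alpha = \dpar^\alpha f - [\dpar^\alpha,A_j]\partial_j u - [\dpar^\alpha,B]u$. The weak dissipativity of order $m$ from Definition \ref{propweakdissipm} ensures that Proposition \ref{propNRJnear} applies to each $\dpar^\alpha u$. Using (a tangential-time variant of) Lemma \ref{lemmcommut} together with Assumption \ref{ass:regAB} to bound the commutators by $CK\opnorm{u(t)}_m$, and using \eqref{propSstar} to trade the resulting $L^2_\lambda$-in-time norm of this commutator for a factor $\lambda^{-1/2}$ in the $S^*_{\lambda,t}$-norm, the sum over $|\alpha|\leq m$ produces
\begin{equation*}
\sum_{|\alpha|\leq m}I_{\lambda,t}(\Abs{\dpar^\alpha u}_{L^2})^2 + \tfrac{8}{\alpha_0}E^m_{\rm bdry}(t) \leq C(K_0)\bigl(\opnorm{u(0)}_m^2 + S^*_{\lambda,t}(\opnorm{f}_m)^2 + \tfrac{1}{\alpha_0}S^m_{\rm data}(t)\bigr) + C(K_0)\bigl(K^2\lambda^{-2}+\nu(\lambda^{-1})\bigr)I_{\lambda,t}(\opnorm{u}_m)^2.
\end{equation*}

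Next, I would use Assumption \ref{assnonchar} to recover the normal derivatives. Writing the equation in the form $\dnor u = A_{\rm nor}^{-1}(f-\dt u-A_{\rm tan}\dtan u-Bu)$ in $\cE\cap U_\mathit{\Gamma}$ and proceeding by induction on the number $k$ of normal derivatives, each $\dnor^k\dpar^\beta u$ with $k+|\beta|\leq m$ is expressed as a linear combination of time-tangential derivatives of $u$ of order at most $k+|\beta|$ and of derivatives of $f$ of total order at most $m-1$, with coefficients built out of the coefficient matrices and their derivatives. Using Proposition \ref{propequivnorm} to compare Cartesian and normal-tangential derivative norms on $U_\mathit{\Gamma}$, this yields the pointwise-in-time bound $\opnorm{u(t)}_m \leq C(K_0)\bigl(\sum_{|\alpha|\leq m}\Abs{\dpar^\alpha u(t)}_{L^2} + \opnorm{f(t)}_{m-1}\bigr)$ which, once substituted into the previous inequality, allows the last error term to be absorbed into the left-hand side by choosing $\lambda\geq\lambda_0(K,\nu)$ sufficiently large, using that $\nu$ is continuous and vanishes at $0$.

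The main obstacle is the second step: although the invertibility of $A_{\rm nor}$ makes the recovery of normal derivatives algebraically straightforward, the bookkeeping of the induction and of the commutators arising when iterating the algebraic relation must be done carefully to match the coefficient regularity allowed by Assumption \ref{ass:regAB}. This is particularly delicate at the quasilinear threshold $m=3$, where, as pointed out in Remark \ref{rem:space}, one must use the $L^p$-regularity of $\boldsymbol{\partial}(A_1,A_2)$ rather than an $H^2$-embedding, so that the commutator estimate of Lemma \ref{lemmcommut} must be applied in its sharper low-$m$ form to remain compatible with that assumption.
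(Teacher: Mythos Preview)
Your two-step strategy---first control time-tangential derivatives via the weakly dissipative $L^2$ estimate, then recover normal derivatives through the non-characteristic condition---is exactly the paper's approach, and your first step matches the argument leading to the displayed inequality after \eqref{equalphanear}.

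The gap is in the second step. Your claimed pointwise-in-time bound
\[
\opnorm{u(t)}_m \leq C(K_0)\Bigl(\sum_{|\alpha|\leq m}\Abs{\dpar^\alpha u(t)}_{L^2} + \opnorm{f(t)}_{m-1}\Bigr)
\]
does not hold with a constant depending only on $K_0$. Already at the first iteration $\dnor u = A_{\rm nor}^{-1}(f-\dt u-A_{\rm tan}\dtan u-Bu)$ the term $A_{\rm nor}^{-1}Bu$ carries a factor $K_0K$, and each further iteration brings in derivatives of $A_j$ and $B$, all controlled only by $K$ under Assumption \ref{ass:regAB}. What you actually obtain is a constant $C(K)$, and this is not enough: the proposition requires $C(K_0)$ precisely so that in the proof of Theorem \ref{th:APE} the bound \eqref{eqNRJNL} has a constant independent of $K=\Abs{u}_{\mathbb W^m_T}$, which is what makes the bootstrap there close (choose $M_0$ from $K_0,K^{\rm in}$, then $K$ from $M_0$, then $T_0$ from $\lambda_0(K)$; this fails if $M_0$ already depends on $K$).

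The paper's Lemma \ref{lemmcontrpar} circumvents this by working at the level of $I_{\lambda,t}$ rather than pointwise. One step of the iteration \eqref{eqAnorinv} uses only $|A_{\rm nor}^{-1}|,|A_j|\leq K_0$; the residual $f_{j,k,l}$ (which carries all the $K$-dependence through $B$ and the commutators) is then handled via \eqref{eqdtS}, namely $I^0_{\lambda,t}(\varphi)\lesssim \Abs{\varphi(0)}_{L^2}+S^*_{\lambda,t}(\Abs{\dt\varphi}_{L^2})$. The initial value $\Abs{f_{j,k,l}(0)}_{L^2}$ is bounded by $C(K_0)\opnorm{u(0)}_m$ by reading \eqref{equjkl} as an \emph{expression} for $f_{j,k,l}$ (only $A_{\rm tan},A_{\rm nor}$ appear, no $B$ and no commutator), while $\Abs{\dt f_{j,k,l}}_{L^2}\leq \opnorm{f}_m+C(K)\opnorm{u}_m$ enters only through $S^*_{\lambda,t}$ and hence with an extra factor $\lambda^{-1}$, making the $C(K)$ contribution absorbable. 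This is the mechanism that pushes the $K$-dependence entirely into the threshold $\lambda_0(K)$; a direct pointwise iteration cannot achieve this separation.
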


\begin{example}
If a boundary condition of the form \eqref{BClin}, with $M$ constant, is maximal or strictly dissipative, 
and if the data $g$ is in $H^m((0,T)\times\mathit{\Gamma})$, then the boundary term is weakly dissipative at order $m$. 
For strictly dissipative boundary conditions, for instance, we have, with the notations in Example \ref{BC3}, 
\[
E^m_{\rm bdry}(t)=\alpha_2 \sum_{\abs{\alpha}\leq m}\abs{\dpar^\alpha u}_{L^2_{\lambda,t}L^2(\mathit{\Gamma})}^2,\qquad
S^m_{\rm data}(t)=\beta_2 \sum_{\abs{\alpha}\leq m}\abs{\dpar^\alpha g}_{L^2_{\lambda,t}L^2(\mathit{\Gamma})}^2,
\]
and Proposition \ref{propfarorderm} therefore gives a control of the trace on $\mathit{\Gamma}$ of all time and tangential derivatives up to order $m$. 
Using the equation as in the proof of Lemma \ref{lemmcontrpar} below, one can deduce a control of the trace of \emph{all} derivatives up to order $m$. 
\end{example}

\begin{proof}[Proof of Proposition \ref{propfarorderm}]
We first note that according to Proposition \ref{propequivnorm}, it is possible, for functions supported in $U_\mathit{\Gamma}$, 
to replace the norm $\opnorm{ u(t)}_m$ with 
\[
\opnorm{u(t)}_{m,*} := \sum_{j+k+l\leq m} \Abs{u^{(j,k,l)}(t)}_{L^2(\cE)},
\]
where $u^{(j,k,l)}=\dt^j\dtan^k\dnor^l u$. 
We also denote by $\opnorm{u(t)}_{m,\parallel}$ the sum of all terms in the above expression that do not contain any normal derivative, that is, 
\[
\opnorm{u(t)}_{m,\parallel}:=\sum_{j+k\leq m} \Abs{u^{(j,k,0)}(t)}_{L^2(\cE)}.
\]
For the sake of conciseness, let us also write 
\[
I^m_{\lambda, t}(u):=I_{\lambda,t}(\opnorm{u(\cdot)}_{m,*}) \quad\mbox{and}\quad
I^{m,\parallel}_{\lambda, t}(u):=I_{\lambda,t}(\opnorm{u(\cdot)}_{m,\parallel}).
\]
The following lemma shows that the fact that the problem is non-characteristic allows us to control the full quantity 
$I^m_{\lambda,t}(u)$ of solutions by its non-normal version $I_{\lambda,t}^{m,\parallel}(u)$.

\begin{lemma}\label{lemmcontrpar}
Under the assumptions of Proposition \ref{propfarorderm}, there exists a constant $\lambda_0=\lambda_0(K)$ such that 
any regular solution $u$ to \eqref{eqlinfixed} supported in $\overline{\cE} \cap U_\mathit{\Gamma}$ satisfies 
\[
I^m_{\lambda,t}(u) \leq C(K_0)\bigl( I_{\lambda,t}^{m,\parallel}(u)+ \opnorm{u(0)}_m+S^*_{\lambda,t}(\opnorm{f(\cdot)}_m) \bigr)
\]
for any $\lambda\geq \lambda_0$ and $t\in[0,T]$. 
\end{lemma}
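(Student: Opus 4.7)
The plan is to exploit the non-characteristic hypothesis in Assumption~\ref{assnonchar}: since $A_{\rm nor}$ is invertible with $|A_{\rm nor}^{-1}|\leq K_0$, the equation \eqref{eqlinfixedbdry} can be algebraically solved for $\dnor u$ in terms of the remaining derivatives,
\[
\dnor u = A_{\rm nor}^{-1}\bigl(f - \dt u - A_{\rm tan}\dtan u - B u\bigr)\quad\mbox{in}\quad(0,T)\times(\cE\cap U_\mathit{\Gamma}).
\]
Each application of this identity trades one normal derivative for either a time or tangential derivative (plus zero-order contributions and a piece of $f$), so by iteration every normal derivative can ultimately be eliminated in favor of the non-normal quantities that make up $\opnorm{\cdot}_{m,\parallel}$.

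I would proceed by (strong) induction on the number of normal derivatives $l$, proving that for every $(j,k,l)$ with $j+k+l\le m$,
\[
\Abs{u^{(j,k,l)}(t)}_{L^2(\cE)} \leq C(K_0)\bigl(\opnorm{u(t)}_{m,\parallel} + \opnorm{f(t)}_m\bigr) + (\text{commutator terms}).
\]
The base case $l=0$ is tautological. For the inductive step, apply $\dt^j\dtan^k\dnor^{l-1}$ to the displayed identity and expand with Leibniz: the principal terms are $A_{\rm nor}^{-1}\dpar^\alpha\dnor^{l-1}(\dt u, A_{\rm tan}\dtan u, Bu, f)$, which involve at most $l-1$ normal derivatives and so are covered by the inductive hypothesis; the commutators between $\dt^j\dtan^k\dnor^{l-1}$ and the coefficients $A_{\rm nor}^{-1},A_{\rm tan},B$ are controlled by Lemma~\ref{lemmcommut} using the regularity bounds of Assumption~\ref{ass:regAB}, and they produce factors of $K$ multiplied by norms of $u$ of total order $\leq m$.

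Finally, I would take the time norm $I_{\lambda,t}$ of the pointwise-in-time bound. The principal part produces $C(K_0)\bigl(I_{\lambda,t}^{m,\parallel}(u) + \opnorm{u(0)}_m + S^*_{\lambda,t}(\opnorm{f(\cdot)}_m)\bigr)$, while the commutator contributions are of the form $C(K)\,\Abs{u}$-norms that can be bounded, via the second inequality in \eqref{propSstar}, by $\frac{C(K)}{\sqrt{\lambda}}\,I^m_{\lambda,t}(u)$. Choosing $\lambda\geq\lambda_0(K)$ large enough so that $C(K)/\sqrt{\lambda}\leq \tfrac12$ lets us absorb the remaining $I^m_{\lambda,t}(u)$ into the left-hand side. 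Summing over $(j,k,l)$ with $j+k+l\leq m$ yields the announced estimate.

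The main obstacle is keeping the final constant dependent only on $K_0$: all $K$-dependence must be pushed into the threshold $\lambda_0(K)$, which forces the commutator remainders to be absorbed rather than kept on the right-hand side. This is delicate near the quasilinear regularity threshold $m=3$, where Lemma~\ref{lemmcommut} is applied in its borderline form using the $W^{1,p}$ bound on $\boldsymbol{\partial}(A_1,A_2)$ in Assumption~\ref{ass:regAB}; one must also be careful that normal derivatives of $A_{\rm nor}^{-1}$ (which re-enter through the Leibniz expansion) inherit the correct Sobolev bounds from $A_j$ via the chain rule, without introducing any additional loss.
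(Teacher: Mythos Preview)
Your overall strategy---use the invertibility of $A_{\rm nor}$ to trade each normal derivative for time/tangential ones, then iterate---is the same as the paper's. The gap is in the final absorption step.

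From a pointwise-in-$t$ inequality of the form
\[
\Abs{u^{(j,k,l)}(t)}_{L^2} \leq C(K_0)\bigl(\opnorm{u(t)}_{m,\parallel}+\opnorm{f(t)}_{m-1}\bigr) + C(K)\opnorm{u(t)}_m,
\]
applying $I_{\lambda,t}$ simply gives $I_{\lambda,t}$ of each term on the right, with no factor of $\lambda^{-1/2}$ anywhere. The second inequality in \eqref{propSstar} that you invoke bounds $S^*_{\lambda,t}$, not $I_{\lambda,t}$; but $S^*$ never appears in your argument, since you are merely taking a sup-plus-$L^2$ norm of a pointwise inequality. Hence the commutator contribution becomes $C(K)\,I^m_{\lambda,t}(u)$, which cannot be absorbed no matter how large $\lambda$ is. The $f$ term suffers the same defect: you obtain $C(K_0)\,I_{\lambda,t}(\opnorm{f(\cdot)}_{m-1})$, not $C(K_0)\,S^*_{\lambda,t}(\opnorm{f(\cdot)}_m)$.

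The paper's repair is to apply the identity \eqref{eqdtS}, namely $I^0_{\lambda,t}(\varphi)\lesssim \Abs{\varphi(0)}_{L^2}+S^*_{\lambda,t}(\Abs{\dt\varphi}_{L^2})$, not to $u$ or $f$ separately but to the \emph{full} right-hand side
\[
f_{j,k,l}={\mathfrak d}^{(j,k,l)}(f-Bu)-[{\mathfrak d}^{(j,k,l)},A_{\rm tan}]\dtan u-[{\mathfrak d}^{(j,k,l)},A_{\rm nor}]\dnor u
\]
of the differentiated equation \eqref{equjkl}. The point is that $f_{j,k,l}$ has two expressions: the commutator form above, and the equation form $f_{j,k,l}=u^{(j+1,k,l)}+A_{\rm tan}u^{(j,k+1,l)}+A_{\rm nor}u^{(j,k,l+1)}$. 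One uses the \emph{equation form} at $t=0$, which involves only $\|A_{\rm tan}\|_{L^\infty},\|A_{\rm nor}\|_{L^\infty}\leq K_0$ and yields $\Abs{f_{j,k,l}(0)}_{L^2}\leq C(K_0)\opnorm{u(0)}_m$; one uses the \emph{commutator form} for $\dt f_{j,k,l}$, which gives $\Abs{\dt f_{j,k,l}}_{L^2}\leq \opnorm{f}_m+C(K)\opnorm{u}_m$, but now this sits inside $S^*_{\lambda,t}$ and therefore picks up the factor $\lambda^{-1}$ needed for absorption. This dual use of the two representations of $f_{j,k,l}$ is exactly what keeps the final constant $C(K_0)$ while pushing all $K$-dependence into $\lambda_0(K)$; separating into ``principal $f$'' and ``commutators'' before passing to time norms, as you do, destroys this structure.
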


\begin{proof}
Applying ${\mathfrak d}^{(j,k,l)}=\dt^j\dtan^k\dnor^l $ with $j+k+l\leq m-1$ to the equations, we get 
\begin{equation}\label{equjkl}
u^{(j+1,k,l)}+A_{\rm tan}u^{(j,k+1,l)}+A_{\rm nor}u^{(j,k,l+1)} = f_{j,k,l},
\end{equation}
where 
\begin{equation}\label{fjkl}
f_{j,k,l} = {\mathfrak d}^{(j,k,l)}(f-Bu)-[{\mathfrak d}^{(j,k,l)},A_{\rm tan}]\dtan u-[{\mathfrak d}^{(j,k,l)},A_{\rm nor}]\dnor u.
\end{equation}
Since we assumed that $A_{\rm nor}$ is invertible, we can now use \eqref{equjkl} to write 
\begin{equation}\label{eqAnorinv}
\abs{u^{(j,k,l+1)}} \leq C(K_0) (\abs{u^{(j+1,k,l)}} + \abs{u^{(j,k+1,l)}} + \abs{f_{j,k,l}}),
\end{equation}
from which we deduce 
\[
I_{\lambda,t}^0(u^{(j,k,l+1)})
\leq C(K_0)\bigl( I_{\lambda,t}^0(u^{(j+1,k,l)}) + I_{\lambda,t}^0(u^{(j,k+1,l)}) + I_{\lambda,t}^{0}(f_{j,k,l}) \bigr). 
\]
There is one less normal derivative on $u$ in the right-hand side than in the left-hand one, so that this relation can be used inductively to obtain 
\[
I_{\lambda,t}^m(u) \leq C(K_0)\bigl( I_{\lambda,t}^{m,\parallel}(u)+ \sum_{j+k+l \leq m-1} I_{\lambda,t}^{0}(f_{j,k,l}) \bigr).
\]
We now need the following property 
\begin{equation}\label{eqdtS}
I_{\lambda,t}^{0}(\varphi)\leq C\big( \Abs{\varphi(0)}_{L^2}+S_{\lambda,t}^*(\Abs{\dt\varphi(\cdot)}_{L^2})
\end{equation}
for all smooth enough function $\varphi$ defined on $\R_+\times \cE$; see Lemma 2.16 in \cite{IguchiLannes} for a proof. 
Therefore, 
\[
I_{\lambda,t}^{0}(f_{j,k,l}) \leq C\bigl( \Abs{f_{j,k,l}(0)}_{L^2}+S^*_{\lambda,t}(\Abs{\dt f_{j,k,l}(\cdot)}_{L^2}) \bigr). 
\]
Here, on one hand, by regarding \eqref{equjkl} as an expression of $f_{j,k,l}$ we have 
\[
\Abs{f_{j,k,l}(0)}_{L^2}\leq C(K_0) \opnorm{u(0)}_m. 
\]
On the other hand, by the definition \eqref{fjkl} of $f_{j,k,l}$, we have also 
\[
\Abs{\dt f_{j,k,l}(t)}_{L^2}\leq \opnorm{f(t)}_{m,*}+C(K)\opnorm{u(t)}_{m}. 
\]
Therefore, we obtain 
\[
I_{\lambda,t}^m(u) \leq C(K_0)\bigl( I_{\lambda,t}^{m,\parallel}(u) + \opnorm{u(0)}_m + S^*_{\lambda,t}(\opnorm{f(\cdot)}_m) \bigr)
 + C(K)S^*_{\lambda,t}(\opnorm{u(\cdot)}_m).
\]
We conclude by recalling that, from \eqref{propSstar}, we have $S^*_{\lambda,t}(\opnorm{u(\cdot)}_m) \leq \frac{1}{\lambda}I^m_{\lambda,t}(u)$, 
so that the last term in the above inequality can be absorbed into the left-hand side when $\lambda$ is large enough depending only on $K$. 
\end{proof}

Owing to the lemma, it is enough to control the $L^2$-norm of time and tangential derivatives of the solution. 
For any $\alpha=(\alpha_0,\alpha_1)\in \N^2$, we write $\dpar^\alpha = \dt^{\alpha_0}\dtan^{\alpha_1}$. 
Applying $\dpar^\alpha$ to the equations \eqref{eqlinfixedbdry}, we get 
\begin{equation}\label{equalphanear}
\dt \dpar^\alpha u + A_{\rm tan}\dtan \dpar^\alpha u + A_{\rm nor}\dnor\dpar^\alpha u + B\dpar^\alpha u = f_\alpha,
\end{equation}
where 
\[
f_\alpha = \dpar^\alpha f - [\dpar^\alpha, A_{\rm tan}]\dtan u - [\dpar^\alpha, A_{\rm nor}]\dnor u - [\dpar^\alpha, B]u. 
\]
In particular, proceeding as in Lemma \ref{lemmcommut}, we get 
\[
\sum_{\abs{\alpha}\leq m}\Abs{f_\alpha(t)}_{L^2} \leq \opnorm{f(t)}_{m,*}+C(K)\opnorm{u(t)}_{m}.
\]
Applying Proposition \ref{propNRJnear} to \eqref{equalphanear} and summing over all $\abs{\alpha}\leq m$, we therefore obtain 
\begin{align*}
I^{m,\parallel}_{\lambda,t}(u) + \bigl( \tfrac{1}{\alpha_0}E^m_{\rm bdry} \bigr)^{1/2}
\leq& C(K_0) \bigl( \opnorm{u(0)}_m + S_{\lambda,t}^*(\opnorm{f(\cdot)}_m) + \bigl( \tfrac{1}{\alpha_0}S^m_{\rm data} \bigr)^{1/2} \bigr) \\
& + C(K)S_{\lambda,t}^*(\opnorm{u(\cdot)}_m) + C(K_0)\nu(\lambda^{-1})I^{m}_{\lambda,t}(u).
\end{align*}
Lemma \ref{lemmcontrpar} allows us to replace $I^{m,\parallel}_{\lambda,t}(u)$ with $I^{m}_{\lambda,t}(u)$ in the left-hand side, and 
the last two terms in the right-hand side can be absorbed for $\lambda $ large enough into the left-hand side as in the proof of Lemma \ref{lemmcontrpar}. 
This concludes the proof. 
\end{proof}

\subsubsection{A priori energy estimates in a characteristic case}\label{sectAECC}
We proceed to consider the case where the boundary is not necessarily non-characteristic and impose structural conditions on the equations. 
Since the lower order term $B(t,x)u$ in \eqref{eqlinfixed} does not contribute to the main structure of the equations in our analysis, 
we absorb the term in the right-hand side and consider in this subsection the equations 
\begin{equation}\label{eqlinbis}
\dt u +A_j(t,x)\partial_j u = f(t,x) \quad\mbox{in}\quad (0,T)\times\cE.
\end{equation}
 {Multiplying a matrix $\widetilde{A}_0(t,x)$ to this equation, we have} 
%
\[
\widetilde{A}_0(t,x)\dt u + \widetilde{A}_j(t,x)\partial_j u = \widetilde{f}(t,x) \quad\mbox{in}\quad (0,T)\times\cE,
\]
where $\widetilde{A}_j=\widetilde{A}_0A_j$ $(j=1,2)$ and $\widetilde{f}=\widetilde{A}_0f$. 
The corresponding boundary matrix is given by $\widetilde{A}_{\rm nor}=N_j\widetilde{A}_j=\widetilde{A}_0A_{\rm nor}$ with $A_{\rm nor}=N_jA_j$, 
for which we impose the following assumptions.

\begin{assumption}\label{asschar}
For any $(t,x)\in(0,T)\times(\cE\cap U_{\mathit{\Gamma}})$, $\widetilde{A}_{\rm nor}(t,x)$ has eigenvalues $\lambda_j(t,x)$ $(j=1,2,\ldots,n)$ 
with associated left eigenvectors $\bm{l}_j(t,x)$, which satisfy the following properties: 
\begin{enumerate}
\item[\rm (i)]
The last $n_2$ eigenvalues $\lambda_{n_1+j}$ $(j=1,2,\ldots,n_2)$ are positive or negative definite, where $n_1+n_2=n$. 

\item[\rm (ii)]
The left eigenvectors $\bm{l}_j$ associated with the first $n_1$ eigenvalues $\lambda_j$ $(j=1,2,\ldots,n_1)$ can be written as 
\[
\bm{l}_j(t,x) = N_1(x)\bm{q}_{j,1}(t,x) + N_2(x)\bm{q}_{j,2}(t,x),
\]
where $\R^n$-valued functions $\bm{q}_{j,1}$ and $\bm{q}_{j,2}$ satisfy the relations 
\[
\begin{cases}
 \bm{q}_{j,1}^{\rm T}(\widetilde{A}_1-w_{j,1}\widetilde{A}_0) = \bm{q}_{j,2}^{\rm T}(\widetilde{A}_2-w_{j,2}\widetilde{A}_0) = \bm{0}^{\rm T}, \\
 \bm{q}_{j,1}^{\rm T}(\widetilde{A}_2-w_{j,2}\widetilde{A}_0) + \bm{q}_{j,2}^{\rm T}(\widetilde{A}_1-w_{j,1}\widetilde{A}_0) = \bm{0}^{\rm T}
\end{cases}
\]
for some $\R^2$-valued functions $w_j=(w_{j,1},w_{j,2})^{\rm T}$ for $j=1,2,\ldots,n_1$. 

\item[\rm (iii)]
The $n\times n$ matrix $L(t,x)$ defined by 
$L=(\widetilde{A}_0^{\rm T}\bm{l}_1,\ldots,\widetilde{A}_0^{\rm T}\bm{l}_{n_1},\bm{l}_{n_1+1},\ldots,\bm{l}_{n_1+n_2})$ is invertible for any 
$(t,x)\in(0,T)\times(\cE\cap U_{\mathit{\Gamma}})$. 

\item[\rm (iv)]
We have $N\cdot w_j \leq 0$ on $(0,T)\times\mathit{\Gamma}$ for $j=1,2,\ldots,n_1$. 
\end{enumerate}
\end{assumption}

\begin{remark}
For $j=1,2,\ldots,n_1$ and $k=1,2$, $w_{j,k}(t,x)$ is necessarily an eigenvalue of the matrix  {$A_k(t,x)$} 
associated with a left eigenvector $\widetilde{A}_0(t,x)^{\rm T}\bm{q}_{j,k}(t,x)$ if $\widetilde{A}_0(t,x)^{\rm T}\bm{q}_{j,k}(t,x)\ne\bm{0}$. 
Particularly, we see that $N\cdot w_j$ is an eigenvalue of the boundary matrix  {$A_{\rm nor}$}. 
\end{remark}

Under these assumptions, we can define a generalized vorticity $\omega=(\omega_1,\omega_2,\ldots,\omega_{n_1})^{\rm T}$ by 
\begin{equation}\label{defGVor}
\omega_j = \bm{q}_{j,k}^{\rm T}\widetilde{A}_0\partial_k u
\end{equation}
for $j=1,2,\ldots,n_1$, where we used Einstein's convention on the index $k$. 
It follows from \eqref{eqlinbis} that this generalized vorticity $\omega$ satisfies the equations 
\begin{equation}\label{EqVor}
\dt\omega_j+w_j\cdot\nabla\omega_j=F_j \quad\mbox{in}\quad (0,T)\times(\cE\cap U_{\mathit{\Gamma}})
\end{equation}
for $j=1,2,\ldots,n_1$, where 
\begin{align*}
F_j
&= ((\dt+w_j\cdot\nabla)(\bm{q}_{j,k}^{\rm T}\widetilde{A}_0))\partial_k u
 + \bm{q}_{j,k}^{\rm T}\widetilde{A}_0(\partial_k f - (\partial_kA_l)\partial_l u).
\end{align*}

\begin{example}\label{ExVor}
The nonlinear shallow water equations \eqref{SWEinE} can be written as in the form \eqref{eqlinbis} with $u=(\zeta,v^{\rm T})^{\rm T}$ and 
\[
A_j =
\begin{pmatrix}
 v_j & h{\bf e}_{j}^{\rm T} \\
 \gr {\bf e}_j & v_j\mbox{\rm Id}_{2\times 2}
\end{pmatrix}
\]
for $j=1,2$. 
By choosing $\widetilde{A}_0=\mbox{\rm Id}_{3\times3}$, the boundary matrix $\widetilde{A}_{\rm nor}=A_{\rm nor}$ is given by 
\begin{equation}\label{BM}
A_{\rm nor} = 
\begin{pmatrix}
 N\cdot v & h N^{\rm T} \\
 \gr N & (N\cdot v){\rm Id_{2\times 2}}
\end{pmatrix}
\end{equation}
and has eigenvalues $\lambda_1=N\cdot v$, 
$\lambda_2=N\cdot v+\sqrt{gh}$, and $\lambda_3=N\cdot v-\sqrt{gh}$. 
Under the subcriticality condition 
\begin{equation}\label{subcritical}
\inf_{(t,x)\in(0,T)\times\cE}(\gr h(t,x)-|v(t,x)|^2)>0,
\end{equation}
the last two eigenvalues are positive and negative definite, respectively, so that we can choose $n_1=1$ and $n_2=2$. 
A left eigenvector $\bm{l}_1$ associated to the first eigenvalue $\lambda_1$ is given by $(0,(N^\perp)^{\rm T})^{\rm T}$ so that 
we have $\bm{q}_{1,1}=(0,{\bf e}_2^{\rm T})^{\rm T}$ and $\bm{q}_{1,2}=-(0,{\bf e}_1^{\rm T})^{\rm T}$. 
Therefore, the corresponding generalized vorticity is given by $\omega=\bm{q}_{1,k}^{\rm T}\partial_k u=\nabla^\perp\cdot v$, 
which is nothing but the vorticity of the velocity field $v$. 
This is the reason why we call $\omega$ as a generalized vorticity. 
Moreover, the $\R^2$-valued function $w_1$ is now given by $v$, so that the condition (ii) in Assumption \ref{asschar} is satisfied. 
The left eigenvectors associated with the last two eigenvalues are given by 
$\bm{l}_2=(\sqrt{\gr h},hN^{\rm T})^{\rm T}$ and $\bm{l}_3=(-\sqrt{\gr h},hN^{\rm T})^{\rm T}$, so that we have 
$\det L = 2h\sqrt{\gr h}$ and that the condition (iii) is also satisfied. 
Finally, the condition (iv) is reduced to $N\cdot v\leq0$ on $(0,T)\times\mathit{\Gamma}$. 
\end{example}

We put $Q = (\bm{q}_{1,1},\bm{q}_{1,2},\ldots,\bm{q}_{n_1,1},\bm{q}_{n_1,2})$ and $W = (w_1,\ldots,w_{n_1})$.

\begin{assumption}\label{ass:regLQW}
There exist two constants $0<K_0\leq K$ such that the following conditions hold. 
\begin{enumerate}
\item[\rm (i)]
$\|(\lambda_{n_1}^{-1},\ldots,\lambda_{n_1+n_2}^{-1})\|_{L^\infty((0,T)\times(\cE\cap U_{\mathit{\Gamma}})} \leq K_0$.

\item[\rm (ii)]
$\|(\widetilde{A}_0,A_1,A_2,L,L^{-1},Q)\|_{L^\infty((0,T)\times(\cE\cap U_{\mathit{\Gamma}})} \leq K_0$.
\item[\rm (iii)]
$\|(\widetilde{A}_0,A_1,A_2,Q,W)\|_{W^{1,\infty}((0,T)\times(\cE\cap U_{\mathit{\Gamma}})} \leq K$.
\end{enumerate}
\end{assumption}

\begin{lemma}\label{lemmvor}
Suppose that Assumptions \ref{asschar} and \ref{ass:regLQW} are satisfied. 
Then, any regular solution $u$ to \eqref{eqlinbis} supported in $\overline{\cE}\cap U_\mathit{\Gamma}$ satisfies 
\begin{align*}
I_{\lambda,t}( \|\dnor u(\cdot)\|_{L^2})
\leq C(K_0) \bigl( \opnorm{ u(0) }_1 + I_{\lambda,t}(\|\dpar u(\cdot)\|_{L^2}) \bigr) + C(K)S_{\lambda,t}^*(\opnorm{ f(\cdot) }_1)
\end{align*}
for any $\lambda\geq\lambda_0(K)$ and $t\in[0,T]$. 
\end{lemma}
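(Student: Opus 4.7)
The plan is to combine two ingredients: a pointwise control of $\dnor u$ in terms of the generalized vorticity $\omega$, the tangential/time derivatives, and $f$; and an $L^2$-transport estimate for $\omega$ itself. The loop between the two (each controls derivatives that appear in the estimate of the other) will be closed by the gain of $\lambda^{-1/2}$ built into $S^*_{\lambda,t}$.

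First, I would establish the pointwise identity $|\dnor u|\lesssim_{K_0} |\omega|+|\dpar u|+|f|$. Multiplying \eqref{eqlinbis} by $\widetilde{A}_0$ and decomposing $\nabla=N\dnor+|T|^{-2}T\dtan$ in $U_\mathit{\Gamma}$, one has $\widetilde{A}_0\dt u+\widetilde{A}_{\rm nor}\dnor u+\widetilde{A}_{\rm tan}\dtan u=\widetilde{A}_0 f$ with $\widetilde{A}_{\rm nor}=\widetilde{A}_0 A_{\rm nor}$. For $j=n_1+1,\dots,n$, left-multiplying by $\bm l_j^{\rm T}$ gives $\lambda_j\,\bm l_j^{\rm T}\dnor u=\bm l_j^{\rm T}(\widetilde{A}_0 f-\widetilde{A}_0\dt u-\widetilde{A}_{\rm tan}\dtan u)$, and Assumption \ref{ass:regLQW}(i) controls $|\lambda_j^{-1}|$ by $K_0$. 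For $j=1,\dots,n_1$, unfolding $\omega_j=\bm q_{j,k}^{\rm T}\widetilde{A}_0\partial_k u$ with $\partial_k=N_k\dnor+|T|^{-2}T_k\dtan$ and using $\bm l_j=N_1\bm q_{j,1}+N_2\bm q_{j,2}$, I get the algebraic identity
\[
\bm l_j^{\rm T}\widetilde{A}_0\dnor u=\omega_j-|T|^{-2}(T_k\bm q_{j,k}^{\rm T})\widetilde{A}_0\dtan u.
\]
Stacking these $n_1+n_2$ scalar relations yields $L^{\rm T}\dnor u$ on the left, and the invertibility of $L$ from Assumption \ref{asschar}(iii), together with Assumption \ref{ass:regLQW}(ii), gives the desired pointwise bound after taking $L^2$-norms.

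Next, I would estimate $\omega=(\omega_1,\dots,\omega_{n_1})^{\rm T}$ by energy methods on the transport equation \eqref{EqVor}. Multiplying by $\omega_j$ and integrating over $\cE$, the only boundary contribution comes from $\mathit{\Gamma}$ (since $u$, and hence $\omega$, is supported in $\overline{\cE}\cap U_\mathit{\Gamma}$); because the outward normal to $\cE$ on $\mathit{\Gamma}$ is $-N$, one obtains $-\frac12\int_\mathit{\Gamma}(N\cdot w_j)\omega_j^2$, which is non-negative by Assumption \ref{asschar}(iv). The interior term $-\frac12\int_\cE(\nabla\cdot w_j)\omega_j^2$ is controlled by $K\|\omega_j\|_{L^2}^2$. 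Arguing exactly as in Proposition \ref{propNRJL2far} (weighting by $e^{-2\lambda t}$, absorbing the lower-order term for $\lambda\geq \lambda_0(K)$), this produces
\[
I_{\lambda,t}(\|\omega(\cdot)\|_{L^2})^2\lesssim \|\omega(0)\|_{L^2}^2+S^*_{\lambda,t}(\|F(\cdot)\|_{L^2})^2,
\]
with $\|\omega(0)\|_{L^2}\leq C(K_0)\opnorm{u(0)}_1$ and, from the explicit formula for $F_j$, $\|F(t)\|_{L^2}\leq C(K)(\opnorm{u(t)}_1+\opnorm{f(t)}_1)$.

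Finally, I combine the two bounds. Expanding $\opnorm{u}_1\leq \|u\|+\|\dpar u\|+\|\dnor u\|$, applying \eqref{propSstar} to lose a factor $\lambda^{-1/2}$ when passing from $S^*_{\lambda,t}$ to $I_{\lambda,t}$, and using \eqref{eqdtS} to bound $I_{\lambda,t}(\|u\|)$ and $I_{\lambda,t}(\|f\|)$ by $\opnorm{u(0)}_1$, $S^*_{\lambda,t}(\opnorm{f}_1)$, and $I_{\lambda,t}(\|\dpar u\|)$, I get an inequality of the form
\[
I_{\lambda,t}(\|\dnor u\|_{L^2})\leq C(K_0)\bigl(\opnorm{u(0)}_1+I_{\lambda,t}(\|\dpar u\|_{L^2})\bigr)+C(K)S^*_{\lambda,t}(\opnorm{f(\cdot)}_1)+C(K_0,K)\lambda^{-1/2}I_{\lambda,t}(\|\dnor u\|_{L^2}).
\]
Choosing $\lambda\geq\lambda_0(K)$ large enough to absorb the last term on the left yields the claim. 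The main obstacle is precisely this circular dependence: $\omega$ requires control of $\opnorm{u}_1$, which itself contains $\|\dnor u\|$; it is resolved only thanks to the $\lambda^{-1/2}$ gain of $S^*_{\lambda,t}$ together with the good-sign boundary contribution granted by condition (iv) of Assumption \ref{asschar}.
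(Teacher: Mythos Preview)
Your proof is correct and follows essentially the same approach as the paper: decompose $L^{\rm T}\dnor u$ into the $n_2$ non-characteristic components (controlled directly from the equation via $\lambda_j^{-1}$) and the $n_1$ characteristic components (controlled through the generalized vorticity $\omega$), run a weighted $L^2$ energy estimate on the transport equation \eqref{EqVor} using the sign condition (iv) to discard the boundary term, and close the loop by absorption. The only cosmetic difference is that the paper records a $\lambda^{-1}$ gain (via $S^*_{\lambda,t}\leq\lambda^{-1}I_{\lambda,t}$) rather than $\lambda^{-1/2}$, and bounds $\|F_j\|_{L^2}$ by $\|\nabla u\|_{L^2}+\|\nabla f\|_{L^2}$ rather than the full $\opnorm{u}_1+\opnorm{f}_1$, but neither affects the argument.
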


\begin{proof}
Since $u$ is supported in $\overline{\cE}\cap U_{\mathit{\Gamma}}$, 
we can rewrite the equation under the form $\widetilde{A}_{\rm nor}\dnor u = \widetilde{A}_0(f-\dt u-A_{\rm tan}\dtan u)$. 
Taking the scalar product of this equation with the last $n_2$ left eigenvectors $\bm{l}_j$, we have 
$\bm{l}_j\cdot\dnor u=\lambda_j^{-1}\bm{l}_j\cdot\widetilde{A}_0(f-\dt u-A_{\rm tan}\dtan u)$, which yields 
\[
|\bm{l}_j\cdot\dnor u| \leq C(K_0)(|f|+|\dpar u|) \quad\mbox{for}\quad n_1+1\leq j\leq n_1+n_2.
\]
By the definition \eqref{defGVor} of the generalized vorticity $\omega$, we have 
$\omega_j=\bm{l}_j\cdot\widetilde{A}_0\dnor u + |T|^{-2}T_k\bm{q}_{j,k}\cdot\widetilde{A}_0\dtan u$, which yields 
\[
|\widetilde{A}_0^{\rm T}\bm{l}_j\cdot\dnor u| \leq C(K_0)( |\omega_j| + |\dtan u| ) \quad\mbox{for}\quad 1\leq j\leq n_1.
\]
Since $L=(\widetilde{A}_0^{\rm T}\bm{l}_1,\ldots,\widetilde{A}_0^{\rm T}\bm{l}_{n_1},\bm{l}_{n_1+1},\ldots,\bm{l}_{n_1+n_2})$ is invertible, 
these two estimates imply 
\begin{equation}\label{EstNoru}
I_{\lambda,t}( \|\dnor u(\cdot)\|_{L^2})
\leq C(K_0) \bigl( I_{\lambda,t}( \|f(\cdot)\|_{L^2}) + I_{\lambda,t}( \|\omega(\cdot)\|_{L^2}) + I_{\lambda,t}(\|\dpar u(\cdot)\|_{L^2}) \bigr).
\end{equation}

We proceed to evaluate the generalized vorticity $\omega$. 
Since $\omega_j$ satisfies \eqref{EqVor}, we see that 
\begin{align*}
&\frac{\rm d}{{\rm d}t} \bigl\{ e^{-2\lambda t }\|\omega_j(t)\|_{L^2}^2 \bigr\}
 + 2\lambda e^{-2\lambda t}\|\omega_j(t)\|_{L^2}^2 \\
&= 2e^{-2\lambda t}(\omega_j(t),2F_j(t)+(\nabla\cdot w_j)\omega_j)_{L^2} + e^{-2\lambda t}\int_\mathit{\Gamma} (N\cdot w_j)\omega_j^2.
\end{align*}
In view of (iv) in Assumption \ref{asschar}, we can drop the last integral on $\mathit{\Gamma}$. 
Therefore, by similar calculations to the proof of Proposition \ref{propNRJL2far} we obtain 
\begin{equation}\label{EstVor}
I_{\lambda,t}( \|\omega_j(\cdot)\|_{L^2})
\leq C( \|\omega_j(0)\|_{L^2} + S_{\lambda,t}^*( \|F_j(\cdot)\|_{L^2} ).
\end{equation}

Here, we see that $\|\omega_j(0)\|_{L^2} \leq C(K_0)\|u(0)\|_{H^1}$ and that 
\begin{align*}
S_{\lambda,t}^*( \|F_j(\cdot)\|_{L^2} )
&\leq C(K)\bigl( S_{\lambda,t}^*( \|\nabla f(\cdot)\|_{L^2} ) + S_{\lambda,t}^*( \|\nabla u(\cdot)\|_{L^2} ) \bigr) \\
&\leq C(K)S_{\lambda,t}^*( \opnorm{ f(\cdot) }_1 ) \\
&\quad\;
 + \lambda^{-1}C(K) \bigl( I_{\lambda,t}( \|\dtan u(\cdot)\|_{L^2} ) + I_{\lambda,t}( \|\dnor u(\cdot)\|_{L^2} ) \bigr).
\end{align*}
Moreover, by \eqref{eqdtS} we see also that 
\begin{align*}
I_{\lambda,t}( \|f(\cdot)\|_{L^2})
&\leq C\bigl( \|f(0)\|_{L^2} + S_{\lambda,t}^*( \|\dt f(\cdot)\|_{L^2} ) \bigr) \\
&\leq C\bigl( \opnorm{ u(0) }_1 + S_{\lambda,t}( \opnorm{ f(\cdot) }_1 ) \bigr).
\end{align*}
These estimates together with \eqref{EstNoru} and \eqref{EstVor} give the desired estimate. 
\end{proof}

\begin{proposition}\label{propfarorderm2}
Let $m$ be a non-negative integer. 
Suppose that Assumptions \ref{assFriedrichs}, \ref{ass:regAB}, \ref{asschar}, and \ref{ass:regLQW} are satisfied and that 
the constants $K_0$ and $K$ in Assumptions \ref{ass:regAB} and \ref{ass:regLQW} are taken such that $\frac{\beta_0}{\alpha_0} \leq K_0$ and 
$\frac{\beta_1}{\alpha_0}\leq K$, where the constants $\alpha_0, \beta_0$, and $\beta_1$ are those in Assumption \ref{assFriedrichs}. 
Then, any regular solution $u$ to \eqref{eqlinfixed} supported in $\overline{\cE}\cap U_\mathit{\Gamma}$ and with a boundary term 
that is weakly dissipative of order $m$ in the sense of Definition \ref{propweakdissipm} satisfies 
\[
I_{\lambda,t}(\opnorm{u(t)}_m)^2 + \tfrac{1}{\alpha_0}E_{\rm bdry}^m(t)
\leq C(K_0) \bigl( \opnorm{u(0)}_m^2 + S^*_{\lambda,t}\big(\opnorm{f(\cdot)}_m\big)^2 + \tfrac{1}{\alpha_0}S^m_{\rm data}(t) \bigr)
\]
for any $\lambda\geq\lambda_0(K)$ and $t\in[0,T]$, where $\lambda_0(K)$ depends also on $\nu$. 
\end{proposition}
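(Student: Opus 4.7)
The plan is to follow the same template as the proof of Proposition \ref{propfarorderm}, with the only substantial change being that the non-characteristic inversion step used in Lemma \ref{lemmcontrpar} must be replaced by an iterated application of Lemma \ref{lemmvor}, the characteristic substitute based on the generalized vorticity. The remaining ingredients---commuting $\dpar^\alpha$ with the equation, invoking Lemma \ref{lemmcommut} to control commutators, applying Proposition \ref{propNRJnear} to each derived system, summing over $|\alpha|\leq m$, and absorbing lower order terms by choosing $\lambda$ large---carry over without modification.

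For the non-normal estimate, I apply $\dpar^\alpha$ to \eqref{eqlinfixed} for $|\alpha|\leq m$, obtaining systems of the form \eqref{equalphanear} with source terms bounded in $L^2$ by $\opnorm{f(t)}_m + C(K)\opnorm{u(t)}_m$ via Lemma \ref{lemmcommut}. Applying Proposition \ref{propNRJnear} to each and summing, the weak dissipativity of order $m$ yields
\[
I^{m,\parallel}_{\lambda,t}(u)^2 + \tfrac{1}{\alpha_0}E^m_{\rm bdry}(t) \leq C(K_0)\bigl(\opnorm{u(0)}_m^2 + S^*_{\lambda,t}(\opnorm{f(\cdot)}_m)^2 + \tfrac{1}{\alpha_0}S^m_{\rm data}(t)\bigr) + \mathcal{R},
\]
where $\mathcal{R}$ gathers terms of the form $C(K)S^*_{\lambda,t}(\opnorm{u(\cdot)}_m)^2$ and $C(K_0)\nu(\lambda^{-1})I^m_{\lambda,t}(u)^2$ that will be absorbed once $\lambda$ is large, exactly as in the non-characteristic case, and where the notations $I^m_{\lambda,t}(u)$ and $I^{m,\parallel}_{\lambda,t}(u)$ are borrowed from the proof of Proposition \ref{propfarorderm}.

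The main obstacle is the characteristic analog of Lemma \ref{lemmcontrpar}, namely bounding $I^m_{\lambda,t}(u)$ by $I^{m,\parallel}_{\lambda,t}(u)$ plus initial data and $S^*_{\lambda,t}(\opnorm{f(\cdot)}_m)$. I would prove this by induction on the number $l$ of normal derivatives: for each $(j,k,l)$ with $j+k+l\leq m$ and $l\geq 1$, the function $v = \dpar^{(j,k)}\dnor^{l-1}u$ solves a system of the form \eqref{eqlinbis} with the same structural matrices and a source obtained by commuting $\dpar^{(j,k)}\dnor^{l-1}$ with $A_j\partial_j$. Applying Lemma \ref{lemmvor} to $v$ and using that $\dpar$ and $\dnor$ commute, one controls $\|\dpar^{(j,k)}\dnor^l u\|_{L^2}$ by quantities involving at most $l-1$ normal derivatives of $u$, thanks to the regularity bounds of Assumption \ref{ass:regLQW}(iii). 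The delicate point is that the generalized vorticity associated with $v$ is not the simple derivative $\dpar^{(j,k)}\dnor^{l-1}\omega$: one must track the commutators of $\dpar^{(j,k)}\dnor^{l-1}$ with \eqref{defGVor} and \eqref{EqVor}, for which the specific structure $\bm{l}_j = N_1\bm{q}_{j,1}+N_2\bm{q}_{j,2}$ in Assumption \ref{asschar}(ii) guarantees that each commutator is of strictly lower differential order and therefore controlled by the inductive hypothesis. Combining this control of normal derivatives with the non-normal estimate, and choosing $\lambda$ large enough depending on $K$ and $\nu$ to absorb every remainder proportional to $\lambda^{-1}$ or $\nu(\lambda^{-1})$ into the left-hand side, yields the announced bound.
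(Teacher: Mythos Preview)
Your proposal is correct and follows essentially the same approach as the paper: replace the non-characteristic inversion of Lemma \ref{lemmcontrpar} by an inductive application of Lemma \ref{lemmvor} to each $u^{(j,k,l)}=\dt^j\dtan^k\dnor^l u$ (which solves a system of the form \eqref{eqlinbis} with source $f_{j,k,l}$), and otherwise proceed exactly as in Proposition \ref{propfarorderm}. One remark: the ``delicate point'' you raise is actually not delicate---since $u^{(j,k,l)}$ satisfies an equation with the \emph{same} coefficients $A_1,A_2$, Lemma \ref{lemmvor} applies to it as a black box (forming its own generalized vorticity internally), so no separate tracking of commutators with \eqref{defGVor} or \eqref{EqVor} is required.
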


\begin{proof}
Thanks to Lemma \ref{lemmvor}, the proof is almost the same as that of Proposition \ref{propfarorderm} 
so that we point out only the place where we need modifications. 
Applying ${\mathfrak d}^{(j,k,l)}=\dt^j\dtan^k\dnor^l $ with $j+k+l\leq m-1$ to the equations, we get 
\[
\dt u^{(j,k,l)}+A_1\partial_1 u^{(j,k,l)}+A_2\partial_2 u^{(j,k,l)} = f_{j,k,l},
\]
where $f_{j,k,l}$ is given by \eqref{fjkl}. 
By Lemma \ref{lemmvor}, we obtain 
\begin{align*}
I_{\lambda,t}( \|u^{(j,k,l+1)}(\cdot)\|_{L^2} )
&\leq C(K_0)\bigl( \opnorm{ u^{(j,k,l)}(0) }_1 + I_{\lambda,t}( \|\dpar u^{(j,k,l)}(\cdot)\|_{L^2} \bigr) \\
&\quad\;
 + C(K) S_{\lambda,t}^*( \opnorm{ f_{j,k,l}(\cdot) }_1 ).
\end{align*}
Here, we have $\opnorm{ f_{j,k,l}(t) }_1 \leq C(K)( \opnorm{ f(t) }_m + \opnorm{ u(t) }_m )$, 
so that by using the above estimate inductively, we obtain 
\begin{align*}
I_{\lambda,t}^m(u)
&\leq C(K_0)( \opnorm{ u(0) }_m + I_{\lambda,t}^{m,\parallel}(u) ) 
 + C(K)S_{\lambda,t}^*( \opnorm{ f(\cdot) }_m ) + \lambda^{-1}C(K)I_{\lambda,t}^m(u).
\end{align*}
Here, the last term can be absorbed in the left-hand side if we take $\lambda$ sufficiently large. 
Once we obtain such an estimate, the rest of the proof is the same as that of Proposition \ref{propfarorderm}. 
\end{proof}

\subsection{A priori energy estimates in the general case}\label{sectAEGC}
Combining the results of Propositions \ref{propNRHhighfar} and \ref{propfarorderm}, 
we show that one can remove the assumption that $u$ is compactly supported in $\overline{\cE}\cap U_\mathit{\Gamma}$ in Proposition \ref{propfarorderm}.

\begin{theorem}\label{theogenorderm}
Let $m$ be a non-negative integer. 
Suppose that Assumptions \ref{assFriedrichs}--\ref{assnonchar} are satisfied and that the constants $K_0$ and $K$ 
in Assumptions \ref{ass:regAB} and \ref{assnonchar} are taken such that $\frac{\beta_0}{\alpha_0}$ and $\frac{\beta_1}{\alpha_0}\leq K$, 
where the constants $\alpha_0, \beta_0$, and $\beta_1$ are those in Assumption \ref{assFriedrichs}. 
Then, any regular solution $u$ to \eqref{eqlinfixed} with a boundary term that is weakly dissipative of order $m$ in the sense of 
Definition \ref{propweakdissipm} satisfies 
\[
I_{\lambda,t}(\opnorm{u(t)}_m)^2 + \tfrac{1}{\alpha_0}E_{\rm bdry}^m(t)
\leq C(K_0) \bigl( \opnorm{u(0)}_m^2 + S^*_{\lambda,t}\big(\opnorm{f(\cdot)}_m\big)^2 + \tfrac{1}{\alpha_0}S^m_{\rm data}(t) \bigr)
\]
for any $\lambda\geq\lambda_0(K)$ and $t\in[0,T]$, where $\lambda_0(K)$ depends also on $\nu$ in Definition \ref{propweakdissipm}. 
\end{theorem}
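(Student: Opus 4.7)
The plan is to reduce to the two situations already handled by splitting $u$ with the cutoff $\chi_{\rm b}$ introduced in \eqref{defchib}. I would write $u = u_1 + u_2$ with $u_1 := \chi_{\rm b}u$ and $u_2 := (1-\chi_{\rm b})u$, so that $u_1$ is supported in $\overline{\cE}\cap U_\mathit{\Gamma}$ while $u_2$ vanishes in a neighborhood of $\mathit{\Gamma}$. Using the commutator identity $[\chi_{\rm b}, A_j\partial_j]u = -A_j(\partial_j\chi_{\rm b})u$, each piece solves a system of the same form as \eqref{eqlinfixed} with identical coefficient matrices $(A_j,B)$ but with modified source terms
\[
f_1 := \chi_{\rm b} f + A_j(\partial_j\chi_{\rm b})u, \qquad f_2 := (1-\chi_{\rm b})f - A_j(\partial_j\chi_{\rm b})u.
\]

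The first step is to apply Proposition \ref{propfarorderm} to $u_1$ and Proposition \ref{propNRHhighfar} to $u_2$. The key observation that makes the first of these applicable is that, since $\chi_{\rm b}$ depends only on the normal variable $r$ and is identically equal to $1$ in a neighborhood of $\mathit{\Gamma}$, we have $\dt\chi_{\rm b}\equiv 0$ and $\dtan\chi_{\rm b}\equiv 0$ throughout $U_\mathit{\Gamma}$. Consequently $\dpar^\alpha(\chi_{\rm b}u) = \chi_{\rm b}\dpar^\alpha u$ and, in particular, $(\dpar^\alpha u_1)\vert_\mathit{\Gamma} = (\dpar^\alpha u)\vert_\mathit{\Gamma}$ for every $\alpha\in\N^2$ with $\abs{\alpha}\leq m$. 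Hence the boundary quadratic forms satisfy $\mathfrak{B}[\dpar^\alpha u_1] = \mathfrak{B}[\dpar^\alpha u]$ on $\mathit{\Gamma}$, and the weak dissipativity of order $m$ assumed for $u$ transfers verbatim to $u_1$, with the same functions $E^m_{\rm bdry}$, $S^m_{\rm data}$, $\nu$ and the same threshold $\lambda_0$.

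The remaining issue is to absorb the additional source $\pm A_j(\partial_j\chi_{\rm b})u$ that appears in $f_1$ and $f_2$. Under Assumption \ref{ass:regAB}, and using that $\nabla\chi_{\rm b}$ is smooth and compactly supported in $U_\mathit{\Gamma}$ with bounds depending only on $\mathit{\Gamma}$, a standard product estimate yields the lower-order bound
\[
\opnorm{A_j(\partial_j\chi_{\rm b})u(t)}_m \leq C(K)\opnorm{u(t)}_m.
\]
By \eqref{propSstar} this contributes to the right-hand sides of the two energy estimates only through $S^*_{\lambda,t}(\opnorm{\cdot}_m)\leq \lambda^{-1}I_{\lambda,t}(\opnorm{u(\cdot)}_m)$, so choosing $\lambda\geq\lambda_0(K,\nu)$ large enough lets this term be absorbed into the left-hand side, in exactly the same manner as at the end of the proofs of Propositions \ref{propNRHhighfar} and \ref{propfarorderm}.

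Summing the estimates for $u_1$ and $u_2$, using the trivial upper bounds $\opnorm{u_i(0)}_m\lesssim \opnorm{u(0)}_m$ and $\opnorm{(\chi_{\rm b}f,(1-\chi_{\rm b})f)(t)}_m \lesssim \opnorm{f(t)}_m$, together with $\opnorm{u(t)}_m \leq \opnorm{u_1(t)}_m + \opnorm{u_2(t)}_m$, yields the claimed inequality. The main potential obstacle is really a bookkeeping one, namely ensuring that the weak dissipativity hypothesis passes cleanly to $u_1$ without picking up any uncontrolled derivatives of $\chi_{\rm b}$; this is why it is essential that $\chi_{\rm b}$ be chosen to depend only on the normal variable, so that multiplication by $\chi_{\rm b}$ commutes exactly with each $\dpar^\alpha$.
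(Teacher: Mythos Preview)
Your proposal is correct and follows essentially the same approach as the paper's own proof: the same cutoff decomposition $u=\chi_{\rm b}u+(1-\chi_{\rm b})u$, application of Proposition~\ref{propfarorderm} to the near-boundary piece and Proposition~\ref{propNRHhighfar} to the far piece, and absorption of the commutator source $A_j(\partial_j\chi_{\rm b})u$ via $S^*_{\lambda,t}(\opnorm{u}_m)\leq\lambda^{-1}I_{\lambda,t}(\opnorm{u}_m)$. Your explicit remark that $\chi_{\rm b}$ depends only on the normal variable, so that $\dpar^\alpha$ commutes with multiplication by $\chi_{\rm b}$ and hence $\mathfrak{B}[\dpar^\alpha u_1]=\mathfrak{B}[\dpar^\alpha u]$ on $\mathit{\Gamma}$, makes precise the step the paper summarizes as ``$u_1$ of course has exactly the same weak dissipativity properties as $u$''; the only caveat is that the right-hand side of the weak dissipativity inequality still carries $I_{\lambda,t}(\Vert\dpar^\alpha u\Vert_{L^2})$ rather than $I_{\lambda,t}(\Vert\dpar^\alpha u_1\Vert_{L^2})$, but since $u=u_1+u_2$ this extra contribution is controlled once the two estimates are added and absorbed, exactly as you indicate.
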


\begin{proof}
Let $\chi_{\rm b}$ be the smooth cutoff function supported in $U_\mathit{\Gamma}$ defined in \eqref{defchib}. 
We can decompose $u$ as $u=u_1+u_2$ with $u_1=\chi_{\rm b} u$ and $u_2=(1-\chi_{\rm b} )u$. 
Then, we have 
\[
\dt u_\ell + A_j\partial_j u_\ell + Bu_\ell = f_\ell \qquad (\ell=1,2),
\]%
where 
\[
f_1 = \chi_{\rm b} f + (\partial_j \chi_{\rm b})A_j u \quad\mbox{and}\quad
f_2 = (1-\chi_{\rm b}) f - (\partial_j \chi_{\rm b})A_j u.
\]
We can therefore apply Proposition \ref{propfarorderm} to $u_1$ and Proposition \ref{propNRHhighfar} to $u_2$, 
which of course has exactly the same weak dissipativity properties as $u$, 
and deduce by adding the two energy estimates and using the triangular inequality that 
\begin{align*}
& I_{\lambda,t}(\opnorm{u(t)}_m)^2 + \tfrac{1}{\alpha_0}E_{\rm bdry}^m(t) \\
&\leq C(K_0)\bigl( \opnorm{u(0)}_m^2 + S^*_{\lambda,t}(\opnorm{f(\cdot)}_m)^2 + \tfrac{1}{\alpha_0}S^m_{\rm data}(t)
 + S^*_{\lambda,t}(\opnorm{u(\cdot)}_m)^2 \bigr).
\end{align*}
The term $S^*_{\lambda,t}\big(\opnorm{u(\cdot)}_m\big)^2$ in the right-hand side, 
which comes from the terms involving derivatives of $\chi$ in $f_1$ and $f_2$, 
can be absorbed into the left-hand side for $\lambda$ large enough, which yields the desired result. 
\end{proof}

Similarly, combining the results of Propositions \ref{propNRHhighfar} and \ref{propfarorderm2}, 
we show that one can remove the assumption that $u$ is compactly supported in $\overline{\cE}\cap U_\mathit{\Gamma}$ in Proposition \ref{propfarorderm2}. 
Since the proof is the same as that of Theorem \ref{theogenorderm}, we omit it.

\begin{theorem}\label{theogenorderm2}
Let $m$ be a non-negative integer. 
Assumptions \ref{assFriedrichs}, \ref{ass:regAB}, \ref{asschar}, and \ref{ass:regLQW} are satisfied and that 
the constants $K_0$ and $K$ in Assumptions \ref{ass:regAB} and \ref{ass:regLQW} are taken such that $\frac{\beta_0}{\alpha_0} \leq K_0$ and 
$\frac{\beta_1}{\alpha_0}\leq K$, where the constants $\alpha_0, \beta_0$, and $\beta_1$ are those in Assumption \ref{assFriedrichs}. 
Then, any regular solution $u$ to \eqref{eqlinfixed} with a boundary term that is weakly dissipative of order $m$ in the sense of 
Definition \ref{propweakdissipm} satisfies 
\[
I_{\lambda,t}(\opnorm{u(t)}_m)^2 + \tfrac{1}{\alpha_0}E_{\rm bdry}^m(t)
\leq C(K_0) \bigl( \opnorm{u(0)}_m^2 + S^*_{\lambda,t}\big(\opnorm{f(\cdot)}_m\big)^2 + \tfrac{1}{\alpha_0}S^m_{\rm data}(t) \bigr)
\]
for any $\lambda\geq\lambda_0(K)$ and $t\in[0,T]$, where $\lambda_0(K)$ depends also on $\nu$ in Definition \ref{propweakdissipm}. 
\end{theorem}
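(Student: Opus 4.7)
The plan is to mimic the proof of Theorem~\ref{theogenorderm}, replacing the non-characteristic input (Proposition~\ref{propfarorderm}) with its characteristic counterpart (Proposition~\ref{propfarorderm2}). Specifically, I would use the smooth cutoff $\chi_{\rm b}$ defined in \eqref{defchib}, which equals $1$ near $\mathit{\Gamma}$ and is supported in $U_\mathit{\Gamma}$, and split $u = u_1+u_2$ with $u_1 = \chi_{\rm b}u$ and $u_2 = (1-\chi_{\rm b})u$. Multiplying \eqref{eqlinfixed} by $\chi_{\rm b}$ and $1-\chi_{\rm b}$ respectively produces, for $\ell=1,2$, the same equation structure
\[
\dt u_\ell + A_j\partial_j u_\ell + B u_\ell = f_\ell,
\]
with modified forcings $f_1 = \chi_{\rm b} f + (\partial_j\chi_{\rm b})A_j u$ and $f_2 = (1-\chi_{\rm b})f - (\partial_j\chi_{\rm b})A_j u$. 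Then $u_1$ is supported in $\overline{\cE}\cap U_\mathit{\Gamma}$ while $u_2$ vanishes in a neighborhood of $\mathit{\Gamma}$, so each piece is in the appropriate regime.

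Next I would check that $u_1$ inherits the weak dissipativity of order $m$ from $u$. Since $\chi_{\rm b}\equiv 1$ in a neighborhood of $\mathit{\Gamma}$, the traces of $u_1$ and $u$ on $\mathit{\Gamma}$ coincide, and the tangential and time derivatives $\dpar^\alpha u_1$ agree with $\dpar^\alpha u$ on $\mathit{\Gamma}$ as well; consequently $\int_\mathit{\Gamma}{\mathfrak B}[\dpar^\alpha u_1]=\int_\mathit{\Gamma}{\mathfrak B}[\dpar^\alpha u]$ and Definition~\ref{propweakdissipm} holds for $u_1$ with the same $E^m_{\rm bdry}$, $S^m_{\rm data}$, and $\nu$. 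Now I would apply Proposition~\ref{propfarorderm2} to $u_1$ and Proposition~\ref{propNRHhighfar} to $u_2$, then combine them through the triangle inequality $\opnorm{u}_m \le \opnorm{u_1}_m + \opnorm{u_2}_m$ to obtain
\[
I_{\lambda,t}(\opnorm{u(\cdot)}_m)^2 + \tfrac{1}{\alpha_0}E^m_{\rm bdry}(t)
\leq C(K_0)\bigl(\opnorm{u(0)}_m^2 + S^*_{\lambda,t}(\opnorm{f(\cdot)}_m)^2 + \tfrac{1}{\alpha_0}S^m_{\rm data}(t) + S^*_{\lambda,t}(\opnorm{u(\cdot)}_m)^2\bigr),
\]
since $\opnorm{f_\ell(\cdot)}_m \lesssim \opnorm{f(\cdot)}_m + \opnorm{u(\cdot)}_m$ where the last term comes from the commutator $(\partial_j\chi_{\rm b})A_j u$ (controlled via Assumption~\ref{ass:regAB}).

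The last step is to absorb the offending term $S^*_{\lambda,t}(\opnorm{u(\cdot)}_m)^2$ by exploiting the bound $S^*_{\lambda,t}(\varphi)\leq \lambda^{-1/2}|\varphi|_{L^2_{\lambda,t}}$ from \eqref{propSstar}, which gives
\[
S^*_{\lambda,t}(\opnorm{u(\cdot)}_m)^2 \leq \lambda^{-2} I_{\lambda,t}(\opnorm{u(\cdot)}_m)^2;
\]
choosing $\lambda\ge \lambda_0(K)$ with $\lambda_0$ large enough (depending on $K$ and on $\nu$) lets this term be moved to the left-hand side, yielding the stated estimate.

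The only non-routine point to verify is that the trace-invariance of $u_1$ on $\mathit{\Gamma}$ under the cutoff preserves weak dissipativity of order $m$ with unchanged $E^m_{\rm bdry}$, $S^m_{\rm data}$, and $\nu$; once this is in hand, the rest is a mechanical combination of the two propositions, and no new ideas are required beyond those already used in Theorem~\ref{theogenorderm}. The main obstacle in a first reading is simply to keep track of how the commutator $(\partial_j\chi_{\rm b})A_j u$ is controlled: it is supported in $U_\mathit{\Gamma}$ but not near $\mathit{\Gamma}$, and its $\opnorm{\cdot}_m$ contribution is handled by the $S^*_{\lambda,t}$-absorption argument, exactly as in the proof of Theorem~\ref{theogenorderm}.
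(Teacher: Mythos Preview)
Your proposal is correct and follows exactly the approach the paper intends: the paper omits the proof entirely, stating only that it is the same as that of Theorem~\ref{theogenorderm} with Proposition~\ref{propfarorderm2} used in place of Proposition~\ref{propfarorderm}, which is precisely your cutoff decomposition $u=\chi_{\rm b}u+(1-\chi_{\rm b})u$ followed by the $S^*_{\lambda,t}$-absorption of the commutator contribution. Your remark that the weak dissipativity of order $m$ transfers from $u$ to $u_1$ because $\chi_{\rm b}\equiv 1$ near $\mathit{\Gamma}$ is the only point the paper leaves implicit, and you have identified it correctly.
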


\section{A priori estimates for the nonlinear wave-structure interaction problem}\label{sectAPNLWS}
The goal of this section is to derive a priori estimate for solutions $u=(\zeta,v^{\rm T})^{\rm T}$ 
to the nonlinear wave-structure interaction problem \eqref{PB1}--\eqref{PB3}. 
We remark that \eqref{PB1} forms a hyperbolic quasilinear system of the form considered in the previous section, that is, $u$ solves 
\[
\dt u + A_j(u)\partial_j u = 0 \quad\mbox{in}\quad (0,T)\times\cE,
\]
where the matrices $A_j(u)$ $(j=1,2)$ are defined by 
\[
A_j(u)=\begin{pmatrix}
 v_j & h{\bf e}_{j}^{\rm T} \\
 \gr {\bf e}_j & v_j\mbox{Id}_{2\times 2}
\end{pmatrix}
\]
with $h=H_0+\zeta$. 
There is a conservation of energy for \eqref{PB1}--\eqref{PB3} from which a control of the $L^2$-norm of $u$ can be deduced. 
In order to derive higher order energy estimates for solutions to \eqref{PB1}--\eqref{PB3}, 
we consider the system satisfied by derivatives of $u$, which is a linearized system of the form 
\begin{equation}\label{pblin1}
\dt\dot{u}+A_j(u)\partial_j\dot{u} = f \quad\mbox{in}\quad (0,T)\times\cE,
\end{equation}
for which an obvious Friedrichs symmetrizer in the sense of Assumption \ref{assFriedrichs} is given by $S(u)=\mbox{diag}(\gr,h,h)$. 
Of course, $\dot{u}$ should also satisfy a linearization of the boundary conditions \eqref{PB2}--\eqref{PB3}. 
The natural strategy is to prove that solutions to \eqref{pblin1} subject to these linearized boundary conditions satisfy 
the weak dissipativity properties introduced in Section \ref{sectapriorilin}, 
so that a control of $\dot{u}$ is granted by Theorem \ref{theogenorderm}. 
There are however at least two obstructions.

The first obstruction is related to the boundary matrix $A_{\rm nor}$ given by \eqref{BM}. 
As explained in Example \ref{ExVor}, the eigenvalues of this matrix are $N\cdot v$, $N\cdot v + \sqrt{\gr h}$, and $N\cdot v - \sqrt{\gr h}$. 
While the last two eigenvalues are positive and negative, respectively, under the subcriticality condition \eqref{subcritical}, 
the first one may vanish and even change the sign. 
This is known to be a very delicate situation for the study of initial boundary value problem to the hyperbolic system. 
Classical results based on the construction of Kreiss symmetrizers cannot be applied and there is no general theory covering such situations; 
see \cite{Rauch, Metivier} as well as Chapter 9 of \cite{BenzoniSerre}. 
Here, the boundary term \eqref{defB} associated with the Friedrichs symmetrizer $S(u)=\mbox{diag}(\gr,h,h)$ is given by 
\[
{\mathfrak B}[u] = (N\cdot v) \bigl(\gr \zeta^2 + h \abs{v}^2 + 2\gr h \zeta \bigr),
\]
and it does not seem to satisfy any dissipative property, even in the weak sense of Definition \ref{propweakdissip}.

The second obstruction to the use of the results of Section \ref{sectapriorilin} is that 
the non-characteristic property of Assumption \ref{assnonchar} is obviously not satisfied.

We proceed in several steps to bypass these obstructions. 
Firstly, we use in Section \ref{sectequivform} the irrotationality condition \eqref{PBirrot} to transform the problem \eqref{PB1}--\eqref{PB3}, 
for which the boundary matrix has an eigenvalue of indefinite sign, into another problem in which the eigenvalue of indefinite sign is replaced with $0$. 
For this new formulation, the second boundary condition in \eqref{PB2} is also removed. 
In Section \ref{sectNRJcons}, we prove that the nonlinear system conserves an energy that controls the $L^2$-norm of the solution. 
As explained above, it is necessary, for higher-order estimates, to linearize the equations. 
To this end we use a non-standard linearization process and a change of unknowns that exploit the structure of the system 
and greatly simplify the boundary conditions. 
Such a linearization is explained first in Section \ref{sectlintoy} on a toy problem, 
and then in Section \ref{sectlinWS} on the original equations to the nonlinear wave-structure interaction problem. 
We then derive in Section \ref{sectWSL2} $L^2$-energy estimates for this linear system; 
the main step is to prove that the boundary conditions are weakly dissipative in the sense of Definition \ref{propweakdissip}. 
Higher order estimates are then provided in Section \ref{sectHOest}. 
The main difficulty there is that the problem does not possess the non-characteristic property in Assumption \ref{assnonchar}. 
However, we check that it satisfies properties in Assumption \ref{asschar} so that one can introduce a generalized vorticity 
to obtain higher order energy estimates. 
The nonlinear estimate can then be established in Section \ref{sectNLAP}.

\subsection{An equivalent formulation}\label{sectequivform}
Remarking that 
\begin{equation}\label{irrotid}
\nabla \bigl( \tfrac{1}{2}\abs{v}^2 \bigr) = (v\cdot \nabla)v - (\nabla^\perp \cdot v)v^\perp,
\end{equation}
we see that solutions $u=(\zeta,v^{\rm T})^{\rm T}$ to \eqref{PB1}--\eqref{PB3} satisfying the irrotational condition $\nabla^\perp \cdot v=0$ 
also solve the following system of equations 
\begin{equation}\label{PB1bis}
\begin{cases}
 \dt \zeta + \nabla\cdot (hv) = 0 &\mbox{in}\quad (0,T)\times\cE, \\
 \dt v + \nabla \big( \gr\zeta + \tfrac{1}{2}\abs{v}^2 \bigr) = 0 &\mbox{in}\quad (0,T)\times\cE,
\end{cases}
\end{equation}
and boundary conditions 
\begin{equation}\label{PB2bis}
N\cdot (hv) = \Lambda \psi_{\rm i} \quad\mbox{on}\quad (0,T)\times\mathit{\Gamma},
\end{equation}
where $\psi_{\rm i}$ is found by solving the forced ODE 
\begin{equation}\label{PB3bis}
\dt\psi_{\rm i} = - \gr \zeta - \tfrac{1}{2}\abs{v}^2 \quad\mbox{on}\quad (0,T)\times\mathit{\Gamma}. 
\end{equation}
The main difference between \eqref{PB1}--\eqref{PB3} and \eqref{PB1bis}--\eqref{PB3bis} is that the boundary condition 
on $N^\perp\cdot v$ imposed in \eqref{PB2} is no longer present in \eqref{PB2bis}. 
The following proposition shows that, for initial data with an irrotational velocity, the initial boundary value problem associated with 
\eqref{PB1}--\eqref{PB3} is equivalent to the one associated with \eqref{PB1bis}--\eqref{PB3bis}. 
From now on, we will therefore work with the latter. 
We impose the initial conditions of the form 
\begin{equation}\label{IC}
\begin{cases}
 (\zeta,v)_{\vert_{t=0}} = (\zeta^{\rm in}, v^{\rm in}) &\mbox{in}\quad \cE, \\
 {\psi_{\rm i}}_{\vert_{t=0}} = \psi_{\rm i}^{\rm in} &\mbox{on}\quad \mathit{\Gamma}. 
\end{cases}
\end{equation}

\begin{proposition}\label{prop:equiv}
Let $(\zeta^{\rm in},\phi^{\rm in})$ be smooth functions defined on $\cE$ and put $v^{\rm in}=\nabla\phi^{\rm in}$, $h^{\rm in}=H_0+\zeta^{\rm in}$, 
and $\psi_{\rm i}^{\rm in} = {\phi^{\rm in}}_{\vert_\mathit{\Gamma}}$. 
If the compatibility condition 
\[
N\cdot(h^{\rm in}v^{\rm in}) = \Lambda \psi_{\rm i}^{\rm in} \quad\mbox{on}\quad \mathit{\Gamma}
\]
holds, then $(\zeta,v,\psi_{\rm i})$ solves \eqref{PB1}--\eqref{PB3} and \eqref{IC} 
if and only if it solves \eqref{PB1bis}--\eqref{PB3bis} and \eqref{IC}. 
Moreover, we have $v=\nabla\phi$ with $\phi = \phi^{\rm in}-\int_0^t(\gr \zeta + \frac{1}{2}\abs{v}^2){\rm d}t'$ and 
$\psi_{\rm i}=\phi_{\vert_\mathit{\Gamma}}$. 
\end{proposition}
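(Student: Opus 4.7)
The proof splits into two implications, which I would argue separately. The forward direction—that a solution of \eqref{PB1}--\eqref{PB3} also solves \eqref{PB1bis}--\eqref{PB3bis}—hinges on propagating the irrotationality of the initial velocity $v^{\rm in}=\nabla\phi^{\rm in}$; the backward direction relies on constructing a single-valued exterior potential and matching it to $\psi_{\rm i}$.

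For the forward direction, given a solution $(\zeta,v,\psi_{\rm i})$ of \eqref{PB1}--\eqref{PB3}, my plan is to \emph{extend} it into the interior domain $\cI$ by defining $\phi_{\rm i}(t,\cdot)$ as the harmonic extension of $\psi_{\rm i}(t,\cdot)$ through the boundary value problem \eqref{BVPinI}, setting $v_{\rm i}=\nabla\phi_{\rm i}$, and choosing $\underline{P}_{\rm i}$ via the Bernoulli relation with an additive function of time calibrated so that \eqref{MConP} holds on $\mathit{\Gamma}$. A direct check then shows that $(\zeta,v,v_{\rm i},\underline{P}_{\rm i})$ satisfies the full coupled system \eqref{SWEinE}--\eqref{MConP} together with \eqref{MConV}, since the two boundary conditions in \eqref{PB2} translate, respectively, into \eqref{MConM} and \eqref{MConV} via the definition of $\Lambda$ and the identity $\dtan\psi_{\rm i}=(N^\perp\cdot\nabla\phi_{\rm i})_{\vert_\mathit{\Gamma}}$. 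Section \ref{sectanalvort} then applies: with $\omega_{\rm i}\equiv 0$ by construction and $\omega^{\rm in}=\nabla^\perp\cdot v^{\rm in}=0$, conservation of $\int_\cE\omega^2/h+\int_\cI\omega_{\rm i}^2/h_{\rm i}$ forces $\omega=\nabla^\perp\cdot v\equiv 0$ in $\cE$. The identity \eqref{irrotid} then recasts the momentum equation of \eqref{PB1} as that of \eqref{PB1bis}, while the remaining equation in \eqref{PB1bis}, the boundary condition \eqref{PB2bis}, and \eqref{PB3bis} coincide with \eqref{PB1}, the first line of \eqref{PB2}, and \eqref{PB3}.

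For the backward direction, assume $(\zeta,v,\psi_{\rm i})$ solves \eqref{PB1bis}--\eqref{PB3bis} with the prescribed initial data. Taking the curl of the momentum equation of \eqref{PB1bis} gives $\dt(\nabla^\perp\cdot v)=0$ in $\cE$, hence $\nabla^\perp\cdot v\equiv\nabla^\perp\cdot v^{\rm in}=0$ for all $t$; applying \eqref{irrotid} in the reverse direction then converts \eqref{PB1bis} into the momentum equation of \eqref{PB1}. To recover the second boundary condition of \eqref{PB2}, I would set
\[
\phi(t,x)=\phi^{\rm in}(x)-\int_0^t\bigl(\gr\zeta+\tfrac{1}{2}\abs{v}^2\bigr)(t',x)\,{\rm d}t',
\]
so that $\nabla\dt\phi=-\nabla(\gr\zeta+\tfrac{1}{2}\abs{v}^2)=\dt v$ by \eqref{PB1bis}. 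The quantity $\nabla\phi-v$ is therefore $t$-independent in $\cE$; it vanishes at $t=0$ by the hypothesis $v^{\rm in}=\nabla\phi^{\rm in}$, so $v=\nabla\phi$ for all $t$. Setting $\widetilde{\psi}_{\rm i}:=\phi_{\vert_\mathit{\Gamma}}$, one has $\dt\widetilde{\psi}_{\rm i}=\dt\psi_{\rm i}$ by \eqref{PB3bis} and $\widetilde{\psi}_{\rm i}(0)=\phi^{\rm in}_{\vert_\mathit{\Gamma}}=\psi_{\rm i}^{\rm in}$, hence $\widetilde{\psi}_{\rm i}\equiv\psi_{\rm i}$. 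Consequently $N^\perp\cdot v=N^\perp\cdot\nabla\phi=\dtan(\phi_{\vert_\mathit{\Gamma}})=\dtan\psi_{\rm i}$ on $\mathit{\Gamma}$, supplying the missing boundary condition.

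The main obstacle is the forward direction: the system \eqref{PB1}--\eqref{PB3} by itself provides no boundary control on the vorticity $\omega$, so a direct transport argument for $\omega/h$ cannot rule out inflow of vorticity through portions of $\mathit{\Gamma}$ where $N\cdot v>0$. The extension to the full coupled system bypasses this obstruction by importing the global energy identity of Section \ref{sectanalvort}, in which the matching of both normal mass flux and tangential velocity on $\mathit{\Gamma}$ cancels the boundary contribution exactly. The compatibility condition $N\cdot(h^{\rm in}v^{\rm in})=\Lambda\psi_{\rm i}^{\rm in}$ is used implicitly to guarantee that the initial traces of the extended data are consistent with the matching conditions at $t=0$.
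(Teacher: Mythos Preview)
Your backward direction is correct and essentially matches the paper: you preserve irrotationality by taking the curl of the momentum equation in \eqref{PB1bis}, invoke \eqref{irrotid} to recover \eqref{PB1}, and use the explicit potential $\phi$ to obtain both the second line of \eqref{PB2} and the ``Moreover'' clause. The paper separates these last two steps, deriving $N^\perp\cdot v=\dtan\psi_{\rm i}$ by the more direct computation $\partial_t(N^\perp\cdot v-\dtan\psi_{\rm i})=0$ from the tangential trace of \eqref{PB1bis} and \eqref{PB3bis}, and only afterwards writes down $\phi$; but the content is the same.

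For the forward direction you are working much harder than necessary, because you have misread the scope of the reference: the range \eqref{PB1}--\eqref{PB3} includes the consecutively numbered equation \eqref{PBirrot}, so irrotationality is part of the \emph{hypothesis} for this implication, not something to be proved. With $\nabla^\perp\cdot v=0$ given, the identity \eqref{irrotid} converts the momentum equation of \eqref{PB1} into that of \eqref{PB1bis} immediately, and the paper disposes of this direction in one line. Your argument via extension to $\cI$ and the vorticity identity of Section~\ref{sectanalvort} is correct and in fact establishes the stronger statement that \eqref{PBirrot} follows from \eqref{PB1}, \eqref{PB2}, \eqref{PB3} for irrotational initial data, but it is not needed here. (If you ever do want that stronger statement, the interior extension can also be bypassed: dotting the trace of $\partial_t v+\omega v^\perp+\nabla(\gr\zeta+\tfrac12|v|^2)=0$ with $N^\perp$ and using both lines of \eqref{PB2} together with \eqref{PB3} gives $\omega\,N\cdot v=0$ on $\mathit{\Gamma}$ directly, whence $\frac{d}{dt}\int_\cE\omega^2/h=0$.)
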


\begin{proof}
Let us prove first that solutions to \eqref{PB1bis}--\eqref{PB3bis} and \eqref{IC} also solve \eqref{PB1}--\eqref{PB3} 
for initial data as in the statement of the proposition; in particular, $\nabla^\perp\cdot v^{\rm in}=0$. 
The irrotationality is obviously conserved by the flow so that \eqref{PBirrot} is satisfied. 
Moreover, \eqref{irrotid} shows that \eqref{PB1} is also satisfied. 
It remains to show that the second boundary condition in \eqref{PB2} is satisfied, namely, that $T\cdot v = \dtan \psi_{\rm i}$ on $\mathit{\Gamma}$. 
Multiplying the second equation in \eqref{PB1bis} by $T$ and using \eqref{PB3bis}, we get that $\dt (T\cdot v-\dtan \psi_{\rm i})=0$ on $\mathit{\Gamma}$. 
Since $T\cdot v^{\rm in}-\dtan \psi_{\rm i}^{\rm in}=0$, this implies that $T\cdot v=\dtan \psi_{\rm i}$ on $\mathit{\Gamma}$ for all times. 
The reverse implication follows directly from \eqref{irrotid}. 
Finally, the last assertion of the proposition follows from the observation that writing the second equation in \eqref{PB1bis} in an integral form 
and using the fact that $v^{\rm in}=\nabla \phi^{\rm in}$ yields 
\[
v = \nabla\left( \phi^{\rm in} - \int_0^t (\gr \zeta+\tfrac{1}{2}\abs{v}^2){\rm d}t' \right). 
\]
Moreover, ${\phi^{\rm in}}_{\vert_\mathit{\Gamma}} = \psi_{\rm i}^{\rm in}$ and $\dt (\phi_{\vert_\mathit{\Gamma}}) = \dt \psi_{\rm i}$, 
so that $\phi_{\vert_\mathit{\Gamma}}=\psi_{\rm i}$ for all times. 
\end{proof}

\subsection{Energy conservation}\label{sectNRJcons}
We show here that the total energy of the fluid 
\[
E_{\rm fluid} = \frac{1}{2}\int_{\cE}\gr\zeta^2 + \frac{1}{2}\int_{\cE} h\abs{v}^2 + \frac{1}{2}\int_\mathit{\Gamma} \psi_{\rm i}\Lambda \psi_{\rm i}
\]
is preserved by the flow. 
This is not surprising since the above expression is a reformulation of the fluid energy considered in Section \ref{constotNRJ}; 
we have actually $E_{\rm fluid}=\frac{1}{\rho}{\mathfrak E}_{\rm fluid}$ so that $E_{\rm fluid}$ is not an energy in physical units. 
It is however instructive to derive this property directly from \eqref{PB1bis}--\eqref{PB3bis} since the last term can be understood as a boundary energy, 
closely related to the notion of weak dissipativity introduced in Definition \ref{propweakdissip}.

\begin{proposition}\label{propNRJcons}
Assume that $u=(\zeta,v^{\rm T})^{\rm T}$ and $\psi_{\rm i}$ form a regular solution to \eqref{PB1bis}--\eqref{PB3bis} for some $T>0$. 
Then for any $t\in [0,T]$, we have 
\[
E_{\rm fluid}(t)=E_{\rm fluid}(0).
\]
\end{proposition}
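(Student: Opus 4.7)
The plan is to differentiate $E_{\rm fluid}(t)$ in time and show the derivative vanishes by combining the two bulk equations, integrating by parts to produce a boundary term on $\mathit{\Gamma}$, and using the two boundary conditions together with the symmetry of $\Lambda$ from Proposition \ref{propDN} (ii) to cancel everything.

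First, I would compute the time derivatives of the three components of $E_{\rm fluid}$ separately. Using $\dt\zeta=-\nabla\cdot(hv)$ from \eqref{PB1bis} gives $\frac{\rm d}{{\rm d}t}\bigl(\tfrac12\int_\cE\gr\zeta^2\bigr)=-\int_\cE\gr\zeta\,\nabla\cdot(hv)$. For the kinetic energy, since $h=H_0+\zeta$, we have $\dt h=\dt\zeta=-\nabla\cdot(hv)$, so together with the second equation of \eqref{PB1bis},
\[
\tfrac{\rm d}{{\rm d}t}\bigl(\tfrac12\int_\cE h\abs{v}^2\bigr)
=-\tfrac12\int_\cE\nabla\cdot(hv)\abs{v}^2-\int_\cE hv\cdot\nabla\bigl(\gr\zeta+\tfrac12\abs{v}^2\bigr).
\]
Adding the two contributions and using the product rule gives
\[
\tfrac{\rm d}{{\rm d}t}\bigl(\tfrac12\int_\cE\gr\zeta^2+\tfrac12\int_\cE h\abs{v}^2\bigr)
=-\int_\cE\nabla\cdot\bigl(hv\bigl(\gr\zeta+\tfrac12\abs{v}^2\bigr)\bigr).
\]

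Next I would integrate by parts. Since $N$ is the unit normal on $\mathit{\Gamma}$ pointing from $\cI$ to $\cE$, it is the \emph{inward} normal to $\cE$, so the divergence theorem on $\cE$ yields
\[
-\int_\cE\nabla\cdot\bigl(hv\bigl(\gr\zeta+\tfrac12\abs{v}^2\bigr)\bigr)
=\int_\mathit{\Gamma}N\cdot(hv)\bigl(\gr\zeta+\tfrac12\abs{v}^2\bigr).
\]
Using the forced ODE \eqref{PB3bis}, the factor $\gr\zeta+\tfrac12\abs{v}^2$ on $\mathit{\Gamma}$ is exactly $-\dt\psi_{\rm i}$, and by the boundary condition \eqref{PB2bis}, $N\cdot(hv)=\Lambda\psi_{\rm i}$. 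The bulk contribution to $\frac{\rm d}{{\rm d}t}E_{\rm fluid}$ therefore reduces to $-\int_\mathit{\Gamma}(\Lambda\psi_{\rm i})\,\dt\psi_{\rm i}$.

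Finally, I would compute the time derivative of the last boundary term. By the symmetry of the Dirichlet-to-Neumann map stated in Proposition \ref{propDN} (ii), applied to $\psi_1=\psi_{\rm i}(t)$ and $\psi_2=\dt\psi_{\rm i}(t)$,
\[
\tfrac{\rm d}{{\rm d}t}\bigl(\tfrac12\int_\mathit{\Gamma}\psi_{\rm i}\Lambda\psi_{\rm i}\bigr)
=\int_\mathit{\Gamma}(\Lambda\psi_{\rm i})\,\dt\psi_{\rm i},
\]
which cancels exactly with the contribution coming from the bulk, yielding $\frac{\rm d}{{\rm d}t}E_{\rm fluid}=0$ as required. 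The only subtle point is the symmetry of $\Lambda$; everything else is an integration by parts combined with the two boundary conditions, which is precisely the computation that motivated the choice \eqref{MConP} of the Bernoulli pressure matching condition in Section \ref{constotNRJ}.
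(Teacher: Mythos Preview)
Your proof is correct and follows essentially the same approach as the paper: both compute the time derivative of the bulk energy via the local conservation law $\dt\mathfrak{e}+\nabla\cdot\mathfrak{F}=0$, apply the divergence theorem on $\cE$ (with $N$ as inward normal), and use \eqref{PB2bis}, \eqref{PB3bis}, and the symmetry of $\Lambda$ to cancel the boundary contributions. The only cosmetic difference is that you spell out the derivation of the local energy identity explicitly, whereas the paper simply asserts it.
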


\begin{proof}
We can check that the modified nonlinear shallow water equations \eqref{PB1bis} satisfy the same conservation of the local energy 
as the standard nonlinear shallow water equations, that is, 
\[
\dt {\mathfrak e} + \nabla\cdot {\mathfrak F}=0 \quad\mbox{in}\quad (0,T)\times\cE
\]
with ${\mathfrak e}=\frac{1}{2}\gr\zeta^2+\frac{1}{2}h\abs{v}^2$ and ${\mathfrak F}=\bigl( \gr\zeta+\frac{1}{2}\abs{v}^2 \bigr)hv$. 
We therefore have 
\begin{align*}
\frac{\rm d}{{\rm d}t}\left( \int_{\cE}{\mathfrak e} + \frac{1}{2}\int_\mathit{\Gamma}\psi_{\rm i}\Lambda \psi_{\rm i} \right)
&= -\int_{\cE}\nabla\cdot {\mathfrak F} + \int_\mathit{\Gamma}(\dt \psi_{\rm i})\Lambda \psi_{\rm i} \\
&=\int_\mathit{\Gamma}(\gr\zeta+\tfrac{1}{2}\abs{v}^2) (N\cdot(hv))-\int_\mathit{\Gamma}({\mathtt g}\zeta+\tfrac{1}{2}\abs{v}^2)\Lambda\psi_{\rm i},
\end{align*}
where we used \eqref{PB3bis} to derive the second identity. 
Using \eqref{PB2bis}, the right-hand side vanishes, which proves the result. 
\end{proof}

If the water height is bounded from below by a positive constant, 
Proposition \ref{propNRJcons} furnishes a control of the $L^2(\cE)$-norm of $\zeta$ and $v$, and also, with the help of Proposition \ref{propDN}, 
of the $H^{-1/2}(\mathit{\Gamma})$-norm of $\dtan\psi_{\rm i}$. 
In order to get some control on the derivatives of these quantities, it is necessary to study the linearization of \eqref{PB1bis}--\eqref{PB3bis}. 
Since this linearization process is not the standard one, we discuss it first on a simpler toy problem.

\subsection{Transformation of the linearized equations for a toy problem}\label{sectlintoy}
Let us consider a system of equations of the form 
\begin{equation}\label{toy1}
\partial_t u + \partial_j {\mathcal F}_j(u) = 0 \quad\mbox{in}\quad (0,T)\times\cE,
\end{equation}
where $u$ and the smooth mappings ${\mathcal F}_j$ ($j=1,2$) take their values in $\R^n$. 
Denoting ${\mathcal F}_{\rm nor}(u) = N_j{\mathcal F}_j(u)\in \R^n$, we consider nonlinear boundary conditions of the form 
\begin{equation}\label{toy2}
M{\mathcal F}_{\rm nor}(u) = 0 \quad\mbox{on}\quad (0,T)\times\mathit{\Gamma},
\end{equation}
where $M$ is a $p\times n$ matrix with constant entries. 
The system \eqref{toy1} can be put under quasilinear form 
\begin{equation}\label{toy1bis}
\partial_t u + A_j(u)\partial_j u = 0 \quad\mbox{in}\quad (0,T)\times\cE,
\end{equation}
where $A_j(u)={\rm d}_u{\mathcal F}_j(u)$. 
Therefore, a standard linearized system around the state $u$ is given by 
\begin{equation}\label{toy1bislin}
\partial_t \dot{u} +A_j(u)\partial_j\dot{u} = f \quad\mbox{in}\quad (0,T)\times\cE,
\end{equation}
with boundary conditions 
\begin{equation}\label{toy2lin}
MA_{\rm nor}\dot{u}=g \quad\mbox{on}\quad (0,T)\times\mathit{\Gamma},
\end{equation}
where $A_{\rm nor}=N_jA_j(u)$ and $f$ and $g$ are source terms typically accounting for the presence of commutators. 
If solutions to \eqref{toy1bislin} subject to the boundary condition \eqref{toy2lin} satisfy, for some Friedrichs symmetrizer $S(u)$, 
the weak dissipativity property introduced in Definition \ref{propweakdissip}, 
then the energy estimates in Proposition \ref{propNRJnear} hold and furnish some control on $\dot{u}$ 
and therefore on the time and tangential derivatives of the solution $u$ to the nonlinear problem.

Alternatively, we could apply $\chi_{\rm b}\dpar$ to the conservative form \eqref{toy1} of the equations, leading to a linear system of the form 
\begin{equation}\label{toy1linbis}
\partial_t \dot{u} + \partial_j (A_j(u)\dot{u}) = f \quad\mbox{in}\quad (0,T)\times\cE, 
\end{equation}
where $f$ is different from that in \eqref{toy1bislin}. 
This formulation can be written 
\[
\partial_t \dot{u} +A_j(u) \partial_j\dot{u} +B(u,\partial_xu)\dot{u} = f \quad\mbox{in}\quad (0,T)\times\cE, 
\]
where $B(u,\partial_xu) = \partial_j (A_j(u))$; the linearized boundary condition is still given by \eqref{toy2lin}. 
This problem falls into the category \eqref{eqlinfixed} considered in Section \ref{sectapriorilin}, 
but the form of the matrix $B$ induces a loss of one space derivative in the dependence on $u$ in the energy estimates. 
This is the reason why the linearization \eqref{toy1bislin} is generally preferred to \eqref{toy1linbis}. 
However, when the matrices $A_j$ satisfy the following assumption, the linearization \eqref{toy1linbis} would be convenient.

\begin{assumption}\label{assstructure}
Let $\Omega\subset \R^n$ an open set, which represents a phase space of $u$. 
Suppose that for all $u\in \Omega$, there exists a positive definite matrix $\Sigma(u)$ and two constant symmetric matrices $G_1$ and $G_2$ 
such that $A_j(u)=G_j \Sigma(u)$ holds for $j=1,2$. 
\end{assumption}

If the assumption is satisfied, then we can rewrite \eqref{toy1linbis} under the form 
\[
\partial_t\dot{u} + \partial_j (G_j \Sigma (u) \dot{u}) = f \quad\mbox{in}\quad (0,T)\times\cE
\]
with boundary conditions 
\[
MN_jG_j\Sigma(u)\dot{u} = g \quad\mbox{on}\quad (0,T)\times\mathit{\Gamma}.
\]
Introducing $\widecheck{u}=\Sigma(u)\dot{u}$, the system \eqref{toy1}--\eqref{toy2} is therefore equivalent to 
\begin{equation}\label{eqtu1}
\partial_t \widecheck{u} + \Sigma(u) G_j \partial_j \widecheck{u} + B(u,\dt u)\widecheck{u} = \Sigma(u)f \quad\mbox{in}\quad (0,T)\times\cE, 
\end{equation}
where $B(u,\dt u) = \Sigma(u)\dt (\Sigma(u)^{-1})$, and linear boundary conditions 
\begin{equation}\label{eqtu2}
MG_{\rm nor} \widecheck{u} = g \quad\mbox{on}\quad (0,T)\times\mathit{\Gamma},
\end{equation}
where $G_{\rm nor}=N_jG_j$. 
The system \eqref{eqtu1} is obviously symmetrizable in the sense that Assumption \ref{assFriedrichs} is satisfied by choosing 
$S(u)=\Sigma(u)^{-1}$ as symmetrizer; 
in order to use Proposition \ref{propNRJnear} to derive an energy estimate on $\widecheck{u}$, we just have to check the necessary assumptions. 
In particular, we must check whether the boundary term ${\mathfrak B}[\widecheck{u}]=G_{\rm nor}\widecheck{u}\cdot\widecheck{u}$ 
associated with this symmetrizer is weakly dissipative in the sense of Definition \ref{propweakdissipm} for solutions 
$\widecheck{u}$ to \eqref{eqtu1} that satisfy the boundary conditions \eqref{eqtu2}.

In the next section, we adapt this strategy to \eqref{PB1bis}--\eqref{PB3bis} whose boundary condition is more complicated than \eqref{toy2lin}; 
the fact that such a weak dissipativity property is satisfied will then be addressed in Section \ref{sectWSL2}.

\subsection{Transformation of the linearized equations for the wave-structure interaction problem}\label{sectlinWS}
The nonlinear shallow water equations \eqref{PB1bis} are written in the conservative form \eqref{toy1}, that is, 
\begin{equation}\label{PB1F}
\dt u + \partial_j{\mathcal F}_j(u)=0 \quad\mbox{in}\quad (0,T)\times\cE,
\end{equation}
where ${\mathcal F}_j(u)^{\rm T}=\big(hv_j, (\gr\zeta+\frac{1}{2}\abs{v}^2 ) {\bf e}_j^{\rm T}\big)$ for $j=1,2$. 
The boundary conditions \eqref{PB2bis}--\eqref{PB3bis} are not exactly under the form \eqref{toy2} 
but can be seen as a nonlocal, in time and space, generalization of it. 
They can indeed be written as 
\begin{equation}\label{PB2F}
\begin{cases}
 {\mathcal F}_{\rm nor}(u)^{\rm I}-\Lambda \psi_{\rm i} = 0 &\mbox{on}\quad (0,T)\times\mathit{\Gamma}, \\
 \partial_t \psi_{\rm i}+N\cdot{\mathcal F}_{\rm nor}(u)^{\rm II} = 0 &\mbox{on}\quad (0,T)\times\mathit{\Gamma}.
\end{cases}
\end{equation}
Here and in what follows, we use the notation $u^{\rm I}=u_1$ and $u^{\rm II}=(u_2,u_3)^{\rm T}$ for $u=(u_1,u_2,u_3)^{\rm T}\in\R^3$. 
Applying $\chi_{\rm b}\dpar$ to these equations with $\dpar=\dt$ or $\dtan$ and $\chi_{\rm b}$ as in \eqref{defchib}, 
we find that $\dot{u}=\chi_{\rm b}\dpar u$ and $\dot{\psi}_{\rm i}=\dpar \psi_{\rm i}$ solve 
\begin{equation}\label{pretransf1}
\dt\dot{u} +\partial_j (A_j(u)\dot{u}) = \dot{f} \quad\mbox{in}\quad (0,T)\times\cE
\end{equation}
with boundary conditions 
\begin{equation}\label{pretransf2}
\begin{cases}
 (A_{\rm nor}\dot{u})^{\rm I} - \Lambda\dot{\psi}_{\rm i} = g_1 &\mbox{on}\quad (0,T)\times\mathit{\Gamma}, \\
 \dt \dot{\psi}_{\rm i} + N\cdot(A_{\rm nor}\dot{u})^{\rm II} = g_2 &\mbox{on}\quad (0,T)\times\mathit{\Gamma}, 
\end{cases}
\end{equation}
where $A_j(u)={\rm d}_u {\mathcal F}_j(u)$, $A_{\rm nor}=N_jA_j(u)$, and 
\begin{equation}\label{deffg1g2}
\dot{f}=-[\chi_{\rm b}\dpar,\partial_j]{\mathcal F}_j(u), \quad
g_1=[\dpar,\Lambda]\psi_{\rm i}, \quad\mbox{and}\quad
g_2=-(\dpar N)\cdot{\mathcal F}_{\rm nor}(u)^{\rm II}.
\end{equation}
In the following argument, we will always assume that the flow is subcritical so that we impose the following assumption on 
$u=(\zeta,v^{\rm T})^{\rm T}$.

\begin{assumption}\label{ass:subcritical}
There exists a positive constant $c_0$ such that for any $(t,x)\in(0,T)\times\cE$ we have 
\[
\gr h(t,x) - \abs{v(t,x)}^2 \geq c_0,
\]
where $h=H_0+\zeta$ with a positive constant $H_0$. 
\end{assumption}
The following proposition provides a convenient reformulation of the problem \eqref{pretransf1}--\eqref{pretransf2}.

\begin{proposition}\label{proplintransf}
Let $T>0$ and suppose that Assumptions \ref{ass:hi} and \ref{ass:subcritical} are satisfied. 
Let us also define the matrices $G_j$ ($j=1,2$) and $\Sigma(u)$ as 
\begin{equation}\label{defGj}
G_j=
\begin{pmatrix}
 0 & {\bf e}_j^{\rm T} \\
 {\bf e}_j & 0_{2\times 2}
\end{pmatrix}
 \quad\mbox{and}\quad
\Sigma(u)=
\begin{pmatrix}
 \gr & v^{\rm T}\\
 v & h \mbox{\rm Id}_{2\times 2}
\end{pmatrix}.
\end{equation}
Then, $\Sigma(u)$ is invertible on $(0,T)\times \cE$ and that $\dot{u}$ is a regular solution of \eqref{pretransf1}--\eqref{pretransf2} 
if and only $\widecheck{u}:=\Sigma(u)\dot{u}$ and $\widecheck{\psi}_{\rm i}:=\dot{\psi}_{\rm i}$ solve 
\begin{equation}\label{PB1bislin0}
\partial_t\widecheck{u} + \Sigma(u)G_j \partial_j \widecheck{u} + B(u,\dt u)\widecheck{u} = \Sigma(u)\dot{f} \quad\mbox{in}\quad (0,T)\times\cE,
\end{equation}
where  $B(u,\dt u)=\Sigma(u)\dt (\Sigma(u)^{-1})$, and boundary conditions 
\begin{equation}\label{PB2bislin0}
\begin{cases}
 N\cdot\widecheck{u}^{\rm II}-\Lambda \widecheck{\psi}_{\rm i} = g_1 &\mbox{on}\quad (0,T)\times\mathit{\Gamma}, \\
 \partial_t \widecheck{\psi}_{\rm i}+\widecheck{u}^{\rm I}=g_2 &\mbox{on}\quad (0,T)\times\mathit{\Gamma}. 
\end{cases}
\end{equation}
\end{proposition}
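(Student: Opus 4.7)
The strategy is a direct algebraic verification that exploits the crucial factorization $A_j(u)=G_j\Sigma(u)$ built into the definitions of $G_j$ and $\Sigma(u)$, together with the fact that $G_1$ and $G_2$ have constant entries. The whole point of the change of unknown $\widecheck{u}=\Sigma(u)\dot u$ is to convert the conservative-form derivative $\partial_j(A_j(u)\dot u)$ appearing in \eqref{pretransf1} into $G_j\partial_j\widecheck u$ with no remainder, thereby turning a system with variable principal symbol into one whose symbol differs from a constant-coefficient one only by a pre-multiplication by $\Sigma(u)$. All the rest of the proposition is a matter of carefully chasing this through the equations and the boundary conditions.

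First I would check the factorization $A_j(u)=G_j\Sigma(u)$ by direct computation: since ${\mathcal F}_j(u)^{\rm T}=(hv_j,\,(\gr\zeta+\tfrac12|v|^2)\mathbf e_j^{\rm T})$, differentiating with respect to $u=(\zeta,v^{\rm T})^{\rm T}$ gives
\[
A_j(u)=\begin{pmatrix} v_j & h\mathbf e_j^{\rm T} \\ \gr \mathbf e_j & \mathbf e_j v^{\rm T}\end{pmatrix},
\]
and multiplying out $G_j\Sigma(u)$ recovers exactly this matrix. Next I would verify invertibility of $\Sigma(u)$: the block-determinant formula yields $\det\Sigma(u)=h^2\bigl(\gr-|v|^2/h\bigr)=h(\gr h-|v|^2)$, which is bounded below by $hc_0$ thanks to Assumption \ref{ass:subcritical} (and the subcriticality also forces $h>0$). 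Hence $\Sigma(u)^{-1}$ is a well-defined smooth matrix on $(0,T)\times\cE$.

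Now I would transform the interior equation. Substituting $\dot u=\Sigma(u)^{-1}\widecheck u$ in \eqref{pretransf1} and using $A_j(u)\Sigma(u)^{-1}=G_j$ gives
\[
\partial_t(\Sigma(u)^{-1}\widecheck u)+\partial_j(G_j\widecheck u)=\dot f;
\]
since $G_j$ has \emph{constant} entries, $\partial_j(G_j\widecheck u)=G_j\partial_j\widecheck u$, and multiplying by $\Sigma(u)$ produces \eqref{PB1bislin0} with $B(u,\partial_tu)=\Sigma(u)\partial_t(\Sigma(u)^{-1})$. This is the step that makes the whole change of unknowns worthwhile: no spatial derivative of $u$ has been created as a lower-order source, which is the exact mismatch between the two linearizations \eqref{toy1bislin} and \eqref{toy1linbis} discussed on the toy model. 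Reversing the computation (multiplying \eqref{PB1bislin0} by $\Sigma(u)^{-1}$, recombining time derivatives and using $\Sigma(u)^{-1}G_j=A_j(u)^{-1}G_j\Sigma(u)\cdot\Sigma(u)^{-1}\cdots$—more simply, multiplying by $\Sigma^{-1}$ gives the identity back) shows the equivalence in both directions.

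For the boundary conditions I would use the same factorization on $A_{\rm nor}=N_jA_j(u)=G_{\rm nor}\Sigma(u)$ with $G_{\rm nor}=N_jG_j=\begin{pmatrix}0&N^{\rm T}\\ N&0_{2\times2}\end{pmatrix}$, so that
\[
A_{\rm nor}\dot u=G_{\rm nor}\widecheck u=\begin{pmatrix} N\cdot\widecheck u^{\rm II}\\ N\,\widecheck u^{\rm I}\end{pmatrix}.
\]
Extracting the scalar component yields $(A_{\rm nor}\dot u)^{\rm I}=N\cdot\widecheck u^{\rm II}$, so the first line of \eqref{pretransf2} becomes the first line of \eqref{PB2bislin0}; for the second line, $N\cdot(A_{\rm nor}\dot u)^{\rm II}=N\cdot(N\widecheck u^{\rm I})=|N|^2\widecheck u^{\rm I}=\widecheck u^{\rm I}$ since $N$ is a unit vector, giving the second line of \eqref{PB2bislin0}. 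Each step is reversible, and no genuine obstruction appears; the only place to be careful is to insist that $G_j$ be constant so that $\partial_j(G_j\widecheck u)=G_j\partial_j\widecheck u$ in the third step—this is exactly the structural assumption encoded in Assumption \ref{assstructure}, and its verification here is what makes this proposition substantive rather than purely formal.
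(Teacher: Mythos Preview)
Your proof is correct and follows exactly the approach of the paper: verify the factorization $A_j(u)=G_j\Sigma(u)$ (which is the verification of Assumption \ref{assstructure}), check invertibility of $\Sigma(u)$ via the subcriticality condition, and then run the change-of-unknown procedure from the toy model in Section \ref{sectlintoy} on both the interior equation and the boundary conditions. The paper's proof is essentially a two-line reference back to that procedure plus the explicit display of $A_j(u)=G_j\Sigma(u)$; you have written out the details, including the boundary computation $A_{\rm nor}\dot u=G_{\rm nor}\widecheck u$, which is precisely what the paper leaves implicit.
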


\begin{proof}
The assumption made on $u$ ensures that $\Sigma(u)$ is invertible. 
The result follows from a straightforward adaptation of the procedure described in the previous section after remarking that 
$A_j(u)={\rm d}_u{\mathcal F}_j(u)$ $(j=1,2)$ satisfy Assumption \ref{assstructure}, that is,
\begin{equation}\label{defAj}
A_j(u)=
\begin{pmatrix}
 v_j & h{\bf e}_j^{\rm T} \\
 \gr {\bf e}_j & {\bf e_j}\otimes v
\end{pmatrix}
=G_j \Sigma(u).
\end{equation}
\end{proof}

\subsection{$L^2$-estimates for the transformed linearized equations}\label{sectWSL2}
We are interested here in the analysis of the transformed linearized equations \eqref{PB1bislin0} under the boundary conditions \eqref{PB2bislin0}, 
as provided by Proposition \ref{proplintransf}. 
Since the lower order term $B(u, \partial_tu)\widecheck{u}$ in the transformed equations does not contribute to the main structure of the system in 
our analysis, we absorb the term in the right-hand side and consider in this and the following subsections the equations 
\begin{equation}\label{PB1bislin}
\dt \widecheck{u} +\Sigma(u)G_j \partial_j\widecheck{u} = \Sigma(u)\dot{f} \quad\mbox{in}\quad (0,T)\times\cE,
\end{equation}
under the boundary conditions 
\begin{equation}\label{PB2bislin}
\begin{cases}
 N\cdot\widecheck{u}^{\rm II}-\Lambda \widecheck{\psi}_{\rm i} = g_1 &\mbox{on}\quad (0,T)\times\mathit{\Gamma}, \\
 \dt\psi_{\rm i} + \widecheck{u}^{\rm I} = g_2 &\mbox{on}\quad (0,T)\times\mathit{\Gamma}. 
\end{cases}
\end{equation}
If $u\in L^\infty((0,T)\times\cE)$ satisfies Assumption \ref{ass:subcritical}, the matrix 
\begin{equation}\label{defS}
S(u) = \Sigma(u)^{-1} = \frac{1}{\gr h -\abs{v}^2}
\left(
\begin{array}{cc}
 h & -v^{\rm T} \\
 -v & \gr\mbox{Id}_{2\times 2}-\frac{1}{h}v^\perp\otimes v^\perp
\end{array}\right)
\end{equation}
furnishes a Friedrichs symmetrizer in the sense of Assumption \ref{assFriedrichs} for \eqref{PB1bislin}. 
The corresponding boundary term \eqref{defB} is given by 
\begin{equation}\label{defBtransf}
{\mathfrak B}[\widecheck{u}] = G_{\rm nor}\widecheck{u}\cdot\widecheck{u} = 2\widecheck{u}^{\rm I}( N\cdot\widecheck{u}^{\rm II}).
\end{equation}
We could obtain an $L^2$ energy estimate for $\widecheck{u}$ from Proposition \ref{propNRJnear} 
if we could control this boundary term by proving that solutions satisfying the boundary conditions \eqref{PB2bislin} 
are weakly dissipative in the sense of Definition \ref{propweakdissip} and for the choice \eqref{defS} of symmetrizer. 
This is done in the following proposition. 
We recall that the functional spaces we use are defined in Section \ref{sectfunctionspace}.

\begin{proposition}\label{propWSweakdiss}
Let $u,\dt u\in L^\infty((0,T)\times\cE)$ and suppose that Assumptions \ref{ass:hi} and \ref{ass:subcritical} are satisfied. 
Let also $0<\underline{\mathfrak c}<\underline{\mathfrak C}$ be the constants in Proposition \ref{propDN} and $\tilde{g}_1$ an extension of $g_1$ 
to $(0,T)\times\cE$ so that $\tilde{g}_{1\vert_\mathit{\Gamma}}=g_1$. 
Then, for any regular solution $(\widecheck{u},\widecheck{\psi}_{\rm i})$ of \eqref{PB1bislin}--\eqref{PB2bislin}, 
the boundary term \eqref{defBtransf} is weakly dissipative in the sense of Definition \ref{propweakdissip}, that is, 
there exists $\lambda_0=\lambda_0\bigl(\frac{1}{c_0},\Abs{(u,\dt u)}_{L^\infty((0,T)\times\cE)}\bigr)$ such that for any $\lambda\geq\lambda_0$ 
and any $t\in[0,T]$ we have 
\[
2 \int_0^t e^{-2\lambda t'} \left( \int_\mathit{\Gamma} \widecheck{u}^{\rm I}( N\cdot\widecheck{u}^{\rm II} ) \right){\rm d}t'
\leq -E_{\rm bdry}(t)+S_{\rm data}(t)+\frac{\alpha_0}{8} I_{\lambda,t}(\Abs{\widecheck{u}(\cdot)}_{L^2})^2
\]
with 
\begin{align*}
E_{\rm bdry}(t) 
&= \underline{\mathfrak c}\bigl( e^{-2\lambda t}\abs{\dtan\widecheck{\psi}_{\rm i}(t)}_{H^{-1/2}(\mathit{\Gamma})}^2
 + \lambda\abs{\dtan\widecheck{\psi}_{\rm i}}^2_{L^2_{\lambda,t}H^{-1/2}(\mathit{\Gamma})} \bigr), \\
S_{\rm data}(t) 
&= C_0\bigl( \abs{\dtan\widecheck{\psi}_{\rm i}(0)}_{H^{-1/2}(\mathit{\Gamma})}^2 + \Abs{ \tilde{g}_1(0) }_{L^2(\cE)}^2 \\
&\qquad
 + S_{\lambda,t}^*(\Abs{ \dot{f}(\cdot)}_{L^2(\cE)})^2 + S_{\lambda,t}^*(\opnorm{\tilde{g}_1(\cdot)}_{1})^2
 + \lambda^{-1}\abs{\dtan g_2}_{L^2_{\lambda,t}H^{-1/2}(\mathit{\Gamma})}^2 \bigr),
\end{align*}
where $C_0=C_0\bigl(\frac{1}{c_0},\frac{1}{\underline{\mathfrak{c}}},\underline{\mathfrak{C}},\Abs{u}_{L^\infty((0,T)\times\cE)}\bigr)$.
\end{proposition}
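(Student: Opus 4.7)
The plan is to substitute both boundary conditions of \eqref{PB2bislin} into the boundary form $\mathfrak B[\widecheck{u}]=2\widecheck{u}^{\rm I}(N\cdot\widecheck{u}^{\rm II})$ and control each resulting piece. Writing $N\cdot\widecheck{u}^{\rm II}=\Lambda\widecheck{\psi}_{\rm i}+g_1$ and $\widecheck{u}^{\rm I}=-\partial_t\widecheck{\psi}_{\rm i}+g_2$, the integrand decomposes as
\[
2\widecheck{u}^{\rm I}(N\cdot\widecheck{u}^{\rm II}) = -2(\partial_t\widecheck{\psi}_{\rm i})\Lambda\widecheck{\psi}_{\rm i} + 2 g_2\Lambda\widecheck{\psi}_{\rm i} + 2\widecheck{u}^{\rm I}g_1,
\]
where the first term will produce the boundary energy, the second is handled by symmetry and duality, and the third is the main technical obstacle since the unknown $\widecheck{u}^{\rm I}$ is paired with a nonhomogeneous datum whose natural estimate lives in the bulk through $\tilde g_1$.

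For the main quadratic term, the symmetry and time-independence of $\Lambda$ (Proposition \ref{propDN} (ii) and Assumption \ref{ass:hi}) give $-2(\partial_t\widecheck{\psi}_{\rm i})\Lambda\widecheck{\psi}_{\rm i}=-\partial_t\langle\Lambda\widecheck{\psi}_{\rm i},\widecheck{\psi}_{\rm i}\rangle$; integrating by parts against the weight $e^{-2\lambda t'}$ produces the pieces $-e^{-2\lambda t}\langle\Lambda\widecheck{\psi}_{\rm i}(t),\widecheck{\psi}_{\rm i}(t)\rangle$ and $-2\lambda\int_0^t e^{-2\lambda t'}\langle\Lambda\widecheck{\psi}_{\rm i},\widecheck{\psi}_{\rm i}\rangle\,dt'$, and Proposition \ref{propDN} (i) converts these into $-E_{\rm bdry}(t)$ at the only cost of an initial contribution $\underline{\mathfrak C}|\dtan\widecheck{\psi}_{\rm i}(0)|_{H^{-1/2}}^2$ absorbed into $S_{\rm data}$. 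For the cross term $g_2\Lambda\widecheck{\psi}_{\rm i}$, Cauchy--Schwarz applied to the positive symmetric form $\Lambda$ together with the coercivity and continuity bounds yields $|\langle\Lambda\widecheck{\psi}_{\rm i},g_2\rangle|\leq\underline{\mathfrak C}|\dtan\widecheck{\psi}_{\rm i}|_{H^{-1/2}}|\dtan g_2|_{H^{-1/2}}$, and a $\lambda$-weighted Young's inequality splits the time-integrated cross term into a piece absorbed into $E_{\rm bdry}$ for $\lambda$ large and a piece of the form $C\lambda^{-1}|\dtan g_2|^2_{L^2_{\lambda,t}H^{-1/2}}$ contributing to $S_{\rm data}$.

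The hard part is the term $2\int\widecheck{u}^{\rm I}g_1$. Using $g_1=\tilde g_1|_\mathit{\Gamma}$ and a smooth bulk extension $\vec N$ of $N$ supported in $U_\mathit{\Gamma}$, the divergence theorem gives
\[
\int_\mathit{\Gamma}\widecheck{u}^{\rm I}g_1 = -\int_\cE\widecheck{u}^{\rm I}\,\nabla\cdot(\tilde g_1\vec N) - \int_\cE\tilde g_1\,\vec N\cdot\nabla\widecheck{u}^{\rm I};
\]
the first bulk integral is handled by Cauchy--Schwarz against $\Abs{\widecheck{u}}_{L^2}$ and $\opnorm{\tilde g_1}_1$, but the second still contains a spatial derivative of $\widecheck{u}^{\rm I}$ that is not controlled by the $L^2$-energy. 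The crucial structural input is the $\widecheck{u}^{\rm II}$-block of the transformed equation \eqref{PB1bislin}, which, precisely thanks to the factorization $A_j=G_j\Sigma$ from Assumption \ref{assstructure}, reads
\[
h\,\nabla\widecheck{u}^{\rm I} = -\partial_t\widecheck{u}^{\rm II} - v\,(\nabla\cdot\widecheck{u}^{\rm II}) + (\Sigma(u)\dot f)^{\rm II},
\]
so that the spatial derivative of $\widecheck{u}^{\rm I}$ is traded for a time derivative of $\widecheck{u}^{\rm II}$, a divergence of $\widecheck{u}^{\rm II}$, and the source $\dot f$. Integration by parts in time on the $\partial_t\widecheck{u}^{\rm II}$ piece produces initial and terminal bulk traces plus a bulk integral against $(\partial_{t'}-2\lambda)(\tilde g_1/h)$, all dualized by Cauchy--Schwarz against $\Abs{\widecheck{u}^{\rm II}}_{L^2}$; after Young, these contribute at most $\tfrac{\alpha_0}{8}I_{\lambda,t}(\Abs{\widecheck{u}(\cdot)}_{L^2})^2$ plus data terms of the form $\Abs{\tilde g_1(0)}^2_{L^2}$, $S^*_{\lambda,t}(\opnorm{\tilde g_1}_1)^2$ and $S^*_{\lambda,t}(\Abs{\dot f}_{L^2})^2$. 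A secondary spatial integration by parts on the $\nabla\cdot\widecheck{u}^{\rm II}$ piece generates one last boundary integral $\int_\mathit{\Gamma}(N\cdot v/h)g_1(\Lambda\widecheck{\psi}_{\rm i}+g_1)$ after reusing the first boundary condition in \eqref{PB2bislin}; this is treated by the same $\Lambda$-duality as the $g_2\Lambda\widecheck{\psi}_{\rm i}$ piece, with the uniform bound on $N\cdot v/h$ provided by Assumption \ref{ass:subcritical} and the $L^\infty$ control of $u$. Gathering all contributions and taking $\lambda_0$ large enough in terms of $c_0^{-1}$, $\Abs{(u,\partial_t u)}_{L^\infty}$ and $\underline{\mathfrak c}^{-1}$ so that the various $I_{\lambda,t}(\Abs{\widecheck{u}}_{L^2})^2$ pieces sum to at most $\tfrac{\alpha_0}{8}I_{\lambda,t}(\Abs{\widecheck{u}}_{L^2})^2$ yields the stated weak dissipativity inequality.
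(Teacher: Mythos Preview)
Your decomposition into $B_1$, $B_2$, $B_3$ and your treatment of $B_1$ and $B_2$ match the paper's proof essentially verbatim. The gap is in your handling of $B_3=2\int_0^t e^{-2\lambda t'}(g_1,\widecheck{u}^{\rm I})_{L^2(\mathit{\Gamma})}\,{\rm d}t'$.

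After your first divergence theorem you isolate $\int_{\cE}\tilde g_1\,\vec N\cdot\nabla\widecheck{u}^{\rm I}$ and replace $h\nabla\widecheck{u}^{\rm I}$ via the second block of \eqref{PB1bislin}; this produces, among other terms, $\int_{\cE}\frac{\tilde g_1}{h}(\vec N\cdot v)\,\nabla\cdot\widecheck{u}^{\rm II}$. Your ``secondary spatial integration by parts'' on this piece yields not only the boundary integral you mention but also the bulk term
\[
-\int_{\cE}\nabla\!\Bigl(\tfrac{\tilde g_1(\vec N\cdot v)}{h}\Bigr)\cdot\widecheck{u}^{\rm II},
\]
whose integrand contains $\nabla v$ and $\nabla h$. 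The proposition only assumes $u,\partial_t u\in L^\infty((0,T)\times\cE)$; no control on spatial derivatives of $u$ is available, so this term cannot be bounded and the argument breaks. The same issue contaminates the boundary integral you do write: estimating $\int_{\mathit{\Gamma}}\tfrac{N\cdot v}{h}\,g_1\,\Lambda\widecheck{\psi}_{\rm i}$ by the $\Lambda$-duality requires an $H^{1/2}(\mathit{\Gamma})$ bound on $\tfrac{N\cdot v}{h}\,g_1$, which again needs tangential regularity of $u$ on $\mathit{\Gamma}$ that is not assumed. Even if one added such hypotheses, the resulting constant $C_0$ would depend on $\|\nabla u\|_{L^\infty}$, contradicting the stated dependence.

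The paper circumvents this by never separating the blocks of the equation. It observes the algebraic identity $g_1\widecheck{u}^{\rm I}=\varphi\cdot G_{\rm nor}\widecheck{u}=N_j(\varphi\cdot G_j\widecheck{u})$ with $\varphi=(0,\chi_{\rm b}\tilde g_1 N^{\rm T})^{\rm T}$, applies the divergence theorem once, and then substitutes the full system in the form $\partial_t(S(u)\widecheck{u})+G_j\partial_j\widecheck{u}=\widecheck{f}$ with $\widecheck{f}=\dot f-(\partial_t S(u))\widecheck{u}$. Because the $G_j$ are constant, all spatial derivatives fall on $\varphi$ (hence on $\tilde g_1$ and the smooth $N$), and the only derivative of $u$ that appears is $\partial_t S(u)$, which is controlled by $\|\partial_t u\|_{L^\infty}$. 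This is precisely why the change of unknowns $\widecheck u=\Sigma(u)\dot u$ was introduced: after it, the principal part has constant coefficients, and the estimate of $B_3$ can be closed without any spatial regularity on $u$.
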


\begin{proof}
Throughout the proof, we use the fact that under the choice of the symmetrizer $S(u)$ given in \eqref{defS}, 
the positive constants $\alpha_0$ and $\beta_0$ in Assumption \ref{assFriedrichs} 
can be taken so that $\tfrac{1}{\alpha_0},\beta_0 \leq C\bigl(\tfrac{1}{c_0},\Abs{u}_{L^\infty((0,T)\times\cE)}\bigr)$. 
We also use the notation 
\[
E_{\rm int}(\psi_{\rm i}) = \frac{1}{2}\int_\mathit{\Gamma} \psi_{\rm i}\Lambda\psi_{\rm i}.
\]
Recalling that $\widecheck{u}^{\rm I}=-\dt\widecheck{\psi}_{\rm i}+g_2$ and that $N\cdot\widecheck{u}^{\rm II}=\Lambda\widecheck{\psi}_{\rm i}+g_1$, 
we have 
\begin{align*}
2\int_0^t e^{-2\lambda t'}\left(\int_\mathit{\Gamma} \widecheck{u}^{\rm I}( N\cdot\widecheck{u}^{\rm II} )\right){\rm d}t'
&= -2\int_0^t e^{-2\lambda t'} ( \dt\widecheck{\psi}_{\rm i},\Lambda \widecheck{\psi}_{\rm i} )_{L^2(\mathit{\Gamma})}{\rm d}t' \\
&\quad\;
 + 2\int_0^t e^{-2\lambda t'} ( \Lambda\widecheck{\psi}_{\rm i},g_2 )_{L^2(\mathit{\Gamma})}{\rm d}t'
 + 2\int_0^t e^{-2\lambda t'} ( g_1, \widecheck{u}^{\rm I} )_{L^2(\mathit{\Gamma})}{\rm d}t' \\
&=: B_1+B_2+B_3.
\end{align*}
Let us evaluate separately each integrals $B_1$, $B_2$, and $B_3$.

-- Analysis of $B_1$. 
From the symmetry of the Dirichlet-to-Neumann map $\Lambda$, we get directly 
\begin{align*}
B_1
&= -2\int_0^t e^{-2\lambda t'} \frac{\rm d}{{\rm d}t'}E_{\rm int}(\widecheck{\psi}_{\rm i}(t')){\rm d}t' \\
&= -2\left( e^{-2\lambda t} E_{\rm int}(\widecheck{\psi}_{\rm i}(t))
 + 2\lambda \int_0^t e^{-2\lambda t'}E_{\rm int}(\widecheck{\psi}_{\rm i}(t')){\rm d}t' \right) + 2E_{\rm int}(\widecheck{\psi}_{\rm i}(0)). 
\end{align*}
Here, by Proposition \ref{propDN} we have 
\begin{equation}\label{DNequi}
\underline{\mathfrak c}\abs{\dtan \psi_{\rm i}}_{H^{-1/2}(\mathit{\Gamma})}^2
 \leq 2E_{\rm int}(\psi_{\rm i})
 \leq \underline{\mathfrak C}\abs{\dtan \psi_{\rm i}}_{H^{-1/2}(\mathit{\Gamma})}^2.
\end{equation}
Therefore, we obtain 
\begin{equation}\label{estB1}
B_1 \leq -\underline{\mathfrak c}\bigl( e^{-2\lambda t} \abs{\dtan \widecheck{\psi}_{\rm i}(t)}_{H^{-1/2}(\mathit{\Gamma})}^2
 + 2\lambda\abs{\dtan\widecheck{\psi}_{\rm i}}^2_{L^2_{\lambda,t}H^{-1/2}(\mathit{\Gamma})} \bigr)
 + \underline{\mathfrak C}\abs{\dtan \widecheck{\psi}_{\rm i}(0)}_{H^{-1/2}(\mathit{\Gamma})}^2.
\end{equation}

-- Analysis of $B_2$. 
Since the Dirichlet-to-Neumann map $\Lambda$ is symmetric and positive in $L^2(\mathit{\Gamma})$, 
we can apply the Cauchy--Schwarz inequality and \eqref{DNequi} to obtain 
\begin{align*}
\abs{B_2}
&\leq 2 \int_0^t e^{-2\lambda t'}E_{\rm int}(\widecheck{\psi}_{\rm i}(t'))^{1/2}E_{\rm int}(g_2(t'))^{1/2}{\rm d}t' \\
&\leq 2\underline{\mathfrak C}\abs{\dtan\widecheck{\psi}_{\rm i}}_{L^2_{\lambda,t}H^{-1/2}(\mathit{\Gamma})}
 \abs{\dtan g_2}_{L^2_{\lambda,t}H^{-1/2}(\mathit{\Gamma})}, 
\end{align*}
so that, using Young's inequality we get 
\begin{equation}\label{estB2}
\abs{B_2} \leq \underline{\mathfrak c}\lambda\abs{\dtan\widecheck{\psi}_{\rm i}}^2_{L^2_{\lambda,t}H^{-1/2}(\mathit{\Gamma})}
 + (\underline{\mathfrak c}\lambda)^{-1}\underline{\mathfrak C}^2\abs{\dtan g_2}^2_{L^2_{\lambda,t}H^{-1/2}(\mathit{\Gamma})}. 
\end{equation}

-- Analysis of $B_3$. 
We note the identity $\widecheck{u}^{\rm I}=(0,N^{\rm T})^{\rm T}\cdot G_{\rm nor}\widecheck{u}$, so that we have 
\[
g_1\widecheck{u}^{\rm I}=\varphi\cdot G_{\rm nor}\widecheck{u}=N_j(\varphi\cdot G_j\widecheck{u})
\]
with $\varphi=(0,\chi_{\rm b}\tilde{g}_1N^{\rm T})^{\rm T}$. 
We note also that the transformed linearized equations \eqref{PB1bislin} can be written as 
\[
\dt(S(u)\widecheck{u}) + G_j\partial_j\widecheck{u} = \widecheck{f} \quad\mbox{in}\quad (0,T)\times\cE,
\]
where $\widecheck{f}=\dot{f}-(\dt S(u))\widecheck{u}$. 
Therefore, we see that 
\begin{align*}
(g_1,\widecheck{u}^{\rm I})_{L^2(\mathit{\Gamma})}
&= -\int_{\cE}\partial_j(\varphi\cdot G_j\widecheck{u}) \\
&= \frac{\rm d}{{\rm d}t}\int_\cE S(u)\varphi\cdot\widecheck{u}
 - \int_\cE \{ (\dt\varphi+G_j\partial_j\varphi)\cdot\widecheck{u}+\varphi\cdot \widecheck{f}\},
\end{align*}
so that 
\begin{align*}
B_3
&= 2e^{-2\lambda t}(S(u(t))\varphi(t),\widecheck{u}(t))_{L^2(\cE)}
 - 2(S(u(0))\varphi(0),\widecheck{u}(0))_{L^2(\cE)} \\
&\quad\;
 +4\lambda\int_0^t e^{\-2\lambda t'}(S(u)\varphi,\widecheck{u})_{L^2(\cE)} {\rm d}t' \\
&\quad\;
 -2 \int_0^t e^{-2\lambda t'}\{ (S(u)\dt\varphi+G_j\partial_j\varphi,\widecheck{u})_{L^2(\cE)} + (\varphi, \widecheck{f})_{L^2(\cE)} \}{\rm d}t'. 
\end{align*}
This implies that 
\begin{align*}
\abs{B_3}
&\leq 4\beta_0I_{\lambda,t}( \Abs{ \varphi(\cdot) }_{L^2}) I_{\lambda,t}(\Abs{ \widecheck{u}(\cdot) }_{L^2}) \\
&\quad\;
 + 2(\beta_0+1)I_{\lambda,t}(\Abs{ \widecheck{u}(\cdot) }_{L^2}) S_{\lambda,t}^*(\opnorm{ \varphi(\cdot) }_1)
 + 2I_{\lambda,t}(\Abs{ \varphi(\cdot) }_{L^2}) S_{\lambda,t}^*(\Abs{ \widecheck{f}(\cdot) }_{L^2}).
\end{align*}
Here, it follows from \eqref{eqdtS} that 
$I_{\lambda,t}( \Abs{ \varphi(\cdot) }_{L^2}) \lesssim \Abs{ \varphi(0) }_{L^2} + S_{\lambda,t}^*( \opnorm{ \varphi(\cdot) }_1 )$, 
so that by the definition of $\varphi$ we easily have $\Abs{ \varphi(0) }_{L^2} \lesssim \Abs{ \tilde{g}_1(0) }_{L^2}$ and 
$\opnorm{\varphi(t)}_1 \lesssim \opnorm{ \tilde{g}_1(t) }_1$. 
Moreover, we have $\Abs{ \widecheck{f}(t) }_{L^2} \leq \Abs{ \dot{f}(t) }_{L^2} + \beta_1\Abs{ \widecheck{u}(t) }_{L^2}$ 
with $\beta_1=\Abs{ \dt S(u) }_{L^\infty((0,T)\times\cE)}$, so that 
\begin{equation}\label{estB3}
\abs{B_3} \leq \left(\frac{\alpha_0}{16}+\frac{\beta_1^2}{\lambda^2}\right) I_{\lambda,t}(\Abs{ \widecheck{u}(\cdot) }_{L^2})^2
 +C_0\bigl( \Abs{ \tilde{g}_1(0) }_{L^2}^2 + S_{\lambda,t}^*(\opnorm{\tilde{g}_1(\cdot)}_1)^2 + S_{\lambda,t}^*(\Abs{ \dot{f}(\cdot) }_{L^2})^2 \bigr),
\end{equation}
where $C_0=C_0\bigl(\frac{1}{\alpha_0},\beta_0\bigr)$. 
Here, we note that $\beta_1 \leq C\bigl(\frac{1}{c_0},\Abs{ (u,\dt u) }_{L^\infty((0,T)\times\cE)}\bigr)$.

Gathering the estimates provided by \eqref{estB1}, \eqref{estB2}, and \eqref{estB3}, and taking $\lambda_0>0$ so large that 
$\beta_1^2 \leq \frac{\alpha_0}{16}\lambda_0^2$ holds, we get the result stated in the proposition. 
\end{proof}

From Proposition \ref{propNRJnear}, we deduce directly the following corollary that provides an energy estimate for the solution 
of the linearized equations \eqref{PB1bislin}--\eqref{PB2bislin}.

\begin{corollary}\label{coroestL2}
Under the assumptions of Proposition \ref{propWSweakdiss}, there exists a positive constant 
$\lambda_0=\lambda_0\bigl(\frac{1}{c_0},\Abs{ (u,\dt u) }_{L^\infty((0,T)\times\cE)}\bigr)$ such that 
for any regular solution $(\widecheck{u},\widecheck{\psi}_{\rm i})$ of \eqref{PB1bislin}--\eqref{PB2bislin} we have the energy estimate 
\begin{align*}
& I_{\lambda,t}(\Abs{\widecheck{u}(\cdot)}_{L^2})^2 + I_{\lambda,t}(\abs{\dtan\widecheck{\psi}_{\rm i}(\cdot)}_{H^{-1/2}})^2 \\
&\leq C_0\bigl( \Abs{\widecheck{u}(0)}_{L^2}^2 + \abs{\dtan\widecheck{\psi}_{\rm i}(0)}_{H^{-1/2}}^2 + \Abs{\tilde{g}_1(0)}_{L^2}^2 \\
&\qquad
 + S_{\lambda,t}^*(\Abs{ \dot{f}(\cdot)}_{L^2})^2
 + S_{\lambda,t}^*(\opnorm{\tilde{g}_1(\cdot)}_{1})^2 + \lambda^{-1}\abs{\dtan g_2}_{L^2_{\lambda,t}H^{-1/2}}^2 \bigr).
\end{align*}
for any $\lambda\geq\lambda_0$ and any $t\in[0,T]$, 
where $C_0=C_0\bigl(\frac{1}{c_0},\frac{1}{\underline{\mathfrak{c}}},\underline{\mathfrak{C}},\Abs{u}_{L^\infty((0,T)\times\cE)}\bigr)$. 
\end{corollary}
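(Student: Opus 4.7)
The plan is to combine the two ingredients already in hand: the abstract energy estimate from Proposition \ref{propNRJnear}, and the weak dissipativity of the boundary term derived in Proposition \ref{propWSweakdiss}. As noted just before the corollary, the matrix $S(u)=\Sigma(u)^{-1}$ given by \eqref{defS} is a Friedrichs symmetrizer for \eqref{PB1bislin} in the sense of Assumption \ref{assFriedrichs}, because $S(u)\,(\Sigma(u)G_j)=G_j$ is symmetric; under Assumptions \ref{ass:subcritical} and the $L^\infty$ bound on $(u,\partial_t u)$, the constants $\alpha_0,\beta_0,\beta_0^{\rm in},\beta_1$ of Assumption \ref{assFriedrichs} are controlled by $C\bigl(1/c_0,\|(u,\partial_t u)\|_{L^\infty}\bigr)$. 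First I would apply Proposition \ref{propNRJnear} to the system \eqref{PB1bislin} with source $\Sigma(u)\dot f$ and the weak dissipativity property established in Proposition \ref{propWSweakdiss}; choosing $\lambda_0$ at least as large as the one produced by Proposition \ref{propWSweakdiss} and as $\beta_1/\alpha_0$, one obtains, for every $\lambda\ge\lambda_0$ and $t\in[0,T]$,
\[
I_{\lambda,t}(\Abs{\widecheck u(\cdot)}_{L^2})^2+\tfrac{8}{\alpha_0}E_{\rm bdry}(t)\ \le\ C_0\bigl(\Abs{\widecheck u(0)}_{L^2}^2+S^*_{\lambda,t}(\Abs{\Sigma(u)\dot f(\cdot)}_{L^2})^2+\tfrac{8}{\alpha_0}S_{\rm data}(t)\bigr).
\]

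Next I would dispose of the remaining discrepancies with the statement of the corollary. The factor $\Sigma(u)$ in front of $\dot f$ is absorbed through the elementary pointwise bound $\Abs{\Sigma(u)\dot f}_{L^2}\le C(\|u\|_{L^\infty})\Abs{\dot f}_{L^2}$, which controls $S^*_{\lambda,t}(\Abs{\Sigma(u)\dot f}_{L^2})$ by $S^*_{\lambda,t}(\Abs{\dot f}_{L^2})$ up to a harmless multiplicative constant. The explicit form of $S_{\rm data}(t)$ given in Proposition \ref{propWSweakdiss} already matches the right-hand side of the corollary, and we can replace the initial value $|\dtan\widecheck\psi_{\rm i}(0)|_{H^{-1/2}}^2$ coming from there by the same term in the statement.

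It remains to convert the boundary contribution $E_{\rm bdry}(t)$, which involves the value $e^{-2\lambda t}|\dtan\widecheck\psi_{\rm i}(t)|_{H^{-1/2}}^2$ at the single time $t$, into the stronger quantity $I_{\lambda,t}(|\dtan\widecheck\psi_{\rm i}(\cdot)|_{H^{-1/2}})^2$ that involves the supremum over $[0,t]$. The standard trick, already used in the proof of Proposition \ref{propNRJL2far}, is to observe that the right-hand side of the estimate above is non-decreasing in $t$: one therefore runs the estimate at every intermediate time $t''\in[0,t]$ (replacing $t$ by $t''$ throughout the left-hand side), bounds each right-hand side by its value at the final time $t$, and finally takes the supremum over $t''\in[0,t]$. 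The term $\underline{\mathfrak c}\,e^{-2\lambda t''}|\dtan\widecheck\psi_{\rm i}(t'')|_{H^{-1/2}}^2$ then becomes $\underline{\mathfrak c}\,\sup_{t''\in[0,t]}e^{-2\lambda t''}|\dtan\widecheck\psi_{\rm i}(t'')|_{H^{-1/2}}^2$, which together with the $\lambda\,\|\dtan\widecheck\psi_{\rm i}\|_{L^2_{\lambda,t}H^{-1/2}}^2$ piece already present recomposes $I_{\lambda,t}(|\dtan\widecheck\psi_{\rm i}(\cdot)|_{H^{-1/2}})^2$ (up to the factor $\underline{\mathfrak c}/2$ hidden in $C_0$). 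The same monotonicity argument applied to $I_{\lambda,t}(\Abs{\widecheck u(\cdot)}_{L^2})^2$ is automatic because that quantity was already produced by Proposition \ref{propNRJnear}. Putting these steps together yields the stated inequality, with constant $C_0$ depending on $1/c_0$, $1/\underline{\mathfrak c}$, $\underline{\mathfrak C}$, and $\|u\|_{L^\infty((0,T)\times\cE)}$.

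There is no real obstacle here since this is a direct assembly of Propositions \ref{propNRJnear} and \ref{propWSweakdiss}; the only mildly delicate point is the monotonicity-in-time argument needed to turn the pointwise boundary energy into a sup-plus-$L^2$ norm, but this is a verbatim reproduction of the bookkeeping already carried out for $\Abs{\widecheck u}_{L^2}$ in the proof of Proposition \ref{propNRJL2far}.
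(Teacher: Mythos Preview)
Your proposal is correct and follows essentially the same approach as the paper's proof: apply Proposition~\ref{propNRJnear} together with the weak dissipativity from Proposition~\ref{propWSweakdiss}, then use the monotonicity-in-time trick from the proof of Proposition~\ref{propNRJL2far} to upgrade the pointwise boundary energy to the full $I_{\lambda,t}$ norm. The paper additionally makes explicit that $\partial_j(SA_j)=\partial_j G_j=0$ so that $\beta_1$ depends only on $\partial_t S(u)$ (hence on $(u,\partial_t u)$ and not on spatial derivatives of $u$), a point you assert but do not spell out; otherwise the arguments coincide.
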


\begin{proof}
We note that the transformed linearized equations \eqref{PB1bislin} have the form \eqref{eqlinfixed} with $A_j=\Sigma(u)G_j$ for $j=1,2$ and 
$B=0_{3\times3}$, so that in view of the choice of the symmetrizer $S(u)$ given by \eqref{defS}, we have $\dt S+\partial_j(SA_j)-2SB=\dt S$. 
Therefore, the constant $\beta_1$ in Assumption \ref{assFriedrichs} can be taken so that 
$\beta_1\leq \Abs{ \dt S(u) }_{L^\infty((0,T)\times\cE)} \leq C\bigl(\frac{1}{c_0},\Abs{(u,\dt u)}_{L^\infty((0,T)\times\cE)}\bigr)$. 
From Propositions \ref{propNRJnear} and \ref{propWSweakdiss}, we get the estimate with $I_{\lambda,t}(\abs{\dtan\widecheck{\psi}_{\rm i}(\cdot)}_{H^{-1/2}})^2$ 
in the left-hand side replaced by $\widetilde{I}_{\lambda,t}(\abs{\dtan\widecheck{\psi}_{\rm i}(\cdot)}_{H^{-1/2}})^2$, 
where $\widetilde{I}_{\lambda,t}(\cdot)$ was defined in \eqref{defItilde}. 
Proceeding as in the proof of Proposition \ref{propNRJL2far}, we obtain the desired estimate. 
\end{proof}

\subsection{Higher order estimates for the transformed linearized equations}\label{sectHOest}
We showed in the previous section that solutions to \eqref{PB1bislin}--\eqref{PB2bislin} satisfy the weak dissipativity property of 
Definition \ref{propweakdissip}, and deduced in Corollary \ref{coroestL2} an $L^2$-based a priori energy estimate for such solutions. 
Following the general approach developed in Section \ref{sectapriorilin}, the derivation of higher order energy estimates requires that 
solutions to \eqref{PB1bislin}--\eqref{PB2bislin} satisfy a higher order dissipativity property, in the sense of Definition \ref{propweakdissipm}. 
Such a property is established in the following proposition. 
We refer to \eqref{defHm0}, \eqref{defHm}, and \eqref{defHm2} for the definition of $L_{\lambda,t}^pH^{s}(\mathit{\Gamma})$, 
$L_{\lambda,t}^pH^{s}_{(m)}(\mathit{\Gamma})$, and $\abs{g(t)}_{H^{s}_{(m)}}$. 
We also recall that $\opnorm{\widecheck{u}(t)}_{m,\parallel}$ was defined in \eqref{defopnormpar} 
and that the constants $\underline{\mathfrak c}$ and $\underline{\mathfrak C}$ were introduced in Proposition \ref{propDN}.

\begin{proposition}\label{propWSweakdissm}
Let $m$ be a non-negative integer and assume that Assumptions \ref{ass:hi} and \ref{ass:subcritical} are satisfied. 
Assume also that $u\in\mathbb{W}_T^m \cap \mathbb{W}_T^{2,p}$ for some $p\in(2,\infty)$ and take two constants $0<K_0\leq K$ such that 
\[
\begin{cases}
 \frac{1}{c_0},\frac{1}{\underline{\mathfrak{c}}},\underline{\mathfrak{C}},\Abs{u}_{L^\infty((0,T)\times\cE)} \leq K_0, \\
 K_0, \Abs{u}_{\mathbb{W}_T^m \cap \mathbb{W}_T^{2,p}} \leq K.
\end{cases}
\]
Then, for any regular solution $(\widecheck{u},\widecheck{\psi}_{\rm i})$ of \eqref{PB1bislin}--\eqref{PB2bislin} supported in $\overline{\cE}\cap U_\mathit{\Gamma}$, 
the boundary term \eqref{defBtransf} is weakly dissipative of order $m$ in the sense of Definition \ref{propweakdissipm}, that is, 
for any $\alpha=(\alpha_0,\alpha_1)\in\N^2$ satisfying $\abs{\alpha} \leq m$ and any $\lambda\geq\lambda_0(K)$ it holds that 
\begin{align*}
& 2\int_0^t e^{-2\lambda t'} \left( \int_\mathit{\Gamma} (\dpar^\alpha\widecheck{u})^{\rm I}( N\cdot(\dpar^\alpha\widecheck{u})^{\rm II} ) \right){\rm d}t' \\
&\leq -E_{{\rm bdry},\alpha}(t)+S_{{\rm data},\alpha}(t)+\frac{\alpha_0}{8} I_{\lambda,t}(\Abs{\dpar^\alpha\widecheck{u}(\cdot)}_{L^2})^2
 + C(K)\lambda^{-2} I_{\lambda,t}(\opnorm{\widecheck{u}(\cdot)}_m)^2
\end{align*}
with 
\begin{align*}
E_{{\rm bdry},\alpha}(t) 
&= \tfrac12\underline{\mathfrak c}\bigl( e^{-2\lambda t}\abs{\dpar^\alpha\dtan\widecheck{\psi}_{\rm i}(t)}_{H^{-1/2}}^2
 + \lambda\abs{\dpar^\alpha\dtan\widecheck{\psi}_{\rm i}}^2_{L^2_{\lambda,t}H^{-1/2}} \bigr), \\
S_{{\rm data},\alpha}(t) 
&= C(K_0)\bigl( \Abs{ [\dpar^\alpha,N]\cdot\widecheck{u}^{\rm II}(0) }_{L^2}^2
 + \abs{\dpar^\alpha\dtan\widecheck{\psi}_{\rm i}(0)}_{H^{-1/2}}^2 + \abs{\dpar^\alpha g_1(0)}_{H^{-1/2}}^2 \\
&\qquad
 + S_{\lambda,t}^*(\Abs{\dpar^\alpha \dot{f}(\cdot)}_{L^2})^2 + S_{\lambda,t}^*(\abs{\dpar^\alpha g_1(\cdot)}_{H_{(1)}^{-1/2}})^2
 + \lambda^{-1}\abs{\dpar^\alpha\dtan g_2}_{L^2_{\lambda,t}H^{-1/2}}^2 \bigr).
\end{align*}
\end{proposition}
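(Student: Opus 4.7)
The overall plan is to follow the proof of Proposition \ref{propWSweakdiss} applied to $(\dpar^\alpha \widecheck{u}, \dpar^\alpha \widecheck{\psi}_{\rm i})$, but with careful bookkeeping of the commutators produced when $\dpar^\alpha$ is pushed through the nonlocal boundary conditions. The $\dt$ component of $\dpar^\alpha$ commutes with $N$ and $\Lambda$, so all commutator contributions come from the tangential part $\dtan^{\alpha_1}$. For those we invoke Proposition \ref{propDN}(iv) together with the smoothness of $N$, and show that all resulting error terms either fit into the data functional $S_{{\rm data},\alpha}$, are absorbable into the left-hand side, or fit into the $C(K)\lambda^{-2} I_{\lambda,t}(\opnorm{\widecheck{u}(\cdot)}_m)^2$ remainder.

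The first step is to apply $\dpar^\alpha$ to the boundary system \eqref{PB2bislin}, which yields
$N\cdot \dpar^\alpha \widecheck{u}^{\rm II} = \Lambda \dpar^\alpha \widecheck{\psi}_{\rm i} + G^\alpha$ and $\dpar^\alpha \widecheck{u}^{\rm I} = -\dt \dpar^\alpha \widecheck{\psi}_{\rm i} + \dpar^\alpha g_2$,
with $G^\alpha := \dpar^\alpha g_1 + [\dpar^\alpha,\Lambda]\widecheck{\psi}_{\rm i} - [\dpar^\alpha,N]\cdot \widecheck{u}^{\rm II}$. Substituting in $2\int_0^t e^{-2\lambda t'}\!\int_\Gamma (\dpar^\alpha \widecheck{u})^{\rm I}(N\cdot (\dpar^\alpha \widecheck{u})^{\rm II})\,{\rm d}t'$ splits it into four contributions: a $B_1$-type term $-2\int e^{-2\lambda t'}(\dt \dpar^\alpha \widecheck{\psi}_{\rm i}, \Lambda \dpar^\alpha \widecheck{\psi}_{\rm i})_{L^2(\Gamma)}$; a $B_2$-type term $2\int e^{-2\lambda t'} (\dpar^\alpha g_2, \Lambda \dpar^\alpha \widecheck{\psi}_{\rm i})_{L^2(\Gamma)}$; a $B_3$-type term $2\int e^{-2\lambda t'}(\dpar^\alpha \widecheck{u}^{\rm I}, \dpar^\alpha g_1)_{L^2(\Gamma)}$ arising from the genuine source $\dpar^\alpha g_1$ contained in $G^\alpha$; and two commutator terms produced by $[\dpar^\alpha,\Lambda]\widecheck{\psi}_{\rm i}$ and $[\dpar^\alpha,N]\cdot\widecheck{u}^{\rm II}$.

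The $B_1$ contribution produces the boundary energy: integration by parts in time, the self-adjointness of $\Lambda$, and the equivalence \eqref{DNequi} of Proposition \ref{propDN}(i) yield exactly $-E_{{\rm bdry},\alpha}(t)$ plus the initial trace contribution $\underline{\mathfrak C}|\dpar^\alpha \dtan \widecheck{\psi}_{\rm i}(0)|_{H^{-1/2}}^2$. The $B_2$ contribution is bounded by Cauchy--Schwarz, \eqref{DNequi}, and Young's inequality exactly as in the proof of Proposition \ref{propWSweakdiss}, producing the term $\lambda^{-1}|\dpar^\alpha \dtan g_2|_{L^2_{\lambda,t}H^{-1/2}}^2$ in $S_{{\rm data},\alpha}$. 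The $B_3$-type contribution is treated by the same trick as before: we introduce $\varphi_\alpha := (0,\chi_{\rm b}\widetilde{\dpar^\alpha g_1}N^{\rm T})^{\rm T}$ using an extension of $\dpar^\alpha g_1$ with sharp trace norm, rewrite $(\dpar^\alpha \widecheck{u}^{\rm I})(\dpar^\alpha g_1) = \partial_j(\varphi_\alpha \cdot G_j \dpar^\alpha \widecheck{u})$ on $\Gamma$, and convert to a volume integral that is controlled by $S^*_{\lambda,t}(|\dpar^\alpha g_1(\cdot)|_{H^{-1/2}_{(1)}})^2$, $S^*_{\lambda,t}(\|\dpar^\alpha \dot f(\cdot)\|_{L^2})^2$, and the absorbable term $\frac{\alpha_0}{8}I_{\lambda,t}(\|\dpar^\alpha \widecheck{u}(\cdot)\|_{L^2})^2$, along with the initial data term $|\dpar^\alpha g_1(0)|_{H^{-1/2}}^2$.

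The main obstacle, and the source of the new remainder term, is the handling of the two commutators in $G^\alpha$. For $[\dpar^\alpha,\Lambda]\widecheck{\psi}_{\rm i}$, we pair it against $\dt \dpar^\alpha \widecheck{\psi}_{\rm i}$ through $H^{1/2}$--$H^{-1/2}$ duality and integrate by parts in time to shift one $\dt$ off; Proposition \ref{propDN}(iv) then gives the bound $|[\dtan^{\alpha_1},\Lambda]\widecheck{\psi}_{\rm i}|_{H^{1/2}} \lesssim |\dtan \widecheck{\psi}_{\rm i}|_{H^{\alpha_1-1/2}}$ which, after using the second boundary condition in \eqref{PB2bislin} to replace $\dt \widecheck{\psi}_{\rm i}$ by a trace of $\widecheck{u}^{\rm I}$ and using the trace theorem, is controlled by $\opnorm{\widecheck{u}}_m$ with a factor $\lambda^{-2}$ stemming from a double application of the $\sqrt{\lambda}^{-1}$ bound in \eqref{propSstar}. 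The term $[\dpar^\alpha,N]\cdot\widecheck{u}^{\rm II}$ involves $N$, which is smooth, so the commutator loses one tangential derivative on $\widecheck{u}$ but stays within $\opnorm{\widecheck{u}(\cdot)}_m$; integration by parts in time generates the initial-value contribution $\|[\dpar^\alpha,N]\cdot \widecheck{u}^{\rm II}(0)\|_{L^2}^2$ listed in $S_{{\rm data},\alpha}$, while the surviving time integrals combine with the previous commutator to yield the $C(K)\lambda^{-2} I_{\lambda,t}(\opnorm{\widecheck{u}(\cdot)}_m)^2$ remainder. Choosing $\lambda_0(K)$ so that the absorbable pieces can be moved to the left-hand side concludes the proof.
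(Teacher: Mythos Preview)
Your decomposition into $B_1$, $B_2$, $B_3$ and commutator contributions is the right starting point, but there are two genuine gaps.

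First, you never differentiate the interior equation. Applying $\dpar^\alpha$ to \eqref{PB1bislin} does \emph{not} give $\dpar^\alpha \dot f$ as source term; it gives $\dot f^{(\alpha)}=\dpar^\alpha\dot f-[\dpar^\alpha,S(u)]\dt\widecheck{u}$ (after multiplying by $\Sigma(u)$). This extra commutator is precisely where the hypothesis $u\in\mathbb{W}_T^{2,p}$ and the constants $K$ enter, via Lemma~\ref{lemmcommut} (and a Moser estimate for $S(u)$). Without it you cannot account for the $C(K)\lambda^{-2}I_{\lambda,t}(\opnorm{\widecheck{u}(\cdot)}_m)^2$ remainder, and your $B_3$ bound ``controlled by $S^*_{\lambda,t}(\Vert\dpar^\alpha\dot f\Vert_{L^2})^2$'' is incomplete.

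Second, your proposed treatment of the boundary commutator $[\dpar^\alpha,N]\cdot\widecheck{u}^{\rm II}$ by integration by parts in time on $\mathit{\Gamma}$ does not close. After replacing $(\dpar^\alpha\widecheck{u})^{\rm I}$ by $-\dt\dpar^\alpha\widecheck{\psi}_{\rm i}+\dpar^\alpha g_2$ and integrating by parts, the surviving integrand contains $\dt[\dpar^\alpha,N]\cdot\widecheck{u}^{\rm II}$ on $\mathit{\Gamma}$, which involves $m$ space--time derivatives of $\widecheck{u}$; but $\opnorm{\widecheck{u}}_m$ only gives $L^2(\cE)$ control of such derivatives, and $L^2(\cE)$ has no trace on $\mathit{\Gamma}$. (Your claim that this step produces the volume term $\Vert[\dpar^\alpha,N]\cdot\widecheck{u}^{\rm II}(0)\Vert_{L^2(\cE)}^2$ is inconsistent with a boundary computation.) The paper avoids this entirely: it treats the \emph{whole} of $g_1^{(\alpha)}=\dpar^\alpha g_1+[\dpar^\alpha,\Lambda]\widecheck{\psi}_{\rm i}-[\dpar^\alpha,N]\cdot\widecheck{u}^{\rm II}$ as the new $g_1$-data and applies Proposition~\ref{propWSweakdiss} as a black box. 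The commutators are therefore handled through the $\varphi$-trick (boundary integral $\to$ volume integral), which only requires estimating $\opnorm{\tilde g_1^{(\alpha)}(\cdot)}_1$ in $L^2(\cE)$; the extension $\tilde g_{1,2}^{(\alpha)}=-[\dpar^\alpha,N]\cdot\widecheck{u}^{\rm II}$ is already defined in $\cE\cap U_{\mathit{\Gamma}}$, and the estimate $S^*_{\lambda,t}(\opnorm{\tilde g_{1,2}^{(\alpha)}}_1)\lesssim\lambda^{-1}I_{\lambda,t}(\opnorm{\widecheck{u}}_m)$ follows directly. The same mechanism handles $[\dpar^\alpha,\Lambda]\widecheck{\psi}_{\rm i}$ via its regularizing extension (Proposition~\ref{propextext}) together with Proposition~\ref{propDN}(iv), without any boundary integration by parts in time.
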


\begin{proof}
Applying $\dpar^\alpha$ with $\abs{\alpha}\leq m$ to the transformed linearized problem \eqref{PB1bislin}--\eqref{PB2bislin} 
we see that $\widecheck{u}^{(\alpha)}:=\dpar^\alpha\widecheck{u}$ and $\widecheck{\psi}_{\rm i}^{(\alpha)}:=\dpar^\alpha\widecheck{\psi}_{\rm i}$ solves 
\begin{equation}\label{PB1bislinalpha}
\partial_t \widecheck{u}^{(\alpha)} + \Sigma(u)G_j \partial_j \widecheck{u}^{(\alpha)} = \Sigma(u) \dot{f}^{(\alpha)}
 \quad\mbox{in}\quad (0,T)\times(\cE\cap U_\mathit{\Gamma})
\end{equation}
with boundary conditions 
\begin{equation}\label{PB2bislinalpha}
\begin{cases}
 N\cdot(\widecheck{u}^{(\alpha)})^{\rm II} - \Lambda \widecheck{\psi}^{(\alpha)}_{\rm i} = g^{(\alpha)}_1 &\mbox{on}\quad (0,T)\times\mathit{\Gamma}, \\
 \partial_t\widecheck{\psi}_{\rm i}^{(\alpha)} + (\widecheck{u}^{(\alpha)})^{\rm I} = \dpar^\alpha g_2 &\mbox{on}\quad (0,T)\times\mathit{\Gamma}, 
\end{cases}
\end{equation}
where 
\begin{align*}
\dot{f}^{(\alpha)}
&= \dpar^\alpha \dot{f} - [\dpar^\alpha,S(u)]\dt\widecheck{u}, \\
g_1^{(\alpha)}
&= (\dpar^\alpha g_1 + [\dpar^\alpha,\Lambda]\psi_{\rm i}) + (-[\dpar^\alpha, N ]\cdot\widecheck{u}^{\rm II}) \\
&=: g^{(\alpha)}_{1,1}+g^{(\alpha)}_{1,2}. 
\end{align*}
The problem \eqref{PB1bislinalpha}--\eqref{PB2bislinalpha} has the same structure as \eqref{PB1bislin}--\eqref{PB2bislin}, and 
we can therefore apply Proposition \ref{propWSweakdiss} with $\widecheck{u}$ replaced by 
$\widecheck{u}^{(\alpha)}$, $\dot{f}$ by $\dot{f}^{(\alpha)}$, etc. 
The result follows therefore from the proposition and the following estimates.

-- Estimate of $\opnorm{\tilde{g}_1^{(\alpha)}(t)}_1$, where $\tilde{g}_1^{(\alpha)}$ is an extension of 
$g_1^{(\alpha)}$ to $(0,T)\times\cE$ so that $\tilde{g}_1^{(\alpha)}$$_{\vert_\mathit{\Gamma}} = g_1^{(\alpha)}$. 
This extension is taken as $\tilde{g}_1^{(\alpha)}=\tilde{g}_{1,1}^{(\alpha)}+\tilde{g}_{1,2}^{(\alpha)}$ with 
$\tilde{g}_{1,1}^{(\alpha)}=(g_{1,1}^{(\alpha)})^{\rm ext}$ and $\tilde{g}_{1,2}^{(\alpha)}=-[\dpar^\alpha,N]\cdot\widecheck{u}^{\rm II}$, 
where $(\cdot)^{\rm ext}$ is the extension constructed in Proposition \ref{propextext}. 
Therefore, it follows from Proposition \ref{propDN} that 
\begin{align*}
\opnorm{ \tilde{g}_{1,1}^{(\alpha)} }_1
&\lesssim \abs{ g_{1,1}^{(\alpha)} }_{H^{1/2}} + \abs{ \dt g_{1,1}^{(\alpha)} }_{H^{-1/2}} \\
&\leq \abs{ \dpar^\alpha g_1 }_{H^{1/2}} + \abs{ \dt\dpar^\alpha g_1 }_{H^{-1/2}}
 + \abs{ [\dpar^\alpha,\Lambda]\widecheck{\psi}_{\rm i} }_{H^{1/2}} + \abs{ [\dpar^\alpha,\Lambda]\dt\widecheck{\psi}_{\rm i} }_{H^{-1/2}} \\
&\lesssim \abs{ \dpar^\alpha g_1 }_{H_{(1)}^{-1/2}} + 
\begin{cases}
 0 &\mbox{if}\quad \alpha_1=0, \\
 \abs{ \dtan\dt^{\alpha_0}\widecheck{\psi}_{\rm i} }_{H^{\alpha_1-1/2}}
 + \abs{ \dtan\dt^{\alpha_0+1}\widecheck{\psi}_{\rm i} }_{H^{\alpha_1-3/2}} &\mbox{if}\quad \alpha_1\geq1.
\end{cases}
\end{align*}
Here, we have easily 
$\abs{ \dtan\dt^{\alpha_0}\widecheck{\psi}_{\rm i} }_{H^{\alpha_1-1/2}} \lesssim \abs{ \dtan\dpar^\alpha\widecheck{\psi}_{\rm i} }_{H^{-1/2}}$. 
On the other hand, in the case $\alpha_1\geq1$ by using $\dt\widecheck{\psi}_{\rm i}=g_2-\widecheck{u}^{\rm I}$ we see that 
\begin{align*}
\abs{ \dtan\dt^{\alpha_0+1}\widecheck{\psi}_{\rm i} }_{H^{\alpha_1-3/2}}
&\leq \abs{ \dtan\dt^{\alpha_0}g_2 }_{H^{\alpha_1-3/2}} + \abs{ \dt^{\alpha_0}\widecheck{u} }_{H^{\alpha_1-1/2}} \\
&\lesssim \abs{\dtan\dpar^\alpha g_2 }_{H^{-3/2}} + \opnorm{ \widecheck{u} }_m.
\end{align*}
As for $\tilde{g}_{1,2}^{(\alpha)}$, we evaluate easily it as $\opnorm{ \tilde{g}_{1,2}^{(\alpha)} }_1 \lesssim \opnorm{ \widecheck{u} }_{m-1}$. 
Therefore, we obtain 
\begin{align*}
S_{\lambda,t}^*(\opnorm{ \tilde{g}_1^{(\alpha)}(\cdot) }_1 )^2
&\lesssim S_{\lambda,t}( \abs{ \dpar^\alpha g_1(\cdot) }_{H_{(1)}^{-1/2}} )^2
 + \lambda^{-1}\abs{ \dtan\dpar^\alpha g_2 }_{L_{\lambda,t}^2H^{-3/2}}^2 \\
&\quad\;
 + \lambda^{-1}\abs{ \dtan\dpar^\alpha \widecheck{\psi}_{\rm i} }_{L_{\lambda,t}^2H^{-1/2}}^2
 + \lambda^{-2}I_{\lambda,t}( \opnorm{ \widecheck{u}(\cdot) }_m )^2,
\end{align*}
where we used $S_{\lambda,t}(f)\leq\lambda^{-1/2}\abs{f}_{L_{\lambda,t}^2}\leq\lambda^{-1}I_{\lambda,t}(f)$. 
Here, we note that the third term in the right-hand side can be absorbed into $E_{{\rm bdry},\alpha}$ for sufficiently large $\lambda$.

-- Estimate of $\Abs{\tilde{g}_{1}^{(\alpha)}(0)}_{L^2}$. 
Using again Proposition \ref{propDN} to control the commutator term in $g_{1,1}^{(\alpha)}$, we get easily 
\[
\Abs{ \tilde{g}_{1}^{(\alpha)}(0) }_{L^2} \lesssim \Abs{ [\dpar^\alpha,N]\cdot\widecheck{u}^{\rm II}(0) }_{L^2}
 + \abs{\dpar^\alpha\dtan\widecheck{\psi}_{\rm i}(0)}_{H^{-3/2}} + \abs{\dpar^\alpha g_1(0)}_{H^{-1/2}}. 
\]

-- Estimate of $\Abs{\dot{f}^{(\alpha)}}_{L^2}$. 
If follows from Lemma \ref{lemmcommut} that the commutators can be evaluated as 
\[
\Abs{ [\dpar^\alpha,S(u)]\dt\widecheck{u} }_{L^2}
\lesssim
\begin{cases}
 \opnorm{ \boldsymbol{\partial} S(u) }_{1,p} \opnorm{ \widecheck{u} }_m &\mbox{for}\quad m=1,2, \\
 \opnorm{ \boldsymbol{\partial} S(u) }_{m-1} \opnorm{ \widecheck{u} }_m &\mbox{for}\quad m\geq3.
\end{cases}
\]
The following lemma gives a non-sharp variant of the classical Moser type inequality, which can be easily shown by using Sobolev embedding 
$H^1(\cE) \hookrightarrow L^q(\cE)$ for any $q\in[2,\infty)$.

\begin{lemma}\label{lem:Moser}
Let $\mathcal{U}$ be an open set in $\R^N$, $F\in C^\infty(\mathcal{U})$, and $m\in\N$. 
If $u\in \mathbb{W}_T^{m+1}$ takes its value in a compact set $\mathcal{K}\subset\mathcal{U}$, then for any $t\in[0,T]$ we have 
\[
\opnorm{ (\boldsymbol{\partial} F(u))(t) }_m 
\leq C(\mathcal{K}) \bigl( 1+\opnorm{ (\boldsymbol{\partial} u)(t) }_{\max\{m-2,1\}} \bigr)^m \opnorm{ (\boldsymbol{\partial} u)(t) }_m.
\]
\end{lemma}

Using this lemma, we obtain 
\[
\Abs{ \dot{f}^{(\alpha)} }_{L^2} \leq \Abs{ \dpar^\alpha \dot{f} }_{L^2} + C(K)\opnorm{ \widecheck{u} }_m.
\]

Finally, applying Proposition \ref{propWSweakdiss} to the problem \eqref{PB1bislinalpha}--\eqref{PB2bislinalpha} and using the above estimates, 
we get the result stated in the proposition. 
\end{proof}

In the general theory developed in Section \ref{sectapriorilin}, dissipativity of order $m$ of the boundary term allows us 
to derive higher order energy estimates, see Theorem \ref{theogenorderm}. 
However, this result relies on the non-characteristic property of Assumption \ref{assnonchar}, which is not satisfied by the system \eqref{PB1bislin}. 
Indeed, the associated boundary matrix is $\Sigma(u)G_{\rm nor}$, which is given by 
\[
\Sigma(u)G_{\rm nor} = 
\begin{pmatrix}
 N\cdot v & \gr N^{\rm T}\\
 hN & v\otimes N
\end{pmatrix}
\]
and has eigenvalues $\lambda_1=0$, $\lambda_2=N\cdot v+\sqrt{\gr h}$, and $\lambda_3=N\cdot v-\sqrt{\gr h}$. 
The matrix is therefore not invertible and Assumption \ref{assnonchar} is not satisfied. 
The key step in the proof of the following corollary, which provides a priori energy estimates of order $m$ on $\widecheck{u}$, 
is to prove that it satisfies the properties in Assumption \ref{asschar} so that one can apply Theorem \ref{theogenorderm2} 
in place of Theorem \ref{theogenorderm}.

\begin{corollary}\label{corhigher}
Under the assumptions of Proposition \ref{propWSweakdissm} and with the same notations, for any regular solution $(\widecheck{u},\widecheck{\psi}_{\rm i})$ 
of \eqref{PB1bislin}--\eqref{PB2bislin}, we have the energy estimate 
\[
I_{\lambda,t}( \opnorm{\widecheck{u}(\cdot)}_{m} )^2 + I_{\lambda,t}( \abs{\dtan\widecheck{\psi}_{\rm i}(\cdot)}_{H_{(m)}^{-1/2}} )^2
 \leq C(K_0) S^m_{\rm data}(t)
\]
with 
\begin{align*}
S^m_{\rm data}(t)&= \opnorm{\widecheck{u}(0)}_{m}^2 + \abs{\dtan\widecheck{\psi}_{\rm i}(0)}_{H_{(m)}^{-1/2}}^2 + \abs{g_1(0)}_{H^{-1/2}_{(m)}}^2 \\
&\quad\;
 + S_{\lambda,t}^*( \opnorm{ \dot{f}(\cdot) }_{m} )^2
 + S_{\lambda,t}^*( \abs{g_1(\cdot)}_{H^{-1/2}_{(m+1)}} )^2 + \lambda^{-1}\abs{\dtan g_2}_{L^2_{\lambda,t}H_{(m)}^{-1/2}}^2
\end{align*}
for any $\lambda\geq\lambda_0(K)$ and $t\in[0,T]$. 
\end{corollary}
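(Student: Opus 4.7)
The strategy is to deduce the estimate from Theorem \ref{theogenorderm2} applied to the symmetrizable linear system \eqref{PB1bislin}--\eqref{PB2bislin}. Proposition \ref{propWSweakdissm} already establishes the weak dissipativity of order $m$ in the sense of Definition \ref{propweakdissipm}, with $\nu_\alpha(\lambda^{-1})=C(K)\lambda^{-2}$ so that $\nu_\alpha(0)=0$; it therefore remains to verify Assumptions \ref{assFriedrichs}, \ref{ass:regAB}, \ref{asschar}, and \ref{ass:regLQW}. The Friedrichs structure is given by the symmetrizer $S(u)=\Sigma(u)^{-1}$ of \eqref{defS}, positive definite under Assumption \ref{ass:subcritical}, and symmetrizing the principal part since $S(u)A_j=G_j$ is symmetric and constant; the constants $\alpha_0,\beta_0,\beta_1$ are controlled by $K_0$ and $K$. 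The coefficient regularity required by Assumption \ref{ass:regAB} follows from $u\in\mathbb{W}_T^m\cap\mathbb{W}_T^{2,p}$ together with the Moser-type bound of Lemma \ref{lem:Moser} applied to $A_j(u)=\Sigma(u)G_j$.

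The key step is the verification of the structural Assumption \ref{asschar}. The natural choice is $\widetilde{A}_0(t,x)=\Sigma(u(t,x))^{-1}$, which renders $\widetilde{A}_j=G_j$ constant and $\widetilde{A}_{\rm nor}=G_{\rm nor}$ independent of $u$. A direct computation gives $G_{\rm nor}^2=\mathrm{diag}(1,NN^{\mathrm T})$, so the eigenvalues of $G_{\rm nor}$ are $\pm 1$ and $0$, with the two nonzero ones definite; hence $n_1=1$ and $n_2=2$. For the zero eigenvalue I propose to take $w_1\equiv\mathbf{0}$ together with the vectors $\bm{q}_{1,1}=(0,\mathbf{e}_2^{\mathrm T})^{\mathrm T}$ and $\bm{q}_{1,2}=-(0,\mathbf{e}_1^{\mathrm T})^{\mathrm T}$ of Example \ref{ExVor}, for which $\bm{l}_1=N_1\bm{q}_{1,1}+N_2\bm{q}_{1,2}=(0,(N^\perp)^{\mathrm T})^{\mathrm T}$. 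The three relations in condition (ii) then collapse to the purely algebraic identities $\bm{q}_{1,1}^{\mathrm T}G_1=\bm{q}_{1,2}^{\mathrm T}G_2=\bm{0}^{\mathrm T}$ and $\bm{q}_{1,1}^{\mathrm T}G_2+\bm{q}_{1,2}^{\mathrm T}G_1=\bm{0}^{\mathrm T}$, while condition (iv) reduces to $N\cdot w_1=0$ and is trivial.

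For condition (iii), I need the invertibility of $L=(\Sigma(u)^{-1}\bm{l}_1,\bm{l}_2,\bm{l}_3)$ where $\bm{l}_2=(1,N^{\mathrm T})^{\mathrm T}$ and $\bm{l}_3=(-1,N^{\mathrm T})^{\mathrm T}$ are the eigenvectors for $\pm 1$; an explicit cofactor expansion using \eqref{defS} together with the elementary identities $N^\perp\cdot N^\perp=1$ and $N^\perp\cdot v^\perp=N\cdot v$ yields
\[
\det L=\frac{2\bigl(\gr h-(N\cdot v)^2\bigr)}{h\bigl(\gr h-\abs{v}^2\bigr)},
\]
which is bounded away from zero under Assumption \ref{ass:subcritical} since $(N\cdot v)^2\leq\abs{v}^2<\gr h$. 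Assumption \ref{ass:regLQW} then follows because $Q$ is constant, $W=0$, and the regularity of $L$ and $L^{-1}$ reduces to that of $\Sigma(u)^{\pm 1}$, which is controlled by $K_0$ and $K$.

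With all the assumptions in place, Theorem \ref{theogenorderm2} yields the desired estimate. Summing the boundary pieces $E_{{\rm bdry},\alpha}$ of Proposition \ref{propWSweakdissm} over $\abs{\alpha}\leq m$ gives $E_{\rm bdry}^m(t)$ equivalent, up to the constant $\underline{\mathfrak c}$, to $I_{\lambda,t}(\abs{\dtan\widecheck{\psi}_{\rm i}(\cdot)}_{H^{-1/2}_{(m)}})^2$; summing the $S_{{\rm data},\alpha}$ reproduces $S^m_{\rm data}(t)$ in the stated form; and the residual term $C(K)\lambda^{-2}I_{\lambda,t}(\opnorm{\widecheck{u}(\cdot)}_m)^2$ is absorbed into the left-hand side for $\lambda\geq\lambda_0(K)$ sufficiently large. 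The main obstacle in this plan is the structural verification: one must recognize that although the boundary is genuinely characteristic for \eqref{PB1bislin}, choosing $\widetilde{A}_0=\Sigma(u)^{-1}$ and $w_1=0$ (rather than mimicking the unlinearized choice $w_1=v$ of Example \ref{ExVor}) makes Assumption \ref{asschar} satisfiable by \emph{constant} vectors, at which point the characteristic energy framework of Section \ref{sectAECC} applies and the estimate follows.
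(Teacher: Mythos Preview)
Your proposal is correct and follows essentially the same route as the paper: you apply Theorem \ref{theogenorderm2} after checking Assumption \ref{asschar} with the choice $\widetilde{A}_0=\Sigma(u)^{-1}$, which reduces $\widetilde{A}_{\rm nor}$ to the constant matrix $G_{\rm nor}$ and allows the constant vectors $\bm{q}_{1,1},\bm{q}_{1,2}$ together with $w_1\equiv 0$. Your explicit determinant formula $\det L=\dfrac{2\bigl(\gr h-(N\cdot v)^2\bigr)}{h\bigl(\gr h-\abs{v}^2\bigr)}$ is precisely the value of the paper's expression $\det L=2\,S(u)\begin{psmallmatrix}0\\N^\perp\end{psmallmatrix}\cdot\begin{psmallmatrix}0\\N^\perp\end{psmallmatrix}$, so the two arguments coincide.
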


\begin{proof}
It is sufficient to check the conditions in Assumptions \ref{asschar} and \ref{ass:regLQW}. 
We choose $\widetilde{A}_0=S(u)=\Sigma(u)^{-1}$. 
Then, the corresponding boundary matrix is given by $\widetilde{A}_{\rm nor}=G_{\rm nor}$, whose eigenvalues are given by 
$\lambda_1=0$, $\lambda_2=1$, and $\lambda_3=-1$, so that we can choose $n_1=1$ and $n_2=2$. 
The corresponding eigenvectors are given by $\bm{l}_1=(0,(N^\perp)^{\rm T})^{\rm T}$, $\bm{l}_2=(1,N^{\rm T})^{\rm T}$, and 
$\bm{l}_3=(-1,N^{\rm T})^{\rm T}$. 
The condition (ii) in Assumption \ref{asschar} is satisfied with $\bm{q}_{1,1}=(0,{\bf e}_2^{\rm T})^{\rm T}$, 
$\bm{q}_{1,2}=-(0,{\bf e}_1^{\rm T})^{\rm T}$, and $w_1(t,x)\equiv0$. 
Moreover, as for the condition (iii) we see that 
$\det L = 2S(u)\begin{psmallmatrix} 0 \\ N^\perp \end{psmallmatrix} \cdot \begin{psmallmatrix} 0 \\ N^\perp \end{psmallmatrix}$, 
which is strictly positive. 
Therefore, we can apply Theorem \ref{theogenorderm2} to obtain the desired estimate. 
\end{proof}

\subsection{Nonlinear a priori estimates}\label{sectNLAP}
We are now ready to derive a priori estimates for the nonlinear problem \eqref{PB1bis}--\eqref{PB3bis} which imply the following stability result.

\begin{theorem}\label{th:APE}
Let $m\geq 3$ be an integer and assume that Assumption \ref{ass:hi} is satisfied. 
Let also $(u,\psi_{\rm i})$ be a regular solution of \eqref{PB1bis}--\eqref{PB3bis} such that the initial data satisfy 
$(u(0,\cdot),\dtan\psi(0,\cdot))\in H^m(\cE)\times H^{m-1/2}(\mathit{\Gamma})$ and the subcriticality condition 
\[
\gr h(0,x)-\abs{v(0,x)}^2 \geq 2 c_0 \quad\mbox{for}\quad x\in\cE
\]
with a positive constant $c_0$. 
Then there exist a positive time $T_0$ and a constant $M_0$ depending only on $\Abs{u(0,\cdot)}_{H^m(\cE)}$, 
$\abs{\dtan\psi_{\rm i}(0,\cdot)}_{H^{m-1/2}(\mathit{\Gamma})}$, and $c_0^{-1}$ such that we have 
\[
\begin{cases}
 \opnorm{u(t)}_m^2 + \abs{\dtan\psi_{\rm i}(t)}_{H^{-1/2}_{(m)}}^2 \leq M_0, \\
 \gr h(t,x) - \abs{v(t,x)}^2 \geq c_0 \quad\mbox{for}\quad (t,x)\in(0,T)\times\cE.
\end{cases}
\]
\end{theorem}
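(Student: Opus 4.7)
The plan is to derive a closed energy inequality for the functional $\mathcal{E}_m(t) := \opnorm{u(t)}_m^2 + \abs{\dtan\psi_{\rm i}(t)}_{H^{-1/2}_{(m)}}^2$ on a short time interval, and then to conclude by a standard continuation argument. The strategy is to apply tangential and time derivatives $\dpar^\alpha$ for $|\alpha|\leq m$ to the conservative form \eqref{PB1F} of the equations near $\mathit{\Gamma}$, use the transformation of Proposition \ref{proplintransf} to reduce to the system \eqref{PB1bislin}--\eqref{PB2bislin}, and invoke the higher order a priori estimate provided by Corollary \ref{corhigher}.

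First, I would set up a bootstrap by fixing $M_0 := 2C_0\mathcal{E}_m(0)$, with $C_0$ the constant in Corollary \ref{corhigher} evaluated at the initial state, and introducing $T^\star := \sup\{T\leq T_0 : \mathcal{E}_m(t)\leq M_0 \text{ and } \gr h-|v|^2\geq c_0 \text{ on } [0,T]\times\cE\}$. On $[0,T^\star]$ the subcriticality constant $c_0$, the bound $\Abs{u}_{L^\infty}\lesssim \sqrt{M_0}$, and the Sobolev embedding $H^{m-1}(\cE)\hookrightarrow W^{1,p}(\cE)$ (valid for $m\geq 3$ and some $p>2$) ensure that Assumptions \ref{ass:hi}, \ref{ass:subcritical}, and the regularity hypotheses of Proposition \ref{propWSweakdissm} hold with constants $K_0,K$ depending only on $c_0^{-1}$ and $M_0$.

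Second, for each $\alpha\in\N^2$ with $|\alpha|\leq m$, I would apply $\chi_{\rm b}\dpar^\alpha$ to \eqref{PB1F} and $\dpar^\alpha$ to \eqref{PB2F}, obtaining that $\dot u^{(\alpha)}:=\chi_{\rm b}\dpar^\alpha u$ and $\dot\psi_{\rm i}^{(\alpha)}:=\dpar^\alpha\psi_{\rm i}$ satisfy \eqref{pretransf1}--\eqref{pretransf2}. After setting $\widecheck u^{(\alpha)}:=\Sigma(u)\dot u^{(\alpha)}$ following Proposition \ref{proplintransf}, Corollary \ref{corhigher} yields, for $\lambda\geq \lambda_0(M_0,c_0^{-1})$ and $t\in[0,T^\star]$, a control of $I_{\lambda,t}(\opnorm{\widecheck u^{(\alpha)}}_{m})$ and $I_{\lambda,t}(\abs{\dtan\dpar^\alpha\psi_{\rm i}}_{H^{-1/2}_{(m)}})$ by $C(M_0)\, S^m_{\rm data}(t)$. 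The source term $\dot f$ comes from a commutator $[\chi_{\rm b}\dpar^\alpha,\partial_j]\mathcal{F}_j(u)$; the boundary source $g_1=[\dpar^\alpha,\Lambda]\psi_{\rm i}$ is controlled via Proposition \ref{propDN}(iv); and $g_2$ depends algebraically on the trace of $u$. Combining Lemma \ref{lemmcommut} with the Moser inequality Lemma \ref{lem:Moser} (applied to $\mathcal{F}_j$ and to $\Sigma(u)$), and noting that $\Sigma(u)^{-1}$ is uniformly bounded by subcriticality, one bounds $S^m_{\rm data}(t)$ by $P(\sqrt{M_0})\,S^*_{\lambda,t}(\sqrt{\mathcal{E}_m})^2$ for some polynomial $P$, plus the initial data. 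Adding the $L^2$ control furnished by the energy conservation Proposition \ref{propNRJcons} (applied to the undifferentiated system, which is needed for the case $\alpha=0$ through Proposition \ref{propDN}(i)) and using $S^*_{\lambda,t}(f)\leq \lambda^{-1/2}\abs{f}_{L^2_{\lambda,t}}\leq \lambda^{-1}I_{\lambda,t}(f)$, I obtain
\[
I_{\lambda,t}\bigl(\sqrt{\mathcal{E}_m(\cdot)}\bigr)^2 \leq C_0\,\mathcal{E}_m(0) + \lambda^{-1} C(M_0)\, I_{\lambda,t}\bigl(\sqrt{\mathcal{E}_m(\cdot)}\bigr)^2.
\]
Choosing $\lambda$ large enough, depending only on $M_0$ and $c_0^{-1}$, absorbs the last term, giving $\mathcal{E}_m(t)\leq e^{2\lambda t}C_0\mathcal{E}_m(0)$. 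For $t\leq T_0:=\min(T^\star,(2\lambda)^{-1}\log 2)$ this is at most $M_0/2<M_0$.

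The subcriticality preservation then follows from the equation satisfied by $\gr h-|v|^2$, whose time derivative is bounded in $L^\infty$ by a polynomial of $\sqrt{\mathcal{E}_m}$ via $H^{m-1}\hookrightarrow L^\infty$; shrinking $T_0$ further if necessary keeps $\gr h-|v|^2\geq c_0$. A standard continuation argument upgrades the bootstrap, forcing $T^\star\geq T_0$ and proving the theorem. The main obstacle is the critical regularity $m=3$: it prevents the use of the sharp form of the commutator estimate in Lemma \ref{lemmcommut}, forcing one to pay attention to the fact that certain factors must be estimated in $W^{1,p}$ rather than in $H^{m-1}$, and to verify that the transformed linear estimates of Corollary \ref{corhigher} can indeed be applied with coefficients whose regularity is only $\mathbb{W}^m_T\cap \mathbb{W}^{2,p}_T$, as reflected in Remark \ref{rem:space}.
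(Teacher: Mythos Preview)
Your overall strategy---differentiate, transform via $\Sigma(u)$, invoke the weakly dissipative linear estimate, close a bootstrap---is the paper's. The execution, however, has a genuine gap. You propose to apply $\chi_{\rm b}\dpar^\alpha$ for every $|\alpha|\leq m$ and then to invoke Corollary~\ref{corhigher} to control $I_{\lambda,t}(\opnorm{\widecheck u^{(\alpha)}}_{m})$. This double-counts derivatives: if $|\alpha|=m$ then $\opnorm{\widecheck u^{(\alpha)}}_m$ involves $2m$ derivatives of $u$, and the source $S^m_{\rm data}$ in the corollary would in turn require $m$ derivatives of $\dot f^{(\alpha)}$, so the estimate cannot close. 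Moreover, for $|\alpha|\geq 2$ the interior source is not merely $-[\chi_{\rm b}\dpar^\alpha,\partial_j]\mathcal F_j(u)$: there is an additional Fa\`a di Bruno remainder $\partial_j\bigl(\dpar^\alpha\mathcal F_j(u)-A_j(u)\dpar^\alpha u\bigr)$ that you omit, and analogous nonlinear commutators appear in $g_1,g_2$.

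The paper avoids both issues by differentiating only \emph{once}: one sets $\dot u=\chi_{\rm b}\dpar u$ with $\dpar\in\{\dt,\dtan\}$, transforms to $\widecheck u=\Sigma(u)\dot u$, and applies Corollary~\ref{corhigher} at order $m-1$ (this is Lemma~\ref{lemmapf3}), delivering control of $\opnorm{\chi_{\rm b}\dpar u}_{m-1}$. A separate argument is then needed for the purely normal piece $\opnorm{\dnor(\chi_{\rm b}u)}_{m-1}$, obtained in Lemma~\ref{lemmapf4} via the generalized-vorticity machinery (Assumption~\ref{asschar}, Lemma~\ref{lemmvor}) applied directly to the nonlinear system; your proposal does not address this normal-derivative recovery at all. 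Together with the away-from-boundary estimate (Lemma~\ref{lemmapf1}) and the $L^2$ bound from energy conservation (Lemma~\ref{lemmapf2}, which you correctly identify), the decomposition \eqref{decompJ1}--\eqref{decompJ2} closes, and your bootstrap in the final two paragraphs then matches the paper's Steps~1--2.
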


\begin{proof}
Let us introduce the notation 
\[
J^m_{\lambda,t}(u,\psi_{\rm i}):=I_{\lambda,t}(\opnorm{u(\cdot)}_{m})^2+{I}_{\lambda,t}(\abs{\dtan\psi_{\rm i}(\cdot)}_{H_{(m)}^{-1/2}})^2,
\]
which is the quantity we are going to control. 
Decomposing $u=\chi_{\rm b}u+(1-\chi_{\rm b})u$, we get 
\begin{equation}\label{decompJ1}
J^m_{\lambda,t}(u,\psi_{\rm i}) \lesssim J^m_{\lambda,t}(\chi_{\rm b}u,\psi_{\rm i})+I_{\lambda,t}(\opnorm{(1-\chi_{\rm b})u(\cdot)}_{m})^2,
\end{equation}
and we further decompose the first term of the right-hand side as 
\begin{align}\label{decompJ2}
J^m_{\lambda,t}(\chi_{\rm b}u,\psi_{\rm i})
&\lesssim J^0_{\lambda,t}(\chi_{\rm b}u,\psi_{\rm i}) + J^{m-1}_{\lambda,t}(\chi_{\rm b}\dt u,\dt\psi_{\rm i})
 + J^{m-1}_{\lambda,t}(\chi_{\rm b}\dtan u,\dtan\psi_{\rm i}) \\
&\quad\;
 + I_{\lambda,t}(\opnorm{\dnor(\chi_{\rm b}u)}_{m-1})^2, \nonumber
\end{align}
where we use the fact that $\chi_{\rm b}$ commutes with $\dt$ and $\dtan$. 
There are therefore five terms to control: 
$I_{\lambda,t}(\opnorm{(1-\chi_{\rm b})u(\cdot)}_{m})$, $J^0_{\lambda,t}(\chi_{\rm b}u,\psi_{\rm i})$, 
$J^{m-1}_{\lambda,t}(\chi_{\rm b}\dpar u,\dpar\psi_{\rm i})$ ($\dpar=\dt$ or $\dtan$), 
and $I_{\lambda,t}(\opnorm{\dnor(\chi_{\rm b}u)}_{m-1})$. 
We evaluate these terms in four separate lemmas. 
In the statements of these lemmas, the constants $K_0$, $K$, and $K^{\rm in}$ are such that
\[
\begin{cases}
\frac{1}{c_0}, \Abs{u}_{L^{\infty}((0,T)\times\cE)} \leq K_0, \\
K_0, \Abs{u}_{\mathbb{W}^m_T} \leq K, \\
\Abs{u(0)}_{H^{m-1}(\cE)} \leq K^{\rm in}. 
\end{cases}
\]

\begin{lemma}\label{lemmapf1}
Let $u=(\zeta,v^{\rm T})^{\rm T}$ be a regular solution of \eqref{PB1bis} such that Assumption \ref{ass:subcritical} is satisfied. 
Then, 
\[
I_{\lambda,t}(\opnorm{(1-\chi_{\rm b})u(\cdot)}_{m})^2 \leq C(K_0)\opnorm{u(0)}_m^2 + C(K)\lambda^{-2}I_{\lambda,t}(\opnorm{u(\cdot)}_m)^2
\]
holds for any $\lambda\geq\lambda_0(K)$ and $t\in[0,T]$. 
\end{lemma}

\begin{proof}[Proof of lemma \ref{lemmapf1}]
We write \eqref{PB1bis} in the form 
\[
\dt u+A_j(u)\partial_j u = 0 \quad\mbox{in}\quad (0,T)\times\cE,
\]
where $A_j(u)$ is given by \eqref{defAj} in Proposition \ref{proplintransf}, 
so that we checks readily that $\dot{u}=(1-\chi_{\rm b})u$ solves 
\[
\dt\dot{u} + A_j(u)\partial_j\dot{u} = f \quad\mbox{in}\quad (0,T)\times\cE
\]
with $f=-(\partial_j\chi_{\rm b}) A_j(u)u$. 
Here, we see easily that $\Sigma(u)$ is a symmetrizer of this system and that we have 
$C(K_0)^{-1}\mathrm{Id}_{3\times3} \leq \Sigma(u) \leq C(K_0)\mathrm{Id}_{3\times3}$ in $(0,T)\times\cE$ and 
\[
\Abs{ \dt\Sigma(u) + \partial_j(\Sigma(u)A_j(u)) }_{L^\infty((0,T)\times\cE)} \leq C(K_0)K. 
\]
Moreover, by the embedding $\mathbb{W}_T^2 \hookrightarrow \mathbb{W}_T^{1,p}$ for any $p\in[2,\infty)$ we have also 
$\Abs{ \partial A_j(u) }_{\mathbb{W}_T^{m-1} \cap \mathbb{W}_T^{1,p}} \leq C(p,K)$ for $j=1,2$. 
Since $\opnorm{f(t)}_m\leq C(K)\opnorm{u(t)}_m$, the result therefore follows directly from Proposition \ref{propNRHhighfar} and \eqref{propSstar}. 
\end{proof}

\begin{lemma}\label{lemmapf2}
Under Assumption \ref{ass:hi}, let $(u,\psi_{\rm i})$ be a regular solution of \eqref{PB1bis}--\eqref{PB3bis} such that 
Assumption \ref{ass:subcritical} is satisfied. 
Then, 
\[
J^0_{\lambda,t}(u,\psi_{\rm i})\leq C(K_0)\bigl( \Abs{u(0)}_{L^2}^2 + \abs{\dtan \psi_{\rm i}(0)}_{H^{-1/2}}^2 \bigr).
\]
holds for any $\lambda>0$ and $t\in[0,T]$. 
\end{lemma}

\begin{proof}[Proof of lemma \ref{lemmapf2}]
From the energy conservation in Proposition \ref{propNRJcons} and the coercivity of the Dirichlet-to-Neumann map established in Proposition \ref{propDN}, 
we have 
\[
\Abs{u(t)}_{L^2}^2 + \abs{\dtan \psi_{\rm i}(t)}_{H^{-1/2}}^2 \leq C(K_0)\bigl( \Abs{u(0)}_{L^2}^2 +\abs{\dtan \psi_{\rm i}(0)}_{H^{-1/2}}^2 \bigr),
\]
from which we infer the result.
\end{proof}

\begin{lemma}\label{lemmapf3}
Under Assumption \ref{ass:hi}, let $(u,\psi_{\rm i})$ be a regular solution of \eqref{PB1bis}--\eqref{PB3bis} such that 
Assumption \ref{ass:subcritical} is satisfied. 
Then, 
\[
J_{\lambda,t}^{m-1}(\chi_{\rm b}\dpar \uu,\dpar\psi_{\rm i}) \leq 
C(K_0,K^{\rm in})\bigl( \opnorm{u(0)}^2_{m} + \abs{\dtan\psi_{\rm i}(0)}_{H^{m-1/2}}^2 \bigr) + C(K)\lambda^{-2}J^m_{\lambda,t}(u,\psi)
\]
holds for any $\lambda\geq\lambda_0(K)$ and $t\in[0,T]$. 
\end{lemma}

\begin{proof}[Proof of lemma \ref{lemmapf3}]
Let us write $(\dot{u},\dot{\psi}_{\rm i})=(\chi_{\rm b}\dpar u,\chi_{\rm b}\dpar \psi_{\rm i})$, which solves \eqref{pretransf1}--\eqref{deffg1g2}. 
Defining, as in Proposition \ref{proplintransf}, $\widecheck{u}=\Sigma(u)\dot{u}$ and $\widecheck{\psi}_{\rm i}=\dot{\psi}$ which solves 
\eqref{PB1bislin0}--\eqref{PB2bislin0} and denoting $S(u)=\Sigma(u)^{-1}$, we have 
\begin{align*}
\opnorm{\dot{u}(t)}_{m-1}
&= \opnorm{ S(u)\widecheck{u}(t) }_{m-1} \\
&\leq C(K_0)\opnorm{ \widecheck{u}(t) }_{m-1} + \sum_{\abs{\alpha}\leq m-1}\Abs{ [\partial^\alpha, S(u)]\widecheck{u}(t) }_{L^2}. 
\end{align*}
Here, by using \eqref{eqdtS} and Lemmas \ref{lemmcommut} and \ref{lem:Moser} we have 
\begin{align*}
\Abs{ [\partial^\alpha, S(u)]\widecheck{u}(t) }_{L^2}
&\leq C\bigl( \Abs{ [\partial^\alpha, S(u)]\widecheck{u}(0) }_{L^2} + S_{\lambda,t}^*( \Abs{ \dt[\partial^\alpha, S(u)]\widecheck{u}(\cdot) }_{L^2} ) \\
&\leq C\opnorm{ \dot{u}(0) }_{m-1} + C(K_0)\opnorm{ \widecheck{u}(0) }_{m-1} + C(K)S_{\lambda,t}^*( \opnorm{ \widecheck{u}(\cdot) }_{m-1}) \\
&\leq C(K_0,K^{\rm in})\opnorm{ u(0) }_m + C(K)S_{\lambda,t}^*( \opnorm{ \widecheck{u}(\cdot) }_{m-1}),
\end{align*}
and therefore, 
\[
I_{\lambda,t}(\opnorm{ \dot{u}(\cdot) }_{m-1})
\leq C(K_0) I_{\lambda,t}(\opnorm{\widecheck{u}(\cdot)}_{m-1}) + C(K_0,K^{\rm in})\opnorm{ u(0) }_{m}
 + C(K)S_{\lambda,t}^*(\opnorm{\widecheck{u}(\cdot)}_{m-1}).
\]
The last term can be absorbed into the first term in the right-hand side for sufficiently large $\lambda$, so that 
\[
I_{\lambda,t}(\opnorm{ \dot{u}(\cdot) }_{m-1} \leq C(K_0)I_{\lambda,t}(\opnorm{ \widecheck{u}(\cdot) }_{m-1}) + C(K_0,K^{\rm in})\opnorm{ u(0) }_{m},
\]
and then 
\begin{equation}\label{eqpreclem}
J_{\lambda,t}^{m-1}(\dot{u},\dot{\psi}_{\rm i}) 
\leq C(K_0)J_{\lambda,t}^{m-1}(\widecheck{u},\widecheck{\psi}_{\rm i}) + C(K_0,K^{\rm in})\opnorm{u(0)}_{m}. 
\end{equation}
The result stated in the lemma is a direct consequence of \eqref{eqpreclem} and of the following upper bound on 
$J_{\lambda,t}^{m-1}(\widecheck{u},\widecheck{\psi}_{\rm i})$ 
\begin{equation}\label{eqclaim}
J_{\lambda,t}^{m-1}(\widecheck{u},\widecheck{\psi}_{\rm i}) \leq 
C(K_0,K^{\rm in})\bigl( \opnorm{u(0)}^2_{m} + \abs{\dtan\psi_{\rm i}(0)}_{H^{m-1/2}}^2 \bigr) + C(K)\lambda^{-2}J^m_{\lambda,t}(u,\psi_{\rm i}).
\end{equation}
that we now turn to prove. 
We note that by the embedding $\mathbb{W}_T^3 \hookrightarrow \mathbb{W}_T^{2,p}$ for any $p\in[2,\infty)$ we have 
$\Abs{ u }_{\mathbb{W}_T^{m-1} \cap \mathbb{W}_T^{2,p}} \leq CK$, 
so that we can use Corollary \ref{corhigher} with $m$ replaced by $m-1$ to obtain 
\begin{align*}
J_{\lambda,t}^{m-1}(\widecheck{u},\widecheck{\psi}_{\rm i})
&\leq C(K_0)\bigl( \opnorm{\widecheck{u}(0)}^2_{m-1} + \abs{\dtan\widecheck{\psi}_{\rm i}(0)}_{H_{(m-1)}^{-1/2}}^2
 + \abs{g_1(0)}^2_{H^{-1/2}_{(m-1)}} \bigr) \\
&\quad\;
 + C(K_0)\bigl( S_{\lambda,t}^*(\opnorm{f(\cdot)}_{m-1})^2
  + S^*_{\lambda,t}(\abs{g_1(\cdot)}_{H^{-1/2}_{(m)}})^2 + \lambda^{-1}\abs{\dtan g_2}_{L^2_{\lambda,t}H_{(m-1)}^{-1/2}}^2 \bigr) \\
&=: J_1+J_2,
\end{align*}
where $f=\Sigma(u)\dot{f}-B(u,\dt u)\widecheck{u}$ with $\dot{f}$, $g_1$, and $g_2$ given by \eqref{deffg1g2}, that is, 
\[
\dot{f}=-[\chi_{\rm b}\dpar,\partial_j]{\mathcal F}_j(u), \quad
g_1=[\dpar,\Lambda]\psi_{\rm i}, \quad\mbox{and}\quad
g_2=-(\dpar N)\cdot{\mathcal F}_{\rm nor}(u)^{\rm II}.
\]
Here, we recall that $B(u,\dt)=\Sigma(u)\dt(\Sigma(u)^{-1})$, 
${\mathcal F}_j(u)^{\rm T}=\bigl(hv_j,(\gr\zeta+\frac{1}{2}\abs{v}^2){\bf e}_j^{\rm T}\bigr)$, and ${\mathcal F}_{\rm nor}(u)=N_j{\mathcal F}_j(u)$. 
Let us control separately $J_1$ and $J_2$, which deal with the initial data and source terms, respectively.

-- Control of $J_1$. 
From the definition of $\widecheck{u}$, we have easily $ \opnorm{\widecheck{u}(0)}_{m-1}\leq C(K^{\rm in}) \opnorm{u(0)}_{m}$ 
and using Proposition \ref{propDN}, we get $\abs{g_1(0)}_{H^{-1/2}_{(m-1)}}\lesssim \abs{\dtan\psi_{\rm i}(0)}_{H^{-1/2}_{(m-1)}}$, so that 
\[
J_1\leq C(K_0,K^{\rm in})\bigl( \opnorm{u(0)}^2_{m} + \abs{\dtan\psi_{\rm i}(0)}_{H_{(m)}^{-1/2}}^2 \bigr).
\]
In order to get an upper bound on $\abs{\dtan\psi_{\rm i}(0)}_{H_{(m)}^{-1/2}}$, 
we need to give an upper bound on $\abs{\dpar^\alpha\dtan\psi_{\rm i}(0)}_{{H}^{-1/2}}$ for all $\alpha=(\alpha_0,\alpha_1)\in \N^2$ 
such that $\abs{\alpha}\leq m$. 
In the case $\alpha_0=0$, $\dpar^\alpha$ contains only tangential derivatives so that 
$\abs{\dpar^\alpha\dtan\psi_{\rm i}(0)}_{H^{-1/2}} \leq \abs{\dtan\psi_{\rm i}(0)}_{H^{m-1/2}}$. 
In the other cases, we have $\abs{\dpar^\alpha\dtan\psi_{\rm i}(0)}_{H^{-1/2}}\leq \abs{\dt\psi_{\rm i}(0)}_{H^{1/2}_{(m-1)}}$. 
Replacing $\dt\psi_{\rm i} = -\gr\zeta-\frac{1}{2}\abs{v}^2$ and using the trace theorem, 
we deduce that $\abs{\dpar^\alpha\dtan\psi_{\rm i}(0)}_{H^{-1/2}}\leq C(K^{\rm in})\opnorm{u(0)}_m$. 
Therefore, 
\[
J_1 \leq C(K_0,K^{\rm in})\bigl( \opnorm{u(0)}^2_{m} + \abs{\dtan\psi_{\rm i}(0)}_{{H}^{m-1/2}}^2 \Bigr).
\]

-- Control of $J_2$. 
Using Proposition \ref{propDN}, Lemma \ref{lem:Moser}, and the trace theorem, we get 
\[
\begin{cases}
 \abs{g_1(t)}_{H^{-1/2}_{(m)}} \leq C\abs{\dtan \psi_{\rm i}(t)}_{H^{-1/2}_{(m)}}, \\
 \opnorm{ f(t) }_{m-1} + \abs{\dtan g_2(t)}_{H_{(m-1)}^{-1/2}} \leq C(K)\opnorm{u(t)}_m.
\end{cases}
\]
From \eqref{propSstar}, we then obtain 
\[
J_2 \leq C(K)\lambda^{-2} J^m_{\lambda,t}(u,\psi_{\rm i}).
\]

These upper bounds on $J_1$ and $J_2$ prove \eqref{eqclaim} and therefore complete the proof of the lemma.
\end{proof}

\begin{lemma}\label{lemmapf4}
Let $u=(\zeta,v^{\rm T})^{\rm T}$ be a regular solution of \eqref{PB1bis} such that Assumption \ref{ass:subcritical} is satisfied. 
Then, 
\begin{align*}
I_{\lambda,t}(\opnorm{\dnor(\chi_{\rm b}u)}_{m-1})^2
&\leq C(K_0)\bigl( \opnorm{u(0)}^2_{m} + I_{\lambda,t}(\opnorm{(\chi_{\rm b}\dt u,\chi_{\rm b}\dtan u)}_{m-1})^2 \bigr) \\
&\quad\;
 + C(K)\lambda^{-2}I_{\lambda,t}(\opnorm{u(\cdot)}_m)^2
\end{align*}
holds for any $\lambda>0$ and $t\in[0,T]$. 
\end{lemma}

\begin{proof}[Proof of the lemma \ref{lemmapf4}]
The proof can be carried out in the way as the proof of Proposition \ref{propfarorderm2}. 
Let us write $\dot{u}=\chi_{\rm b} u$. 
Then, we have 
\[
\dt\dot{u} + A_j(u)\partial_j\dot{u} = f \quad\mbox{in}\quad (0,T)\times\cE
\]
with $f=(\partial_j \chi_{\rm b})A_j(u)u$, where $A_j(u)$ is given by \eqref{defAj} in Proposition \ref{proplintransf}. 
Denoting $\dot{u}^{(j,k,l)}=\dt^j\dtan^k\dnor^l\dot{u}$ as before, for $j+k+l\leq m-1$ we have 
\[
\dt\dot{u}^{(j,k,l)} + A_1\dot{u}^{(j,k,l)} + A_2\dot{u}^{(j,k,l)} = f_{j,k,l} \quad\mbox{in}\quad (0,T)\times\cE. 
\]
We proceed to verify that the coefficient matrices $A_1$ and $A_2$ satisfy Assumptions \ref{asschar} and \ref{ass:regLQW}. 
We choose $\widetilde{A}_0=S(u)=\Sigma(u)^{-1}$. 
Then, the corresponding boundary matrix is given by $\widetilde{A}_{\rm nor}=\Sigma(u)^{-1}G_{\rm nor}\Sigma(u)$, 
whose eigenvalues are given by $\lambda_1=0$, $\lambda_2=1$, and $\lambda_3=-1$, so that we choose $n_1=1$ and $n_2=2$. 
The corresponding left eigenvectors are given by $\bm{l}_1=\Sigma(u)(0,(N^\perp)^{\rm T})^{\rm T}$, 
$\bm{l}_2=\Sigma(u)(1,N^{\rm T})^{\rm T}$, and $\bm{l}_3=\Sigma(u)(-1,N^{\rm T})^{\rm T}$. 
The condition (ii) in Assumption \ref{asschar} is satisfied with $\bm{q}_{1,1}=\Sigma(u)(0,{\bf e}_2^{\rm T})^{\rm T}$, 
$\bm{q}_{1,2}=-\Sigma(u)(0,{\bf e}_1^{\rm T})^{\rm T}$, and $w_1(t,x)\equiv0$. 
Moreover, as for the condition (iii) we see that 
$\det L = 2\det(\Sigma(u))S(u)\begin{psmallmatrix} 0 \\ N^\perp \end{psmallmatrix} \cdot \begin{psmallmatrix} 0 \\ N^\perp \end{psmallmatrix}$, 
which is strictly positive. 
Therefore, we can apply Lemma \ref{lemmvor} to obtain 
\begin{align*}
I_{\lambda,t}( \|u^{(j,k,l+1)}(\cdot)\|_{L^2} )
&\leq C(K_0)\bigl( \opnorm{ u^{(j,k,l)}(0) }_1 + I_{\lambda,t}( \|\dpar u^{(j,k,l)}(\cdot)\|_{L^2} \bigr) \\
&\quad\;
 + C(K) S_{\lambda,t}^*( \opnorm{ f_{j,k,l}(\cdot) }_1 ).
\end{align*}
Using this inductively, we get the desired estimate. 
\end{proof}

By \eqref{decompJ1}, \eqref{decompJ2}, and Lemmas \ref{lemmapf1}--\ref{lemmapf4}, we get 
\[
J_{\lambda,t}^{m}( u,\psi_{\rm i}) \leq 
C(K_0,K^{\rm in})\bigl( \opnorm{u(0)}_{m}^2 + \abs{\dtan\psi_{\rm i}(0)}_{H^{m-1/2}}^2 \bigr) + C(K)\lambda^{-2}J^m_{\lambda,t}(u,\psi_{\rm i}).
\]
The last term can be absorbed by the left-hand side for sufficiently large $\lambda$. 
We can also use the equation to get $\opnorm{u(0)}_{m} \leq C(K^{\rm in})\Abs{u(0)}_{H^m(\cE)}$, leading to the upper bound 
\begin{equation}\label{eqNRJNL}
J_{\lambda,t}^{m}( u,\psi_{\rm i}) \leq C_1(K_0,K^{\rm in})\bigl( \Abs{u(0)}_{H^m(\cE)}^2 + \abs{\dtan\psi_{\rm i}(0)}_{H^{m-1/2}}^2 \bigr)
\end{equation}
for any $\lambda\geq\lambda_0(K)$ and any $t\in[0,T]$. 
We note also that 
\begin{align}\label{estsubcritical}
\gr h(t,x)-\abs{v(t,x)}^2 
&= \gr h(0,x)-\abs{v(0,x)}^2 +\int_0^t \big( \gr\dt\zeta - 2v\cdot\dt v\big)(t',x){\rm d}t' \\
&\geq 2 c_0 - C_2(K)t \nonumber
\end{align}
hold for any $(t,x)\in(0,T)\times\cE$, where we used the Sobolev embedding $\mathbb{W}_T^m \hookrightarrow W^{1,\infty}((0,T)\times\cE)$. 
Similarly, for any $(t,x)\in(0,T)\times\cE$ we have 
\begin{equation}\label{estLinf}
\abs{u(t,x)} \leq \Abs{ u(0,\cdot) }_{L^\infty(\cE)} + C_2(K)t.
\end{equation}
We can now conclude the proof of the theorem. 
We recall that we want to find $M_0$ and $T_0$ that depend only on the initial data, more precisely, 
on $c_0$ and an upper bound of $\Abs{ u(0) }_{H^m(\cE)}$ and $\abs{\dtan\psi_{\rm i}(0)}_{H^{m-1/2}}$, 
such that $\mathscr{E}_m(t):=\opnorm{u(t)}_m^2+\abs{\dtan\psi_{\rm i}(t)}_{H^{-1/2}_{(m)}}^2 \leq M_0$ holds over the time interval $[0,T_0]$. 
We will do this in two steps.

\medskip
\noindent
{\bf Step 1.} Choice of $M_0$ and $T_0$. 
We first choose positive constants $K_0$ and $K^{\rm in}$ such that 
\[
\begin{cases}
 \frac{1}{c_0}, 2\Abs{ u(0,\cdot) }_{L^\infty(\cE)} \leq K_0, \\
 \Abs{ u(0,\cdot) }_{H^{m-1}(\cE)} \leq K^{\rm in},
\end{cases}
\]
where $c_0$ is the positive constant in the subcriticality condition on the initial data. 
Then, we choose the positive constant $M_0$ such that 
\[
2C_1(K_0,K^{\rm in})\bigl( \Abs{u(0)}_{H^m(\cE)}^2 + \abs{\dtan\psi_{\rm i}(0)}_{H^{m-1/2}}^2 \bigr) \leq M_0,
\]
where $C_1(K_0,K^{\rm in})$ is the constant in \eqref{eqNRJNL}. 
In view of $\opnorm{ u(t) }_m ^2 \leq e^{2\lambda t}J_{\lambda,t}^m(u,\psi_{\rm i})$, 
we choose the positive constant $K$ such that $\frac{1}{c_0}, \sqrt{M_0} \leq K$. 
Finally, we choose a positive time $T_0$ such that 
\[
\begin{cases}
 2\lambda_0(K)T_0 \leq \log2, \\
 C_2(K)T_0 \leq \min\{ c_0,\frac12K_0 \},
\end{cases}
\]
where $\lambda_0(K)$ is the constant so that \eqref{eqNRJNL} holds for any $\lambda\geq\lambda_0(K)$ and 
$C_2(K)$ is the constants in \eqref{estsubcritical} and \eqref{estLinf}. 
Here, we note that by the choice of the constant $M_0$ we have $\mathscr{E}_m(0) \leq \frac12M_0$.

\medskip
\noindent
{\bf Step 2.}
We will show that under the choice of the constants $M_0$ and $T_0$ in the previous step 
$\mathscr{E}_m(t) \leq M_0$ holds for any $t\in[0,T_0]$ by contradiction. 
In fact, if it does not hold, then there exists a time $t_0\in(0,T_0]$ such that $\mathscr{E}_m(t_0)>M_0$. 
Then, we can define a time $T_*$ by 
\[
T_* = \inf\{t>0 \,|\, \mathscr{E}_m(t) > M_0 \}.
\]
Since $\mathscr{E}_m(0)<M_0$ and $\mathscr{E}_m(t_0)>M_0$, we have $0<T_*<t_0\leq T_0$. 
We have also $\mathscr{E}_m(t)\leq M_0$ for $t\in[0,T_*]$. 
Therefore, by our choice of the constants in Step 1 we see that the subcriticality condition in Assumption \ref{ass:subcritical}, 
$\Abs{ u(t) }_{L^\infty(\cE)} \leq K_0$, and $\opnorm{ u(t) }_m\leq K$ hold for any $t\in[0,T_*]$, 
so that \eqref{eqNRJNL} also holds for any $t\in[0,T_*]$. 
Particularly, we have $\mathscr{E}_m(t) \leq \frac12 M_0 e^{2\lambda_0(K)T_*}<\frac12 M_0 e^{2\lambda_0(K)T_0} \leq M_0$ for any $t\in[0,T_*]$. 
Therefore, by the continuity in time, there exists a small $\varepsilon>0$ such that $\mathscr{E}_m(t)<M_0$ holds for any $t\in[0,T_*+\varepsilon]$, 
so that by the definition of $T_*$ we get $T_*+\varepsilon\leq T_*$, which is a contradiction. 
This completes the proof. 
\end{proof}

\section{Well-posedness of the linearized hyperbolic system}\label{sectexistlin}
The aim of this section is to establish the well-posedness of the initial boundary value problem to the transformed linearized equations 
\eqref{PB1bislin}--\eqref{PB2bislin} for the wave-structure interaction problem, 
so that we will consider in this section the following linear symmetric hyperbolic system 
\begin{equation}\label{PB1lin}
S(t,x)\dt u + G_j\partial_ju = f(t,x) \quad\mbox{in}\quad (0,T)\times\cE
\end{equation}
under the boundary conditions 
\begin{equation}\label{PB2lin}
\begin{cases}
 N\cdot u^\mathrm{II} - \Lambda\psi_{\rm i} = g_1(t,x) &\mbox{on}\quad (0,T)\times\mathit{\Gamma}, \\
 \dt\psi_{\rm i} + u^\mathrm{I} = g_2(t,x) &\mbox{on}\quad (0,T)\times\mathit{\Gamma},
\end{cases}
\end{equation}
where $f$ is an $\R^3$-valued function defined in $(0,T)\times \cE$ for some $T>0$, 
$S$ takes its values in the space of $3\times 3$ real symmetric and positive definite matrices, 
$G_j$ $(j=1,2)$ are constant symmetric matrices given by 
\[
G_j = 
\begin{pmatrix}
 0 & \mathbf{e}_j^{\rm T} \\
 \mathbf{e}_j & 0_{2\times2}
\end{pmatrix},
\]
$N$ is the extension of the normal vector to $\mathit{\Gamma}$ constructed in Section \ref{sectnoprmtang}, $\Lambda$ is the 
Dirichlet-to-Neumann map defined in Section \ref{sectDN}, $g_1$ and $g_2$ are real-valued functions defined in $(0,T)\times\mathit{\Gamma}$. 
Comparing \eqref{PB1lin} to \eqref{PB1bislin}, the equations have been multiplied by $S=\Sigma^{-1}$; both formulations are of course equivalent. 
We also recall that for $u=(u_1,u_2,u_3)^{\rm T}\in\R^3$ we write $u^{\rm I}=u_1$ and $u^{\rm II}=(u_2,u_3)^{\rm T}$. 
We impose the initial conditions 
\begin{equation}\label{IC2}
\begin{cases}
 u_{\vert_t=0}=u^{\rm in} &\mbox{in}\quad \cE, \\
 {\psi_{\rm i}}_{\vert_t=0}=\psi_{\rm i}^{\rm in} &\mbox{on}\quad \mathit{\Gamma}.
\end{cases}
\end{equation}
Since $S(t,x)$ is invertible by assumption, the symmetric hyperbolic system \eqref{PB1lin} can be written in the form \eqref{eqlinfixed} 
and $S(t,x)$ plays the role of a Friedrichs symmetrizer. 
The corresponding boundary quadratic form defined as \eqref{defB} is now given by $\mathfrak{B}[u]=G_{\rm nor}u\cdot u$ with $G_{\rm nor}=N_jG_j$. 
The eigenvalues of the boundary matrix $G_{\rm nor}$ are $\pm1$ and $0$, so that the boundary is characteristic.

In order to prove the well-posedness of the problem \eqref{PB1lin}--\eqref{IC2}, 
we first propose in Section \ref{sectregul} a regularization of the problem for which the well-posedness is known. 
We then establish in Section \ref{sectunifEE} uniform energy estimates with respect to the regularization parameter. 
As usual with hyperbolic initial boundary value problems, we have to impose compatibility conditions on the data to allow regular solutions. 
Such compatibility conditions are studied in Section \ref{sect:CC} and several approximation results proved; 
the task is made delicate by the fact that the problem is characteristic. 
It is in particular shown that data compatible up to order $m-1$ can be approximated by more regular data satisfying higher order compatibility conditions. 
The well-posedness of the linearized equations is then proved in Section \ref{subsectexistlin}. 
It is important to note that we are able to work with regularity requirements on the coefficients of the operator 
that are lower than those usually required for linear IBVPs. 
This will be used to get the critical regularity in Theorem \ref{th:exist}.

\subsection{Regularized problem}\label{sectregul}
As we have seen in Propositions \ref{propWSweakdiss} and \ref{propWSweakdissm}, for any regular solutions of \eqref{PB1lin}--\eqref{PB2lin} 
the boundary term is weakly dissipative in the sense of Definitions \ref{propweakdissip} and \ref{propweakdissipm} so that a priori energy 
estimates of the solutions can be obtained. 
Nevertheless, a standard theory of the initial boundary problem for hyperbolic systems cannot be applicable directly to show the existence of regular solutions. 
To show the existence, we first regularize the equations with a regularizing parameter $\varepsilon>0$ and construct a solution 
$(u^\varepsilon,\psi_{\rm i}^\varepsilon)$ to the regularized problem. 
Then, by passing to the limit $\varepsilon\to+0$ we shall show that the series of approximate solutions 
$\{(u^\varepsilon,\psi_{\rm i}^\varepsilon)\}_{\varepsilon>0}$ converges to a solution of the original problem \eqref{PB1lin}--\eqref{PB2lin}. 
In this article, we adopt the following regularized problem 
\begin{equation}\label{PB1reg}
S(t,x)(\dt-\varepsilon\chi_{\rm b}N\cdot\nabla)u + G_j\partial_ju = f(t,x) - \varepsilon S(t,x)(\chi_{\rm b}N\cdot\nabla)u^{(0),\varepsilon} \big) 
 \quad\mbox{in}\quad (0,T)\times\cE
\end{equation}
under the regularized boundary conditions 
\begin{equation}\label{PB2reg}
\begin{cases}
 N\cdot u^\mathrm{II} = J_\varepsilon\Lambda J_\varepsilon\psi_{\rm i} + g_1(t,x) &\mbox{on}\quad (0,T)\times\mathit{\Gamma}, \\
 \dt\psi_{\rm i} + u^\mathrm{I} = g_2(t,x) &\mbox{on}\quad (0,T)\times\mathit{\Gamma},
\end{cases}
\end{equation}
where $\chi_{\rm b}$ is the smooth cutoff function supported in $U_\mathit{\Gamma}$ defined in \eqref{defchib}, 
$u^{(0),\varepsilon}$ will be constructed a priori from the initial data so that 
$(\dt^ju^{(0),\varepsilon})_{\vert_{t=0}} = (\dt^ju)_{\vert_{t=0}}$ holds for $j=0,1,2,\ldots$, 
and $J_\varepsilon=(1+\varepsilon\abs{D})^{-1}$ is a smoothing operator. 
Here, we used a standard notation of Fourier multipliers acting on functions defined on $\mathit{\Gamma}$, so that $\abs{D}=(-\dtan^2)^{1/2}$. 
We note that this type of regularization in \eqref{PB1reg} was already used in the analysis of the local well-posedness for the compressible 
Euler equations in a fixed domain \cite{Schochet1986}. 
We impose the initial conditions 
\begin{equation}\label{ICreg}
\begin{cases}
 u_{\vert_t=0}=u^{{\rm in},\varepsilon} &\mbox{in}\quad \cE, \\
 {\psi_{\rm i}}_{\vert_t=0}=\psi_{\rm i}^{\rm in} &\mbox{on}\quad \mathit{\Gamma},
\end{cases}
\end{equation}
where the initial data $u^{{\rm in},\varepsilon}$ is in general not the same data as $u^{\rm in}$ to the original problem \eqref{PB1lin}--\eqref{PB2lin}; 
even if the data $(u^{\rm in},\psi_{\rm i}^{\rm in})$ satisfy the compatibility conditions to the original problem \eqref{PB1lin}--\eqref{PB2lin}, 
which will be introduced in Section \ref{sect:CC} below, they do not necessarily satisfy the compatibility conditions to the regularized problem 
\eqref{PB1reg}--\eqref{PB2reg}. 
Therefore, we need to modify the initial data $u^{\rm in}$ so that the modified data $(u^{{\rm in},\varepsilon},\psi_{\rm i}^{\rm in})$ satisfy 
the compatibility conditions to the regularized problem and that $\{u^{{\rm in},\varepsilon}\}_{\varepsilon>0}$ converges to $u^{\rm in}$ as 
$\varepsilon\to+0$.

The boundary matrix to the regularized hyperbolic system \eqref{PB1reg} is given by $G_\mathrm{nor}-\varepsilon S$. 
Concerning the eigenvalues of this boundary matrix, we have the following lemma.

\begin{lemma}\label{lem:EVofRBM}
Let $S$ be a $3\times3$ real symmetric matrix. 
There exists a sufficiently small $\varepsilon_0>0$ such that for any unit vector $N\in\R^2$ and any 
$\varepsilon \in[-\varepsilon_0,\varepsilon_0]$ the matrix $G_\mathrm{nor}-\varepsilon S$ has three different eigenvalues 
$\lambda_0^\varepsilon(N)$ and $\pm\lambda_\pm^\varepsilon(N)$. 
Moreover, the eigenvalues satisfy 
\begin{equation}\label{EVexpansion}
\lambda_0^\varepsilon(N) = \lambda_0(N)\varepsilon + O(\varepsilon^2), \quad
\lambda_\pm^\varepsilon(N) = 1 + O(\varepsilon)
\end{equation}
as $\varepsilon\to0$, where 
$\lambda_0(N) = -\begin{psmallmatrix} 0 \\ N^\perp \end{psmallmatrix} \cdot S \begin{psmallmatrix} 0 \\ N^\perp \end{psmallmatrix}$. 
Particularly, if $S$ is positive definite, then for any $\varepsilon\in(0,\varepsilon_0]$ the matrix $G_\mathrm{nor}-\varepsilon S$ 
has one positive eigenvalue and two negative eigenvalues. 
\end{lemma}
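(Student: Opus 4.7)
The plan is to treat this as a standard perturbation problem around $\varepsilon = 0$, exploiting the fact that the unperturbed matrix $G_{\rm nor}(N)$ has a simple, $N$-independent spectrum. First I would compute the characteristic polynomial of $G_{\rm nor}(N)$ explicitly: writing $G_{\rm nor}(N) = \begin{pmatrix} 0 & N^{\rm T} \\ N & 0_{2\times 2}\end{pmatrix}$ with $\abs{N}=1$, direct expansion gives $\det(G_{\rm nor}(N) - \lambda I) = -\lambda(\lambda^2 - \abs{N}^2) = -\lambda(\lambda-1)(\lambda+1)$, so the eigenvalues at $\varepsilon = 0$ are exactly $\{0, 1, -1\}$, each simple and independent of $N$. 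The unit eigenvector associated with the eigenvalue $0$ is readily computed to be $v_0(N) = \begin{pmatrix} 0 \\ N^\perp \end{pmatrix}$, since any $v$ with $G_{\rm nor}(N)v = 0$ must satisfy $v^{\rm I} = 0$ and $N\cdot v^{\rm II} = 0$.

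Next, since the three eigenvalues at $\varepsilon = 0$ are pairwise distinct, I would apply the implicit function theorem to the scalar equation $p(\lambda, \varepsilon, N) := \det(G_{\rm nor}(N) - \varepsilon S - \lambda I) = 0$ at each unperturbed root $\lambda_* \in \{0, 1, -1\}$, using $\partial_\lambda p(\lambda_*, 0, N) \neq 0$ (which holds because $\lambda_*$ is a simple root). This produces three smooth branches $\lambda_0^\varepsilon(N)$, $\lambda_+^\varepsilon(N)$, $-\lambda_-^\varepsilon(N)$ of eigenvalues of $G_{\rm nor}(N)-\varepsilon S$, defined for $\abs{\varepsilon} \leq \varepsilon_0(N)$ and close to $0, 1, -1$ respectively. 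Uniformity of $\varepsilon_0$ with respect to $N$ follows by a compactness argument: the discriminant $\prod_{j<k}(\lambda_j - \lambda_k)^2 = 4$ at $\varepsilon = 0$ does not depend on $N$, so the size of the neighborhood on which the three roots remain separated (and hence the size of the interval on which the implicit function theorem applies) can be chosen uniformly for $N$ on the unit circle $\mathbb{S}^1$.

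The leading-order expansions then follow from classical first-order perturbation theory for simple eigenvalues: differentiating $(G_{\rm nor}(N) - \varepsilon S)v^\varepsilon = \lambda^\varepsilon v^\varepsilon$ in $\varepsilon$, evaluating at $\varepsilon = 0$, and taking the inner product with the unit eigenvector $v_0(N)$ of the eigenvalue $0$ yields
\[
\partial_\varepsilon \lambda_0^\varepsilon(N)\big|_{\varepsilon = 0} = -\langle Sv_0(N), v_0(N)\rangle = -\begin{pmatrix} 0 \\ N^\perp \end{pmatrix}\cdot S \begin{pmatrix} 0 \\ N^\perp \end{pmatrix} = \lambda_0(N),
\]
which is exactly the announced expansion $\lambda_0^\varepsilon(N) = \lambda_0(N)\varepsilon + O(\varepsilon^2)$. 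An analogous argument at $\lambda_* = \pm 1$ gives $\lambda_\pm^\varepsilon(N) = 1 + O(\varepsilon)$.

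Finally, the sign statement is a direct consequence: if $S$ is positive definite, then $\lambda_0(N) = -\langle Sv_0(N), v_0(N)\rangle < 0$ uniformly in $N \in \mathbb{S}^1$, so $\lambda_0^\varepsilon(N) < 0$ for all sufficiently small $\varepsilon > 0$, while $\lambda_+^\varepsilon(N) > 0$ and $-\lambda_-^\varepsilon(N) < 0$ by the expansion $\lambda_\pm^\varepsilon(N) = 1 + O(\varepsilon)$; shrinking $\varepsilon_0$ if necessary yields one positive and two negative eigenvalues. There is no real obstacle in this proof; the only point requiring a little care is the uniformity of $\varepsilon_0$ with respect to $N$, handled by the compactness of $\mathbb{S}^1$ as outlined above.
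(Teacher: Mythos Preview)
Your proof is correct and follows essentially the same approach as the paper: both rely on perturbation theory from the simple eigenvalues $\{0,\pm 1\}$ of $G_{\rm nor}$, and both identify the first-order coefficient of the branch near $0$ as $-\begin{psmallmatrix}0\\N^\perp\end{psmallmatrix}\cdot S\begin{psmallmatrix}0\\N^\perp\end{psmallmatrix}$. The only minor technical difference is that the paper extracts this coefficient by expanding the characteristic polynomial via Jacobi's formula $\det(A+B)=\det A+\operatorname{tr}(\operatorname{adj}(A)B)+O(|B|^2)$, whereas you use the standard Rayleigh-quotient formula $\partial_\varepsilon\lambda|_{\varepsilon=0}=-\langle Sv_0,v_0\rangle$; both computations are equivalent and equally valid.
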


\begin{proof}
In the case $\varepsilon=0$, the matrix $G_\mathrm{nor}$ has three different eigenvalues $0$ and $\pm1$, 
so that the perturbation theory from simple eigenvalues implies the first part of the lemma. 
Therefore, it is sufficient to show that 
$\lambda_0(N) = -\begin{psmallmatrix} 0 \\ N^\perp \end{psmallmatrix} \cdot S \begin{psmallmatrix} 0 \\ N^\perp \end{psmallmatrix}$. 
The characteristic equation of $G_\mathrm{nor}-\varepsilon S$ is given by $\det(-G_{\rm nor}+\lambda{\rm Id}_{3\times3}+\eps S)=0$. 
Here, we recall Jacobi's formula 
$\frac{\mathrm{d}}{\mathrm{d}\theta}\det(A(\theta)) = \operatorname{tr}(\operatorname{adj}(A(\theta))\frac{\mathrm{d}}{\mathrm{d}\theta}A(t))$, 
where $\operatorname{adj}(A)$ is the adjugate of the matrix $A$. 
Particularly, we have $\det(A+B)=\det A + \operatorname{tr}(\operatorname{adj}(A)B)+O(|B|^2)$ as $|B|\to0$. 
Therefore, as $\varepsilon\to0$ and $\lambda\to0$, the characteristic equation can be expanded as 
\begin{align*}
0&= \operatorname{tr}\bigl( \operatorname{adj}(-G_{\rm nor})(\lambda{\rm Id}_{3\times3}+\eps S) \bigr) + O(\lambda^2+\eps^2) \\
&= -\lambda - \begin{pmatrix} 0 \\ N^\perp \end{pmatrix}\cdot S \begin{pmatrix} 0 \\ N^\perp \end{pmatrix}\varepsilon
 + O(\lambda^2+\varepsilon^2).
\end{align*}
Plugging $\lambda=\lambda_0(N)\varepsilon + O(\varepsilon^2)$ into the above equation and looking at the coefficient of 
$\varepsilon$, we obtain the expression of $\lambda_0(N)$. 
\end{proof}

\begin{remark}\label{rem:exist}
In view of this lemma, the number of the boundary conditions should be one for the regularized system \eqref{PB1reg}, 
which is consistent to the boundary conditions \eqref{PB2reg}, because the second condition in \eqref{PB2reg} can be integrated as 
$\psi_{\rm i}=\psi_{\rm i}^{\rm in}+\int_0^t(g_2-u^{\rm I}){\rm d}t'$ so that the unknown $\psi_{\rm i}$ could be eliminated from 
the boundary conditions, leading to one boundary condition. 
We also note that the term $J_\varepsilon\Lambda J_\varepsilon\psi_{\rm i}$ can be regarded as a lower order term 
thanks to the smoothing operator $J_\varepsilon$. 
Moreover, if we assume that $S(t,x)\geq \alpha_0\mathrm{Id}_{3\times3}$ holds for any $(t,x)\in(0,T)\times\cE$, then we see that 
\begin{align*}
(G_\mathrm{nor}-\varepsilon S)u\cdot u
&= 2u^{\rm I}(N\cdot u^{\rm II}) - \varepsilon Su\cdot u \\
&\leq 2\abs{u}\abs{ N\cdot u^{\rm II} } - \varepsilon \alpha_0 \abs{u}^2 \\
&\leq -\frac{\varepsilon \alpha_0}{2} \abs{u}^2 + \frac{2}{\varepsilon \alpha_0} \abs{ N\cdot u^{\rm II} }^2.
\end{align*}
Therefore, \eqref{PB2reg} could be regarded as strictly dissipative boundary conditions for the regularized system \eqref{PB1reg}, 
so that we can apply a standard theory to construct a solution to the regularized problem \eqref{PB1reg}--\eqref{ICreg}. 
\end{remark}

\subsection{Uniform energy estimates}\label{sectunifEE}
We will derive uniform energy estimates of regular solutions to the regularized problem \eqref{PB1reg}--\eqref{PB2reg} 
with respect to the small regularized parameter $\varepsilon$ along with the same lines as in Sections \ref{sectWSL2} and \ref{sectHOest} 
with slight modifications. 
For simplify the notation we put $f^\varepsilon=f-\varepsilon S(\chi_{\rm b}N\cdot\nabla)u^{(0),\varepsilon}$, so that the regularized 
hyperbolic system \eqref{PB1reg} can be written simply as 
\[
S(t,x)(\dt-\varepsilon\chi_{\rm b}N\cdot\nabla)u + G_j\partial_ju = f^\varepsilon(t,x) \quad\mbox{in}\quad (0,T)\times\cE.
\]
Corresponding to Assumption \ref{assFriedrichs}, we impose the following conditions.

\begin{assumption}\label{ass:S}
There exist a positive constant $c_0$ and a constant $\beta_1$ such that for any $(t,x)\in(0,T)\times\cE$ the following conditions hold. 
\begin{enumerate}
\item[{\rm (i)}]
$\alpha_0{\rm Id}_{3\times3} \leq S(t,x) \leq \beta_0 {\rm Id}_{3\times3}$. 
\item[{\rm (ii)}]
$\dt S(t,x) - \varepsilon\partial_j(\chi_{\rm b}(x)N_j(x)S(t,x)) \leq \beta_1{\rm Id}_{3\times3}$. 
\end{enumerate}
\end{assumption}

In the statement of the following proposition, we use the notational convention 
$\Abs{ \cdot }_{\mathbb{W}_T^{-1} \cap \mathbb{W}_T^{1,p}}=0$ in the case $m=0$. 
Note also that the quantity $S_{{\rm data},\eps}^m(t)$ is the same as the quantity $S_{{\rm data}}^m(t)$ that appears in Corollary \ref{corhigher}, 
except that $\dot{f}$ has been replaced by $f^\eps$ and that it contains an additional term 
$\varepsilon \abs{ g_1 }_{H_\lambda^m((0,t)\times\mathit{\Gamma})}^2$.

\begin{proposition}\label{prop:UEE}
Let $m$ be a non-negative integer and assume that Assumptions \ref{ass:hi} and \ref{ass:S} are satisfied. 
Assume also that $\partial S\in\mathbb{W}_T^{m-1} \cap \mathbb{W}_T^{1,p}$ for some $p\in(2,\infty)$ in the case $m\geq1$ 
and take two constants $0<K_0\leq K$ such that 
\[
\begin{cases}
 \frac{1}{\alpha_0},\beta_0,\frac{1}{\underline{\mathfrak{c}}},\underline{\mathfrak{C}} \leq K_0, \\
 K_0, \beta_1, \Abs{ \partial S }_{\mathbb{W}_T^{m-1} \cap \mathbb{W}_T^{1,p}} \leq K.
\end{cases}
\]
Then, there exist positive constants $\varepsilon_0=\varepsilon_0(K_0)$ and $\lambda_0=\lambda_0(K)$ such that for any 
$\varepsilon\in[0,\varepsilon_0]$ and any regular solution $(u,\psi_{\rm i})$ of \eqref{PB1reg}--\eqref{PB2reg} we have the energy estimate 
\[
I_{\lambda,t}( \opnorm{ u(\cdot) }_{m} )^2 + I_{\lambda,t}( \abs{\dtan J_\varepsilon\psi_{\rm i}(\cdot) }_{H_{(m)}^{-1/2}} )^2
 + \varepsilon\abs{u}_{H_\lambda^m((0,t)\times\mathit{\Gamma})}^2 \leq C(K_0)S_{{\rm data},\eps}^m(t)
\]
for any $\lambda\geq\lambda_0(K)$ and $t\in[0,T]$, where
\begin{align*}
S_{{\rm data},\eps}^m(t)
&= \opnorm{ u(0) }_{m}^2 + \abs{\dtan \psi_{\rm i}(0) }_{H_{(m)}^{-1/2}}^2 + \abs{ g_1(0) }_{H_{(m)}^{-1/2}}^2 \\
&\quad\;
 + S_{\lambda,t}^*( \opnorm{ f^\varepsilon(\cdot) }_{m} )^2 + S_{\lambda,t}^*( \abs{ g_1(\cdot) }_{H_{(m+1)}^{-1/2}} )^2
 + \lambda^{-1}\abs{ \dtan g_2 }_{L_{\lambda,t}^2H_{(m)}^{-1/2}}^2 + \varepsilon \abs{ g_1 }_{H_\lambda^m((0,t)\times\mathit{\Gamma})}^2.
\end{align*}
\end{proposition}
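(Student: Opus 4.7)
The plan is to closely adapt the arguments from Sections \ref{sectWSL2} and \ref{sectHOest} to the regularized problem, tracking carefully the additional $\varepsilon$-dependent terms so that the estimates are uniform in $\varepsilon\in[0,\varepsilon_0]$. The overall strategy mirrors Corollary \ref{corhigher}: first establish a weakly dissipative $L^2$ estimate (analogous to Proposition \ref{propWSweakdiss}), then apply $\dpar^\alpha$ for $|\alpha|\leq m$ and use the generalized-vorticity framework of Theorem \ref{theogenorderm2} to recover normal derivatives. Even though Lemma \ref{lem:EVofRBM} shows that the regularized boundary matrix $G_{\rm nor}-\varepsilon S$ is non-characteristic for $\eps>0$, its smallest eigenvalue is $O(\eps)$, so one cannot use Assumption \ref{assnonchar} uniformly; instead, Assumption \ref{asschar} will be verified for the regularized system and applied directly, exactly as in Corollary \ref{corhigher}.

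For the $L^2$ step, I would use $S$ as Friedrichs symmetrizer. The standard energy identity produces three types of boundary terms: the original $2\widecheck u^{\rm I}(N\cdot\widecheck u^{\rm II})$, the strictly dissipative term $-\varepsilon Su\cdot u$ coming from the regularization (exploited as in Remark \ref{rem:exist}), and a volume term $\varepsilon\bigl(\partial_j(\chi_{\rm b}N_jS)u,u\bigr)_{L^2}$ which is absorbed into Assumption \ref{ass:S}(ii). Inserting the boundary conditions \eqref{PB2reg}, the quadratic form $2u^{\rm I}(N\cdot u^{\rm II})$ is rewritten using $N\cdot u^{\rm II}=J_\varepsilon\Lambda J_\varepsilon\psi_{\rm i}+g_1$ and $u^{\rm I}=g_2-\dt\psi_{\rm i}$; by symmetry of $\Lambda$ and of $J_\varepsilon$ on $\mathit{\Gamma}$, the leading piece $-2(\dt\psi_{\rm i},J_\varepsilon\Lambda J_\varepsilon\psi_{\rm i})_{L^2(\mathit{\Gamma})}$ generates a boundary energy in $|\dtan J_\varepsilon\psi_{\rm i}|_{H^{-1/2}}^2$ by Proposition \ref{propDN}. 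The remaining cross terms are handled exactly as $B_2$ and $B_3$ in the proof of Proposition \ref{propWSweakdiss}, the only novelty being the strictly dissipative contribution $\frac{\varepsilon\alpha_0}{2}\abs{u}_{L^2(\mathit{\Gamma})}^2$ on the left and the corresponding $\eps^{-1}$-weighted $|g_1|^2$ contribution on the right; this is exactly the extra $\varepsilon|g_1|^2$ data term in $S_{{\rm data},\eps}^m$.

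For the higher order estimate, I would apply $\dpar^\alpha=\dt^{\alpha_0}\dtan^{\alpha_1}$ for $|\alpha|\leq m$ to \eqref{PB1reg}--\eqref{PB2reg}. The transformed system for $(\dpar^\alpha u,\dpar^\alpha\psi_{\rm i})$ has the same structure as \eqref{PB1reg}--\eqref{PB2reg} with commutator source terms. These commutators are bounded exactly as in Proposition \ref{propWSweakdissm}, using Lemmas \ref{lemmcommut}, \ref{lem:Moser}, Proposition \ref{propDN}(iv), and the identity $\dt\psi_{\rm i}=g_2-u^{\rm I}$ to trade a time derivative of $\psi_{\rm i}$ for an $L^2(\cE)$ norm of $u$. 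The smoothing operator $J_\varepsilon$ commutes with $\dpar^\alpha$ since $\dtan$ acts on $\mathit{\Gamma}$ and $J_\varepsilon$ is a tangential Fourier multiplier, so $J_\varepsilon$ causes no additional commutator. Finally, one verifies Assumption \ref{asschar} for the system \eqref{PB1reg}, with $\widetilde A_0=S$: the eigenvalues of the regularized boundary matrix are those of Lemma \ref{lem:EVofRBM}, and the $\eps$-perturbation of the left eigenvectors and of the transport velocity $w_1$ is smooth and bounded uniformly in $\eps\leq\eps_0(K_0)$, with $N\cdot w_1^\eps=O(\eps)$ still compatible with (iv). Applying Theorem \ref{theogenorderm2} gives the $\opnorm{u}_m^2+|\dtan J_\eps\psi_{\rm i}|_{H^{-1/2}_{(m)}}^2$ control, while the $\eps|u|^2$ boundary dissipation persists through the differentiation argument to yield the $\varepsilon\abs{u}_{H^m_\lambda((0,t)\times\mathit{\Gamma})}^2$ term.

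The main obstacle is ensuring uniformity in $\eps$: in particular, the volume term $\eps\,(S\chi_{\rm b}N\cdot\nabla)u^{(0),\eps}$ hidden in $f^\eps$ must not blow up as $\eps\to 0$, which is guaranteed by the preparation of $u^{(0),\eps}$ from the data (to be carried out in Section \ref{sect:CC}), and the $\lambda$-threshold $\lambda_0(K)$ must be chosen independently of $\eps$ so that commutator contributions of the form $C(K)\lambda^{-2}I_{\lambda,t}(\opnorm{u}_m)^2$ can be absorbed. A second subtlety is that the boundary energy now controls $\dtan J_\eps\psi_{\rm i}$ rather than $\dtan\psi_{\rm i}$, so passing to the limit $\eps\to 0$ later will require a separate lower bound on $|\dtan\psi_{\rm i}|$ obtained by letting $\eps\to 0$ in the symmetric identity $2E_{\rm int}(J_\eps\psi_{\rm i})\geq\underline{\mathfrak c}|\dtan J_\eps\psi_{\rm i}|_{H^{-1/2}}^2$; this is compatible with the statement as written.
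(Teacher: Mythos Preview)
Your overall strategy matches the paper's: establish the weak-dissipativity estimate for the regularized boundary form $\mathfrak{B}[u]=2u^{\rm I}(N\cdot u^{\rm II})-\varepsilon Su\cdot u$ (this is the paper's Lemma \ref{lem:UEE1}), then recover normal derivatives via Assumption \ref{asschar} and Lemma \ref{lemmvor} (the paper's Lemma \ref{lem:UEE2}), and conclude by Theorem \ref{theogenorderm2}.

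There is, however, a concrete gap in your verification of Assumption \ref{asschar}. You take $\widetilde A_0=S$, which gives $\widetilde A_{\rm nor}=G_{\rm nor}-\varepsilon S$; the left eigenvector of this matrix for the small eigenvalue is genuinely $\varepsilon$-dependent and is \emph{not} $(0,(N^\perp)^{\rm T})^{\rm T}$ for $\varepsilon>0$. On the other hand, the structural relations in (ii) force each $\bm q_{1,k}$ to lie in an eigenspace of the pencil $(G_k,S)$, which is $\varepsilon$-independent, so one cannot in general match $N_k\bm q_{1,k}$ to the $\varepsilon$-dependent eigenvector. The paper instead takes $\widetilde A_0=\mathrm{Id}_{3\times3}$: then $\widetilde A_{\rm nor}=A_{\rm nor}=\Sigma G_{\rm nor}-\varepsilon\,\mathrm{Id}$, and the shift by $-\varepsilon\,\mathrm{Id}$ moves only the eigenvalues, not the eigenvectors. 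The left eigenvector for the eigenvalue $\lambda_1=-\varepsilon$ is exactly $\bm l_1=S(0,(N^\perp)^{\rm T})^{\rm T}$, independent of $\varepsilon$, and one reads off $\bm q_{1,1}=S(0,\bm e_2^{\rm T})^{\rm T}$, $\bm q_{1,2}=-S(0,\bm e_1^{\rm T})^{\rm T}$, $w_1=-\varepsilon\chi_{\rm b}N$. This gives $N\cdot w_1=-\varepsilon\leq 0$ on $\mathit{\Gamma}$ with a \emph{definite sign}; merely asserting $N\cdot w_1^\varepsilon=O(\varepsilon)$, as you do, does not establish (iv), and without (iv) the boundary integral $\int_\mathit{\Gamma}(N\cdot w_1)\omega_1^2$ in the proof of Lemma \ref{lemmvor} cannot be dropped.

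A smaller point: the extra term $\varepsilon\abs{g_1}_{H^m_\lambda((0,t)\times\mathit{\Gamma})}^2$ does not arise from a direct Young inequality on $(g_1,u^{\rm I})_{L^2(\mathit{\Gamma})}$ against the $\varepsilon$-dissipation --- that would give an $\varepsilon^{-1}$ weight, as you yourself note. The paper keeps the volume-integral treatment of $B_3$ from Proposition \ref{propWSweakdiss}; the regularized identity $g_1 u^{\rm I}=N_j\bigl(\varphi\cdot(G_j-\varepsilon\chi_{\rm b}N_j S)u\bigr)+\varepsilon\varphi\cdot Su$ produces an \emph{additional} boundary term $\varepsilon\int_\mathit{\Gamma}\varphi\cdot Su$, and only this residual piece is paired with the $\varepsilon$-dissipation $B_4=-\varepsilon(Su,u)_{L^2(\mathit{\Gamma})}$, yielding the correct $\varepsilon\abs{g_1}^2$ contribution.
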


\begin{proof}
We can prove the proposition along with the same lines as in Sections \ref{sectWSL2} and \ref{sectHOest} with slight modifications, 
so that we focus the points where we need the modifications, which will be given by the following lemmas.

\begin{lemma}\label{lem:UEE1}
Under the assumptions of Proposition \ref{prop:UEE} and with the same notations, there exists a positive constant $\lambda_0=\lambda_0(K)$ 
such that for any $\varepsilon\in[0,1]$ and any regular solution $(u,\psi_{\rm i})$ of \eqref{PB1reg}--\eqref{PB2reg} with $u$ 
supported in $\overline{\cE}\cap U_\mathit{\Gamma}$ we have 
\begin{align*}
I_{\lambda,t}( \opnorm{ u(\cdot) }_{m,\parallel} )^2 
&+ I_{\lambda,t}( \abs{\dtan J_\varepsilon\psi_{\rm i}(\cdot) }_{H_{(m)}^{-1/2}} )^2
 + \varepsilon\abs{u}_{H_\lambda^m((0,t)\times\mathit{\Gamma})}^2 \\
&\leq C(K_0)S_{\rm data}^m(t) + C(K)\lambda^{-2}I_{\lambda,t}( \opnorm{ u(\cdot) }_m )^2
\end{align*}
for any $\lambda\geq\lambda_0(K)$ and $t\in[0,T]$. 
\end{lemma}

\begin{proof}[Proof of Lemma \ref{lem:UEE1}]
We prove the lemma only in the case $m=0$, 
because the case $m\geq1$ can be proved along with the same line as in the proof of Proposition \ref{propWSweakdissm}. 
For the regularized system \eqref{PB1reg}, the boundary quadratic form $\mathfrak{B}[u]$ is given by 
$\mathfrak{B}[u] = 2u^{\rm I}(N\cdot u^{\rm II}) -\varepsilon Su\cdot u$. 
Therefore, by the boundary conditions \eqref{PB2reg} and noting that the smoothing operator $J_\varepsilon$ is symmetric in $L^2(\mathit{\Gamma})$ we have 
\begin{align*}
&\int_0^t e^{-2\lambda t'} \left( \int_\mathit{\Gamma}\mathfrak{B}[u] \right){\rm d}t' \\
&= -2\int_0^t e^{-2\lambda t'} ( \dt J_\varepsilon\psi_{\rm i},\Lambda J_\varepsilon \psi_{\rm i} )_{L^2(\mathit{\Gamma})}{\rm d}t' 
 + 2\int_0^t e^{-2\lambda t'} ( \Lambda J_\varepsilon\psi_{\rm i},J_\varepsilon g_2 )_{L^2(\mathit{\Gamma})}{\rm d}t'\\
&\quad\;
 + 2\int_0^t e^{-2\lambda t'} ( g_1, u^{\rm I} )_{L^2(\mathit{\Gamma})}{\rm d}t'
 - 2\varepsilon\int_0^t e^{-2\lambda t'} (Su, u)_{L^2(\mathit{\Gamma})}{\rm d}t' \\
&=: B_1+B_2+B_3+B_4.
\end{align*}
Here, as in the proof of Proposition \ref{propWSweakdiss} we obtain 
\begin{align*}
B_1 &\leq -\underline{\mathfrak c}\bigl( e^{-2\lambda t} \abs{\dtan J_\varepsilon\psi_{\rm i}(t)}_{H^{-1/2}(\mathit{\Gamma})}^2
 + 2\lambda\abs{\dtan J_\varepsilon\psi_{\rm i}}^2_{L^2_{\lambda,t}H^{-1/2}(\mathit{\Gamma})} \bigr)
 + \underline{\mathfrak C}\abs{\dtan \psi_{\rm i}(0)}_{H^{-1/2}(\mathit{\Gamma})}^2, \\
\abs{ B_2 } &\leq \underline{\mathfrak c}\lambda\abs{\dtan J_\varepsilon\psi_{\rm i}}^2_{L^2_{\lambda,t}H^{-1/2}(\mathit{\Gamma})}
 + (\underline{\mathfrak c}\lambda)^{-1}\underline{\mathfrak C}^2\abs{\dtan g_2}^2_{L^2_{\lambda,t}H^{-1/2}(\mathit{\Gamma})},
\end{align*}
where we used the inequality $\abs{ J_\varepsilon \psi }_{H^s(\mathit{\Gamma})} \leq \abs{ \psi }_{H^s(\mathit{\Gamma})}$. 
As for $B_3$, we put $\varphi=(0,\chi_{\rm b}\tilde{g}_1N^{\rm T})^{\rm T}$ with $\tilde{g}_1$ an extension of $g_1$ 
to $(0,T)\times\cE$ so that $\tilde{g}_{1\vert_\mathit{\Gamma}}=g_1$ as before. 
Then, we obtain 
\[g_1u^{\rm I} = N_j(\varphi\cdot (G_j-\varepsilon \chi_{\rm b}N_jS)u) + \varepsilon\varphi\cdot Su,\] 
which together with the hyperbolic system \eqref{PB1reg} implies 
\begin{align*}
(g_1,u^{\rm I})_{L^2(\mathit{\Gamma})}
&= \frac{\rm d}{{\rm d}t}\int_\cE \varphi\cdot Su
 + \varepsilon\int_\mathit{\Gamma}\varphi\cdot Su \\
&\quad\;
 - \int_\cE \{ (S(\dt-\varepsilon\chi_{\rm b}N_j\partial_j)\varphi+G_j\partial_j\varphi )\cdot u
  + \varphi\cdot( f^\varepsilon + (\dt S - \varepsilon\partial_j(\chi_{\rm b}N_jS) )u )\}.
\end{align*}
Therefore, proceeding as in the proof of Proposition \ref{propWSweakdiss} we get 
\begin{align*}
\abs{B_3} 
&\leq \left(\frac{\alpha_0}{8}+\frac{K}{\lambda^2}\right) I_{\lambda,t}(\Abs{ u(\cdot) }_{L^2})^2
 + \frac{\alpha_0}{2}\varepsilon \abs{u}_{L_\lambda^2((0,t)\times\mathit{\Gamma})}^2 \\
&\quad\;
 + C(K_0)\bigl( \Abs{ \tilde{g}_1(0) }_{L^2}^2 + S_{\lambda,t}^*(\opnorm{\tilde{g}_1(\cdot)}_1)^2 + S_{\lambda,t}^*(\Abs{ f^\varepsilon(\cdot) }_{L^2})^2
  + \varepsilon \abs{g_1}_{L_\lambda^2((0,t)\times\mathit{\Gamma})}^2 \bigr),
\end{align*}
where we used the Sobolev embedding $W^{1,p}(\cE) \hookrightarrow L^\infty(\cE)$ to obtain 
$\Abs{ \partial S }_{L^\infty((0,T)\times\cE)} \lesssim \Abs{ \partial S }_{\mathbb{W}_T^{1,p}}$. 
Finally, $B_4$ is easily evaluated as $B_4 \leq -\alpha_0\varepsilon\abs{u}_{L_\lambda^2((0,t)\times\mathit{\Gamma})}^2$. 
Gathering the above estimates, we can deduce the desired estimate. 
\end{proof}

\begin{lemma}\label{lem:UEE2}
Under the assumptions of Proposition \ref{prop:UEE} and with the same notations, there exist positive constants $\varepsilon_0=\varepsilon_0(K_0)$ 
and $\lambda_0=\lambda_0(K)$ such that for any $\varepsilon\in[0,\varepsilon_0]$ and any regular solution $(u,\psi_{\rm i})$ of 
\eqref{PB1reg}--\eqref{PB2reg} with $u$ supported in $\overline{\cE}\cap U_\mathit{\Gamma}$ we have 
\[
I_{\lambda,t}( \opnorm{ u(\cdot) }_{m} )^2 
\leq C(K_0)\bigl( I_{\lambda,t}( \opnorm{ u(\cdot) }_{m,\parallel} )^2 + \opnorm{ u(0) }_m + S_{\lambda,t}^*( \opnorm{ f^\varepsilon(\cdot) }_m )^2 \bigr)
\]
for any $\lambda\geq\lambda_0(K)$ and $t\in[0,T]$. 
\end{lemma}

\begin{proof}[Proof of Lemma \ref{lem:UEE2}]
The proof can be carried out in the same way as the proof of Proposition \ref{propfarorderm2} and Lemma \ref{lemmapf4}, 
that is, we apply Lemma \ref{lemmvor} to $u^{(j,k,l)}=\dt^j\dtan^k\dnor^l u$ for $j+k+l\leq m-1$. 
To this end, we have to check that the coefficient matrices satisfy Assumptions \ref{asschar} and \ref{ass:regLQW}. 
In this case, we choose $\widetilde{A}_0={\rm Id}_{3\times3}$ so that $A_j=\Sigma G_j-\varepsilon\chi_{\rm b}N_j$ for $j=1,2$, 
where $\Sigma(t,x)=S(t,x)^{-1}$. 
The corresponding boundary matrix is given by $A_{\rm nor}=\Sigma G_{\rm nor}-\varepsilon{\rm Id}_{3\times3}$. 
To calculate eigenvalues of this matrix, we write 
\[
\Sigma = 
\begin{pmatrix}
 \sigma_0 & \bm{\sigma}^{\rm T} \\
 \bm{\sigma} & \Sigma'
\end{pmatrix}.
\]
Then, the eigenvalues of $A_{\rm nor}$ are given by $\lambda_1=-\varepsilon$, 
$\lambda_2=N\cdot\bm{\sigma}+\sqrt{\sigma_0N\cdot\Sigma'N}-\varepsilon$, 
and $\lambda_3=N\cdot\bm{\sigma}-\sqrt{\sigma_0N\cdot\Sigma'N}-\varepsilon$. 
By (i) in Assumption \ref{ass:S}, we have $\beta_0^{-1}{\rm Id}_{3\times3} \leq \Sigma(t,x) \leq \alpha_0^{-1}{\rm Id}_{3\times3}$, so that 
\[
\beta_0^{-1}{\rm Id}_{2\times2} \leq 
\begin{pmatrix}
 \sigma_0 & N\cdot\bm{\sigma} \\
 N\cdot\bm{\sigma} & N\cdot\Sigma'N
\end{pmatrix}
 \leq \alpha_0^{-1}{\rm Id}_{2\times2}.
\]
Therefore, in the case $\varepsilon=0$ we have $|\lambda_2\lambda_3|\geq\beta_0^{-2}$ so that 
$|\lambda_2^{-1}|,|\lambda_3^{-1}| \leq 2\beta_0^2\alpha_0^{-1}$. 
Hence, $\lambda_2$ and $\lambda_3$ are positive and negative definite, respectively, for $|\varepsilon|\leq\varepsilon_0(K_0)$. 
Although these eigenvalues have definite signs for $0<\varepsilon\leq\varepsilon_0(K_0)$, 
we have to choose $n_1=1$ and $n_2=2$ to obtain uniform estimates with respect to the regularizing parameter $\varepsilon$. 
The corresponding left eigenvectors are given by 
\[
\bm{l}_1 = S\begin{pmatrix} 0 \\ N^\perp \end{pmatrix}, \quad
\bm{l}_2 = S\begin{pmatrix} \sqrt{\sigma_0} \\ \sqrt{N\cdot\Sigma'N}N \end{pmatrix}, \quad
\bm{l}_2 = S\begin{pmatrix} -\sqrt{\sigma_0} \\ \sqrt{N\cdot\Sigma'N}N \end{pmatrix}. 
\]
It is easy to see that the condition (ii) in Assumption \ref{asschar} is satisfied with 
$\bm{q}_{1,1}=S(0,{\bf e}_2^{\rm T})^{\rm T}$, $\bm{q}_{1,2}=-S(0,{\bf e}_1^{\rm T})^{\rm T}$, and $w_1=-\varepsilon\chi_{\rm b}N$. 
Moreover, we have $\det L=2(\det S)\sqrt{\sigma_0N\cdot\Sigma'N} \geq 2\alpha_0^3\beta_0^{-1}$. 
Therefore, we can apply Lemma 3 to obtain the desired estimate. 
\end{proof}

Lemmas \ref{lem:UEE1} and \ref{lem:UEE2} are sufficient to conclude the proof of the proposition. 
\end{proof}

\subsection{Compatibility conditions}\label{sect:CC}
It is classical for hyperbolic initial boundary value problems that smooth solutions can exist only if 
the initial and boundary data satisfy an appropriate number of so-called compatibility conditions. 
In this section, we derive these compatibility conditions for the initial boundary value problem \eqref{PB1lin}--\eqref{IC2} 
as well as for its regularized version \eqref{PB1reg}--\eqref{ICreg}. 
It turns out that these compatibility conditions are different, 
so that data that are compatible with the original problem are not necessarily compatible with the regularized one; 
we show however that it is possible to approximate them by data that are compatible with the regularized problem. 
We also show that compatible data for \eqref{PB1lin}--\eqref{IC2} can be approximated by more regular data 
satisfying higher order compatibility conditions.

\subsubsection{Compatibility conditions for the  original problem}
We first consider the initial boundary value problem \eqref{PB1lin}--\eqref{IC2}. 
Let $(u,\psi_{\rm i})$ be a smooth solution to \eqref{PB1lin}--\eqref{IC2} and put 
$(u_j^{\rm in},\psi_{{\rm i},j}^{\rm in})=(\dt^ju,\dt^j\psi_{\rm i})_{\vert_{t=0}}$ for $j=0,1,2,\ldots$. 
By applying $\dt^j$ to \eqref{PB1lin} and to the second condition in \eqref{PB2lin} 
and putting $t=0$ we see that the $\{u_j^{\rm in}\}$ are calculated inductively by 
\[
S_{\vert_{t=0}}u_{j+1}^{\rm in} + \sum_{k=0}^{j-1}\binom{j}{k}(\dt^{j-k}S)_{\vert_{t=0}} u_{k+1}^{\rm in}
 + G_p\partial_p u_j^{\rm in} = (\dt^j f)_{\vert_{t=0}}
\]
for $j=0,1,2,\ldots$, and that $\{\psi_{{\rm i},j}^{\rm in}\}$ are given by 
$\psi_{{\rm i},j+1}^{\rm in} = (\dt^jg_2)_{\vert_{t=0}}-{(u_j^{\rm in})^{\rm I}}_{\vert_\mathit{\Gamma}}$ for $j=0,1,2,\ldots$. 
Then, by applying $\dt^j$ to the first condition in \eqref{PB2lin} and putting $t=0$ we obtain 
\begin{equation}\label{CC1}
N\cdot (u_j^{\rm in})^{\rm II} = \Lambda\psi_{{\rm i},j}^{\rm in} + (\dt^jg_1)_{\vert_{t=0}}
 \quad\mbox{on}\quad \mathit{\Gamma}
\end{equation}
for $j=0,1,2,\ldots$. 
These are necessary conditions that the data $(u^{\rm in},\psi_{\rm i}^{\rm in},f,g_1,g_2)$ should satisfy 
for the existence of a regular solution to the problem \eqref{PB1lin}--\eqref{IC2}. 
We check easily that under assumptions 
\begin{equation}\label{Data}
u^{\rm in} \in H^m(\cE), \quad \psi_{\rm i}^{\rm in} \in H^{m+1/2}(\mathit{\Gamma}), \quad
 f\in\mathbb{W}_T^{m-1}, \quad g_1,g_2\in\mathbb{W}_{{\rm b},T}^{m-1+1/2},
\end{equation}
together with Assumption \ref{ass:S} (i) and $\partial S\in \mathbb{W}_T^{m-2} \cap \mathbb{W}_T^{0,p}$ for some $p\in(2,\infty)$ in the case $m\geq2$, 
we have $u_j^{\rm in} \in H^{m-j}(\cE)$ and $\psi_{\rm i}^{\rm in} \in H^{m-j+1/2}(\mathit{\Gamma})$ for 
$j=0,1,\ldots,m$, so that each term in \eqref{CC1} belongs to $H^{m-1-j+1/2}(\mathit{\Gamma})$ for $j=0,1,\ldots,m-1$. 
Therefore, the condition \eqref{CC1} make sense for $j=0,1,\ldots,m-1$.

\begin{definition}\label{def:CC}
Let $m\geq1$ be an integer. 
For any integer $j\in\{0,1,\ldots,m-1\}$ we say that the data $(u^{\rm in},\psi_{\rm i}^{\rm in},f,g_1,g_2)$ satisfying \eqref{Data} 
for the initial boundary value problem \eqref{PB1lin}--\eqref{IC2} satisfy the compatibility condition at order $j$ if \eqref{CC1} holds. 
\end{definition}

\subsubsection{Compatibility conditions for the regularized problem}
We proceed to consider the initial boundary value problem \eqref{PB1reg}--\eqref{ICreg} for the regularized system with the regularized parameter 
$\varepsilon\in(0,1]$. 
Let $(u^\varepsilon,\psi_{\rm i}^\varepsilon)$ be a smooth solution to \eqref{PB1reg}--\eqref{ICreg} and put 
$(u_j^{{\rm in},\varepsilon},\psi_{{\rm i},j}^{{\rm in},\varepsilon})
 =(\dt^ju^\varepsilon,\dt^j\psi_{\rm i}^\varepsilon)_{\vert_{t=0}}$ for $j=0,1,2,\ldots$. 
We recall that $u^{(0),\varepsilon}$ will be constructed a priori from the initial data so that 
$(\dt^ju^{(0),\varepsilon})_{\vert_{t=0}} = (\dt^ju^\varepsilon)_{\vert_{t=0}}$ holds for $j=0,1,2,\ldots$. 
Therefore, by applying $\dt^j$ to \eqref{PB1reg} and to the second condition in \eqref{PB2reg} 
and putting $t=0$ we see that $\{u_j^{{\rm in},\varepsilon}\}$ are calculated inductively by 
\[
S_{\vert_{t=0}}u_{j+1}^{{\rm in},\varepsilon} + \sum_{k=0}^{j-1}\binom{j}{k}(\dt^{j-k}S)_{\vert_{t=0}} u_{k+1}^{{\rm in},\varepsilon}
 + G_p\partial_pu_j^{{\rm in},\varepsilon} = (\dt^j f)_{\vert_{t=0}}
\]
for $j=0,1,2,\ldots$, and that $\{\psi_{{\rm i},j}^{{\rm in},\varepsilon}\}$ are given by 
$\psi_{{\rm i},j+1}^{{\rm in},\varepsilon} = (\dt^jg_2)_{\vert_{t=0}}-{(u_j^{{\rm in},\varepsilon})^{\rm I}}_{\vert_\mathit{\Gamma}}$ 
for $j=0,1,2,\ldots$. 
Here, we note that this recursion formula is exactly the same as that of $\{u_j^{\rm in}\}$; 
the only difference is the initial data $u_0^{\rm in}=u^{\rm in}$ and $u_0^{{\rm in},\varepsilon}=u^{{\rm in},\varepsilon}$. 
Then, by applying $\dt^j$ to the first condition in \eqref{PB2reg} and putting $t=0$ we obtain 
\begin{equation}\label{CC2}
N\cdot (u_j^{{\rm in},\varepsilon})^{\rm II} = J_\varepsilon\Lambda J_\varepsilon\psi_{{\rm i},j}^{{\rm in},\varepsilon} + (\dt^jg_1)_{\vert_{t=0}}
 \quad\mbox{on}\quad \mathit{\Gamma}
\end{equation}
for $j=0,1,2,\ldots$. 
As in the case $\varepsilon=0$, this condition makes sense for $j=0,1,\ldots,m-1$ under the assumptions
\begin{equation}\label{Data2}
u^{{\rm in},\varepsilon} \in H^m(\cE), \quad \psi_{\rm i}^{\rm in} \in H^{m+1/2}(\mathit{\Gamma}), \quad
 f\in\mathbb{W}_T^{m-1}, \quad g_1,g_2\in\mathbb{W}_{{\rm b},T}^{m-1+1/2},
\end{equation}
together with the same condition on $S$ as before.

\begin{definition}\label{def:CC2}
Let $m\geq1$ be an integer. 
For any integer $j\in\{0,1,\ldots,m-1\}$ we say that the data $(u^{{\rm in},\varepsilon},\psi_{\rm i}^{\rm in},f,g_1,g_2)$ satisfying \eqref{Data2} 
for the initial boundary value problem \eqref{PB1reg}--\eqref{ICreg} satisfy the compatibility condition at order $j$ if \eqref{CC2} holds. 
\end{definition}

\subsubsection{Approximation of the data by compatible data to the regularized problem}
Even if the data $(u^{\rm in},\psi_{\rm i}^{\rm in},f,g_1,g_2)$ satisfy the compatibility conditions for the initial boundary value 
problem \eqref{PB1lin}--\eqref{IC2}, the same data does not necessarily satisfy the compatibility conditions for the initial boundary value 
problem \eqref{PB1reg}--\eqref{ICreg} to the regularized system. 
However, the following proposition ensures that we can approximate the initial data $u^{\rm in}$ by 
$u^{{\rm in},\varepsilon}$ so that the data $(u^{{\rm in},\varepsilon},\psi_{\rm i}^{\rm in},f,g_1,g_2)$ satisfy 
the compatibility conditions for the initial boundary value problem \eqref{PB1reg}--\eqref{ICreg} to the regularized system, and that 
$u^{{\rm in},\varepsilon} \to u^{\rm in}$ as $\varepsilon\to+0$.

\begin{proposition}\label{prop:appData}
Let $m\geq1$ be an integer and suppose that the coefficient matrix $S$ satisfies Assumption \ref{ass:S} (i) and 
$\partial S\in \mathbb{W}_T^{m-2} \cap \mathbb{W}_T^{0,p}$ for some $p\in(2,\infty)$ in the case $m\geq2$, and that Assumption \ref{ass:hi} is satisfied. 
Assume also that the data $(u^{\rm in},\psi_{\rm i}^{\rm in},f,g_1,g_2)$ satisfy \eqref{Data} and the compatibility conditions 
for the problem \eqref{PB1lin}--\eqref{IC2} up to order $m-1$. 
Then, for any $\varepsilon\in(0,1]$ there exists an initial data $u^{{\rm in},\varepsilon}\in H^m(\cE)$ such that 
the data $(u^{{\rm in},\varepsilon},\psi_{\rm i}^{\rm in},f,g_1,g_2)$ satisfy the compatibility conditions for the regularized problem 
\eqref{PB1reg}--\eqref{ICreg} up to order $m-1$. 
Moreover, we have 
\begin{equation}\label{limAppData}
\lim_{\varepsilon\to+0}\sum_{j=0}^m \Abs{ u_j^{{\rm in},\varepsilon} - u_j^{\rm in} }_{H^{m-j}(\cE)} = 0.
\end{equation}
\end{proposition}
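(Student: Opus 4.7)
The plan is to construct $u^{{\rm in},\varepsilon}$ of the form $u^{\rm in}+w_0^\varepsilon$, where $w_0^\varepsilon\in H^m(\cE)$ is a correction of small norm supported near $\mathit{\Gamma}$, designed to reconcile the original and regularized compatibility conditions. Setting $w_j^\varepsilon:=u_j^{{\rm in},\varepsilon}-u_j^{\rm in}$ and $\pi_j^\varepsilon:=\psi_{{\rm i},j}^{{\rm in},\varepsilon}-\psi_{{\rm i},j}^{\rm in}$, the recursions defining $\{u_j^{\rm in}\}$ and $\{u_j^{{\rm in},\varepsilon}\}$ share the same coefficients (only the initial data differ), so $\{w_j^\varepsilon\}$ is generated from $w_0^\varepsilon$ by the same linear recursion; since ${\psi_{\rm i}}_{\vert_{t=0}}=\psi_{\rm i}^{\rm in}$ in both problems, $\pi_0^\varepsilon=0$ and $\pi_{j+1}^\varepsilon=-(w_j^\varepsilon)^{\rm I}_{\vert_\mathit{\Gamma}}$. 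Subtracting \eqref{CC1} from \eqref{CC2} reduces the regularized compatibility conditions to the upper-triangular system of $m$ scalar equations on $\mathit{\Gamma}$:
\[
N\cdot(w_j^\varepsilon)^{\rm II}_{\vert_\mathit{\Gamma}}-J_\varepsilon\Lambda J_\varepsilon\pi_j^\varepsilon=R_j^\varepsilon,\qquad j=0,\ldots,m-1,
\]
where $R_j^\varepsilon:=(J_\varepsilon\Lambda J_\varepsilon-\Lambda)\psi_{{\rm i},j}^{\rm in}$ is explicit and independent of the unknown $w_0^\varepsilon$.

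A preliminary step is to verify that the defect vanishes in the limit: using the identity $J_\varepsilon-\mathrm{Id}=-\varepsilon|D|(1+\varepsilon|D|)^{-1}$, Proposition \ref{propDN}(iii), and the observation that $\psi_{{\rm i},j}^{\rm in}\in H^{m-j+1/2}(\mathit{\Gamma})$ (by the recursion together with the trace theorem applied to $u_k^{\rm in}\in H^{m-k}(\cE)$), a routine computation gives $\abs{R_j^\varepsilon}_{H^{m-j-1/2}(\mathit{\Gamma})}\to 0$ as $\varepsilon\to 0$ for each $j=0,\ldots,m-1$.

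The main construction proceeds by induction in normal-tangential coordinates on $U_\mathit{\Gamma}$. I decompose $w_0^\varepsilon=\sum_{k=0}^{m-1}\delta_k^\varepsilon$, with each $\delta_k^\varepsilon\in H^m(\cE)$ compactly supported in $U_\mathit{\Gamma}$ and satisfying $(\dnor^l\delta_k^\varepsilon)_{\vert_\mathit{\Gamma}}=\delta_{lk}\,a_k^\varepsilon$ for $l=0,\ldots,m-1$; such a lifting exists via a straightforward adaptation of Proposition \ref{propextext} and gives $\Abs{\delta_k^\varepsilon}_{H^m(\cE)}\lesssim\abs{a_k^\varepsilon}_{H^{m-k-1/2}(\mathit{\Gamma})}$. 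The $a_k^\varepsilon$ are selected inductively: the Taylor coefficients with $k<j$ do not contribute to the principal part of the $j$-th compatibility condition, and after choosing $a_0^\varepsilon,\ldots,a_{j-1}^\varepsilon$, the $j$-th equation reduces, via the recursion $u_{l+1}=-S_0^{-1}G_p\partial_p u_l$ (modulo lower-order contributions from time derivatives of $S$), to a scalar linear equation on $\mathit{\Gamma}$ whose leading term is proportional to $N\cdot[(-S_0^{-1}G_{\rm nor})^j a_j^\varepsilon]^{\rm II}$ and whose remaining terms depend linearly on $R_j^\varepsilon$, on $a_0^\varepsilon,\ldots,a_{j-1}^\varepsilon$, and on their tangential derivatives. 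Solving for one suitable scalar component of $a_j^\varepsilon\in\R^3$ (and setting the others to zero) yields $a_j^\varepsilon\in H^{m-j-1/2}(\mathit{\Gamma})$ with norm controlled linearly by $\abs{R_j^\varepsilon}_{H^{m-j-1/2}}$ and by the norms of the previously chosen $a_k^\varepsilon$. Extending $w_0^\varepsilon$ by zero outside $U_\mathit{\Gamma}$ completes the construction, and the bound $\Abs{w_j^\varepsilon}_{H^{m-j}(\cE)}\lesssim\Abs{w_0^\varepsilon}_{H^m(\cE)}$ from the linearity of the cascade then gives \eqref{limAppData}.

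The main obstacle is verifying the solvability at each inductive step, namely, that the scalar functional $a\mapsto N\cdot[(-S_0^{-1}G_{\rm nor})^j a]^{\rm II}$ on $\R^3$ is nontrivial for each $j\in\{0,\ldots,m-1\}$. For $j=0$ this is the map $a\mapsto N\cdot a^{\rm II}$, which is obviously surjective. For $j\geq 1$ it must be checked by direct computation: writing $S_0^{-1}=\begin{psmallmatrix}\alpha&\beta^{\rm T}\\\beta&\Gamma\end{psmallmatrix}$, the relevant coefficient at $j=1$ is the nonzero quantity $N\cdot\Gamma N>0$ (positive-definiteness of $S_0^{-1}$), and at $j=2$ it reduces to $(N\cdot\beta)^2+\alpha(N\cdot\Gamma N)>0$; a careful inductive bookkeeping extends this to all $j\leq m-1$ using the positive-definiteness of $S_0^{-1}$ and the explicit structure of $G_{\rm nor}$. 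This algebraic verification is the most technical part of the proof, and once established, it completes the construction and the convergence statement.
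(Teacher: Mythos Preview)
Your overall strategy is correct and arrives at the same object as the paper's proof—the normal Taylor traces $\{(\dnor^j w_0^\varepsilon)_{\vert_{\mathit\Gamma}}\}_{j=0}^{m-1}$, followed by a standard lifting—but the route is genuinely different. The paper does \emph{not} iterate the map $(-S_0^{-1}G_{\rm nor})^j$; instead it works with the non-invertible equation $G_{\rm nor}W=F$ via Lemma~\ref{lemmacompalt}, which gives the solvability constraint $F\cdot\begin{psmallmatrix}0\\N^\perp\end{psmallmatrix}=0$ together with a one-parameter family of solutions $W=G_{\rm nor}F+\alpha\begin{psmallmatrix}0\\N^\perp\end{psmallmatrix}$. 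The induction is then over all traces $\{(\dnor^q w_p)_{\vert_{\mathit\Gamma}}:p+q=j+1\}$ simultaneously, and the free parameters $\alpha$ are pinned down by extra, freely imposed conditions such as \eqref{addcond0}. This makes the characteristic structure completely explicit, at the cost of more bookkeeping. Your approach bypasses the inversion of $G_{\rm nor}$ entirely by observing that the $j$-th compatibility condition is a single scalar equation whose dependence on $a_j^\varepsilon$ is through the linear functional $a\mapsto N\cdot[(-S_0^{-1}G_{\rm nor})^ja]^{\rm II}$; this is more economical once that functional is shown to be nontrivial.

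Two comments on the execution. First, the ``careful inductive bookkeeping'' you invoke for general $j$ is unnecessary: since $S_0^{-1}G_{\rm nor}$ is similar to the symmetric matrix $S_0^{-1/2}G_{\rm nor}S_0^{-1/2}$, it is diagonalizable, and hence $\ker\bigl((G_{\rm nor}S_0^{-1})^j\bigr)=\ker(G_{\rm nor}S_0^{-1})=\mathrm{span}\{S_0(0,N^\perp)^{\rm T}\}$ for every $j\ge1$. Your functional is therefore trivial only if $(0,N)^{\rm T}$ is $S_0$-proportional to $(0,N^\perp)^{\rm T}$, which would force $N\cdot\Gamma N=0$ and is excluded by positive-definiteness. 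This settles all $j$ at once and also gives the uniform lower bound on $\mathit\Gamma$ needed for the Sobolev estimate. Second, ``solving for one suitable scalar component and setting the others to zero'' is slightly imprecise: the nonzero coordinate may depend on $x\in\mathit\Gamma$. The clean choice is $a_j^\varepsilon(x)=\tfrac{\mathrm{RHS}(x)}{|c_j(x)|^2}\,c_j(x)$ with $c_j(x):=(G_{\rm nor}S_0^{-1}(x))^j(0,N)^{\rm T}$, which is smooth and nowhere vanishing, so the required $H^{m-j-1/2}(\mathit\Gamma)$ regularity and the convergence $a_j^\varepsilon\to 0$ follow directly.
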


Since the boundary is characteristic for the symmetric hyperbolic system \eqref{PB1lin}, a standard technique cannot directly be applicable 
to prove this type of proposition and we need a more delicate analysis than the standard one. 
Before giving a proof of this proposition, we state the following algebraic lemma whose proof is straightforward and thus omitted.

\begin{lemma}\label{lemmacompalt}
Let $W,F\in  {\mathbb R^3}$ and $G_{\rm nor}=N_1 G_1+N_2G_2$, 
where $G_1$ and $g_2$ are matrices defined as in \eqref{defGj} and $N=(N_1,N_2)^{\rm T}\in\R^2$ is a unit vector. 
Then, the following two assertions are equivalent:
\begin{enumerate}
\item[\rm (i)]
The vectors $W$ and $F$ solve the algebraic equation $G_{\rm nor}W=F$;
\item[\rm (ii)]
We have $F\cdot\begin{psmallmatrix} 0 \\ N^\perp \end{psmallmatrix}=0$ and there exists $\alpha\in {\mathbb R}$ such that 
$W=G_{\rm nor} F+\alpha \begin{psmallmatrix} 0 \\ N^\perp \end{psmallmatrix}$.
\end{enumerate}
If, in addition, $S_0$ is a definite positive matrix and $\widetilde{f}\in {\mathbb R}$, then the following two assertions are equivalent: 
\begin{enumerate}
\item[\rm (iii)]
The triplet $(W,F,\widetilde{f})$ solves the algebraic equations 
$G_{\rm nor}W=F$ and $S_0 \begin{psmallmatrix} 0 \\ N^\perp \end{psmallmatrix}\cdot W=\widetilde{f}$;
\item[\rm (iv)]
We have $F\cdot\begin{psmallmatrix} 0 \\ N^\perp \end{psmallmatrix}=0$ and 
$W=G_{\rm nor} F+\alpha \begin{psmallmatrix} 0 \\ N^\perp \end{psmallmatrix}$, with $\alpha$ given by 
\[
\alpha=\frac{1}{S_0 \begin{psmallmatrix} 0 \\ N^\perp \end{psmallmatrix} \cdot \begin{psmallmatrix} 0 \\ N^\perp \end{psmallmatrix}}
\bigl( \widetilde{f} -S_0\begin{psmallmatrix} 0 \\ N^\perp \end{psmallmatrix}\cdot G_{\rm nor} F \bigr).
\]
\end{enumerate}
\end{lemma}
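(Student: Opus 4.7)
The plan is to exploit the very explicit block structure of the boundary matrix
\[
G_{\rm nor} = \begin{pmatrix} 0 & N^{\rm T} \\ N & 0_{2\times 2} \end{pmatrix},
\]
and to read off everything from a computation of $\ker(G_{\rm nor})$, $\mathrm{Ran}(G_{\rm nor})$, and $G_{\rm nor}^2$. First, I would observe that a direct calculation gives
\[
G_{\rm nor}^2 = \begin{pmatrix} 1 & 0 \\ 0 & N\otimes N \end{pmatrix} = \mathrm{Id}_{3\times 3} - P,
\]
where $P$ is the orthogonal projector onto $\mathrm{span}\bigl\{\begin{psmallmatrix} 0 \\ N^\perp \end{psmallmatrix}\bigr\}$ (using $|N|=|N^\perp|=1$). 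In particular $\ker(G_{\rm nor})$ is the line spanned by $\begin{psmallmatrix} 0 \\ N^\perp \end{psmallmatrix}$, and since $G_{\rm nor}$ is symmetric, $\mathrm{Ran}(G_{\rm nor}) = \ker(G_{\rm nor})^\perp$ is characterized precisely by the scalar condition $F\cdot \begin{psmallmatrix} 0 \\ N^\perp \end{psmallmatrix} = 0$.

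For the equivalence (i)$\Leftrightarrow$(ii), the implication (i)$\Rightarrow$(ii) will follow by noting that $F\in\mathrm{Ran}(G_{\rm nor})$ forces the compatibility condition $F\cdot\begin{psmallmatrix} 0 \\ N^\perp \end{psmallmatrix}=0$, after which $G_{\rm nor}^2 F = F$ (from the identity $G_{\rm nor}^2 = \mathrm{Id} - P$), so that $G_{\rm nor} F$ is a particular solution to $G_{\rm nor} W = F$; the general solution is obtained by adding an arbitrary element of the kernel, which yields the form in (ii). Conversely, assuming (ii), I would simply substitute and compute $G_{\rm nor} W = G_{\rm nor}^2 F + \alpha\, G_{\rm nor} \begin{psmallmatrix} 0 \\ N^\perp \end{psmallmatrix} = (\mathrm{Id}-P)F + 0 = F$, since $PF = 0$ under the compatibility assumption.

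For the equivalence (iii)$\Leftrightarrow$(iv), by the first part of the lemma the condition $G_{\rm nor} W = F$ is equivalent to the existence of a scalar $\alpha$ such that $W = G_{\rm nor} F + \alpha\, \begin{psmallmatrix} 0 \\ N^\perp \end{psmallmatrix}$, together with the compatibility condition on $F$. Inserting this expression into the scalar equation $S_0 \begin{psmallmatrix} 0 \\ N^\perp \end{psmallmatrix} \cdot W = \widetilde{f}$ yields a linear equation for $\alpha$ whose leading coefficient is $S_0 \begin{psmallmatrix} 0 \\ N^\perp \end{psmallmatrix} \cdot \begin{psmallmatrix} 0 \\ N^\perp \end{psmallmatrix}$; since $S_0$ is positive definite and $\begin{psmallmatrix} 0 \\ N^\perp \end{psmallmatrix}\ne 0$, this coefficient is strictly positive, so $\alpha$ is uniquely determined by the formula stated in (iv).

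This lemma is purely linear algebra and presents no real obstacle; the only subtlety is noticing that the kernel of $G_{\rm nor}$ and the direction singled out in the scalar condition agree, so that the additional equation in (iii) exactly pins down the one-dimensional freedom left by (i). This is precisely the reason why the vector $\begin{psmallmatrix} 0 \\ N^\perp \end{psmallmatrix}$, which already played a role in the weak dissipativity analysis through Section~\ref{sectWSL2}, reappears here as the distinguished direction.
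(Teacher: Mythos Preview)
Your argument is correct and complete. The paper does not give a proof of this lemma at all: it states that the proof ``is straightforward and thus omitted,'' so there is no approach to compare against, and your computation via $G_{\rm nor}^2 = \mathrm{Id}_{3\times 3} - P$ together with the symmetry of $G_{\rm nor}$ is exactly the natural way to fill in the details.
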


\begin{proof}[Proof of Proposition \ref{prop:appData}]
We construct such an approximate data $u^{{\rm in},\varepsilon}$ in the form 
$u^{{\rm in},\varepsilon}=u^{\rm in}+w^{{\rm in},\varepsilon}$ and put 
$w_j^{{\rm in},\varepsilon}:=u_j^{{\rm in},\varepsilon}-u_j^{\rm in}$ and 
$\phi_{{\rm i},j}^{{\rm in},\varepsilon}:=\psi_{{\rm i},j}^{{\rm in},\varepsilon}-\psi_{{\rm i},j}^{\rm in}$ for $j=0,1,2,\ldots,m$. 
Then, we see that $\{w_j^{{\rm in},\varepsilon}\}_{j=0}^m$ are determined inductively from $w_0^{{\rm in},\varepsilon}=w^{{\rm in},\varepsilon}$ by 
\begin{equation}\label{wjin}
S_{\vert_{t=0}}w_{j+1}^{{\rm in},\varepsilon} + \sum_{k=0}^{j-1}\binom{j}{k}(\dt^{j-k}S)_{\vert_{t=0}} w_{k+1}^{{\rm in},\varepsilon}
 + G_p\partial_pw_j^{{\rm in},\varepsilon} = 0
\end{equation}
and that $\{\phi_{{\rm i},j}^{{\rm in},\varepsilon}\}_{j=0}^m$ are given by $\phi_{{\rm i},0}^{{\rm in},\varepsilon}=0$ and 
$\phi_{{\rm i},j+1}^{{\rm in},\varepsilon}=-{(w_j^{{\rm in},\varepsilon})^{\rm I}}_{\vert_\mathit{\Gamma}}$ for $j=0,1,\ldots,m-1$. 
Moreover, the compatibility condition \eqref{CC2} at order $j$ to the regularized system is reduced to 
\begin{equation}\label{rRjCC}
N\cdot (w_j^{{\rm in},\varepsilon})^{\rm II}
= J_\varepsilon\Lambda J_\varepsilon\phi_{{\rm i},j}^{{\rm in},\varepsilon}
 + (J_\varepsilon\Lambda J_\varepsilon - \Lambda)\psi_{{\rm i},j}^{\rm in}
 \quad\mbox{on}\quad \mathit{\Gamma}.
\end{equation}
We are going to construct the initial data $w_0^{{\rm in},\varepsilon}$ such that these compatibility conditions are satisfied up to order $m-1$.

\medskip
\noindent
{\bf Step 1.} Compatibility at order $j=0$. 
The compatibility condition at order $0$ can be written as 
$N\cdot(w_0^{{\rm in},\varepsilon})^{\rm II}=g_0^\eps$ with $g_0^\eps
= (J_\varepsilon\Lambda J_\varepsilon - \Lambda)\psi_{{\rm i},0}^{\rm in}$ on $\mathit{\Gamma}$. 
There are many choices for such $w_0^{{\rm in},\varepsilon}$. 
We can for instance impose additionally 
\begin{equation}\label{addcond0}
\begin{cases}
 (w_0^{{\rm in},\varepsilon})^{\rm I}=0 &\mbox{on}\quad \mathit{\Gamma}, \\
 \begin{psmallmatrix} 0 \\ N^\perp \end{psmallmatrix}\cdot w_0^{{\rm in},\varepsilon}=0 &\mbox{on}\quad \mathit{\Gamma},
\end{cases}
\end{equation}
so that $w_0^{{\rm in},\varepsilon}$ solves the equations $G_{\rm nor}w_0^{{\rm in},\varepsilon}=F$ and 
$S_0\begin{psmallmatrix} 0 \\ N^\perp \end{psmallmatrix}\cdot w_0^{{\rm in},\eps}=\widetilde{f}$, 
with $F=(g_0^\eps,0,0)^{\rm T}$, $S_0={\rm Id}_{3\times3}$ and $\widetilde{f}=0$. 
It follows that $w_0^{{\rm in},\varepsilon}$ is given by the explicit formula in (iv) of Lemma \ref{lemmacompalt}. 
In particular, since we have ${ \psi_{{\rm i},0}^{\rm in} }_{\vert_{\mathit{\Gamma}}} \in H^{m+1/2}(\mathit{\Gamma})$, 
this expressions determines ${ w_0^{{\rm in},\varepsilon} }_{\vert_{\mathit{\Gamma}}}$ uniquely as a function in $H^{m-1+1/2}(\mathit{\Gamma})$. 
Moreover, we see that 
\begin{align*}
\abs{ w_0^{{\rm in},\varepsilon} }_{H^{m-1+1/2}(\mathit{\Gamma})}
&\lesssim \abs{ (J_\varepsilon\Lambda J_\varepsilon - \Lambda)\psi_{{\rm i},0}^{\rm in} }_{H^{m-1+1/2}(\mathit{\Gamma})} \\
&\lesssim \abs{ (J_\varepsilon-\mathrm{Id})\psi_{{\rm i},0}^{\rm in} }_{H^{m+1/2}(\mathit{\Gamma})}
 + \abs{ (J_\varepsilon-\mathrm{Id})\Lambda\psi_{{\rm i},0}^{\rm in} }_{H^{m-1+1/2}(\mathit{\Gamma})} \\
&\to 0 \quad\mbox{as}\quad \varepsilon\to+0,
\end{align*}
where we used Proposition \ref{propDN}.

\medskip
\noindent
{\bf Step 2.} Analysis of the induction relation \eqref{wjin}. 
Taking the trace of \eqref{wjin} on ${\mathit \Gamma}$, we obtain the equation 
$G_{\rm nor} \dnor w_j^{{\rm in},\varepsilon} =-S_{\vert_{t=0}}w_{j+1}^{{\rm in},\varepsilon}+f_{j,0}^\eps$ with 
\[
f_{j,0}^\eps = - \sum_{k=0}^{j-1}\binom{j}{k}(\dt^{j-k}S)_{\vert_{t=0}} w_{k+1}^{{\rm in},\varepsilon}
 - G_{\rm tan}\dtan w_j^{{\rm in},\varepsilon} \quad\mbox{ on }\quad {\mathit \Gamma},
\]
where $G_{\rm tan}=\frac{1}{\abs{T}^2}(T_1G_1+T_2G_2)$. 
More generally, if $p+q=j$, then applying $\dnor^q$ to \eqref{wjin}$_{j=p}$  and taking the trace on ${\mathit \Gamma}$ yields the equation 
$G_{\rm nor} \dnor^{q+1} w_p^{{\rm in},\varepsilon} =-S_{\vert_{t=0}}\dnor^q w_{p+1}^{{\rm in},\varepsilon} +  f_{p,q}^\eps$ with 
\begin{align*}
f_{p,q}^\varepsilon
&= - [\dnor^q,S_{\vert_{t=0}}]w_{p+1}^{{\rm in},\varepsilon} + [\dnor^q,G_\mathrm{nor}]\dnor w_p^{{\rm in},\varepsilon} \\
&\qquad
 + \dnor^q \biggl( 
  \sum_{k=0}^{p-1}\binom{p}{k}(\dt^{p-k}S)_{\vert_{t=0}} w_{k+1}^{{\rm in},\varepsilon}
  + G_\mathrm{tan}\dtan w_p^{{\rm in},\varepsilon} \biggr)   \quad\mbox{ on }\quad {\mathit \Gamma};
\end{align*}
an important observation is the following: 
if $\{(\dnor^q w_p^{{\rm in},\varepsilon})_{\vert_{\mathit{\Gamma}}} \,|\, p+q\leq j\}$ are determined such that 
$(\dnor^q w_p^{{\rm in},\varepsilon})_{\vert_{\mathit{\Gamma}}}\to0$ in $H^{m-1-(p+q)+1/2}(\mathit{\Gamma})$ as $\varepsilon\to+0$, 
then the regularity assumed on $S_{\vert_{t=0}}$ is enough to ensure that 
$f_{p,q}^\varepsilon\to0$ in $H^{m-2-(p+q)+1/2}(\mathit{\Gamma})$ as $\varepsilon\to+0$.

Now, since $G_{\rm nor}W=F$ with $W= \dnor^{q+1} w_p^{{\rm in},\varepsilon}$ and 
$F= -S_{\vert_{t=0}}\dnor^q w_{p+1}^{{\rm in},\varepsilon} +  f_{p,q}^\eps$, we can apply the first assertion of Lemma \ref{lemmacompalt} to get 
\begin{equation}\label{systequiv0}
\begin{cases}
 S_{\vert_{t=0}}\dnor^q w_{p+1}^{{\rm in},\varepsilon}\cdot
  \begin{psmallmatrix} 0 \\ N^\perp \end{psmallmatrix} = f_{p,q}^\eps\cdot \begin{psmallmatrix} 0 \\ N^\perp \end{psmallmatrix}, \\
 \dnor^{q+1} w_p^{{\rm in},\varepsilon} = G_{\rm nor}\big (- S_{\vert_{t=0}}\dnor^qw_{p+1}^{{\rm in},\varepsilon} + f_{p,q}^\eps\big)+\alpha_{p,q}
  \begin{psmallmatrix} 0 \\ N^\perp \end{psmallmatrix},
\end{cases}
\end{equation}
for some coefficient $\alpha_{p,q}\in \R$. 
The set of all these relations obtained for all possible choices of $p$ and $q$ such that $p+q=j$ can be rewritten as 
\begin{equation}\label{systequiv1}
S_{\vert_{t=0}}w_{j+1}^{{\rm in},\varepsilon}\cdot \begin{psmallmatrix} 0 \\ N^\perp \end{psmallmatrix}
 = f_{j,0}^\eps\cdot \begin{psmallmatrix} 0 \\ N^\perp \end{psmallmatrix},
\end{equation}
which corresponds to the first equation of \eqref{systequiv0}$_{j,0}$, together with 
\begin{equation}\label{systequiv2}
\begin{cases}
 \dnor^{q} w_{j-q+1}^{{\rm in},\varepsilon} = G_{\rm nor}\big (- S_{\vert_{t=0}}\dnor^{q-1}w_{j-q+2}^{{\rm in},\varepsilon}
  + f_{j-q+1,q-1}^\eps\big)+\alpha_{j-q+1,q-1} \begin{psmallmatrix} 0 \\ N^\perp \end{psmallmatrix}, \\
 S_{\vert_{t=0}}\dnor^q w_{j-q+1}^{{\rm in},\varepsilon}\cdot \begin{psmallmatrix} 0 \\ N^\perp \end{psmallmatrix}
  = f_{j-q,q}^\eps\cdot \begin{psmallmatrix} 0 \\ N^\perp \end{psmallmatrix},
\end{cases}
\end{equation}
which correspond respectively to the second equation of \eqref{systequiv0}$_{j-q+1,q-1}$ and the first equation of 
\eqref{systequiv0}$_{j-q,q}$, for all $1\leq q\leq j$, as well as 
\begin{equation}\label{systequiv3}
\dnor^{j+1} w_{0}^{{\rm in},\varepsilon} 
= G_{\rm nor}\bigl (- S_{\vert_{t=0}}\dnor^{j}w_{1}^{{\rm in},\varepsilon} + f_{0,j}^\eps \bigr)
 + \alpha_{0,j}  \begin{psmallmatrix} 0 \\ N^\perp \end{psmallmatrix},
\end{equation}
which corresponds to the second equation of \eqref{systequiv0}$_{0,j}$.

\medskip
\noindent
{\bf Step 3.} Compatibility condition at order $j+1$. 
The compatibility condition at order $j+1$ can be written as 
\begin{equation}\label{j+1CC}
N\cdot(w_{j+1}^{{\rm in},\varepsilon})^{\rm II} = g_{j+1}^\varepsilon \quad\mbox{on}\quad \mathit{\Gamma},
\end{equation}
where $g_{j+1}^\varepsilon = J_\varepsilon\Lambda J_\varepsilon\phi_{{\rm i},j+1}^{{\rm in},\varepsilon}
 + (J_\varepsilon\Lambda J_\varepsilon - \Lambda)\psi_{{\rm i},j+1}^{\rm in}$. 
As there is no condition on $(w_{j+1}^{{\rm in},\varepsilon})^{\rm I}$, we are free to set it for instance equal to zero. 
This together with \eqref{systequiv1} yields 
\begin{equation}\label{systequiv1bis}
\begin{cases}
 w_{j+1}^{{\rm in},\varepsilon}=g_{j+1}^\eps \begin{psmallmatrix} 0 \\ N \end{psmallmatrix}
  + \alpha_{j+1} \begin{psmallmatrix} 0 \\ N^\perp \end{psmallmatrix}, \\
 S_{\vert_{t=0}} w_{j+1}^{{\rm in},\varepsilon}\cdot \begin{psmallmatrix} 0 \\ N^\perp \end{psmallmatrix}
  =f_{j,0}^\eps\cdot \begin{psmallmatrix} 0 \\ N^\perp \end{psmallmatrix}
\end{cases}
\end{equation}
for some $\alpha_{j+1}\in \R$ which is fully determined by Lemma \ref{lemmacompalt} by taking this time $S_0=S_{\vert_{t=0}}$. 
In a similar way, we are free to complement \eqref{systequiv3} by an addition condition on 
$S_{\vert_{t=0}}\begin{psmallmatrix} 0 \\ N^\perp \end{psmallmatrix}\cdot\dnor^{j+1}w_0^{{\rm in},\varepsilon}$; 
for the sake of simplicity, we set this quantity equal to zero, so that we can replace \eqref{systequiv3} by 
\begin{equation}\label{systequiv3bis}
\begin{cases}
 \dnor^{j+1} w_{0}^{{\rm in},\varepsilon}=G_{\rm nor}\big (- S_{\vert_{t=0}}\dnor^{j}w_{1}^{{\rm in},\varepsilon} + f_{0,j}^\eps\big)+\alpha_{0,j}
  \begin{psmallmatrix} 0 \\ N^\perp \end{psmallmatrix}, \\
 S_{\vert_{t=0}}\begin{psmallmatrix} 0 \\ N^\perp \end{psmallmatrix}\cdot\dnor^{j+1}w_0^{{\rm in},\varepsilon}=0,
\end{cases}
\end{equation}
which, owing to Lemma \ref{lemmacompalt}, determines the coefficient $\alpha_{0,j}$.

\medskip
\noindent
{\bf Step 4.} Induction. 
We want to show that the relations formed by \eqref{systequiv1bis}, \eqref{systequiv2}, and \eqref{systequiv3} 
allow us to construct the desired approximation. 
More precisely, we prove by induction that for all $0\leq j\leq m-1$, 
the $\{(\dnor^q w_p^{{\rm in},\varepsilon})_{\vert_{\mathit{\Gamma}}} \,|\, p+q\leq j\}$ are determined and such that 
$(\dnor^q w_p^{{\rm in},\varepsilon})_{\vert_{\mathit{\Gamma}}}\to0$ in $H^{m-1-(p+q)+1/2}(\mathit{\Gamma})$ as $\varepsilon\to+0$. 
We have proved in Step 1 that this quantity is true for $j=0$. 
Let us thus assume that it is satisfied for $0\leq j\leq m-2$ and prove that it also holds for $j+1$. 
We know from the induction assumption that $g_{j+1}^\eps$ is already determined and satisfies 
$g_{j+1}^\varepsilon \to 0$ in $H^{m-2-j+1/2}(\mathit{\Gamma})$ as $\varepsilon\to+0$. 
As already observed in Step 2, the some holds for $f_{j,0}^\eps$. 
Using the explicit expression of the solution to \eqref{systequiv1bis} furnished by the second assertion of Lemma \ref{lemmacompalt} 
with $S_0=S_{\vert_{t=0}}$, and the fact that we assumed enough regularity on $S_{\vert_{t=0}}$, 
we get the desired bound on $(w_{j+1}^{{\rm in},\varepsilon})_{\vert_{\mathit{\Gamma}}}$. 
Using \eqref{systequiv2} and \eqref{systequiv3}, we readily show that the same holds true for all the 
$(\dnor^q w_p^{{\rm in},\varepsilon})_{\vert_{\mathit{\Gamma}}}$ with $p+q\leq j=1$, so that the induction is complete.

\medskip
\noindent
{\bf Step 5.} Conclusion. 
Now, we have seen that the compatibility conditions up to order $m-1$ together with the additional conditions \eqref{addcond0} 
and the second equation of \eqref{systequiv3bis} determine $\{(\dnor^j w_0^{{\rm in},\varepsilon})_{\vert_{\mathit{\Gamma}}}\}_{j=0}^{m-1}$ 
uniquely and that $(\dnor^j w_0^{{\rm in},\varepsilon})_{\vert_{\mathit{\Gamma}}}\to0$ in $H^{m-1-j+1/2}(\mathit{\Gamma})$ as $\varepsilon\to+0$. 
Therefore, by a standard extension theorem we can construct the desired correction $w_0^{{\rm in},\varepsilon}$. 
Moreover, the limit \eqref{limAppData} follows directly from the above construction of $w_0^{{\rm in},\varepsilon}$. 
\end{proof}

\subsubsection{Approximation of the data by more regular compatible data}
In order to approximate the data $(u^{\rm in},\psi_{\rm i}^{\rm in},f,g_1,g_2)$ for the initial boundary value problem \eqref{PB1lin}--\eqref{IC2} 
by more regular data satisfying higher order compatibility conditions, as in \cite{Audiard} in the non-characteristic case, 
we use the following proposition.

\begin{proposition}\label{prop:appData2}
Let $m,s\geq1$ be integers and suppose that $S\in C_{\rm b}^\infty(\overline{(0,T)\times\cE})$ satisfies Assumption \ref{ass:S} (i) and that 
Assumption \ref{ass:hi} is satisfied. 
Assume also that the data $(u^{\rm in},\psi_{\rm i}^{\rm in},f,g_1,g_2)$ for the initial boundary value problem \eqref{PB1lin}--\eqref{IC2} 
satisfy \eqref{Data} and the compatibility conditions up to order $m-1$. 
Let $\{\psi_{\rm i}^{{\rm in}(n)}\}_{n=1}^\infty \subset H^{m+s+1/2}(\mathit{\Gamma})$, 
$\{f^{(n)}\}_{n=1}^\infty \subset \mathbb{W}_T^{m+s-1}$, $\{(g_1^{(n)},g_2^{(n)})\}_{n=1}^\infty \subset \mathbb{W}_{{\rm b},T}^{m+s-1+1/2}$ 
be approximations of the data, which converge to $(u^{\rm in},\psi_{\rm i}^{\rm in},f,g_1,g_2)$ in the class indicated in \eqref{Data} as $n\to\infty$. 
Then, there exists $\{u^{{\rm in}(n)}\}_{n=1}^\infty \subset H^{m+s}(\cE)$ satisfying 
$u^{{\rm in}(n)} \to u^{\rm in}$ in $H^{m}(\cE)$ such that the data $(u^{{\rm in}(n)},\psi_{\rm i}^{{\rm in}(n)},f^{(n)},g_1^{(n)},g_2^{(n)})$ 
satisfy the compatibility conditions up to order $m+s-1$. 
\end{proposition}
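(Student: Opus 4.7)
The proof adapts the strategy of Proposition~\ref{prop:appData}. First, pick any $\tilde{u}^{(n)} \in H^{m+s}(\cE)$ with $\tilde{u}^{(n)} \to u^{\rm in}$ in $H^m(\cE)$, for instance by mollification. Forming the associated iterates $\tilde{u}_j^{(n)}$ and $\tilde{\psi}_{{\rm i},j}^{(n)}$ via the recursion of Section~\ref{sect:CC} and the approximate data $(\psi_{\rm i}^{{\rm in}(n)}, f^{(n)}, g_1^{(n)}, g_2^{(n)})$, define the compatibility defect at order $j$ by
\[
G_j^{(n)} := \Lambda \tilde{\psi}_{{\rm i},j}^{(n)} + (\dt^j g_1^{(n)})(0) - N \cdot (\tilde{u}_j^{(n)})^{\rm II}\big|_\mathit{\Gamma}.
\]
The goal is to construct a correction $w_0^{(n)} \in H^{m+s}(\cE)$, supported in $\overline{\cE}\cap U_\mathit{\Gamma}$, such that $u^{{\rm in}(n)} := \tilde{u}^{(n)} + w_0^{(n)}$ satisfies compatibility to order $m+s-1$, while $w_0^{(n)} \to 0$ in $H^m(\cE)$. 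By linearity of the recursions, the iterates $w_j^{(n)}$ starting from $w_0^{(n)}$ with vanishing source terms obey the homogeneous recursion \eqref{wjin}, and the compatibility condition at order $j$ becomes
\[
N \cdot (w_j^{(n)})^{\rm II}\big|_\mathit{\Gamma} = \Lambda\phi_{{\rm i},j}^{(n)} + G_j^{(n)}, \qquad \phi_{{\rm i},j+1}^{(n)} = -(w_j^{(n)})^{\rm I}\big|_\mathit{\Gamma}, \qquad \phi_{{\rm i},0}^{(n)} = 0,
\]
structurally identical to the scheme of Proposition~\ref{prop:appData}. Applying Lemma~\ref{lemmacompalt} order by order, together with auxiliary prescriptions analogous to \eqref{addcond0} and to the second line of \eqref{systequiv3bis} to pin down the one-parameter freedom at each step, uniquely determines the traces $\phi_j^{(n)} := (\dnor^j w_0^{(n)})|_\mathit{\Gamma} \in H^{m+s-j-1/2}(\mathit{\Gamma})$ for $j = 0, 1, \ldots, m+s-1$.

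The key step is to show $\phi_j^{(n)} \to 0$ in $H^{m-j-1/2}(\mathit{\Gamma})$ for every $j \leq m-1$. At each order Lemma~\ref{lemmacompalt} expresses $\phi_j^{(n)}$ as an explicit combination of $G_p^{(n)}$ with $p \leq j$ and of already constructed lower-order traces. Comparing the recursion satisfied by $\tilde{u}_j^{(n)}$ with the one producing $u_j^{\rm in}$ from the original data, one can rewrite
\[
G_j^{(n)} = \Lambda\bigl(\tilde{\psi}_{{\rm i},j}^{(n)} - \psi_{{\rm i},j}^{\rm in}\bigr) + \bigl(\dt^j(g_1^{(n)} - g_1)\bigr)(0) - N\cdot\bigl(\tilde{u}_j^{(n)} - u_j^{\rm in}\bigr)^{\rm II}\big|_\mathit{\Gamma},
\]
where the unperturbed terms cancelled thanks to the assumed compatibility at order $j \leq m-1$. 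The convergence of the data in the class of \eqref{Data}, Proposition~\ref{propDN}, and the trace theorem then yield $G_j^{(n)} \to 0$ in $H^{m-j-1/2}(\mathit{\Gamma})$; inductively the same holds for every $\phi_j^{(n)}$ with $j \leq m-1$. For $j \geq m$ the trace $\phi_j^{(n)}$ is only uniformly bounded in $H^{m+s-j-1/2}(\mathit{\Gamma})$.

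It remains to build $w_0^{(n)}$ from its prescribed normal traces. We use a multi-trace lifting modelled on Proposition~\ref{propextext}, writing $w_0^{(n)} = \sum_{j=0}^{m+s-1} W_j^{(n)}$ with
\[
W_j^{(n)}\circ\theta(r,s) = \frac{r^j}{j!}\,\chi(r)\,\chi\!\bigl(r\langle D\rangle\bigr)\,\tilde{\phi}_j^{(n)}(s),
\]
where $\tilde{\phi}_j^{(n)}$ absorbs the corrections needed so that $W_j^{(n)}$ contributes only to the $j$-th normal trace and vanishes to order $j$ at $\mathit{\Gamma}$. A direct analysis following the argument behind Proposition~\ref{propextext} gives the scale-by-scale bound $\|W_j^{(n)}\|_{H^k(\cE)} \lesssim |\tilde{\phi}_j^{(n)}|_{H^{k-j-1/2}(\mathit{\Gamma})}$ for $j < k \leq m+s$, from which
\[
\|w_0^{(n)}\|_{H^m(\cE)} \lesssim \sum_{j=0}^{m-1}|\phi_j^{(n)}|_{H^{m-j-1/2}(\mathit{\Gamma})} \longrightarrow 0, \qquad \|w_0^{(n)}\|_{H^{m+s}(\cE)} = O(1),
\]
completing the proof. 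The main obstacle lies precisely in this extension step: because the traces $\phi_j^{(n)}$ for $j \geq m$ are merely bounded, the lifting must be built trace-by-trace with the $r^j/j!$ weighting so that the higher-order (non-convergent) traces do not pollute the $H^m$ norm, while still yielding a function in the larger space $H^{m+s}(\cE)$.
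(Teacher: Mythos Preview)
Your overall strategy coincides with the paper's proof: approximate $u^{\rm in}$ by smoother data $\tilde u^{(n)}$, compute the compatibility defects $G_j^{(n)}$, run the inductive machinery of Proposition~\ref{prop:appData} and Lemma~\ref{lemmacompalt} (with the same auxiliary prescriptions) to determine the normal traces $\phi_j^{(n)}=(\dnor^j w_0^{(n)})_{\vert_{\mathit\Gamma}}$, and show that $\phi_j^{(n)}\to 0$ in $H^{m-j-1/2}(\mathit\Gamma)$ for $j\leq m-1$ by exploiting the compatibility of the original data. All of this is correct and is exactly what the paper does.

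The gap is in your extension step. With the lifting $W_j^{(n)}=\frac{r^j}{j!}\chi(r)\chi(r\langle D\rangle)\tilde\phi_j^{(n)}$, a direct computation actually gives $\Abs{W_j^{(n)}}_{H^k(\cE)}\lesssim |\phi_j^{(n)}|_{H^{k-j-1/2}(\mathit\Gamma)}$ for \emph{every} $j\geq 0$, not only $j<k$; in particular, for $j\geq m$ the term $\Abs{W_j^{(n)}}_{H^m}$ is controlled by a \emph{negative-order} norm of $\phi_j^{(n)}$, which is nonzero. Your displayed bound $\Abs{w_0^{(n)}}_{H^m}\lesssim\sum_{j=0}^{m-1}|\phi_j^{(n)}|_{H^{m-j-1/2}}$ simply drops these contributions, and the $r^j/j!$ weighting does not justify this: it makes $W_j^{(n)}$ vanish to order $j$ on $\mathit\Gamma$, but not in $H^m(\cE)$. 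Moreover the missing terms do not tend to zero, since $\phi_j^{(n)}$ for $j\geq m$ is built from defects such as $(\dt^j g_1^{(n)})(0)$ which are merely bounded in $H^{m+s-j-1/2}(\mathit\Gamma)$ and have no limit as $n\to\infty$. The paper handles this by invoking the method of \cite{RauchMassey1974}, whose essential extra ingredient is a diagonal localization: for $j\geq m$ one uses instead $W_j^{(n)}=\frac{r^j}{j!}\chi(r)\chi(\lambda_n r\langle D\rangle)\phi_j^{(n)}$ with $\lambda_n\to\infty$ chosen so that $\sum_{j\geq m}\Abs{W_j^{(n)}}_{H^m}\leq 1/n$, which is possible because the resulting $H^m$ bound carries a factor $\lambda_n^{2(m-j)-1}\leq\lambda_n^{-1}$. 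This makes $\Abs{w_0^{(n)}}_{H^{m+s}}$ large but finite, which is harmless since only membership in $H^{m+s}(\cE)$ is claimed, not a uniform bound.
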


\begin{proof}
By the density of the space, we first approximate the initial data $u^{\rm in}$ by a sequence of data 
$\{\tilde{u}^{{\rm in}(n)}\}_{n=1}^\infty \subset H^{m+s}(\cE)$ such that $\tilde{u}^{{\rm in}(n)} \to u^{\rm in}$ in $H^{m}(\cE)$ as $n\to\infty$. 
In general, these data $(\tilde{u}^{{\rm in}(n)},\psi_{\rm i}^{{\rm in}(n)}, f^{(n)},g_1^{(n)},g_2^{(n)})$ do not satisfy the compatibility conditions. 
Therefore, we compensate the initial data in the form $u^{{\rm in}(n)}=\tilde{u}^{{\rm in}(n)}+w^{{\rm in}(n)}$ so that the modified data satisfy the 
compatibility conditions up to order $m+s-1$. 
Let $(\tilde{u}_j^{{\rm in}(n)},\tilde{\psi}_{{\rm i},j}^{{\rm in}(n)})$ and $(u_j^{{\rm in}(n)},\psi_{{\rm i},j}^{{\rm in}(n)})$ be initial date 
$(\dt^j u,\dt^j\psi_{\rm i})_{\vert_{t=0}}$ for the initial boundary value problem \eqref{PB1lin}--\eqref{IC2} determined from the data 
$(\tilde{u}^{{\rm in}(n)},\psi_{\rm i}^{{\rm in}(n)},f^{(n)},g_1^{(n)},g_2^{(n)})$, and 
$(u^{{\rm in}(n)},\psi_{\rm i}^{{\rm in}(n)},f^{(n)},g_1^{(n)},g_2^{(n)})$, respectively, and put 
$w_j^{{\rm in}(n)}=u_j^{{\rm in}(n)}-\tilde{u}_j^{{\rm in}(n)}$ and 
$\phi_{{\rm i},j}^{{\rm in}(n)}=\tilde{\psi}_{{\rm i},j}^{{\rm in}(n)}-\psi_{{\rm i},j}^{{\rm in}(n)}$. 
Then, we see that $\{w_j^{{\rm in}(n)}\}_{j=0}^{m+s}$ are determined inductively from $w_0^{{\rm in}(n)}=w^{{\rm in}(n)}$ by 
\[
S_{\vert_{t=0}}w_{j+1}^{{\rm in}(n)} + \sum_{k=0}^{j-1}\binom{j}{k}(\dt^{j-k}S)_{\vert_{t=0}} w_{k+1}^{{\rm in}(n)}
 + G_p\partial_pw_j^{{\rm in}(n)} = 0,
\]
and that $\{\phi_{{\rm i},j}^{{\rm in}(n)}\}_{j=0}^{m+s}$ are given by $\phi_{{\rm i},0}^{{\rm in}(n)}=0$ and 
$\phi_{{\rm i},j+1}^{{\rm in}(n)}=-{(w_j^{{\rm in}(n)})^{\rm I}}_{\vert_{\mathit{\Gamma}}}$ for $j=0,1,\ldots,m+s-1$. 
Moreover, the compatibility condition at order $j$ can be written as 
\[
N\cdot (w_j^{{\rm in}(n)})^{\rm II} = \Lambda\phi_{{\rm i},j}^{{\rm in}(n)}
 - (N\cdot (\tilde{u}_j^{{\rm in}(n)})^{\rm II} - \Lambda\tilde{\psi}_{{\rm i},j}^{{\rm in}(n)})
 \quad\mbox{on}\quad \mathit{\Gamma}.
\]
Here, we see that 
\begin{align*}
& N\cdot (\tilde{u}_j^{{\rm in}(n)})^{\rm II} - \Lambda\tilde{\psi}_{{\rm i},j}^{{\rm in}(n)} \in 
 H^{m+s-1-j+1/2}(\mathit{\Gamma}) \quad\mbox{for}\quad j=0,1,\ldots,m+s-1, \\
&\to N\cdot (u_j^{\rm in})^{\rm II} - \Lambda\psi_{{\rm i},j}^{\rm in} = 0
 \quad\mbox{in}\quad H^{m-1-j+1/2}(\mathit{\Gamma}) \quad\mbox{for}\quad j=0,1,\ldots,m-1.
\end{align*}
Therefore, as in the proof of Proposition \ref{prop:appData}, if we impose additionally the corresponding conditions to \eqref{addcond0} 
and the second equation in \eqref{systequiv3bis} for $j=0,1,\ldots,m+s-2$, 
then we can determine $\{(\dnor^j w_0^{{\rm in}(n)})_{\vert_{\mathit{\Gamma}}}\}_{j=0}^{m+s-1}$ uniquely 
such that $(\dnor^j w_0^{{\rm in}(n)})_{\vert_{\mathit{\Gamma}}} \in H^{m+s-1-j+1/2}(\mathit{\Gamma})$ for $j=0,1,\ldots,m+s-1$ and 
$(\dnor^j w_0^{{\rm in}(n)})_{\vert_{\mathit{\Gamma}}}$ $\to0$ in $H^{m-1-j+1/2}(\mathit{\Gamma})$ as $n\to\infty$ for $j=0,1,\ldots,m-1$. 
Now, by using the method in \cite{RauchMassey1974} we can construct the desired correction $w_0^{{\rm in}(n)}$. 
\end{proof}

\subsection{Existence theorem}\label{subsectexistlin}
We can now state the main result of this section, 
which provides an existence result for the initial boundary value problem \eqref{PB1lin}--\eqref{IC2}. 
Let us insist on the fact that the assumption $\partial S \in \mathbb{W}_T^{1,p}$ is weaker than the assumption $\partial S \in {\mathbb W}^2_T$ 
usually found in the literature on hyperbolic initial boundary value problems. 
This will be used in Section \ref{sect:exist2} to obtain a nonlinear existence theorem with sharp regularity.

\begin{theorem}\label{th:linexist}
Let $m\geq1$ be an integer and $T>0$, and suppose that Assumption \ref{ass:hi} is satisfied and that the coefficient matrix $S$ satisfies 
$S \in C_{\rm b}^1(\overline{(0,T)\times\cE})$, $\partial S \in \mathbb{W}_T^{m-1} \cap \mathbb{W}_T^{1,p}$ for some $p\in(2,\infty)$, 
and Assumption \ref{ass:S}. 
Then, for any data $(u^{\rm in},\psi_{\rm i}^{\rm in}, f,g_1,g_2)$ satisfying 
\begin{equation}\label{dataclass}
\begin{cases}
 u^{\rm in} \in H^m(\cE), \quad \psi_{\rm i}^{\rm in} \in H^{m+1/2}(\mathit{\Gamma}), \quad f \in H^m((0,T)\times\cE), \\
 (1+\abs{D})^{-1/2}g_1 \in H^{m+1}((0,T)\times\mathit{\Gamma}), \quad (1+\abs{D})^{1/2}g_2 \in H^{m}((0,T)\times\mathit{\Gamma}),
\end{cases}
\end{equation}
and the compatibility conditions up to order $m-1$, the initial boundary value problem \eqref{PB1lin}--\eqref{IC2} has a unique solution 
$(u,\psi_{\rm i}) \in \mathbb{W}_T^m\times\mathbb{W}_{{\rm b},T}^{m+1/2}$. 
Moreover, the solution satisfies the energy estimate obtained in Proposition \ref{prop:UEE} with $\eps=0$. 
\end{theorem}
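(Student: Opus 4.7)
The plan is to construct the solution by regularization: we solve the regularized problem \eqref{PB1reg}--\eqref{ICreg}, use the uniform estimate of Proposition \ref{prop:UEE} to pass to the limit $\varepsilon\to 0^+$, and close the argument with an additional approximation of the data.

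First, I would reduce to the case of smooth data satisfying high order compatibility conditions. Using Proposition \ref{prop:appData2} (applied after smoothing $S$ if necessary, then undoing the smoothing at the end via stability estimates), I approximate the data $(u^{\rm in},\psi_{\rm i}^{\rm in},f,g_1,g_2)$ by a sequence $(u^{{\rm in}(n)},\psi_{\rm i}^{{\rm in}(n)},f^{(n)},g_1^{(n)},g_2^{(n)})$ that is much more regular and satisfies the compatibility conditions of the original problem up to some large order $m+s-1$, while converging to the original data in the class \eqref{dataclass}. Working at this higher regularity level makes the limit procedure below unambiguous; the final estimate with sharp regularity $m$ will be recovered by a difference argument using Proposition \ref{prop:UEE} with $\varepsilon=0$ applied to differences $(u^{(n)}-u^{(n')},\psi_{\rm i}^{(n)}-\psi_{\rm i}^{(n')})$.

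Second, for each fixed $n$, I apply Proposition \ref{prop:appData} to produce, for every $\varepsilon\in(0,\varepsilon_0]$, initial data $u^{{\rm in}(n),\varepsilon}$ such that $(u^{{\rm in}(n),\varepsilon},\psi_{\rm i}^{{\rm in}(n)},f^{(n)},g_1^{(n)},g_2^{(n)})$ satisfies the compatibility conditions up to order $m+s-1$ for the regularized problem and converges to $u^{{\rm in}(n)}$ in $H^{m+s}(\cE)$ as $\varepsilon\to 0^+$; the auxiliary function $u^{(0),\varepsilon}$ appearing in the source term of \eqref{PB1reg} is built from these approximate initial times by a Borel-type extension, ensuring $(\partial_t^j u^{(0),\varepsilon})_{\vert_{t=0}}=u^{{\rm in}(n),\varepsilon}_j$ for all $j$. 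By Lemma \ref{lem:EVofRBM} and Remark \ref{rem:exist}, for $\varepsilon\in(0,\varepsilon_0]$ the regularized problem is non-characteristic with one outgoing mode matching the single boundary condition in \eqref{PB2reg}, and this boundary condition is strictly dissipative. Standard theory for non-characteristic strictly dissipative linear symmetric hyperbolic IBVP (as presented in \cite{BenzoniSerre}) therefore yields a regular solution $(u^{(n),\varepsilon},\psi_{\rm i}^{(n),\varepsilon})\in\mathbb{W}^{m+s}_T\times\mathbb{W}^{m+s+1/2}_{{\rm b},T}$.

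Third, I pass to the limit $\varepsilon\to 0^+$ with $n$ fixed. Proposition \ref{prop:UEE} at order $m+s$ gives a bound on $(u^{(n),\varepsilon},\psi_{\rm i}^{(n),\varepsilon})$ uniform in $\varepsilon$, since the extra term $\varepsilon|g_1^{(n)}|^2_{H^{m+s}_\lambda}$ is harmless at this level of regularity. Extracting a weak-$*$ limit in $L^\infty(0,T;H^{m+s}(\cE))\cap\mathrm{Lip}(0,T;H^{m+s-1}(\cE))$, and using that $J_\varepsilon\to\mathrm{Id}$ strongly on $H^s(\mathit{\Gamma})$ together with Proposition \ref{propDN} to handle the boundary term $J_\varepsilon\Lambda J_\varepsilon\psi_{\rm i}^{(n),\varepsilon}\to\Lambda\psi_{\rm i}^{(n)}$, I obtain a solution $(u^{(n)},\psi_{\rm i}^{(n)})$ to the original problem \eqref{PB1lin}--\eqref{IC2} with the regularized data. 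Lower semicontinuity of the norm combined with the estimate of Proposition \ref{prop:UEE} with $\varepsilon=0$ (applied this time at the sharp order $m$) shows the limit satisfies the announced energy bound; continuity in time is upgraded from weak to strong by the standard argument using energy identity for the symmetric hyperbolic structure together with the weak continuity of $\partial_t^j u^{(n)}$.

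Fourth and finally, I pass to the limit $n\to\infty$. Linearity together with the energy estimate of Proposition \ref{prop:UEE} with $\varepsilon=0$ applied at sharp order $m$ to the differences shows that $\{(u^{(n)},\psi_{\rm i}^{(n)})\}$ is Cauchy in $\mathbb{W}^m_T\times\mathbb{W}^{m+1/2}_{{\rm b},T}$, so the limit $(u,\psi_{\rm i})$ belongs to this space and solves \eqref{PB1lin}--\eqref{IC2} with the original data. Uniqueness follows from the same estimate applied to the difference of two solutions, and the estimate of Proposition \ref{prop:UEE} with $\varepsilon=0$ is inherited in the limit. The main obstacles are the characteristic nature of the boundary, which forces the use of the generalized vorticity of Corollary \ref{corhigher} built into Proposition \ref{prop:UEE} to control normal derivatives, and the relaxed regularity assumption $\partial S\in\mathbb{W}^{m-1}_T\cap\mathbb{W}^{1,p}_T$ instead of $\mathbb{W}^2_T$, which must be preserved through every approximation step so that the result remains usable in Section \ref{sectmain} at the critical quasilinear threshold; in particular the commutator bounds in Proposition \ref{propWSweakdissm} must be used in the form involving $L^p$ norms rather than $L^\infty$ norms of $\boldsymbol\partial S$.
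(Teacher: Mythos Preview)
Your approach is essentially the same three-layer approximation as the paper's: regularize the problem via \eqref{PB1reg}--\eqref{PB2reg}, approximate the data by smoother compatible data (Proposition~\ref{prop:appData2}), and approximate $S$ by smooth matrices. Two points are worth noting. First, for the $\varepsilon\to 0^+$ limit the paper does not extract weak-$*$ limits but instead proves directly that $\{u^\varepsilon\}$ is Cauchy in $\mathbb{W}^m_T$ by applying the $\varepsilon=0$ estimate of Proposition~\ref{prop:UEE} to the difference $u^\varepsilon-u^\delta$; the error terms are $O(\varepsilon+\delta)$ thanks to the uniform bound at order $m+2$ and the smallness of $J_\varepsilon-\mathrm{Id}$. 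This gives strong convergence at full regularity without a separate weak-to-strong upgrade at this stage. Second, and more importantly, your parenthetical ``smoothing $S$ if necessary, then undoing the smoothing at the end via stability estimates'' hides the main difficulty. When $S$ is replaced by $S^{(n)}$, the difference $u^{(n)}-u^{(n')}$ picks up a source $(S^{(n')}-S^{(n)})\partial_t u^{(n')}$ which is only bounded in $\mathbb{W}^{m-1}_T$, so the stability estimate yields Cauchy convergence merely in $\mathbb{W}^{m-1}_T$, not $\mathbb{W}^m_T$. The paper's Step~3 therefore combines this with weak convergence of $\partial^\alpha u^{(n)}(t)$ for $|\alpha|=m$ and then upgrades to strong continuity in time via Majda's technique, using the specific time-dependent inner product $\langle(u,\psi),(\tilde u,\tilde\psi)\rangle_t=(u,S(t)\tilde u)_{L^2}+\langle\Lambda\psi,\tilde\psi\rangle+(\psi,\tilde\psi)_{L^2}$ and an energy identity; normal derivatives at top order are then recovered through the generalized-vorticity argument of Lemma~\ref{lemmvor}. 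You should make this step explicit, since it is precisely where the relaxed hypothesis $\partial S\in\mathbb{W}^{m-1}_T\cap\mathbb{W}^{1,p}_T$ (rather than $\mathbb{W}^2_T$) is exercised.
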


\begin{proof}
The proof consists of three steps. 
In the first step, we prove the existence for $S$ smooth and more regular data. 
In the second step, the assumption of additional regularity on the data is removed while, in the third step, 
we remove the additional regularity assumption on $S$.

\medskip
\noindent
{\bf Step 1.}
We first consider the case where the coefficient matrix $S$ is sufficiently smooth as $S\in C_{\rm b}^\infty(\overline{(0,T)\times\cE})$ and the data 
$(u^{\rm in},\psi_{\rm i}^{\rm in},f,g_1,g_2)$ satisfy \eqref{dataclass} with $m$ replaced by $m+3$ and the compatibility conditions up to order $m+1$. 
By Proposition \ref{prop:appData}, for each $\varepsilon\in(0,1]$ there exists an initial datum 
$u^{{\rm in},\varepsilon} \in H^{m+3}(\cE)$ such that the data $(u^{{\rm in},\varepsilon},\psi_{\rm i}^{\rm in}, f,g_1,g_2)$ 
satisfy the compatibility conditions up to order $m+1$ for the initial value problem \eqref{PB1reg}--\eqref{ICreg} to the regularized system. 
Moreover, we have $u^{{\rm in},\varepsilon} \to u^{\rm in}$ in $H^{m+3}(\cE)$ as $\varepsilon\to+0$. 
In this case we have $u_j^{{\rm in},\varepsilon} \in H^{m+3-j}(\cE)$ for $j=0,1,\ldots,m+3$, so that we can construct 
$u^{(0),\varepsilon} \in \mathbb{W}_T^{m+3}$ satisfying $(\dt^j u^{(0),\varepsilon})_{\vert_{t=0}} = u_j^{{\rm in},\varepsilon}$ for $j=0,1,\ldots,m+3$ 
and $\Abs{ u^{(0),\varepsilon} }_{\mathbb{W}_T^{m+3}} \lesssim \Abs{ u^{{\rm in},\varepsilon} }_{H^{m+3}(\cE)}$. 
Then, we consider the initial boundary value problem \eqref{PB1reg}--\eqref{ICreg} to the regularized system for $\varepsilon\in(0,1]$. 
We note also that $f^\varepsilon = f - \varepsilon S(\chi_{\rm b}N\cdot\nabla)u^{(0),\varepsilon} \in \mathbb{W}_T^{m+2}$. 
Therefore, in view of Lemma \ref{lem:EVofRBM} and Remark \ref{rem:exist} we see that there exists sufficiently small $\varepsilon_0>0$ such that 
for each $\varepsilon\in(0,\varepsilon_0]$ the initial value problem \eqref{PB1reg}--\eqref{ICreg} has a unique 
solution $(u^\varepsilon,\psi_{\rm i}^\varepsilon) \in \mathbb{W}_T^{m+2} \times \mathbb{W}_{{\rm b},T}^{m+2+1/2}$. 
Furthermore, by Propositions \ref{prop:UEE} with $m$ replaced by $m+2$ we have a uniform bound of the solution 
\[
\opnorm{ u^\varepsilon(t) }_{m+2}^2 + \abs{ \dtan J_\varepsilon\psi_{\rm i}^\varepsilon(t) }_{H_{(m+2)}^{-1/2}} \leq C
\]
for any $\varepsilon\in(0,\varepsilon_0]$ and $t\in[0,T]$ with a constant $C$ independent of $\varepsilon$ and $t$.

We proceed to see the convergence of these approximated solutions 
$\{(u^\varepsilon,\psi_{\rm i}^\varepsilon)\}_{0<\varepsilon\leq\varepsilon_0}$ as $\varepsilon\to+0$. 
For $\varepsilon,\delta \in (0,\varepsilon_0]$, we put $u^{\varepsilon,\delta}=u^\varepsilon-u^\delta$ and 
$\psi_{\rm i}^{\varepsilon,\delta}=\psi_{\rm i}^\varepsilon-\psi_{\rm i}^\delta$, which satisfy the equations 
\[
S\dt u^{\varepsilon,\delta} + G_j\partial_j u^{\varepsilon,\delta} = f^{\varepsilon,\delta} \quad\mbox{in}\quad (0,T)\times\cE
\]
and boundary conditions 
\[
\begin{cases}
 N\cdot(u^{\varepsilon,\delta})^{\rm II}
  = \Lambda\psi_{\rm i}^{\varepsilon,\delta} + g_1^{\varepsilon,\delta} &\mbox{on}\quad (0,T)\times\mathit{\Gamma}, \\
 \dt\psi_{\rm i}^{\varepsilon,\delta} + (u^{\varepsilon,\delta})^{\rm I} = 0 &\mbox{on}\quad (0,T)\times\mathit{\Gamma},
\end{cases}
\]
where 
\begin{align*}
f^{\varepsilon,\delta}
&= S(\chi_{\rm b}N\cdot\nabla)\{ \varepsilon(u^\varepsilon-u^{(0),\varepsilon}) - \delta(u^\delta-u^{(0),\delta})\}, \\
g_1^{\varepsilon,\delta}
&= (J_\varepsilon-{\rm Id})\Lambda J_\varepsilon\psi_{\rm i}^\varepsilon + \Lambda(J_\varepsilon-{\rm Id})\psi_{\rm i}^\varepsilon
 - (J_\delta-{\rm Id})\Lambda J_\delta\psi_{\rm i}^\delta - \Lambda(J_\delta-{\rm Id})\psi_{\rm i}^\delta.
\end{align*}
Therefore, by Proposition \ref{prop:UEE} with $\varepsilon=0$ we have 
\begin{align*}
& \sup_{t\in[0,T]} \bigl( \opnorm{ u^{\varepsilon,\delta}(t) }_m^2 + \abs{ \dtan\psi_{\rm i}^{\varepsilon,\delta}(t) }_{H_{(m)}^{-1/2}}^2 \bigr) \\
&\lesssim \opnorm{ u^{\varepsilon,\delta}(0) }_m^2 + \abs{ \dtan\psi_{\rm i}^{\varepsilon,\delta}(0) }_{H_{(m)}^{-1/2}}^2 
 + \abs{ g_1^{\varepsilon,\delta}(0) }_{H_{(m)}^{-1/2}}^2 
 + \int_0^T \bigl( \opnorm{ f^{\varepsilon,\delta}(t) }_m^2 + \abs{ g_1^{\varepsilon,\delta}(t) }_{H_{(m+1)}^{-1/2}}^2 \bigr){\rm d}t.
\end{align*}
Here, we easily see that 
\begin{align*}
\abs{ \dtan\psi_{\rm i}^{\varepsilon,\delta}(0) }_{H_{(m)}^{-1/2}}
&\lesssim \opnorm{ u^{\varepsilon,\delta}(0) }_m \\
&\lesssim \Abs{ u^{{\rm in},\varepsilon} - u^{{\rm in},\delta} }_{H^m} + \opnorm{ f^{\varepsilon,\delta}(0) }_{m-1}
\end{align*}
and that 
\begin{align*}
\opnorm{ f^{\varepsilon,\delta}(t) }_m
&\lesssim \varepsilon\opnorm{ (u^\varepsilon,u^{(0),\varepsilon)})(t) }_{m+1} + \delta\opnorm{ (u^\delta,u^{(0),\delta)})(t) }_{m+1} \\
&\lesssim \varepsilon+\delta.
\end{align*}
In view of $\abs{ (J_\varepsilon-\mathrm{Id})\phi }_{H^s} \leq \varepsilon\abs{ J_\varepsilon\phi }_{H^{s+1}}$ 
and Proposition \ref{propDN} we also get 
\begin{align*}
\abs{ g_1^{\varepsilon,\delta}(t) }_{H_{(m+1)}^{-1/2}}
&\lesssim \varepsilon \abs{ \dtan J_\varepsilon\psi_{\rm i}^\varepsilon(t) }_{H_{(m+2)}^{-1/2}}
 + \delta \abs{ \dtan J_\delta\psi_{\rm i}^\delta(t) }_{H_{(m+2)}^{-1/2}} \\
&\lesssim \varepsilon+\delta.
\end{align*}
Moreover, in view of $\dt\psi_{\rm i}^{\varepsilon,\delta}=-(u^{\varepsilon,\delta})^{\rm I}$ and ${\psi_{\rm i}^{\varepsilon,\delta}}_{\vert_{t=0}}=0$ 
and the trace theorem we have 
\[
\abs{ \dt^j \psi_{\rm i}^{\varepsilon,\delta}(t) }_{H^{m-j+1/2}} \lesssim \Abs{ u^{\varepsilon,\delta} }_{\mathbb{W}_T^m}
 + \abs{ \dtan \psi_{\rm i}^{\varepsilon,\delta}(t) }_{H^{m-1/2}}
\]
for $j=0,1,\ldots,m$ and $t\in[0,T]$. 
Therefore, we obtain 
\[
\sup_{t\in[0,T]}\sum_{j=0}^m \bigl( \Abs{ \dt^j(u^\varepsilon-u^\delta)(t) }_{H^{m-j}}
 + \abs{ \dt^j(\psi_{\rm i}^\varepsilon-\psi_{\rm i}^\delta)(t) }_{H^{m-j+1/2}} \bigr)
\lesssim \Abs{ u^{{\rm in},\varepsilon} - u^{{\rm in},\delta} }_{H^m} + \varepsilon+\delta.
\]
This shows that the approximate solutions $\{(u^\varepsilon,\psi_{\rm i}^\varepsilon)\}_{0<\varepsilon\leq\varepsilon_0}$ 
converge as $\varepsilon\to+0$ and the limit is the desired solution.

\medskip
\noindent
{\bf Step 2.} 
We still consider the case where the coefficient matrix $S$ is of $C_{\rm b}^\infty$-class 
but we do not assume additional regularity assumption on the data $(u^{\rm in},\psi_{\rm i}^{\rm in},f,g_1,g_2)$. 
By the density of function spaces, there exists a sequence of data $\{(\psi_{\rm i}^{{\rm in}(n)},f^{(n)},g_1^{(n)},g_2^{(n)})\}_{n=1}^\infty$, 
which satisfies the regularity indicated in \eqref{dataclass} with $m$ replaced by $m+3$ and converges to $(\psi_{\rm i}^{\rm in},f,g_1,g_2)$ 
in the space indicated in \eqref{dataclass}. 
Then, by Proposition \ref{prop:appData2} there exists $\{u^{{\rm in}(n)}\}_{n=1}^\infty \subset H^{m+3}(\cE)$ 
satisfying $u^{{\rm in}(n)} \to u^{\rm in}$ in $H^m(\cE)$ such that the data 
$(u^{{\rm in}(n)},\psi_{\rm i}^{{\rm in}(n)},f^{(n)},g_1^{(n)},g_2^{(n)})$ satisfy the compatibility conditions up to order $m+1$. 
By the result in Step 1, for each $n\in\N$ there exists a unique solution $(u^{(n)},\psi_{\rm i}^{(n)})$ to the initial boundary value problem 
\eqref{PB1lin}--\eqref{IC2} for the data $(u^{{\rm in}(n)},\psi_{\rm i}^{{\rm in}(n)},f^{(n)},g_1^{(n)},g_2^{(n)})$. 
Then, by the linearity of the equations and by the energy estimate given in Proposition \ref{prop:UEE} with $\varepsilon=0$ 
we see that $\{(u^{(n)},\psi_{\rm i}^{(n)})\}_{n=1}^\infty$ converges in $\mathbb{W}_T^m \times \mathbb{W}_{{\rm b},T}^{m+1/2}$ as $n\to\infty$ 
and the limit is the desired solution.

\medskip
\noindent
{\bf Step 3.} 
Finally, we consider the case where the coefficient matrix $S$ is not necessarily of $C_{\rm b}^\infty$-class 
but satisfies the assumptions in the theorem. 
By the density of function spaces, we can approximate $S$ by a sequence of symmetric and positive definite matrix valued functions 
$\{S^{(n)}\}_{n=1}^\infty \subset C_{\rm b}^\infty(\overline{(0,T)\times\cE})$ such that 
\[
\begin{cases}
 S^{(n)} \to S \quad\mbox{in}\quad C_{\rm b}^1(\overline{(0,T)\times\cE}), \\
 \partial S^{(n)} \to \partial S \quad\mbox{in}\quad \mathbb{W}_T^{m-1} \cap \mathbb{W}_T^{1,p}
\end{cases}
\]
as $n\to\infty$. 
Then, we consider the initial boundary value problem to the hyperbolic system 
\begin{equation}\label{appPB1lin}
S^{(n)}(t,x)\dt u + G_j\partial_j u = f(t,x) \quad\mbox{in}\quad (0,T)\times\cE
\end{equation}
under the boundary conditions 
\begin{equation}\label{appPB2lin}
\begin{cases}
 N\cdot u^{\rm II} - \Lambda\psi_{\rm in} = g_1(t,x) &\mbox{on}\quad (0,T)\times\mathit{\Gamma}, \\
 \dt\psi_{\rm i} + u^{\rm I} = g_2(t,x) &\mbox{on}\quad (0,T)\times\mathit{\Gamma},
\end{cases}
\end{equation}
and the initial conditions 
\begin{equation}\label{appIC}
\begin{cases}
 u_{\vert_{t=0}}=u^{{\rm in}(n)} &\mbox{in}\quad \cE, \\
 {\psi_{\rm i}}_{\vert_{t=0}}=\psi_{\rm i}^{\rm in} &\mbox{on}\quad \mathit{\Gamma}.
\end{cases}
\end{equation}
Here, the initial data $u^{{\rm in}(n)}$ should be modified from the original data $u^{\rm in}$ in order that the data 
 $(u^{{\rm in}(n)},\psi_{\rm i}^{\rm in},f,g_1,g_2)$ satisfy the compatibility conditions up to order $m-1$. 
In exactly the same way as in the proof of Proposition \ref{prop:appData}, we can construct such an initial data $u^{{\rm in}(n)}$, 
which converges to $u^{\rm in}$ in $H^m(\cE)$ as $n\to\infty$. 
By the result in Step 2, for each $n\in\N$ there exists a unique solution 
$(u,\psi_{\rm i})=(u^{(n)},\psi_{\rm i}^{(n)}) \in \mathbb{W}_T^m \times \mathbb{W}_{{\rm b},T}^{m+1/2}$ 
to the initial boundary value problem \eqref{appPB1lin}--\eqref{appIC}. 
Moreover, by Proposition \ref{prop:UEE} we have a uniform bound of the solution 
\[
\opnorm{ u^{(n)}(t) }_m^2 + \abs{ \dtan \psi_{\rm i}^{(n)}(t) }_{H_{(m)}^{-1/2}} \leq C
\]
for any $n\in\N$ and $t\in[0,T]$ with a constant $C$ independent of $n$ and $t$.

We readily get, using Proposition \ref{prop:UEE} with $\varepsilon=0$ as in Step 1, 
that $\{(u^{(n)},\psi_{\rm i}^{(n)})\}_{n=1}^\infty$ converges in $\mathbb{W}_T^{m-1}\times\mathbb{W}_{{\rm b},T}^{m-1+1/2}$; 
the limit is denoted by $(u,\psi_{\rm i})$. 
By standard compactness arguments, for each $t\in[0,T]$ we also have 
\[
\begin{cases}
 \partial^\alpha u^{(n)}(t) \rightharpoonup \partial^\alpha u(t)
  &\mbox{weakly in}\quad L^2(\cE) \quad (\abs{\alpha}=m), \\
 \dpar^\alpha\psi_{\rm i}^{(n)}(t) \rightharpoonup \dpar^\alpha\psi_{\rm i}(t)
  &\mbox{weakly in}\quad H^{1/2}(\mathit{\Gamma}) \quad (\abs{\alpha}=m)
\end{cases}
\]
with a uniform bound 
\[
\opnorm{ \bm{u}(t) }_m + \abs{ \dtan\psi_{\rm i}(t) }_{H_{(m)}^{-1/2}} \leq C,
\]
where the constant $C$ is independent of $t\in[0,T]$. 
By a standard method we can also show 
\[
\begin{cases}
 \partial^\alpha u \in C_w^0([0,T];L^2(\cE)) &(\abs{\alpha}=m), \\
 \dpar \psi_{\rm i} \in C_w^0([0,T];H^{1/2}(\mathit{\Gamma})) &(\abs{\alpha}=m).
\end{cases}
\]
Obviously, $(u,\psi_{\rm i})$ is a unique solution to the initial boundary value problem \eqref{PB1lin}--\eqref{IC2}. 
It remains to show that the above weak continuity in time can be replaced by the strong continuity. 
To this end, we use the technique by A. J. Majda \cite{Majda1984}, see also A. J. Majda and A. L. Bertozzi \cite{MajdaBertozzi2002}, 
that is, we make use of the energy estimate. 
Here, we also note that the initial boundary value problem \eqref{PB1lin}--\eqref{IC2} can be solved for negative time; 
in fact, under the change of variable $t \to -t$ the weak dissipative structure does not change. 
More precisely, for the space $X:= L^2(\cE) \times H^{1/2}(\mathit{\Gamma})$ and for each $t\in[0,T]$ we introduce an inner product 
\[
\langle((u,\psi_{\rm i}),(\tilde{u},\tilde{\psi}_{\rm i})\rangle_t
:= (u,S(t,\cdot)\tilde{u})_{L^2(\cE)} + \langle \Lambda\psi_{\rm i},\tilde{\psi}_{\rm i}\rangle_{H^{-1/2}\times H^{1/2}}
 + (\psi_{\rm i},\tilde{\psi}_{\rm i})_{L^2(\mathit{\Gamma})},
\]
and denote the corresponding norm by $\Abs{\cdot}_t$, which is equivalent to the standard $L^2(\cE)\times H^{1/2}(\mathit{\Gamma})$-norm 
by Proposition \ref{propDN}. 
Then, by an energy identity corresponding to the one obtained in the proof of Proposition \ref{prop:UEE}, 
for any $\alpha\in\N$ satisfying $\abs{\alpha}=m$ and $t_0\in[0,T]$ we can show that 
\[
\lim_{t\to t_0} \Abs{ (\chi_{\rm b}\dpar^\alpha u(t),\dpar^\alpha\psi_{\rm i}(t)) }_t
 = \Abs{ (\chi_{\rm b}\dpar^\alpha u(t_0),\dpar^\alpha\psi_{\rm i}(t_0)) }_{t_0}. 
\]
Since we already know the weak continuity, this gives the strong continuity, that is, we have 
$\chi_{\rm b}\dpar^\alpha u \in C^0([0,T];L^2(\cE))$ and $\dpar^\alpha\psi_{\rm i} \in C^0([0,T];H^{1/2}(\mathit{\Gamma}))$, 
leading to $\psi_{\rm i} \in \mathbb{W}_{{\rm b},T}^{m+1/2}$. 
Then, as in the proof of Proposition \ref{propfarorderm2}; see also Lemma \ref{lemmvor}, 
we obtain $\chi_{\rm b}\partial^\alpha u\in C^0([0,T];L^2(\cE))$ for $\abs{ \alpha }=m$. 
Similarly and more easily, we can see $(1-\chi_{\rm b})\partial^\alpha u \in C^0([0,T];L^2(\cE))$ for $\abs{\alpha}=m$. 
Therefore, we obtain $\partial^\alpha u \in C^0([0,T];L^2(\cE))$ for $\abs{\alpha}=m$, so that $u\in\mathbb{W}_T^m$. 
The proof is complete. 
\end{proof}

\section{Local well-posedness of the nonlinear wave-structure interaction problem}\label{sectmain}
In this section we will show the local well-posedness of the nonlinear wave-structure interaction problem \eqref{PB1}--\eqref{PB3}, 
which is equivalent to \eqref{PB1bis}--\eqref{PB3bis} under appropriate initial conditions as shown in Proposition \ref{prop:equiv}. 
Therefore, we consider the nonlinear shallow water equations 
\begin{equation}\label{SWEs}
\begin{cases}
 \dt \zeta + \nabla\cdot (hv) = 0 &\mbox{in}\quad (0,T)\times\cE, \\
 \dt v + \nabla \big( \gr\zeta + \tfrac{1}{2}\abs{v}^2 \bigr) = 0 &\mbox{in}\quad (0,T)\times\cE
\end{cases}
\end{equation}
under the boundary conditions 
\begin{equation}\label{BCs}
\begin{cases}
 N\cdot (hv) = \Lambda \psi_{\rm i} &\mbox{on}\quad (0,T)\times\mathit{\Gamma}, \\
 \dt\psi_{\rm i} + \gr \zeta +\tfrac{1}{2}\abs{v}^2 = 0 &\mbox{on}\quad (0,T)\times\mathit{\Gamma}, 
\end{cases}
\end{equation}
and the initial conditions 
\begin{equation}\label{ICs}
\begin{cases}
 (\zeta,v)_{\vert_t=0}=(\zeta^{\rm in},v^{\rm in}) &\mbox{in}\quad \cE, \\
 {\psi_{\rm i}}_{\vert_t=0}=\psi_{\rm i}^{\rm in} &\mbox{on}\quad \mathit{\Gamma}.
\end{cases}
\end{equation}
For simplicity, as before we denote $u^{\rm in}=(\zeta^{\rm in},(v^{\rm in})^{\rm T})^{\rm T}$ as well as $u=(\zeta,v^{\rm T})^{\rm T}$. 

The statement of the main result of this article, showing the local well-posedness of the nonlinear model, 
is provided in Section \ref{sect:state-ET}. 
In the rest of this article, we will prove this existence theorem. 
The proof consists of several steps. 
Although the nonlinear shallow water equations \eqref{SWEs} are quasilinear with respect to $u$, 
the boundary conditions \eqref{BCs} are not quasilinear but fully nonlinear, 
so that we first need to reduce the problem to a system of quasilinear equations. 
This step can be carried out through the analysis in Section \ref{sectlinWS} by introducing new unknowns 
\begin{equation}\label{newUNs}
\widecheck{u} = \Sigma(u)\dt u, \quad \widecheck{\psi}_{\rm i} = \dt\psi_{\rm i},
\end{equation}
where $\Sigma(u)$ is the matrix defined as \eqref{defGj}. 
This reduction to a quasilinear system will be given in Section \ref{sect:reduction}. 
Then, by applying the existence theorem for a linear problem given by Theorem \ref{th:linexist} and a standard method of Picard's iteration, 
we will show in Section \ref{sect:exist} the existence of a unique solution to the reduced quasilinear system 
under additional assumptions on the initial data. 
In order to prove the theorem without the additional assumptions, we approximate the initial data $(u^{\rm in},\psi_{\rm i}^{\rm in})$ by a sequence of 
more regular initial data $\{(u^{{\rm in}(n)},\psi_{\rm i}^{{\rm in}(n)})\}_{n=1}^\infty$ satisfying higher order compatibility conditions. 
By the existence result in Section \ref{sect:exist}, we can construct a sequence of smoother approximate solutions. 
Then, thanks to the a priori estimate obtained in Theorem \ref{th:APE} together with the energy estimate for the linear system obtained in Proposition 
\ref{prop:UEE} we can show in Section \ref{sect:exist2} that the approximate solutions converge to the desired solution 
and that the solution satisfies the regularity \eqref{Reg}.

\subsection{Statement of the existence theorem}\label{sect:state-ET}
In the following, we let $m\geq3$ be an integer and assume that the initial data $(u^{\rm in},\psi_{\rm i}^{\rm in})$ satisfy 
\begin{equation}\label{regID}
u^{\rm in} \in H^m(\cE), \quad \psi_{\rm i}^{\rm in} \in H^{m+1/2}(\mathit{\Gamma}).
\end{equation}
We recall that $m=3$ is the minimal integer regularity in a general existence theory for quasilinear symmetric hyperbolic systems 
in two space dimensions. 
Suppose that $(u,\psi_{\rm i})$ is a smooth solution to the initial value problem \eqref{SWEs}--\eqref{ICs} and put 
$u_j^{\rm in} = (\zeta_j^{\rm in},(v_j^{\rm in})^{\rm T})^{\rm T} = (\dt^j u)_{\vert_{t=0}}$ and 
$\psi_{{\rm i},j}^{\rm in}=(\dt^j\psi_{\rm i})_{\vert_{t=0}}$ for $j=0,1,2,\ldots$. 
Then, it follows from the nonlinear shallow water equations \eqref{SWEs} and the second boundary condition in \eqref{BCs} that 
$(u_j^{\rm in},\psi_{{\rm i},j}^{\rm in})$ can be written, at least formally, in terms of the initial data $(u^{\rm in},\psi_{\rm i}^{\rm in})$ 
inductively as $(u_0^{\rm in},\psi_{{\rm i},0}^{\rm in})=(u^{\rm in},\psi_{\rm i}^{\rm in})$ and 
\begin{equation}\label{DefID1}
\begin{cases}
 \zeta_{j+1}^{\rm in} = -\nabla\cdot\bigl( \sum_{k=0}^j\binom{j}{k} h_{j-k}^{\rm in}v_k^{\rm in} \bigr), \\
 v_{j+1}^{\rm in} = -\nabla\bigl( \gr\zeta_j^{\rm in} + \frac12\sum_{k=0}^j\binom{j}{k}v_{j-k}^{\rm in}\cdot v_k^{\rm in}\bigr), \\
 \psi_{{\rm i},j+1}^{\rm in} 
  = -\bigl( \gr\zeta_j^{\rm in} + \frac12\sum_{k=0}^j\binom{j}{k}v_{j-k}^{\rm in}\cdot v_k^{\rm in} \bigr)_{\vert_{\mathit{\Gamma}}}
\end{cases}
\end{equation}
for $j=0,1,2,\ldots$, where $h_j^{\rm in} = (\dt^j h)_{\vert_{t=0}}$ so that $h_0^{\rm in}=H_0+\zeta^{\rm in}$ and 
$h_j^{\rm in}=\zeta_j^{\rm in}$ for $j\geq1$. 
It is easy to check that under the assumptions in \eqref{regID}, $(u_j^{\rm in},\psi_{{\rm i},j}^{\rm in})$ can be, in fact, 
defined for $j=0,1,\ldots,m$ by the above recursion formula so that $u_j^{\rm in}\in H^{m-j}(\cE)$ and 
$\psi_{{\rm i},j}^{\rm in} \in H^{m-j+1/2}(\mathit{\Gamma})$. 
Now, applying $\dt^j$ to the first boundary condition in \eqref{BCs} and putting $t=0$ 
we see that $\{u_k^{\rm in}\}_{k=0}^j$ and $\psi_{{\rm i},j}^{\rm in}$ should satisfy 
\begin{equation}\label{CC}
\sum_{k=0}^j\binom{j}{k}N\cdot(h_{j-k}^{\rm in}v_k^{\rm in}) = \Lambda\psi_{\rm{i},j}^{\rm in} \quad\mbox{on}\quad \mathit{\Gamma}.
\end{equation}
This condition makes sense for $j=0,1,\ldots,m-1$ under our regularity assumptions \eqref{regID} on the initial data and Assumption \ref{ass:hi}. 
As with the linearized equations in the previous section, we need to define the notion of compatible data.

\begin{definition}\label{def:CC3}
Let $m\geq3$ be an integer and suppose that the initial data $(u^{\rm in},\psi_{\rm i}^{\rm in})$ satisfy \eqref{regID} 
and that Assumption \ref{ass:hi} is satisfied. 
For an integer $j$ satisfying $0\leq j\leq m-1$ we say that the data $(u^{\rm in},\psi_{\rm i}^{\rm in})$ for the problem 
\eqref{SWEs}--\eqref{ICs} satisfy the compatibility condition at order $j$ if the $\{u_k^{\rm in}\}_{k=0}^j$ and $\psi_{{\rm i},j}^{\rm in}$ 
defined by \eqref{DefID1} satisfy \eqref{CC}. 
\end{definition}

In Theorem \ref{th:APE}, we established a priori estimates for the nonlinear wave-structure interaction problem \eqref{SWEs}--\eqref{ICs}; 
the following theorem, which is the main result of this article, shows a stronger result, namely, 
the local well-posedness of these equations for data at the quasilinear regularity threshold $m=3$.

\begin{theorem}\label{th:exist}
Let $\gr$, $c_0$, and $M_0$ be positive constants, and $m$ an integer such that $m\geq3$. 
Under Assumption \ref{ass:hi} there exists a positive time $T$ and a positive constant $C$ such that 
for any data $(u^{\rm in},\psi_{\rm i}^{\rm in})$ satisfying 
\[
\begin{cases}
 \Abs{ u^{\rm in} }_{H^m(\cE)} + \abs{ \dtan\psi_{\rm i}^{\rm in} }_{H^{m-1/2}(\mathit{\Gamma})} \leq M_0, \\
 \gr h^{\rm in}(x)-\abs{ v^{\rm in}(x) }^2 \geq 2c_0 \quad\mbox{for}\quad x\in\cE,
\end{cases}
\]
and the compatibility conditions up to order $m-1$, the initial value problem to the nonlinear wave-structure interaction problem 
\eqref{SWEs}--\eqref{ICs} has a unique solution $(u,\psi_{\rm i})$ on the time interval $[0,T]$ satisfying 
\begin{equation}\label{Reg}
u \in \mathbb{W}_T^m, \quad \psi_{\rm i} \in \mathbb{W}_{{\rm b},T}^{m+1/2}
\end{equation}
and $\Abs{ u }_{\mathbb{W}^m_T} + \abs{ \dtan\psi_{\rm i} }_{\mathbb{W}_{{\rm b},T}^{m+1/2}} \leq C$. 
Moreover, the constants $C$ and $T^{-1}$ can be chosen as non-decreasing functions of $c_0^{-1}$ and $M_0$.
\end{theorem}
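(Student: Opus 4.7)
\textbf{Proof plan for Theorem \ref{th:exist}.}

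The overall strategy is a standard Picard iteration scheme, but one that must be carried out on a reformulated problem because the boundary conditions \eqref{BCs} are \emph{fully} nonlinear in $v$ (via $\frac{1}{2}\abs{v}^2$) and not quasilinear. The plan is therefore to first reformulate \eqref{SWEs}--\eqref{BCs} as an extended quasilinear system, then establish existence at some high regularity $m+s$ by Picard iteration using Theorem \ref{th:linexist}, then recover the sharp regularity threshold $m=3$ by an approximation argument based on the a priori estimates of Theorem \ref{th:APE}, and finally recover strong continuity in time and prove uniqueness.

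\textbf{Step 1: Quasilinear reformulation.} Following the change of unknowns \eqref{newUNs}, introduce $\widecheck{u}=\Sigma(u)\dt u$ and $\widecheck{\psi}_{\rm i}=\dt\psi_{\rm i}$. Time-differentiating \eqref{SWEs}--\eqref{BCs} and using the analysis of Section \ref{sectlinWS} (in particular Proposition \ref{proplintransf}), the pair $(\widecheck{u},\widecheck{\psi}_{\rm i})$ solves a symmetric hyperbolic system of the form \eqref{PB1lin}--\eqref{PB2lin} in which the coefficient $S(u)=\Sigma(u)^{-1}$ depends on $u$ only. Coupling this system with the nonlinear shallow water equations \eqref{SWEs} and the second boundary identity in \eqref{BCs} gives an extended system in $(u,\widecheck{u},\widecheck{\psi}_{\rm i})$ whose principal part is quasilinear in $u$ and linear in $(\widecheck{u},\widecheck{\psi}_{\rm i})$. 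The natural initial data are $u^{\rm in}$ together with $\widecheck{u}^{\rm in}:=\Sigma(u^{\rm in})u_1^{\rm in}$ and $\widecheck{\psi}_{{\rm i}}^{\rm in}:=\psi_{{\rm i},1}^{\rm in}$ defined via \eqref{DefID1}, and the compatibility conditions at order $j\leq m-1$ for the original nonlinear problem (Definition \ref{def:CC3}) translate into corresponding compatibility conditions for the linearized problem at order $j\leq m-2$.

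\textbf{Step 2: Picard iteration for smoother data.} Assume first that the data are more regular, say $(u^{\rm in},\psi_{\rm i}^{\rm in})\in H^{m+s}(\cE)\times H^{m+s+1/2}(\mathit{\Gamma})$ for some sufficiently large $s$ and satisfy the compatibility conditions up to order $m+s-1$. Starting from $u^{(0)}(t,x):=u^{\rm in}(x)$, one defines a sequence $\{(u^{(n)},\widecheck{u}^{(n)},\widecheck{\psi}_{\rm i}^{(n)})\}$ by substituting $u^{(n)}$ into the coefficients $\Sigma(u)$ and solving the resulting \emph{linear} system of the form \eqref{PB1lin}--\eqref{IC2} by Theorem \ref{th:linexist}; this gives $(\widecheck{u}^{(n+1)},\widecheck{\psi}_{\rm i}^{(n+1)})$, from which $u^{(n+1)}$ is then recovered by integrating $\dt u^{(n+1)}=\Sigma(u^{(n)})^{-1}\widecheck{u}^{(n+1)}$ in time. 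The crucial point here is that the regularity hypotheses of Theorem \ref{th:linexist} on the coefficient $S=\Sigma(u^{(n)})^{-1}$, namely $\partial S\in\mathbb{W}_T^{m-1}\cap\mathbb{W}_T^{1,p}$, are precisely at the quasilinear threshold; this is exactly what motivated the weakened assumptions in Assumption \ref{ass:regAB} and Theorem \ref{th:linexist}. Uniform bounds of the iterates on a common time interval $[0,T_s]$ follow from the linear energy estimate of Proposition \ref{prop:UEE}, and contraction in a lower Sobolev norm is obtained by estimating the differences $u^{(n+1)}-u^{(n)}$ using the same linear estimate applied to the difference equations. This yields a unique solution $(u,\psi_{\rm i})$ on $[0,T_s]$ at regularity $m+s$.

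\textbf{Step 3: Approximation at the critical regularity and passage to the limit.} For general data $(u^{\rm in},\psi_{\rm i}^{\rm in})$ at the sharp regularity $m\geq 3$ satisfying the compatibility conditions of Definition \ref{def:CC3} up to order $m-1$, approximate them (using an analogue of Proposition \ref{prop:appData2} adapted to the nonlinear compatibility conditions \eqref{CC}) by a sequence $\{(u^{{\rm in}(n)},\psi_{\rm i}^{{\rm in}(n)})\}\subset H^{m+s}(\cE)\times H^{m+s+1/2}(\mathit{\Gamma})$ satisfying the compatibility conditions up to order $m+s-1$ and converging in $H^m\times H^{m+1/2}$. Step 2 provides solutions $(u^{(n)},\psi_{\rm i}^{(n)})$. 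The a priori estimate of Theorem \ref{th:APE}, applied with the same constants $c_0$ and $M_0$ (up to a factor $2$), yields a uniform existence time $T_0>0$ and a uniform bound
\[
\opnorm{u^{(n)}(t)}_m^2+\abs{\dtan\psi_{\rm i}^{(n)}(t)}_{H_{(m)}^{-1/2}}^2\leq M_0
\]
on $[0,T_0]$, independent of $n$. Passing to the limit by standard compactness and using the linear estimate of Proposition \ref{prop:UEE} applied to the difference of two iterates (which is a linear problem of the form \eqref{PB1lin}--\eqref{PB2lin} for which Cauchy stability holds at regularity $m-1$) gives convergence in $\mathbb{W}_T^{m-1}\times\mathbb{W}_{{\rm b},T}^{m-1+1/2}$ to a solution $(u,\psi_{\rm i})$ of \eqref{SWEs}--\eqref{ICs}, with weak-$*$ regularity at order $m$ inherited from the uniform bounds.

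\textbf{Step 4: Strong continuity in time and uniqueness.} The final point, which is the main technical obstacle at the critical regularity, is to upgrade weak continuity in time to strong continuity, i.e.\ to show $u\in\mathbb{W}_T^m$ and $\psi_{\rm i}\in\mathbb{W}_{{\rm b},T}^{m+1/2}$ rather than only the corresponding $L^\infty_tC_w^0$ spaces. This is handled exactly as at the end of the proof of Theorem \ref{th:linexist}: one uses the fact that the problem is reversible in time (the weakly dissipative structure is invariant under $t\mapsto -t$, since it is purely integral), together with the energy identity underlying Proposition \ref{prop:UEE}, to show that $\Abs{(\chi_{\rm b}\dpar^\alpha u(t),\dpar^\alpha\psi_{\rm i}(t))}_t$ is continuous at every $t_0\in[0,T_0]$ for $\abs{\alpha}=m$; combined with the known weak continuity this yields strong continuity of the tangential and time derivatives, and the normal derivatives are recovered using the generalized-vorticity argument of Lemma \ref{lemmvor} as in Theorem \ref{theogenorderm2}. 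Uniqueness follows from Proposition \ref{prop:UEE} applied to the difference of two solutions. The hardest part of the proof is the interplay between the three ingredients \emph{weak dissipativity}, \emph{characteristic boundary}, and \emph{critical regularity}: the linear estimate of Theorem \ref{th:linexist} has been carefully designed so that $\Sigma(u)^{-1}$ with $u\in\mathbb{W}_T^m$ (and only $m=3$) is admissible as a coefficient, and the approximation procedure of Step 3 must produce iterates whose compatibility conditions are consistent with both the nonlinear equations \eqref{CC} and the linearized ones used in Step 2, which is delicate precisely because the boundary is characteristic.
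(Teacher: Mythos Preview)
Your Steps 1--3 align closely with the paper's argument: quasilinearize via \eqref{newUNs}, run a Picard iteration at higher regularity, then approximate general data by smoother compatible data and use Theorem~\ref{th:APE} for uniform bounds. Two minor discrepancies: the paper needs only $s=1$ extra derivative (not ``sufficiently large $s$''), and your first iterate $u^{(0)}(t,x):=u^{\rm in}(x)$ fails to satisfy $(\partial_t^j u^{(0)})|_{t=0}=u_j^{\rm in}$ for $j\geq1$, so the compatibility conditions for the first linear problem would break; the paper instead takes $u^{(0)}$ in a class $S_T$ of functions matching all the prescribed initial time-derivatives.

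Your Step~4, however, takes a genuinely different route. You propose to upgrade weak to strong continuity by repeating the Majda-type reversibility/energy-identity argument from the end of Theorem~\ref{th:linexist} directly on $\partial_\parallel^\alpha u$, $|\alpha|=m$. The paper does \emph{not} do this. Instead it \emph{re-applies Theorem~\ref{th:linexist} itself} at order $m-1$, first to $(\widecheck{u},\widecheck{\psi}_{\rm i})=(\Sigma(u)\partial_t u,\partial_t\psi_{\rm i})$ and then to $(\Sigma(u)(\chi_{\rm b}\dtan u),\dtan\psi_{\rm i})$, each viewed as the solution of a linear problem with coefficient $S(u)$; since that theorem already delivers $\mathbb{W}_T^{m-1}$ regularity, one obtains $\partial_t u,\chi_{\rm b}\dtan u\in\mathbb{W}_T^{m-1}$ directly, and $\chi_{\rm b}\dnor u$ is recovered algebraically from the equations and vorticity conservation. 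The enabling step---absent from your outline---is the Gagliardo--Nirenberg interpolation giving $u\in\mathbb{W}_T^{2,p}$ from $u\in\mathbb{W}_T^{m-1}$ plus the uniform $\opnorm{\cdot}_m$ bound; this is exactly what verifies the coefficient hypothesis $\partial S(u)\in\mathbb{W}_T^{m-2}\cap\mathbb{W}_T^{1,p}$ of Theorem~\ref{th:linexist} at the critical index $m=3$, and the paper flags it as the place where the relaxation from $\mathbb{W}_T^2$ to $\mathbb{W}_T^{1,p}$ is indispensable. Your direct energy-identity approach is not obviously wrong, but carrying it out would require controlling the nonlinear commutators inside an \emph{identity} (not merely an inequality) at critical regularity, and would ultimately hinge on the same interpolation step you have not identified.
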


\subsection{Reduction to a quasilinear system}\label{sect:reduction}
Let $(u,\psi_{\rm i})$ be a smooth solution to the nonlinear wave-structure interaction problem \eqref{SWEs}--\eqref{ICs} and introduce new unknowns 
$(\widecheck{u},\widecheck{\psi}_{\rm i})$ as \eqref{newUNs}. 
Then, as in Proposition \ref{proplintransf} we see that $(\widecheck{u},\widecheck{\psi}_{\rm i})$ satisfies the equations 
\begin{equation}\label{quasilin1}
\begin{cases}
 S(u)\dt\widecheck{u} + G_j\partial_j\widecheck{u} = f(u,\widecheck{u}) &\mbox{in}\quad (0,T)\times\cE, \\
 N\cdot\widecheck{u}^{\rm II} - \Lambda\widecheck{\psi}_{\rm i} = 0 &\mbox{on}\quad (0,T)\times\mathit{\Gamma}, \\
 \dt\widecheck{\psi}_{\rm i} + \widecheck{u}^{\rm I} = 0 &\mbox{on}\quad (0,T)\times\mathit{\Gamma}, \\
 (\widecheck{u},\widecheck{\psi}_{\rm i})_{\vert_{t=0}} = (\widecheck{u}^{\rm in},\widecheck{\psi}_{\rm i}^{\rm in})
   &\mbox{in}\quad \cE\times\mathit{\Gamma},
\end{cases}
\end{equation}
where $G_j$ $(j=1,2)$ and $S(u)$ are the matrices defined as \eqref{defGj} and \eqref{defS} as before, 
whereas $f(u,\widecheck{u})=-({\rm d}_uS(u))[S(u)\widecheck{u}]\widecheck{u}$. 
Then, $(u,\psi_{\rm i})$ can be recovered from $(\widecheck{u},\widecheck{\psi}_{\rm i})$ by solving the following ODEs 
\begin{equation}\label{quasilin2}
\begin{cases}
 \dt u = S(u)\widecheck{u} &\mbox{in}\quad (0,T)\times\cE, \\
 \dt\psi_{\rm i} = \widecheck{\psi}_{\rm i} &\mbox{on}\quad (0,T)\times\mathit{\Gamma}, \\
 (u,\psi_{\rm i})_{\vert_{t=0}}=(u^{\rm in},\psi_{\rm i}^{\rm in}) &\mbox{in}\quad \cE\times\mathit{\Gamma}.
\end{cases}
\end{equation}
There two sets of equations form a quasilinear system of equations for the unknowns $(u,\widecheck{u},\psi_{\rm i},\widecheck{\psi}_{\rm i})$. 
The following proposition shows the equivalence of the reduced quasilinear problem and the original problem 
under appropriate assumptions on the initial data.

\begin{proposition}\label{prop:equi}
Let $T>0$ and suppose that Assumptions \ref{ass:hi} and \ref{ass:subcritical} are satisfied. 
If $(\widecheck{u},\widecheck{\psi}_{\rm i})$ and $(u,\psi_{\rm i})$ solve \eqref{quasilin1} and \eqref{quasilin2} and if the initial data satisfy 
\begin{equation}\label{CCequi}
\begin{cases}
 S(u^{\rm in})\widecheck{u}^{\rm in} + \partial_j\mathcal{F}_j(u^{\rm in}) = 0 &\mbox{in}\quad \cE, \\
 \mathcal{F}_{\rm nor}(u^{\rm in})^{\rm I} - \Lambda \psi_{\rm i}^{\rm in} = 0 &\mbox{on}\quad \mathit{\Gamma}, \\
 \widecheck{\psi}_{\rm i}^{\rm in} + N\cdot\mathcal{F}_{\rm nor}(u^{\rm in})^{\rm II} = 0 &\mbox{on}\quad \mathit{\Gamma},
\end{cases}
\end{equation}
then $(u,\psi_{\rm i})$ solves \eqref{SWEs}--\eqref{ICs}. 
\end{proposition}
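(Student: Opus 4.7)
The plan is to introduce three residuals that measure the failure of $(u,\psi_{\rm i})$ to satisfy the interior equations and the two boundary conditions of \eqref{SWEs}--\eqref{BCs}, and show that each residual satisfies a pointwise homogeneous evolution (either in the bulk or on $\mathit{\Gamma}$) with a vanishing initial value forced by \eqref{CCequi}. The initial conditions \eqref{ICs} are already built into \eqref{quasilin2}, so only the three residuals remain. The algebraic backbone of the argument is the structural identity $A_j(u)={\rm d}_u\mathcal{F}_j(u)=G_j\Sigma(u)$ recorded in \eqref{defAj}, which together with $\Sigma(u)S(u)={\rm Id}$ and $\dt u=S(u)\widecheck{u}$ yields the key relation $A_j(u)\dt u=G_j\widecheck{u}$; this, together with the specific choice of the source $f(u,\widecheck{u})=-({\rm d}_uS(u))[S(u)\widecheck{u}]\widecheck{u}$ in \eqref{quasilin1}, is precisely what makes the residuals trivially propagate.

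For the interior system, I would set $w:=\dt u+\partial_j\mathcal{F}_j(u)=S(u)\widecheck{u}+\partial_j\mathcal{F}_j(u)$, so that $w_{\vert_{t=0}}=0$ is exactly the first line of \eqref{CCequi}. Differentiating, $\dt w=\dt^2u+\partial_j(A_j(u)\dt u)$; the second term equals $G_j\partial_j\widecheck{u}$ by the key relation, while for the first one I expand $\dt^2u=({\rm d}_uS(u))[\dt u]\widecheck{u}+S(u)\dt\widecheck{u}$ and substitute $S(u)\dt\widecheck{u}=-G_j\partial_j\widecheck{u}+f(u,\widecheck{u})$ from \eqref{quasilin1}. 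Because $f(u,\widecheck{u})=-({\rm d}_uS(u))[\dt u]\widecheck{u}$ by definition, the two contributions involving ${\rm d}_uS$ cancel and $\dt^2 u=-G_j\partial_j\widecheck{u}$. Adding the two pieces gives $\dt w\equiv 0$ pointwise in $x$, hence $w\equiv 0$ on $(0,T)\times\cE$, which is exactly the system \eqref{SWEs} in conservative form and equivalent to \eqref{PB1bis}.

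For the Bernoulli boundary condition I would introduce $q:=\widecheck{\psi}_{\rm i}+\gr\zeta+\tfrac{1}{2}|v|^2$ on $\mathit{\Gamma}$. Reading off the first row of $\widecheck{u}=\Sigma(u)\dt u$ from \eqref{defGj} gives $\widecheck{u}^{\rm I}=\gr\dt\zeta+v\cdot\dt v$, and the second equation of \eqref{quasilin1} says $\dt\widecheck{\psi}_{\rm i}=-\widecheck{u}^{\rm I}$, so $\dt q=0$ directly, with no use of the already-established interior PDE. Using $|N|=1$ and the explicit form of $\mathcal{F}_{\rm nor}(u)^{\rm II}=(\gr\zeta+\tfrac12|v|^2)N$ shows that the third line of \eqref{CCequi} is precisely $q_{\vert_{t=0}}=0$, so $q\equiv 0$ on $\mathit{\Gamma}$. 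Finally, for the kinematic condition I set $p:=\mathcal{F}_{\rm nor}(u)^{\rm I}-\Lambda\psi_{\rm i}=N\cdot(hv)-\Lambda\psi_{\rm i}$; the key relation $A_j(u)\dt u=G_j\widecheck{u}$ together with $(G_j\widecheck{u})^{\rm I}=\widecheck{u}^{\rm II}_j$ yields $\dt\mathcal{F}_{\rm nor}(u)^{\rm I}=N\cdot\widecheck{u}^{\rm II}$, and combining this with $\dt\psi_{\rm i}=\widecheck{\psi}_{\rm i}$ and the first boundary condition of \eqref{quasilin1} gives $\dt p=N\cdot\widecheck{u}^{\rm II}-\Lambda\widecheck{\psi}_{\rm i}=0$ on $\mathit{\Gamma}$. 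Since $p_{\vert_{t=0}}=0$ is the second line of \eqref{CCequi}, we conclude $p\equiv 0$, completing the verification of \eqref{BCs}. There is no genuine analytic obstacle: the subtlety lies entirely in recognizing that the particular form of $f(u,\widecheck{u})$ and the structural factorization $A_j(u)=G_j\Sigma(u)$ are engineered so that every residual evolves by a trivial homogeneous equation, reducing the whole statement to the three pointwise identities listed in \eqref{CCequi}.
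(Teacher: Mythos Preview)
Your proposal is correct and follows essentially the same approach as the paper: both introduce the three residuals $\dt u+\partial_j\mathcal{F}_j(u)$, $\mathcal{F}_{\rm nor}(u)^{\rm I}-\Lambda\psi_{\rm i}$, and $\dt\psi_{\rm i}+N\cdot\mathcal{F}_{\rm nor}(u)^{\rm II}$, show each has vanishing time derivative using the factorization $A_j(u)=G_j\Sigma(u)$ together with $\dt u=S(u)\widecheck{u}$ and the specific form of $f(u,\widecheck{u})$, and conclude from the initial conditions \eqref{CCequi}. Your write-up is somewhat more explicit in spelling out the cancellations, but the argument is the same.
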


\begin{remark}\label{rem:equi}
In terms of the notations introduced in Section \ref{sect:state-ET}, the first and the third conditions in \eqref{CCequi} can be written as 
$\widecheck{u}^{\rm in} = \Sigma(u^{\rm in})u_1^{\rm in}$ and $\widecheck{\psi}_{\rm i}^{\rm in}=\psi_{{\rm i},1}^{\rm in}$, see also \eqref{newUNs}, 
whereas the second condition is nothing but the compatibility condition at order $0$ for the problem \eqref{SWEs}--\eqref{ICs}. 
\end{remark}

\begin{proof}[Proof of Proposition \ref{prop:equi}]
We recall that \eqref{SWEs} and \eqref{BCs} can be written in the form \eqref{PB1F} and \eqref{PB2F}, respectively. 
It follows from \eqref{quasilin1} and \eqref{quasilin2} that 
\begin{align*}
\dt(\dt u + \partial_j\mathcal{F}_j(u))
&= \dt(S(u)\widecheck{u}) + \partial_j(G_j\Sigma(u)\dt u) \\
&= S(u)\dt\widecheck{u} + ({\rm d}S(u))[S(u)\widecheck{u}]\widecheck{u} + G_j\partial_j\widecheck{u} \\
&=0 \quad\mbox{in}\quad (0,T)\times\cE.
\end{align*}
Similarly, we have 
\[
\begin{cases}
 \dt( {\mathcal F}_{\rm nor}(u)^{\rm I}-\Lambda \psi_{\rm i} ) = 0 &\mbox{on}\quad (0,T)\times\mathit{\Gamma}, \\
 \dt( \partial_t \psi_{\rm i}+N\cdot{\mathcal F}_{\rm nor}(u)^{\rm II} ) = 0 &\mbox{on}\quad (0,T)\times\mathit{\Gamma}.
\end{cases}
\]
On the other hand, \eqref{CCequi} can be written as 
\[
\begin{cases}
 ( \dt u + \partial_j\mathcal{F}_j(u) )_{\vert_{t=0}} = 0 &\mbox{in}\quad \cE, \\
 ( {\mathcal F}_{\rm nor}(u)^{\rm I}-\Lambda \psi_{\rm i} )_{\vert_{t=0}} = 0 &\mbox{on}\quad (0,T)\times\mathit{\Gamma}, \\
 ( \partial_t \psi_{\rm i}+N\cdot{\mathcal F}_{\rm nor}(u)^{\rm II} )_{\vert_{t=0}} = 0 &\mbox{on}\quad (0,T)\times\mathit{\Gamma}.
\end{cases}
\]
Therefore, we obtain the desired result. 
\end{proof}

\subsection{Existence of the solution under additional conditions}\label{sect:exist}
In this subsection, we prove Theorem \ref{th:exist} under additional assumptions that $u^{\rm in}\in H^{m+1}(\cE)$, 
$\dtan\psi_{\rm i}^{\rm in}\in H^{m+1-1/2}(\mathit{\Gamma})$, and that the initial data satisfy the compatibility conditions up to order $m$, 
by constructing a solution to the reduced quasilinear problem \eqref{quasilin1}--\eqref{quasilin2}. 
Moreover, this solution is constructed as the limit of a sequence of approximate solutions; 
we first need to construct carefully the first iterate of the sequence so that the compatibility conditions are propagated by the iterative scheme.

\medskip
\noindent
{\bf Step 1.} Construction of the first iterate $u^{(0)}$. 
We use the same notations $\{(u_j^{\rm in},\psi_{{\rm i},j}^{\rm in})\}_{j=0}^{m+1}$ as in Section \ref{sect:state-ET}. 
Also, assuming that $(\widecheck{u},\widecheck{\psi}_{\rm i})$ is defined as \eqref{newUNs} from $(u,\psi_{\rm i})$ we put 
$\widecheck{u}_j^{\rm in}=(\dt^j\widecheck{u})_{\vert_{t=0}}$ and $\widecheck{\psi}_{{\rm i},j}^{\rm in}=(\dt\widecheck{\psi}_{\rm i})_{\vert_{t=0}}$, 
which can be written explicitly as 
\[
\begin{cases}
 (\widecheck{u}_j^{\rm in})^{\rm I} = \gr\zeta_{j+1}^{\rm in} + \frac12\sum_{k=0}^{j+1}\binom{j+1}{k}v_{j+1-k}^{\rm in}\cdot v_k^{\rm in}, \\
 (\widecheck{u}_j^{\rm in})^{\rm II} = \sum_{k=0}^{j+1}\binom{j+1}{k} h_{j+1-k}^{\rm in}v_k^{\rm in}, \\
 \widecheck{\psi}_{{\rm i},j}^{\rm in} = \psi_{{\rm i},j+1}^{\rm in}
\end{cases}
\]
for $j=0,1,\ldots,m$. 
We see easily that $\widecheck{u}_j^{\rm in} \in H^{m-j}(\cE)$ and $\widecheck{\psi}_{{\rm i},j}^{\rm in} \in H^{m-j+1/2}(\mathit{\Gamma})$ 
as well as $u_j^{\rm in} \in H^{m+1-j}(\cE)$ and $\psi_{{\rm i},j}^{\rm in} \in H^{m+1-j+1/2}(\mathit{\Gamma})$ for $j=0,1,\ldots,m$. 
Therefore, we can take positive constants $K_1$ and $K_2$ such that 
\[
\begin{cases}
 \sum_{j=0}^{m} \Abs{ u_j^{\rm in} }_{H^{m-j}(\cE)} \leq K_1, \\
 \sum_{j=0}^{m} \bigl( \Abs{ \widecheck{u}_j^{\rm in} }_{H^{m-j}(\cE)}
  + \abs{ \dtan\widecheck{\psi}_{{\rm i},j}^{\rm in} }_{H^{m-j-1/2}(\mathit{\Gamma})} \bigr) \leq K_2.
\end{cases}
\]
For $T,M>0$ we denote by $S_T$ the set of all functions $u=(\zeta,v^{\rm T})^{\rm T}\in\mathbb{W}_T^{m}$ which satisfy 
\[
\begin{cases}
 (\dt^j u)_{\vert_{t=0}}=u_j^{\rm in} \quad\mbox{for}\quad j=0,1,\ldots,m, \\
 \gr h(t,x) - \abs{v(t,x)}^2 \geq c_0 \quad\mbox{for}\quad (t,x)\in\overline{(0,T)\times\cE}, \\
 \Abs{ u }_{\mathbb{W}_T^{m}} \leq 2K_1,
\end{cases}
\]
and by $\widecheck{S}_{T,M}$ the set of all functions $\widecheck{u}\in\mathbb{W}_T^{m}$ which satisfy 
\[
\begin{cases}
 (\dt^j\widecheck{u})_{\vert_{t=0}}=\widecheck{u}_j^{\rm in} \quad\mbox{for}\quad j=0,1,\ldots,m, \\
 \Abs{ \widecheck{u} }_{\mathbb{W}_T^{m}} \leq M.
\end{cases}
\]
Here, we note that for any $u\in S_T$ we have $\Abs{ u }_{C^1_{\rm b}(\overline{(0,T)\times\cE})} \leq C(K_1)$ and 
$\partial S(u) \in \mathbb{W}_T^{m-1} \cap \mathbb{W}_T^{1,p}$ for any $p\in[2,\infty)$ with 
$\Abs{ \partial S(u) }_{\mathbb{W}_T^{m-1} \cap \mathbb{W}_T^{1,p}} \leq C\bigl(\frac{1}{c_0},K_1,p\bigr)$. 
It is easy to check that there exist sufficiently small $T_0$ and large $M_0$ such that $S_{T_0}\ne\emptyset$ and 
$\widecheck{S}_{T_0,M_0}\ne\emptyset$. 
We take $u^{(0)} \in S_{T_0}$ and $\widecheck{u}^{(0)} \in \widecheck{S}_{T_0,M_0}$ arbitrarily and fix them.

\medskip
\noindent
{\bf Step 2.} Construction of a sequence of approximate solutions. 
Given $u^{(n)}\in S_T$ and $\widecheck{u}^{(n)} \in \widecheck{S}_{T,M}$, we consider the initial value problem for unknowns 
$(\widecheck{u}^{(n+1)},\widecheck{\psi}_{\rm i}^{(n+1)})$
\[
\begin{cases}
 S(u^{(n)})\dt\widecheck{u}^{(n+1)} + G_j\partial_j\widecheck{u}^{(n+1)} = f(u^{(n)},\widecheck{u}^{(n)}) &\mbox{in}\quad (0,T)\times\cE, \\
 N\cdot(\widecheck{u}^{(n+1)})^{\rm II} - \Lambda\widecheck{\psi}_{\rm i}^{(n+1)} = 0 &\mbox{on}\quad (0,T)\times\mathit{\Gamma}, \\
 \dt\widecheck{\psi}_{\rm i}^{(n+1)} + (\widecheck{u}^{(n+1)})^{\rm I} = 0 &\mbox{on}\quad (0,T)\times\mathit{\Gamma}, \\
 (\widecheck{u}^{(n+1)},\widecheck{\psi}_{\rm i}^{(n+1)})_{\vert_{t=0}} = (\widecheck{u}_0^{\rm in},\widecheck{\psi}_{{\rm i},0}^{\rm in})
  &\mbox{in}\quad \cE\times\mathit{\Gamma}.
\end{cases}
\]
We see that the initial data $(\widecheck{u}_0^{\rm in},\widecheck{\psi}_{{\rm i},0}^{\rm in}) \in H^{m}(\cE)\times H^{m+1/2}(\mathit{\Gamma})$ 
for this problem satisfy the compatibility conditions up to order $m-1$, so that we can apply Theorem \ref{th:linexist} to obtain the unique solution 
$(\widecheck{u}^{(n+1)},\widecheck{\psi}_{\rm i}^{(n+1)}) \in \mathbb{W}_T^{m}\times\mathbb{W}_{{\rm b},T}^{m+1/2}$. 
Moreover, by Proposition \ref{prop:UEE} the solution satisfies
\begin{align*}
\Abs{ \widecheck{u}^{(n+1)} }_{\mathbb{W}_T^{m}}
&\leq C_1e^{C_1T}\bigl( \opnorm{ \widecheck{u}^{(n+1)}(0) }_{m} + \abs{ \dtan\widecheck{\psi}_{\rm i}^{(n+1)}(0) }_{H_{(m)}^{-1/2}}
 + \Abs{ f(u^{(n)},\widecheck{u}^{(n)}) }_{H^{m}(0,T)\times\cE} \bigr) \\
&\leq C_1e^{C_1T}( K_2 + C_2\sqrt{T} ),
\end{align*}
where $C_1=C_1\bigl( \frac{1}{c_0}, K_1 \bigr)$ and $C_2=C_2(K_1,M)$. 
Then, we define $u^{(n+1)}$ as the unique solution to 
\[
\begin{cases}
 \dt u^{(n+1)} = S(u^{(n)})\widecheck{u}^{(n)} &\mbox{in}\quad (0,T)\times\cE, \\
 {u^{(n+1)}}_{\vert_{t=0}}=u^{\rm in} &\mbox{in}\quad \cE.
\end{cases}
\]
We see that $u^{(n+1)}\in\mathbb{W}_T^{m}$ satisfies 
\begin{align*}
\Abs{ u^{(n+1)} }_{\mathbb{W}_T^{m}}
&\leq \opnorm{ u^{(n+1)}(0) }_{m} + \int_0^T\opnorm{ S(u^{(n)}(t))\widecheck{u}^{(n)}(t) }_{m}{\rm d}t \\
&\leq K_1 + C_3T,
\end{align*}
where $C_3=C_3(K_1,M)$. 
Therefore, if $T>0$ is so small that $C_3T\leq K_1$, then we have $\Abs{ u^{(n+1)} }_{\mathbb{W}_T^{m}} \leq 2K_1$. 
Under this restriction on $T$, we also have 
\begin{align*}
\gr h^{(n+1)}(t,x) - \abs{ v^{(n+1)}(t,x) }^2
&\geq \gr h^{\rm in}(x) - \abs{ v^{\rm in}(x) }^2 - C_4t \\
&\geq 2c_0 - C_4t \quad\mbox{for}\quad (t,x)\in\overline{(0,T)\times\cE}, 
\end{align*}
where $C_4=C_4(K_1)$. 
Therefore, if we take $M>0$ so large that $M\geq \max\{ M_0, 2C_1K_2 \}$ and $T>0$ so small that 
$C_1T\leq\log2$, $C_2\sqrt{T}\leq K_2$, $C_3T\leq K_1$, and $C_4T\leq c_0$, 
then we see that $u^{(n+1)} \in S_T$ and $\widecheck{u}^{(n+1)} \in \widecheck{S}_{T,M}$. 
Hence, we have constructed a sequence of approximated solutions $\{(u^{(n)},\widecheck{u}^{(n)},\widecheck{\psi}_{\rm i}^{(n)})\}_{n=1}^\infty$.

\medskip
\noindent
{\bf Step 3.} Convergence and regularity of the limit. 
We classically obtain that $\{(u^{(n)},\widecheck{u}^{(n)})\}_{n=1}^\infty$ and $\{\widecheck{\psi}_{\rm i}^{(n)}\}_{n=1}^\infty$ converge 
in $\mathbb{W}_T^{m-1}$ and $\mathbb{W}_{{\rm b},T}^{m-1+1/2}$, respectively. 
We denote by $(u,\widecheck{u})$ and by $\widecheck{\psi}_{\rm i}$ their limits and define $\psi_{\rm i}$ as 
$\psi_{\rm i}(t,\cdot)=\psi_{\rm i}^{\rm in}+\int_0^t\widecheck{\psi}_{\rm i}(t',\cdot){\rm d}t'$. 
Then, we see that $(u,\widecheck{u},\psi_{\rm i},\widecheck{\psi}_{\rm i})$ is a unique solution to the quasilinear system \eqref{quasilin1} 
and \eqref{quasilin2} with the initial data 
$(\widecheck{u}^{\rm in},\widecheck{\psi}_{\rm i}^{\rm in}) = (\widecheck{u}_0^{\rm in},\widecheck{\psi}_{{\rm i},0}^{\rm in})$. 
Particularly, it follows form Proposition \ref{prop:equi} and Remark \ref{rem:equi} that $(u,\psi_{\rm i})$ is a unique solution 
to the nonlinear wave-structure interaction problem \eqref{SWEs}--\eqref{ICs}. 
This solution satisfies $u,\widecheck{u}\in\mathbb{W}_T^{m-1}$ and, by a standard compactness argument, 
$\opnorm{ u(t) }_{m} \leq 2K_1$ and $\opnorm{ \widecheck{u}(t) }_{m} \leq M$ for any $t\in[0,T]$. 
Since $\dt u = S(u)\widecheck{u}$, we can recover the continuity of $u$ in $t$ in the norm $\opnorm{ \cdot }_{m}$, that is, $u \in \mathbb{W}_T^{m}$. 
Moreover, we have $f(u,\widecheck{u}) \in H^{m}((0,T)\times\cE)$, 
so that by regarding \eqref{quasilin1} as a linear problem for $(\widecheck{u},\widecheck{\psi}_{\rm i})$ 
we can apply Theorem \ref{th:linexist} to obtain $\widecheck{u}\in\mathbb{W}_T^{m}$ and $\widecheck{\psi}_{\rm i}\in\mathbb{W}_{{\rm b},T}^{m+1/2}$. 
By $\dt u = S(u)\widecheck{u}$ again, we also obtain $\dt u\in \mathbb{W}_T^{m}$.

\subsection{Completion of the proof of the existence theorem}\label{sect:exist2}
In this last subsection, we complete the proof of Theorem \ref{th:exist} without assuming the additional assumptions in the previous subsection. 
As in the proof of Proposition \ref{prop:appData2}, we can approximate the initial data $(u^{\rm in},\psi_{\rm i}^{\rm in})$ 
by a sequence of more regular data 
$\{ (u^{{\rm in}(n)},\psi_{\rm i}^{{\rm in}(n)}) \}_{n=1}^\infty \subset H^{m+1}(\cE)\times H^{m+1+1/2}(\mathit{\Gamma})$ such that each initial data 
$(u^{{\rm in}(n)},\psi_{\rm i}^{{\rm in}(n)})$ for the nonlinear wave-structure interaction problem \eqref{SWEs}--\eqref{BCs} satisfy 
the compatibility conditions up to order $m$ and that they converge to the original data in $H^{m}(\cE)\times H^{m+1/2}(\mathit{\Gamma})$. 
Without loss of generality, we can assume that 
\[
\begin{cases}
 \Abs{ u^{{\rm in}(n)} }_{H^m(\cE)} + \abs{ \dtan\psi_{\rm i}^{{\rm in}(n)} }_{H^{m-1/2}(\mathit{\Gamma})} \leq 2M_0, \\
 \gr h^{{\rm in}(n)}(x) - \abs{ v^{{\rm in}(n)}(x) }^2 \geq c_0 \quad\mbox{for}\quad x\in\cE,
\end{cases}
\]
for $n=1,2,\ldots$. 
By the result in the previous subsection, for each $n$ there exists a positive time $T_n$ such that the nonlinear wave-structure interaction problem 
\eqref{SWEs}--\eqref{BCs} under the initial conditions $(u,\psi_{\rm i})_{\vert_{t=0}}=(u^{{\rm in}(n)},\psi_{\rm i}^{{\rm in}(n)})$ 
has a unique solution $(u^{(n)},\psi_{\rm i}^{(n)}) \in \mathbb{W}_{T_n}^m\times\mathbb{W}_{{\rm b},T_n}^{m+1/2}$. 
Moreover, by the a priori estimate given in Theorem \ref{th:APE} there exist a positive time $T$ and a positive constant $C$ independent of $n$ 
such that these approximate solutions can be extended on the time interval $[0,T]$ and satisfy 
\[
\begin{cases}
 \opnorm{ u^{(n)}(t) }_{m} + \abs{ \dtan\psi_{\rm i}^{(n)}(t) }_{H_{(m)}^{-1/2}} \leq C, \\
 \gr h^{(n)}(t,x) - \abs{ v^{(n)}(t,x) }^2 \geq \frac12c_0 \quad\mbox{for}\quad (t,x)\in\overline{(0,T)\times\cE},
\end{cases}
\]
for $n=1,2,\ldots$. 
To show the convergence of these approximate solutions, we use the quasilinear equations \eqref{quasilin1} and \eqref{quasilin2}. 
In view of \eqref{newUNs} we put $\widecheck{u}^{(n)}=\Sigma(u^{(n)})\dt u^{(n)}$ and $\widecheck{\psi}_{\rm i}^{(n)}=\dt\psi_{\rm i}^{(n)}$, 
which satisfy the quasilinear equations with the corresponding initial data. 
Then, we have $\Abs{ \widecheck{u}^{(n)} }_{\mathbb{W}_T^{m-1}} \leq C_1$ and $\partial S(u^{(n)}) \in \mathbb{W}_T^{m-1} \cap \mathbb{W}_T^{1,p}$ 
for any $p\in[2,\infty)$ with 
$\Abs{ \partial S(u^{(n)}) }_{\mathbb{W}_T^{m-1} \cap \mathbb{W}_T^{1,p}} \leq C_1$, where $C_1$ is a constant independent of $n$. 
Therefore, by the same reasoning in the previous subsection we can show that $\{(u^{(n)},\widecheck{u}^{(n)})\}_{n=1}^\infty$ and 
$\{\widecheck{\psi}_{\rm i}^{(n)}\}_{n=1}^\infty$ converge in $\mathbb{W}_T^{m-2}$ and $\mathbb{W}_{{\rm b},T}^{m-2+1/2}$, respectively. 
We denote by $(u,\widecheck{u})$ and by $\widecheck{\psi}_{\rm i}$ their limits and define $\psi_{\rm i}$ as 
$\psi_{\rm i}(t,\cdot)=\psi_{\rm i}^{\rm in}+\int_0^t\widecheck{\psi}_{\rm i}(t',\cdot){\rm d}t'$, as before. 
Then, we see that $(u,\widecheck{u},\psi_{\rm i},\widecheck{\psi}_{\rm i})$ satisfy the quasilinear equations and that $(u,\psi_{\rm i})$ solves the 
nonlinear wave-structure interaction problem \eqref{SWEs}--\eqref{BCs}.

It remains to show that this solution satisfies the regularity \eqref{Reg}. 
By a standard compactness argument, the solution also satisfies $\opnorm{ u(t) }_m + \abs{ \dtan\psi_{\rm i}(t) }_{H_{(m)}^{-1/2}} \leq C$ 
and $\opnorm{ \widecheck{u}(t) }_{m-1} \leq C_1$ for $t\in[0,T]$. 
Since $\dt u=S(u)\widecheck{u}$, we have $u\in\mathbb{W}_T^{m-1}$. 
By the Gagliardo--Nirenberg inequality $\Abs{f}_{L^p(\cE)} \lesssim \Abs{ f }_{L^2(\cE)}^{2/p} \Abs{ f }_{H^1(\cE)}^{1-2/p}$, 
which holds for any $p\in[2,\infty)$, we have $\opnorm{ u(t) }_{2,p} \lesssim \opnorm{ u(t) }_2^{2/p} \opnorm{ u(t) }_3^{1-2/p}$, 
so that $u\in\mathbb{W}_T^{2,p}$ for any $p\in[2,\infty)$. 
Particularly, we have $\partial S(u) \in \mathbb{W}_T^{m-2}\cap\mathbb{W}_T^{1,p}$. 
We see also that the source term $f(u,\widecheck{u})$ in the quasilinear equations belongs to $H^{m-1}((0,T)\times\cE)$. 
Therefore, we can apply Theorem \ref{th:linexist} to obtain 
$(\widecheck{u},\widecheck{\psi}_{\rm i})\in\mathbb{W}_T^{m-1}\times\mathbb{W}_{{\rm b},T}^{m-1+1/2}$, 
which yields $\dt u\in \mathbb{W}_T^{m-1}$ and $\dt\psi_{\rm i}\in\mathbb{W}_{{\rm b},T}^{m-1+1/2}$. 
Here, we emphasize that this is the place where we use essentially the wider space $\mathbb{W}_T^{1,p}$ rather than $\mathbb{W}_T^2$ 
in Theorems \ref{theogenorderm} and \ref{th:linexist}. 
For the moment, we do not know whether $\partial S(u)\in\mathbb{W}_T^2$ holds or not in the critical case $m=3$; see also Remark \ref{rem:space}. 
Next, we put $\widecheck{u}=\Sigma(u)(\chi_{\rm b}\dtan u)$ and $\widecheck{\psi}_{\rm i}=\dtan\psi_{\rm i}$. 
Although we use the same notation for different functions, it would not bring any confusion. 
By Proposition \ref{proplintransf}, we see that 
\[
\begin{cases}
 S(u)\dt\widecheck{u} + G_j\partial_j\widecheck{u} = f &\mbox{in}\quad (0,T)\times\cE, \\
 N\cdot\widecheck{u}^{\rm II} - \Lambda\widecheck{\psi}_{\rm i} = g_1 &\mbox{on}\quad (0,T)\times\mathit{\Gamma}, \\
 \dt\widecheck{\psi}_{\rm i} + \widecheck{u}^{\rm I} = g_2 &\mbox{on}\quad (0,T)\times\mathit{\Gamma},
\end{cases}
\]
where $f=-[\chi_{\rm b}\dtan,\partial_j]\mathcal{F}_j(u)-(\dt S(u))\widecheck{u}$, $g_1=[\dtan,\Lambda]\psi_{\rm i}$, and 
$g_2=-(\dtan N)\cdot\mathcal{F}_{\rm nor}(u)^{\rm II}$. 
By Proposition \ref{propDN} and Lemma \ref{lem:Moser}, 
we see that $f\in H^{m-1}((0,T)\times\cE)$, $(1+\abs{D})^{-1/2}g_1 \in H^m((0,T)\times\mathit{\Gamma})$, 
and $(1+\abs{D})^{1/2}g_2 \in H^{m-1}((0,T)\times\mathit{\Gamma})$. 
Moreover, by a straightforward calculation we see also that the initial data 
$(\widecheck{u}(0,\cdot),\widecheck{\psi}_{\rm i}(0,\cdot)) \in H^{m-1}(\cE)\times H^{m-1+1/2}(\mathit{\Gamma})$ 
satisfy the compatibility conditions up to order $m-2$. 
Therefore, we can apply Theorem \ref{th:linexist} again to obtain 
$(\widecheck{u},\widecheck{\psi}_{\rm i})\in\mathbb{W}_T^{m-1}\times\mathbb{W}_{{\rm b},T}^{m-1+1/2}$, 
which yields $\chi_{\rm b}\dtan u\in \mathbb{W}_T^{m-1}$ and $\dtan\psi_{\rm i}\in\mathbb{W}_{{\rm b},T}^{m-1+1/2}$. 
Particularly, we obtain $\psi_{\rm i}\in\mathbb{W}_{{\rm b},T}^{m+1/2}$.

We proceed to see a regularity of $\chi_{\rm b}\dnor u$ by reasoning as in the proof of Lemma \ref{lemmapf4}, that is, 
we express the normal derivative in terms of the time and the tangential derivatives by using the nonlinear shallow water equations, 
which can be written as $\dt u + A_j(u)\partial_ju=0$ with $A_j(u)$ given by \eqref{defAj}. 
Particularly, we have 
\[
\dt u + A_{\rm tan}\dtan u + A_{\rm nor}\dnor u = 0 \quad\mbox{in}\quad (0,T)\times(\cE\cap U_\mathit{\Gamma}).
\]
Although the normal matrix $A_{\rm nor}$ is not invertible, we have 
\[
\begin{pmatrix}
 (A_{\rm nor}\dnor u)^{\rm I} \\
 N\cdot (A_{\rm nor}\dnor u)^{\rm II}
\end{pmatrix}
=
\begin{pmatrix}
 N\cdot v & h \\
 \gr & N\cdot v
\end{pmatrix}
\begin{pmatrix}
 \dnor\zeta \\
 N\cdot(\dnor v)
\end{pmatrix}
+
\begin{pmatrix}
 0 \\
 (N^\perp\cdot v)(N^\perp\cdot(\dnor v))
\end{pmatrix}.
\]
These equations imply that $\dnor\zeta$ and $N\cdot(\dnor v)$ can be expressed in terms of $u$, $\dt u$, $\dtan u$, and $N^\perp\cdot(\dnor v)$. 
It also follows from the nonlinear shallow water equations that the vorticity is conserved in time, that is, 
$\nabla^\perp\cdot v(t,\cdot)=\nabla^\perp\cdot v^{\rm in}$, so that we have 
$N^\perp\cdot(\dnor v) = \nabla^\perp\cdot v^{\rm in} - \abs{T}^{-2}T^\perp\cdot(\dtan v)$. 
As a result, we can express $\dnor u$ in terms of $u$, $\dt u$, and $\dtan u$, leading to $\chi_{\rm b}\dnor u\in \mathbb{W}_T^{m-1}$. 
Therefore, we obtain $\chi_{\rm b}\partial u \in \mathbb{W}_T^{m-1}$. 
Similarly and more easily we can show $(1-\chi_{\rm b})\partial u \in \mathbb{W}_T^{m-1}$, leading to $u\in\mathbb{W}_T^m$. 
The proof is complete.


\end{document}